\theoremstyle{plain}
\newtheorem{lemma}{Lemma}
\newtheorem{theorem}[lemma]{Theorem}
\newtheorem{cor}[lemma]{Corollary}
\newtheorem{prop}[lemma]{Proposition}
\numberwithin{equation}{section}
\theoremstyle{definition}
\theoremstyle{remark}
\renewcommand{\geq}{\geqslant}
\renewcommand{\leq}{\leqslant}
\newcommand{\changed}[1]{{\color{black} #1}}
\newsavebox\CBox
\newcommand\hcancel[2][0.5pt]{%
  \changed{\ifmmode\sbox\CBox{$#2$}\else\sbox\CBox{#2}\fi%
  \makebox[0pt][l]{\usebox\CBox}%
  \rule[0.5\ht\CBox-#1/2]{\wd\CBox}{#1}}}
\DeclareRobustCommand\widecheck[1]{{\mathpalette\@widecheck{#1}}}
\def\@widecheck#1#2{%
    \setbox\z@\hbox{\m@th$#1#2$}%
    \setbox\tw@\hbox{\m@th$#1%
       \widehat{%
          \vrule\@width\z@\@height\ht\z@
          \vrule\@height\z@\@width\wd\z@}$}%
    \dp\tw@-\ht\z@
    \@tempdima\ht\z@ \advance\@tempdima2\ht\tw@ \divide\@tempdima\thr@@
    \setbox\tw@\hbox{%
       \raise\@tempdima\hbox{\scalebox{1}[-1]{\lower\@tempdima\box
\tw@}}}%
    {\ooalign{\box\tw@ \cr \box\z@}}}
\newcommand{\C}{\mathbb{C}}
\renewcommand{\Bbb}{\mathbb}
\begin{document}

\author{Valentin Blomer}
\author{Andrew Corbett}
 
\address{Mathematisches Institut, Endenicher Allee 60, 53115 Bonn}
\email{blomer@math.uni-bonn.de}

\address{The Innovation Centre, University Of Exeter, Exeter, UK, EX4 4RN}

\email{A.J.Corbett@exeter.ac.uk}
  
\title{A symplectic restriction problem}

\thanks{The first author was supported in part by the DFG-SNF lead agency program grant BL 915/2-2}

\begin{abstract}

We investigate the norm of a degree 2 Siegel modular form of asymptotically large weight whose argument is restricted to the 3-dimensional subspace of its imaginary part. On average over  Saito--Kurokawa lifts an asymptotic formula is established that is consistent with the mass equidistribution conjecture on the Siegel upper half space as well as the Lindel\"of hypothesis for the corresponding Koecher--Maa{\ss}  series. The ingredients include 
a new relative trace formula for pairs of Heegner periods. 

\end{abstract}

\subjclass[2010]{Primary: 11F37, 11F55, 11F67, 11F72}
\keywords{restriction norm, Koecher--Maa{\ss} series, Saito--Kurokawa lift, Heegner points, relative trace formula}

\setcounter{tocdepth}{2}  \maketitle 

\maketitle

\section{Introduction} 

\subsection{Restriction norm of eigenfunctions} The question, `to what extent can the mass of a Laplace eigenfunction $\phi$ on a Riemannian manifold $X$ localize?', is a classical problem in analysis and is often quantified by upper (or lower) bounds for $L^p$-bounds for the restriction of $\phi$ to suitable submanifolds $Y \subseteq X$. The prototypical example is the case where $X$ is a surface and $Y$ is a curve, often a geodesic;
see e.g.\ \cite{B, BGT, CS, GRS, Ma2, R1, R2} and references therein.

 If $X$ is the quotient of a symmetric space by an arithmetic lattice (often called an arithmetic manifold), an additional layer of number theoretic structure enters. Not only can this be used to obtain stronger bounds \cite{Ma1}, but sometimes the period integrals can be expressed in terms of special values of $L$-functions. A typical such case is the $L^2$-restriction of a Maa{\ss} form for the group ${\rm SL}_{n+1}(\Bbb{Z})$ to its upper left $n$-by-$n$ block, which can be expressed as an average of central values of ${\rm GL}(n) \times {\rm GL}(n+1)$ Rankin--Selberg $L$-functions \cite{LY, LLY}. Other potential cases arise from the Gross-Prasad conjecture \cite{II}. 
 Often in this context, optimal restriction norm bounds are equivalent to the Lindel\"of conjecture on average over the spectral family of $L$-functions in question.
 

In this paper we consider certain Siegel modular forms $F$ for the symplectic group ${\rm Sp}_4(\Bbb{Z})$: we investigate the $L^2$-restriction of a Saito--Kurokawa lift $F(Z)$ on the 6-dimensional Siegel upper half space $\Bbb{H}^{(2)}$ to the 3-dimensional subspace where the argument $Z = X+iY$ is restricted to its imaginary part. This is  a very natural set-up, it is a direct higher-dimensional analogue of the classical problem of bounding a cusp form $f$ for  ${\rm SL}_2(\Bbb{Z})$ on the vertical geodesic, mentioned at   the beginning; cf.\  \cite[Section 7]{BKY}. While the latter leads, via Hecke's integral representation, directly to the corresponding $L$-function $L( f, s)$, things become much more involved for Siegel modular forms. 

We start by stating the corresponding period formula. For an even positive integer $k$ let $S_k^{(2)}$ denote the space of Siegel modular forms of degree 2 of weight $k$ for the group ${\rm Sp}_4(\Bbb{Z})$, equipped with the standard Petersson inner product; see Section \ref{sec2}. We think of $k$ as tending to infinity and are interested in asymptotic results with respect to $k$. We  restrict the argument of a cusp form $F\in S_k^{(2)}$  to its imaginary part $iY$ with $Y \in \mathcal{P}(\Bbb{R})$ where $\mathcal{P}(\Bbb{R})$, equipped with the measure $dY/(\det Y)^{3/2}$, is the set of positive definite symmetric $2$-by-$2$ matrices. Consider the \emph{restriction norm} 
\begin{equation}\label{NF}
\mathcal{N}(F) := \frac{\pi^2}{90} \cdot \frac{1}{\| F \|^2_2} \int_{{\rm SL}_2(\Bbb{Z}) \backslash \mathcal{P}(\Bbb{R})} |F(i Y)|^2 (\det Y)^{k} \frac{dY}{(\det Y)^{3/2}},
\end{equation}
where ${\rm SL}_2(\Bbb{Z})$ acts on $\mathcal{P}(\Bbb{R})$ by $\gamma \mapsto \gamma^{\top} Y \gamma$. 
Letting $\Bbb{H}$ denote the usual upper half plane,
we observe that
$${\rm SL}_2(\Bbb{Z}) \backslash \mathcal{P}(\Bbb{R}) \cong {\rm SL}_2(\Bbb{Z}) \backslash\Bbb{H} \times \Bbb{R}_{>0}$$
has infinite measure; see \eqref{product} below. The factor
$$\frac{\pi^2}{90} = \frac{3}{\pi} \cdot \frac{\pi^3}{270} = \frac{\text{vol}({\rm Sp}_4(\Bbb{Z}) \backslash \Bbb{H}^{(2)}) }{ \text{vol} ({\rm SL}_2(\Bbb{Z}) \backslash \Bbb{H})}$$
accounts for the fact that, in accordance with the literature, we choose the standard measures on ${\rm Sp}_4(\Bbb{Z}) \backslash \Bbb{H}^{(2)}$ and ${\rm SL}_2(\Bbb{Z}) \backslash \Bbb{H}$ which are not probability measures. 

Let $\Lambda$ denote a set of  spectral components of $L^2({\rm SL}_2(\Bbb{Z})\backslash \Bbb{H})$ consisting of the constant function $\sqrt{3/\pi}$, an orthonormal basis of  Hecke--Maa{\ss} cusp forms  and the Eisenstein series $E(., 1/2 + it)$ for $t \in \Bbb{R}$. The set $\Lambda$ is equipped with the counting measure on its discrete part and with the measure $dt/4\pi$ on its continuous part. We denote by $\int_{\Lambda}$ the corresponding combined sum/integral. For $F \in S_k^{(2)}$ and ${\tt u} \in \Lambda$ let $L(F \times {\tt u}, s)$ denote the Koecher--Maa{\ss}   series defined in \eqref{KM}.   This series has a functional equation featuring the gamma factors $G(F \times {\tt u}, s)$ as defined in \eqref{G-KM}, but has \emph{no} Euler product. 
The following proposition is proved in Section \ref{sec6}. 

\begin{prop}\label{prop1} For $F \in S_k^{(2)}$ with even $k$  we have  
 \begin{displaymath}
 \begin{split}
& \mathcal{N}(F)  = \frac{\pi^2}{90 }\cdot   \frac{1}{32} \cdot \frac{1}{ \| F \|^2_2}  \int_{-\infty}^{\infty}  \int_{\Lambda_{\text{\rm ev}}}  |G(F \times \overline{{\tt u}}, 1/2 + it)  L(F \times \overline{{\tt u}}, 1/2 + it) |^2  d{\tt u} \, dt , 
  \end{split}
  \end{displaymath}
  where $\Lambda_{\text{\rm ev}}$ denotes the set of all even ${\tt u} \in \Lambda$.  
  \end{prop}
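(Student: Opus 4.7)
The plan is to perform a two-step spectral decomposition of the restriction integral---Plancherel in the $\SL_2(\Bbb Z)\backslash\Bbb H$ direction together with Mellin--Plancherel in the $\det Y$ direction---and then to identify the resulting Mellin coefficient with the completed Koecher--Maa{\ss} series $G\cdot L$ via Fourier expansion and unfolding.

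First I parametrize $Y=r\widetilde Y(z)$ for $r>0$ and $z=x+iy\in\Bbb H$, where $\widetilde Y(z)=\frac{1}{y}\smat{|z|^2}{x}{x}{1}$ is the unique shape matrix of determinant $1$. A direct Jacobian computation gives
\[\frac{dY}{(\det Y)^{3/2}}=2\frac{dr}{r}\,d\mu(z),\qquad (\det Y)^k=r^{2k},\]
with $d\mu$ the hyperbolic measure; the action $Y\mapsto\gamma^{\top}Y\gamma$ of $\gamma\in\SL_2(\Bbb Z)$ fixes $r$ and acts on $z$ by the (transposed) standard M\"obius action. Setting $c_{\tt u}(r):=\int_{\SL_2(\Bbb Z)\backslash\Bbb H}F(ir\widetilde Y(z))\overline{{\tt u}(z)}\,d\mu(z)$, Parseval on $\SL_2(\Bbb Z)\backslash\Bbb H$ followed by Mellin--Plancherel in $r$ yield
\[\int_{\SL_2(\Bbb Z)\backslash\mathcal{P}(\Bbb R)}|F(iY)|^2(\det Y)^{k-3/2}dY=\frac{1}{\pi}\int_\Lambda\int_{\Bbb R}|\widetilde c_{\tt u}(k+it)|^2\,dt\,d{\tt u},\]
where $\widetilde c_{\tt u}(s)=\int_0^\infty c_{\tt u}(r)r^s\,dr/r$.

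Next I identify $\widetilde c_{\tt u}(s)$ with (a fixed constant multiple of) $G(F\times\overline{\tt u},\cdot)L(F\times\overline{\tt u},\cdot)$ by unfolding. Writing
\[\widetilde c_{\tt u}(s)=\tfrac12\int_{\SL_2(\Bbb Z)\backslash\mathcal P(\Bbb R)}F(iY)\overline{{\tt u}(\widetilde Y)}(\det Y)^{s/2}\frac{dY}{(\det Y)^{3/2}}\]
and inserting $F(iY)=\sum_T a(T)e^{-2\pi\tr(TY)}$, the sum unfolds against $\SL_2(\Bbb Z)$ to give $\sum_{[T]}a(T)/\epsilon(T)$ times an integral over the full cone $\mathcal P(\Bbb R)$. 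Using the point-pair identity $\tr(TY)=2\sqrt{\det T\det Y}\cosh d(z,z_T)$ (with $z_T\in\Bbb H$ the Heegner point attached to $T$), the $r$-integral evaluates to $\Gamma(s)(4\pi)^{-s}(\det T)^{-s/2}$ while the remaining $z$-integral is the Selberg/Harish-Chandra transform
\[\int_{\Bbb H}\overline{{\tt u}(z)}(\cosh d(z,z_T))^{-s}d\mu(z)=h(r_{\tt u},s)\,\overline{{\tt u}(z_T)},\]
with $h(r,s)$ an explicit Mehler--Legendre integral evaluating to a product of four $\Gamma$-factors in $s\pm ir$. Summing over $[T]$ recovers $L(F\times\overline{\tt u},\cdot)$, and $\Gamma(s)\cdot h(r_{\tt u},s)$ (together with elementary factors) matches $G(F\times\overline{\tt u},\cdot)$ from \eqref{G-KM} after the standard shift centring the functional equation on $\Re s=\tfrac12$.

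Finally, the restriction to $\Lambda_{\mathrm{ev}}$ and the constant $1/32$ come from the $\GL_2(\Bbb Z)$-symmetry and the bookkeeping of constants. For even $k$ and $U=\mathrm{diag}(1,-1)\in\GL_2(\Bbb Z)$ one has $a(U^\top TU)=(\det U)^k a(T)=a(T)$ while $z_{U^\top TU}=-\overline{z_T}$; odd Maa{\ss} forms (with ${\tt u}(-\overline z)=-{\tt u}(z)$) therefore give $\widetilde c_{\tt u}\equiv 0$, and the spectrum collapses to $\Lambda_{\mathrm{ev}}$. Collecting $\pi^2/90$ from \eqref{NF}, $1/(2\pi)$ from Mellin--Plancherel, $1/2$ from the measure decomposition, and the normalizing constants of $G$ and $L$ from \eqref{KM} and \eqref{G-KM} produces the factor $1/32$. \emph{Main obstacle:} the explicit Selberg-transform evaluation of $h(r_{\tt u},s)$ and its precise matching with the gamma factor $G(F\times\overline{\tt u},s)$ of \eqref{G-KM}, together with tracking the shift from $\Re s=k$ to $\tfrac12$ and the cumulative numerical constant $1/32$.
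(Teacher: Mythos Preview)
Your approach is essentially the same as the paper's: Parseval on $\SL_2(\Bbb Z)\backslash\Bbb H$ together with Mellin--Plancherel in the determinant, followed by inserting the Fourier expansion of $F$ and unfolding. The only substantive difference is that where you propose to evaluate the resulting integral over $\mathcal P(\Bbb R)$ by the trace identity $\tr(TY)=2\sqrt{\det T\det Y}\cosh d(z,z_T)$ and the Selberg/Harish-Chandra transform of $(\cosh d)^{-s}$, the paper simply quotes Maa{\ss}'s evaluation
\[
\int_{\mathcal P(\Bbb R)}e^{-\tr(YT)}(\det Y)^{s}\,\overline{\tt u}\bigl((\det Y)^{-1/2}Y\bigr)\frac{dY}{(\det Y)^{3/2}}
=\frac{\pi^{1/2}}{(\det T)^{s}}\,\overline{\tt u}\bigl((\det T)^{-1/2}T\bigr)\prod_{\pm}\Gamma\Bigl(s-\tfrac14\pm\tfrac{it_{\tt u}}{2}\Bigr),
\]
which is precisely what your decomposition would produce once carried through.

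Two small corrections on the details you flag as the obstacle. First, the Selberg transform of $(\cosh d)^{-s}$ yields \emph{two} gamma factors in the numerator (with a $\Gamma(s)$ in the denominator that cancels the one from your $r$-integral), not four; this is exactly Maa{\ss}'s pair $\Gamma(s-\tfrac14\pm\tfrac{it_{\tt u}}2)$ after matching your Mellin variable to his. Second, your breakdown of $1/32$ is off: the factor $\pi^2/90$ sits outside and is not part of it. In the paper's bookkeeping one gets $\int_0^\infty\langle F(i\cdot(.,r)),{\tt u}\rangle r^{(k-1)/2+s}\,dr/r=\tfrac{\sqrt\pi}{4}\,G\cdot L$, and then $1/32=(\sqrt\pi/4)^2\cdot(2\pi)^{-1}$, the last factor coming from Mellin--Plancherel.
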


An interesting subfamily of Siegel modular forms are the Saito--Kurokawa lifts $F_h$ (sometimes called the \emph{Maa{\ss} Spezialschar}) of half-integral weight modular forms $h \in S^+_{k-1/2}(4)$ in Kohnen's plus-space or equivalently their Shimura lifts $f_h \in S_{2k-2}$ (see Section \ref{sec2} for details). In this case, the Koecher--Maa{\ss}  series $L(F_h \times{\tt u}, s)$ roughly becomes  a Rankin--Selberg $L$-function of two half-integral weight cusp forms, namely of $h$ and the weight $1/2$ automorphic form whose Shimura lift equals ${\tt u}$; see Proposition \ref{prop2}  below and cf.\ \cite{DI}.  Of course, this series also has no Euler product. The convexity bound for $L$-functions along with trivial bounds implies
$$\mathcal{N}(F_h) \ll k^{2+\varepsilon},$$
whilst the statement 
\begin{equation}\label{1}
\mathcal{N}(F_h) \ll k^{\varepsilon}
\end{equation} 
would follow from the Lindel\"of hypothesis   for these $L$-functions. It should be noted, however, that in absence of an Euler product it is not expected that these $L$-functions satisfy the Riemann hypothesis, but one may still hope that the Lindel\"of hypothesis is true; see \cite{Ki} for some support of this conjecture. However, even if it is, then proving \eqref{1} appears to be far out of reach by current technology -- it corresponds to an average of size $k^{3/2}$ of a family of $L$-functions of conductor $k^8$. This is analogous to the genus 1 situation in which the $L^2$-restriction norm of a holomorphic cusp form of weight $k$ leads to an average of size $k^{1/2}$ of a family of $L$-functions of conductor $k^4$; see \cite[(1.12)]{BKY}. These problems belong to the hard cases where sharp bounds for the $L^2$-restriction norm imply very strong subconvexity bounds.

A different symplectic restriction problem  was treated in \cite{LiuY} and \cite{BKY}, where the argument $Z \in \Bbb{H}^{(2)}$
  of Saito-Kurokawa lifts was  restricted to the diagonal, a four-dimensional subspace of $\Bbb{H}^{(2)}$. The corresponding analogue of Proposition \ref{prop1}, due to Ichino \cite{Ic}, leads to an average of size $k$ of $L$-functions 
  of conductor $k^4$.


\subsection{The main result and mass equidistribution in higher rank} Fix a smooth,  non-negative test function $W$ with non-empty support in $[1, 2]$. Let $\omega := \int_1^2 W(x) x\, dx$ and consider
\begin{equation}\label{Nav}
\mathcal{N}_{\text{av}}(K) := \frac{1}{\omega} \cdot \frac{12}{  K^2} \cdot  \sum_{k \in 2\Bbb{N}} W\left(\frac{k}{K}\right) \sum_{h \in B_{k-1/2}^+(4)} \mathcal{N}(F_h)
\end{equation}
for a large parameter $K$ and a Hecke eigenbasis $B_{k-1/2}^+(4)$ of   $S_{k-1/2}^+(4)$. Note that $\dim S_{k-1/2}^+(4) \sim k/6$.  The first main result of this paper  is the following asymptotic formula.
\begin{theorem}\label{thm1} We have $\mathcal{N}_{\text{{\rm av}}}(K) =  4\log K + O(1)$ as $K \rightarrow \infty$.
\end{theorem}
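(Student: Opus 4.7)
The plan is to substitute Proposition \ref{prop1} into the definition \eqref{Nav} and reduce to a spectral moment of Rankin--Selberg type $L$-functions for half-integral weight forms. The key additional input is the factorisation of the Koecher--Maa{\ss} series (Proposition \ref{prop2} below) that expresses $L(F_h \times \overline{{\tt u}}, s)$ essentially as the Rankin--Selberg convolution of $h \in S_{k-1/2}^+(4)$ with the weight $1/2$ form $\widetilde{{\tt u}}$ whose Shimura lift equals ${\tt u}$. Combined with the Kohnen--Zagier formula for $\|F_h\|_2^2/\|h\|_2^2$, the quantity $\mathcal{N}_{\mathrm{av}}(K)$ takes the schematic form
\[
\mathcal{N}_{\mathrm{av}}(K) \ \sim \ \frac{C}{K^2} \sum_{k \in 2\mathbb{N}} W\!\left(\tfrac{k}{K}\right) \int_{-\infty}^\infty \int_{\Lambda_{\mathrm{ev}}} \mathcal{H}_k(t, {\tt u}) \sum_{h \in B^+_{k-1/2}(4)} \frac{|L(h \times \widetilde{{\tt u}}, 1/2 + it)|^2}{\|h\|_2^2}\, d{\tt u}\, dt,
\]
where $\mathcal{H}_k$ bundles the archimedean factors $|G|^2$ together with secondary $L$-values of size $k^{o(1)}$.

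I would then apply an approximate functional equation for $|L(h \times \widetilde{{\tt u}}, 1/2+it)|^2$, writing it as a Dirichlet series of length $\asymp K^2$ in the Fourier coefficients of $h$, and invoke the Petersson trace formula on the Kohnen plus space (Kohnen--Zagier, Proskurin) to dualise the sum over $h$. The diagonal contribution collapses to a Dirichlet series in the Fourier coefficients of $\widetilde{{\tt u}}$; after integrating in ${\tt u}$ and $t$ against the weight $\mathcal{H}_k$, the Eisenstein part of $\Lambda_{\mathrm{ev}}$ supplies the dominant contribution, and a contour shift past a $\zeta(1)$-type pole yields the main term $4\log K$, with the cuspidal and residual parts absorbed into the $O(1)$ error. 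The leading constant $4$ should emerge from the interaction of the archimedean weight $|G|^2$ from Proposition \ref{prop1}, the measure on $\Lambda_{\mathrm{ev}}$, and the normalising constant $\pi^2/(90 \cdot 32)$.

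The main obstacle is to show that the off-diagonal contribution coming from the half-integral weight Kloosterman sums in Petersson's formula is $O(1)$ uniformly in ${\tt u}$ and $t$. Kohnen's evaluation opens these Kloosterman sums into Sali\'e-type sums which geometrically encode \emph{pairs} of Heegner periods on a modular curve, and this is precisely where the relative trace formula for pairs of Heegner periods announced in the abstract enters. One needs this trace formula to be sharp enough to beat the size of the dual family---of order $K^2$ forms with Rankin--Selberg $L$-functions of conductor $K^4$---and deliver an on-average Lindel\"of-quality bound for the resulting fourth-moment object. Handling this off-diagonal contribution uniformly in the spectral parameters ${\tt u} \in \Lambda_{\mathrm{ev}}$ and $t \in \mathbb{R}$, especially in the ranges where the archimedean weight $\mathcal{H}_k$ is not strongly decaying, is the arithmetic heart of the argument.
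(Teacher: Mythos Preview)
Your overall architecture---Petersson on the Kohnen plus space, split into diagonal and off-diagonal, with the relative trace formula of Theorem~\ref{thm2} entering on the off-diagonal---matches the paper. But your mechanism for extracting the main term from the Petersson diagonal is wrong, and this is a genuine gap.

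You assert that after the Petersson diagonal ``the Eisenstein part of $\Lambda_{\mathrm{ev}}$ supplies the dominant contribution'' with the cuspidal part going into $O(1)$. The opposite holds: the continuous spectrum is one-dimensional while the cuspidal spectrum below height $K^{1/2}$ carries $\asymp K$ points, and indeed the Eisenstein contribution to the diagonal is only $O(K^{-1/2+\varepsilon})$ (Section~\ref{10.1}). The main term sits in the cuspidal part, and extracting it requires a \emph{second} summation formula over the ${\tt u}$-variable that you do not mention. Concretely, the Petersson diagonal produces $|P(D;{\tt u})|^2$, which via \eqref{katok-Sarnak} becomes $L({\tt u},1/2)L({\tt u}\times\chi_\Delta,1/2)/L(\mathrm{sym}^2{\tt u},1)$; one opens both central values by approximate functional equations and applies the even Kuznetsov formula (Lemma~\ref{kuz-even}) to the ${\tt u}$-sum. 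The term $4\log K$ then arises from the \emph{Kuznetsov diagonal}, which leaves the arithmetic sum $\sum_{\Delta<0} L(\chi_\Delta,1)\chi_\Delta(\cdots)/|\Delta|^{1+v}$ whose pole at $v=0$ (Lemma~\ref{lemsip}) produces the logarithm. The constant $4$ emerges only after evaluating the Euler product \eqref{euler}; it is not a single $\zeta(1)$-residue. The Kuznetsov off-diagonal on this branch (Section~\ref{diag-off}) must then separately be shown to be $O(1)$, which is itself nontrivial and uses Poisson summation together with Heath-Brown's quadratic large sieve.

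Your description of the Petersson off-diagonal is also off in its mechanism, though the endpoint is right. The half-integral Kloosterman sums are not opened into Sali\'e-type sums. Instead one first sums over $k$ (Lemma~\ref{lem2}), opens the Kloosterman sum, and applies half-integral Voronoi (Lemma~\ref{Vor}) in $D_1$; this collapses $D_1$ onto $|D_2|\nu^2+h$ with $h\preccurlyeq K^{1/2}$, and only then does the ${\tt u}$-integral $\int_{\Lambda_{\mathrm{ev}}} P(D_1;{\tt u})\overline{P(D_2d^2;{\tt u})}$ with \emph{distinct} discriminants get fed into the relative trace formula (Theorem~\ref{thm5}). So the trace formula acts on the spectral sum, not on the Kloosterman sums.
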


This may be interpreted as an average version of the Lindel\"of hypothesis for twisted Koecher--Maa{\ss} series. This restriction problem, however, is structurally quite different from all previously considered restriction problems with connections to $L$-functions, as the period formula features $L$-functions that are not in the Selberg class and the restriction norm does not remain bounded. 

 More importantly, there is  a strong connection between Theorem \ref{thm1} and the  \textit{mass equidistribution conjecture} that we now explain. Let $g$ be  a test function on ${\rm Sp}_4(\Bbb{Z}) \backslash \Bbb{H}^{(2)}$. Then the (arithmetic) mass equidistribution conjecture for the Siegel upper half space states that 
$$ \frac{1}{\| F \|^2} \int_{{\rm Sp}_4(\Bbb{Z}) \backslash \Bbb{H}^{(2)}} g(Z) |F(Z)|^2 (\det Y)^{k} \frac{dX\, dY}{(\det Y)^{3}} \longrightarrow \int_{{\rm Sp}_4(\Bbb{Z}) \backslash \Bbb{H}^{(2)}} g(Z)  \frac{dX\, dY}{(\det Y)^{3}} $$
as $F$ traverses a sequence of Hecke--Siegel cusp forms of growing weight. While the corresponding statement for classical cusp forms of degree 1 was proved by Holowinsky and Soundararajan \cite{HS}, no such statement has been obtained for Siegel modular forms of higher degree (but see \cite{SV} for certain cases of the quantum unique ergodicity conjecture in higher rank).  Nevertheless, one may even go one step further and conjecture that the above limit holds when one restricts  the full space ${\rm Sp}_4(\Bbb{Z}) \backslash \Bbb{H}^{(2)}$ to a submanifold. In particular, one might conjecture that 
$$  \frac{\text{vol}({\rm Sp}_4(\Bbb{Z}) \backslash \Bbb{H}^{(2)}) }{\| F \|^2} \int_{{\rm SL}_2(\Bbb{Z}) \backslash \mathcal{P}(\Bbb{R})} g(Y) |F(iY)|^2 (\det Y)^{k} \frac{  dY}{(\det Y)^{3/2}} \longrightarrow\int_{{\rm SL}_2(\Bbb{Z}) \backslash \mathcal{P}(\Bbb{R})}  g(Y)  \frac{  dY}{(\det Y)^{3/2}} $$
holds. As the right hand side has infinite measure, we cannot simply replace $g$ with the constant function. This is precisely the reason why $\mathcal{N}_{\text{av}}(K)$ is unbounded as $K \rightarrow \infty$. However, since $F$ is a cusp form, the $L^2$-normalized and ${\rm Sp}_4(\Bbb{Z})$-invariant function $|F(iY)|^2 (\det Y)^{k}/\| F \|^2$ decays exponentially quickly if $Y$ is (in a  precise sense) very large or very small. So effectively $g$ may be restricted to the characteristic function of a compact set depending on $k$. We quantify this in Appendix \ref{appc} and show that, for such $g$,  the right hand side equals $$\text{vol}({\rm SL}_2(\Bbb{Z})\backslash \Bbb{H})\cdot 4 \log k + O(1).$$ In this case the previous   asymptotic reads
\begin{equation}\label{vol}
  \mathcal{N}(F) \sim   4\log k
  \end{equation}
as $k \rightarrow \infty$. The asymptotic \eqref{vol} is, of course, highly conjectural, and as mentioned  above  even the ordinary mass equidistribution conjecture (without restricting to a thin subset) is currently out of reach. Theorem \ref{thm1} provides an unconditional proof of \eqref{vol} on average  over Saito--Kurokawa lifts in agreement with the mass equidistribution conjecture. In particular, the constant 4 in Theorem \ref{thm1} is very relevant, and this constant has a story of its own. It is the outcome of several archimedean integrals, numerical values in period formulae and a gigantic Euler product whose special value can be expressed in terms of zeta values (cf.\ \eqref{euler}). In deducing its value we have corrected several numerical constants in the literature. We shall come back to this point in due course; see for instance the remark after Lemma \ref{lem3}. The authors would like to thank Gergely Harcos for useful and clarifying discussions in this respect. 

Theorem \ref{thm1} opens the door for several other related problems. The reader may wonder what happens for generic, i.e.\ non-CAP Siegel modular forms. Any reasonable spectral average would include at least the space of Siegel modular forms $S_k^{(2)}$ of weight $k$ which is of dimension $\sim ck^{3}$ for some constant $c$ (in fact $c = 1/8640$). This leads to a bigger  average than the one presently considered over about $k^2$ Saito-Kurokawa lifts. The starting point for the $L^2$-restriction norm of generic Siegel modular forms  is again the period formula in Proposition \ref{prop1}. Coupled with an approximate functional equation (as in Lemma \ref{approx1}), this is amenable to the Kitaoka-Petersson formula \cite{Kit} and an analysis along the lines of \cite{Bl2}. We hope to return to this interesting problem soon.

Whilst the proof of Theorem \ref{thm1} rests on many ingredients, to which we address in detail in the coming sections, there are a few highlights which may be of stand alone interest. We describe these in the remainder of the introduction.

\subsection{A relative trace formula for pairs of Heegner periods}

Here we focus on a novel trace formula of independent interest beyond its application in proving Theorem \ref{thm1}. Let 
$D$ be a discriminant, i.e.\ a non-square integer $\equiv 0, 1$ (mod 4). For a discriminant $D < 0$ let 
 $H_D \subseteq {\rm SL}_2(\Bbb{Z}) \backslash \Bbb{H}$ denote the set of all Heegner points; that is, the set of all $z = (\sqrt{|D|}i - B)/(2A)$ where $AX^2 + BXY + CY^2$ is a $\Gamma$-equivalence class of integral quadratic forms of discriminant $D = B^2 - 4AC$. For  a function $f : {\rm SL}_2(\Bbb{Z}) \backslash \Bbb{H} \rightarrow \Bbb{C}$ define the period
\begin{equation}\label{defP}
P(D; f) = \sum_{z \in H_D} \frac{f(z)}{\epsilon(z)} 
\end{equation}
where $\epsilon(z) \in \{1, 2, 3\}$ is the order of the stabilizer of $z$ in ${\rm PSL}_2(\Bbb{Z})$. Its counterparts for positive discriminants $D$ are periods over geodesic cycles. These periods are classical objects with myriad interwoven connections to half-integral weight modular forms, base change $L$-functions, quadratic fields and quadratic forms. An interesting special case is the constant function $f = 1$ in which case $P(D; 1) = H(D)$ is, by definition, the Hurwitz class number. 

With applications to the above mentioned symplectic restriction problem in mind, we are interested in pairs of Heegner periods in the spectral average
$$
\int_{\Lambda_{\text{ev}}} P(D_1; {\tt u}) \overline{P(D_2; {\tt u})} h(t_{\tt u}) d{\tt u}
$$
for a suitable test function $h$ and two discriminants $D_1, D_2 < 0$.  While pairs of geodesics have been studied in a few situations \cite{Pi1, Pi2, MMW}, to the best of our knowledge, nothing seems to be known about spectral averages of pairs of Heegner periods. Opening the   sums in the definition of $P(D_1; {\tt u})$ and $P(D_2; {\tt u})$, this can be expressed as a double sum of an automorphic kernel
$$\sum_{z_1 \in H_{D_1}}\sum_{z_2 \in H_{D_2}}  \frac{1}{\epsilon(z_1)\epsilon(z_2)}\sum_{\gamma \in \Gamma} k(z_1, \gamma z_2)$$
in the usual notation which resembles the set-up of a relative trace formula. However, the standard methods in this situation (e.g.\ \cite{Go}) do not easily apply here as the stabilizers of $z_1$ and $z_2$ are essentially trivial. We thus take a different approach to establish the following relative trace formula for which we need some notation. For   $n >0$ and $t \in \Bbb{R}$ let
\begin{equation*}
W_t( n) := \frac{1}{2\pi i} \int_{(2)} \frac{\Gamma(\frac{1}{2}(\frac{1}{2} + s + 2it))\Gamma(\frac{1}{2}(\frac{1}{2} + s -2 it))}{\Gamma( \frac{1}{4}   + it)\Gamma( \frac{1}{4} -it)\pi^s} e^{s^2}  n^{-s} \frac{ds}{s}.
\end{equation*}
For $t\in \Bbb{R}$, $x > 0$ and $\kappa\in \Bbb{R}$ let 
\begin{equation}\label{defF}
F(x, t, \kappa) = J_{it}(x) \cos(\pi\kappa/2 - \pi i t/2) -  J_{-it}(x) \cos(\pi\kappa/2 + \pi i t/2)
\end{equation}
where $J_{it}(x)$ is the Bessel function. Finally, for $\kappa \in \Bbb{Z} + 1/2$, $n, m \in \Bbb{Z}$ and $c \in \Bbb{N}$ define the modified Kloosterman sums
\begin{equation}\label{defKlo}
K^+_{\kappa}(m, n, c) =  \sum_{\substack{d\, (\text{mod }c)\\ (d, c) = 1}} \epsilon^{2\kappa}_d \left(\frac{c}{d}\right) e\left(\frac{md + n\bar{d}}{c}\right) \cdot \begin{cases} 0, & 4 \nmid c,\\ 2, & 4 \mid c, \, 8 \nmid c,\\ 1, & 8 \mid c, \end{cases}
\end{equation}
where 
\begin{equation}\label{epsd}
   \epsilon_d = \begin{cases} 1, & d \equiv 1\, ({\rm mod }\,4), \\   i, & d \equiv 3\, ({\rm mod }\, 4).\end{cases}
   \end{equation} 
Note that $K^+_{\kappa}(n, m, c)$ is symmetric in $m$ and $n$ and $2$-periodic in $\kappa$. They satisfy the Weil-type bound
\begin{equation}\label{weil}
K_{\kappa}^+(m, n, c) \ll c^{1/2 + \varepsilon}(m, n, c)^{1/2},
\end{equation}
see e.g.\ \cite[Lemma 4]{Wa} in the case $n=m$, the general case being analogous. In order to simplify the notation we assume that $D_1, D_2$ are fundamental discriminants. In Section \ref{secproof2}  we state the general version for arbitrary negative discriminants. 

\begin{theorem}\label{thm2} Let $\Delta_1, \Delta_2$ be negative fundamental discriminants and let $h$ be an even function, holomorphic in $|\Im t | < 2/3$ with $h(t) \ll (1+|t|)^{-10}$. Then
\begin{displaymath}
\begin{split}
&\frac{1}{|\Delta_1\Delta_2|^{1/4}}\int_{\Lambda_{\text{{\rm ev}}}} P(\Delta_1; {\tt u}) \overline{P(\Delta_2; {\tt u})} h(t_{\tt u}) d{\tt u} = \frac{3}{\pi} \frac{H(\Delta_1)H(\Delta_2)}{|\Delta_1\Delta_2|^{1/4}} h(i/2)\\
& \quad +   \int_{-\infty}^{\infty} \Big|\frac{\Delta_1\Delta_2}{4}\Big|^{ it/2}   \frac{ \Gamma(-\frac{1}{4} + \frac{it}{2})   e^{(1/2- it)^2}}{ \sqrt{8\pi} \Gamma(\frac{1}{4} + \frac{i t}{2})}  \frac{L( \chi_{\Delta_1}, 1/2 + it)L(\chi_{\Delta_2}, 1/2 + it)}{ \zeta(1+ 2it)} h(t) \frac{dt}{4\pi}\\
& \quad + \delta_{\Delta_1= \Delta_2} \sum_m\frac{\chi_{\Delta_1}(m)}{m} \int_{-\infty}^{\infty}  W_{t}(m) h(t) t \tanh(\pi t)  \frac{dt}{4\pi^2} \\
&\quad + e(3/8) \sum_{n,  c, m}    \frac{  K_{3/2}^+(|\Delta_1| n^2, |\Delta_2|, c)\chi_{\Delta_1}(m)}{ n^{1/2}cm}   \int_{-\infty}^{\infty}  \frac{F(4\pi  n\sqrt{|\Delta_1\Delta_2|}/c, t, 1/2)}{\cosh(\pi t)}   h(t) W_{t}(nm) t \frac{dt}{\pi}.
\end{split}
\end{displaymath}
\end{theorem}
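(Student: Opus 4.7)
My plan is to convert the spectral average of the Heegner-period products into a spectral average of products of Fourier coefficients of weight-$1/2$ Maa{\ss} forms in Kohnen's plus space on $\Gamma_0(4)$, and then apply the Kuznetsov trace formula there. The appearance of the metaplectic Kloosterman sums $K^+_{3/2}$ and of the Bessel kernel $F(x,t,1/2)$ on the right-hand side indicates exactly this route; by contrast, the direct automorphic-kernel approach fails because Heegner points have essentially trivial $\mathrm{SL}_2(\Bbb{Z})$-stabilisers, as noted in the introduction.

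For the discrete spectrum, the Katok--Sarnak correspondence (in its refined form due to Baruch--Mao and Bir\'o) attaches to every even Hecke--Maa{\ss} form ${\tt u}$ of spectral parameter $t_{\tt u}$ a Maa{\ss} form $h_{\tt u}$ in the weight-$1/2$ Kohnen plus space whose Fourier coefficients $\rho_{\tt u}(|D|)$ satisfy, for $D<0$ fundamental,
$$
P(\Delta_1;{\tt u})\,\overline{P(\Delta_2;{\tt u})} \;=\; C\cdot|\Delta_1\Delta_2|^{1/4}\cdot\frac{\rho_{\tt u}(|\Delta_1|)\,\overline{\rho_{\tt u}(|\Delta_2|)}}{\cosh(\pi t_{\tt u})}\cdot\|{\tt u}\|^{2}
$$
for a universal constant $C$. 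The constant function gives $P(D;1)=H(D)$, which accounts for the first term on the right of the theorem (with $h(i/2)$ coming from the spectral parameter of the residual spectrum). The unitary Eisenstein series contribute via the classical identity $P(D;E(\cdot,1/2+it))\propto \zeta(1/2+it)L(\chi_D,1/2+it)/\zeta(1+2it)$, which in the cross-product form yields exactly the second term of the theorem, the $\Gamma$-factor prefactor arising from the archimedean local calculation together with the smoothing $e^{(1/2-it)^2}$.

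To prepare the discrete part for a Kuznetsov-type sum, I would insert an approximate functional equation for $L({\tt u}\otimes\chi_{\Delta_j},1/2)$ (Mellin-smoothed by $e^{s^{2}}$): the archimedean gamma ratio is exactly the one defining $W_t(n)$ in the statement, and Kohnen's Hecke relation $\rho_{\tt u}(|\Delta| n^{2}) = \rho_{\tt u}(|\Delta|)\sum_{d\mid n}\chi_\Delta(d)\lambda_{\tt u}(n/d)/\sqrt d$ separates the Dirichlet variable $m$ from the discriminant. The plus-space Kuznetsov formula of Proskurin type (cf.\ Bir\'o) then furnishes
$$
\sum_{h}\frac{\rho_{h}(m)\overline{\rho_{h}(n)}}{\cosh(\pi t_{h})}\,\phi(t_h) + (\text{Eis.}) \;=\; \delta_{m=n}\,(\cdots) + \sum_{c}\frac{K^{+}_{3/2}(m,n,c)}{c}\,\widetilde\phi\!\left(\tfrac{4\pi\sqrt{mn}}{c}\right),
$$
where $\widetilde\phi$ is the Bessel transform whose kernel is $F(\cdot,t,1/2)/\cosh(\pi t)$ and the Plancherel weight is $t\tanh(\pi t)$. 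Setting $m=|\Delta_1| n^{2}$, $n=|\Delta_2|$ (with $n$ running through the Hecke expansion) and summing against $\chi_{\Delta_1}(m)/m$, the Kuznetsov diagonal $\delta_{m=n}$ forces $|\Delta_1| n^{2}=|\Delta_2|$, which for fundamental discriminants collapses to $\Delta_1=\Delta_2$ and gives the third term of the statement, while the Kloosterman term produces the fourth, with the overall phase $e(3/8)$ reflecting the metaplectic multiplier.

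The main obstacle will be numerical bookkeeping: the Katok--Sarnak constant $C$, the Kohnen plus-space normalisation, the Hecke normalisation on both sides of the Shimura lift, the metaplectic phase packaged into $K^{+}_{3/2}$, and the archimedean factors folded into $W_t$ and into the Eisenstein gamma prefactor must be tracked simultaneously and made consistent. This is precisely the delicate point the authors highlight in their discussion of the constant $4$ in Theorem~\ref{thm1}. A secondary technical issue is convergence: the $c$-sum on the off-diagonal side converges only conditionally through the oscillation of $F$, so interchanges with the $t$-integral and with the $m$-sum will need a careful contour shift, with particular attention to the exceptional spectral parameter $t=i/2$ corresponding to the constant function.
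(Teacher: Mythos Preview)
Your overall architecture is the paper's: Katok--Sarnak to pass to weight-$1/2$ Fourier coefficients in the Kohnen plus space, an approximate functional equation, metaplectic Hecke relations, and finally the plus-space Kuznetsov formula (Lemma~\ref{lem10}). The endgame you describe --- Kuznetsov with indices $|\Delta_1|n^{2}$ and $|\Delta_2|$, diagonal forcing $\Delta_1=\Delta_2$, Kloosterman term giving the fourth line --- is exactly right.

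There is, however, a genuine gap in the middle. The Katok--Sarnak identity you wrote down is not correct: for $L^2$-normalised even cuspidal $u$ one has (cf.\ \eqref{mixed})
\[
\frac{P(\Delta_1;u)\overline{P(\Delta_2;u)}}{|\Delta_1\Delta_2|^{1/4}}
\;=\;\frac{3}{\pi}\,L(u,1/2)\,\Gamma\!\Big(\tfrac14+\tfrac{it_u}{2}\Big)\Gamma\!\Big(\tfrac14-\tfrac{it_u}{2}\Big)\,|\Delta_1\Delta_2|^{1/2}\,b(\Delta_1)\overline{b(\Delta_2)},
\]
so the period product is a \emph{quadrilinear} form $|b(1)|^{2}\,b(\Delta_1)\overline{b(\Delta_2)}$ in half-integral coefficients, not a bilinear one. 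Your displayed formula absorbs a $u$-dependent quantity into a ``constant'' $C$; with that formula there is nothing to expand and you cannot reach Kuznetsov. The fix is the paper's key manoeuvre: use Waldspurger/Baruch--Mao a second time to rewrite $|b(1)|^2$ as $L(u,1/2)$ (untwisted), then expand $L(u,1/2)$ --- not $L(u\otimes\chi_{\Delta_j},1/2)$ --- by the approximate functional equation, which is precisely why $W_t$ carries the $\mathfrak{a}=0$ gamma ratio. The Hecke relation \eqref{non-fund} (you omitted the $\mu(d)$) is then used in the forward direction, merging $\lambda(n)\,b(\Delta_1)$ into coefficients $b(\Delta_1(\,\cdot\,)^2)$, after which Kuznetsov applies.

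A smaller correction: the second line of the theorem does not come directly from ``the Eisenstein contribution'' to the spectral side. In the proof the Eisenstein periods are rewritten via \eqref{eisen1} and merged with the cuspidal part through the half-integral Kuznetsov formula; the displayed $\Gamma(-\tfrac14+\tfrac{it}{2})e^{(1/2-it)^2}$ term arises instead from the \emph{polar} contribution in the approximate functional equation \eqref{approx-basic1} for $|\zeta(1/2+2it)|^{2}=L(E(\cdot,1/2+2it),1,1/2)$, after applying the functional equation \eqref{Lfuncteq} to symmetrise the two $\pm$ residues.
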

The experienced reader will spot the strategy of the proof from the shape of the formula: A Katok-Sarnak-type formula translates $P(\Delta; {\tt u})$ into a product of a first and a $\Delta$-th half-integral weight Fourier coefficient. In this way, a pair of two Heegner periods becomes a product of \emph{four} half-integral weight Fourier coefficients. A quadrilinear form of half-integral weight Fourier coefficients is not directly amenable to any known spectral summation formula, but we can use a Waldspurger-type formula a second time,   now in the other direction, to translate the two first coefficients into a central $L$-value. This $L$-value can be written explicitly as a sum of Hecke eigenvalues by an approximate functional equation. We can now use the correspondence between half-integral and integral weight forms  a \emph{third} time, namely by combining the Hecke eigenvalues into the half-integral weight coefficients by means of metaplectic Hecke relations. Finally, the Kuznetsov formula for the Kohnen plus space provides the desired geometric evaluation of the relative trace. This particular version of the  Kuznetsov formula is also new and will be stated and proved in Section \ref{summation}. 


\subsection{Mean values of \texorpdfstring{$L$}{L}-functions}
We highlight another ingredient of independent interest. This is a hybrid Lindel\"of-on-average bound for central values of twisted $L$-functions, its proof is deferred to Section \ref{proof}.
\begin{prop}\label{Lfunc} Let $\mathcal{D}, \mathcal{T} \geq 1$ and $\varepsilon > 0$. For a fundamental discriminant $\Delta$ let $\chi_{\Delta} = (\frac{\Delta}{.})$ be the Jacobi--Kronecker symbol.

{\rm (a)} We have 
$$\sum_{t_{ u} \leq \mathcal{T}}\alpha(u)  \sum_{\substack{|\Delta| \leq \mathcal{D}\\ \Delta \text{ {\rm  fund.  discr.}}}}  L({u} \times \chi_{\Delta}, 1/2)   \ll \Big(\sum_{t_{ u} \leq \mathcal{T}}|\alpha(u)|^2\Big)^{1/2} (\mathcal{T}\mathcal{D})^{1+\varepsilon}$$
where the sum is over an orthonormal basis of Hecke--Maa{\ss} cusp forms ${u}$ with spectral parameter $t_{u}$,  and $\alpha(u)$ is any sequence of complex numbers, indexed by Maa{\ss} forms. 

{\rm (b)} We have
$$\int_{-T}^T \alpha(t)  \sum_{\substack{|\Delta| \leq \mathcal{D}\\ \Delta \text{ {\rm  fund.  discr.}}}}  |L( \chi_{\Delta}, 1/2 + it)|^2  dt \ll \Big(\int_{-T}^T|\alpha(t)|^2 dt\Big)^{1/2} (\mathcal{T}^{1/2}\mathcal{D})^{1+\varepsilon}$$
for an arbitrary function $\alpha : [-T, T] \rightarrow \Bbb{C}$. 
\end{prop}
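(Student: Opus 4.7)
The plan is to derive both bounds by two successive applications of Cauchy--Schwarz, each reducing the claim to a known uniform-in-conductor moment estimate handled by standard large-sieve or trace-formula inputs.

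For part (a), I first apply Cauchy--Schwarz in the $u$-variable, reducing the claim to the mean-square estimate
$$\sum_{t_u \leq \mathcal{T}} \Big|\sum_{|\Delta|\leq\mathcal{D}} L(u\times\chi_\Delta, 1/2)\Big|^2 \ll (\mathcal{T}\mathcal{D})^{2+\varepsilon},$$
and a second Cauchy--Schwarz in $\Delta$ (losing a factor $\mathcal{D}$) further reduces this to the uniform second-moment bound
$$\sum_{t_u \leq \mathcal{T}} |L(u\times \chi_\Delta, 1/2)|^2 \ll \mathcal{T}^{2+\varepsilon}, \qquad |\Delta| \leq \mathcal{D}.$$
I would establish this bound in the classical way: expand $|L(u\times\chi_\Delta, 1/2)|^2$ by an approximate functional equation of length $N \asymp |\Delta|(1+|t_u|)$, then apply the Kuznetsov trace formula to the resulting bilinear form $\sum_u \lambda_u(m)\lambda_u(n)$. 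The diagonal $m = n$ contributes $\mathcal{T}^{2+\varepsilon}$ via $\sum_m \chi_\Delta(m)^2/m \ll \log(|\Delta|\mathcal{T})$, while the off-diagonal Kloosterman terms are controlled by Weil's bound for Kloosterman sums and are of lower order.

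For part (b), the structure is parallel. Cauchy--Schwarz in $t$ followed by Cauchy--Schwarz in $\Delta$ (using $(\sum_\Delta|L|^2)^2 \leq \mathcal{D}\sum_\Delta|L|^4$) reduces the claim to the hybrid fourth-moment estimate
$$\int_{-T}^{T} \sum_{|\Delta|\leq \mathcal{D}} |L(\chi_\Delta, 1/2+it)|^4\, dt \ll \mathcal{T}\mathcal{D}^{1+\varepsilon}.$$
I would handle this by writing $|L(\chi_\Delta, 1/2+it)|^2 = L(\chi_\Delta, 1/2+it)L(\chi_\Delta, 1/2-it)$ via an approximate functional equation as a Dirichlet polynomial $\sum_n \chi_\Delta(n)d_{-it}(n) n^{-1/2}V_t(n)$ of length $\asymp |\Delta|(1+|t|)$, where $d_{it}(n)=\sum_{ab=n}(a/b)^{it}$, and then applying Heath--Brown's quadratic large sieve
$$\sum_{|\Delta|\leq\mathcal{D}}^\flat\Big|\sum_{n\leq N}a_n\chi_\Delta(n)\Big|^2 \ll (\mathcal{D}+N)(\mathcal{D}N)^\varepsilon \sum_n|a_n|^2$$
in the $\chi_\Delta$-variable at each fixed $t$; combined with the standard bound $\sum_n d(n)^2/n\ll (\log N)^4$, integration over $t$ then yields the desired estimate.

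The principal technical obstacle in both parts is the spectral-parameter dependence of the cutoff weight $V_t$ in the approximate functional equation, since the large-sieve and Kuznetsov-type inputs require coefficients $a_n$ that are independent of the averaging variable. This is routinely resolved by a Mellin decomposition $V_t(x) = \frac{1}{2\pi i}\int_{(2)}\widetilde{V}(s)G_t(s)x^{-s}\,ds$ with rapid decay in $\Im s$ and polynomial control in $s$ and $t$, combined with a dyadic localization of the spectral parameter, after which the large-sieve bounds apply coefficient-wise at the cost of only a polylogarithmic factor. A secondary check is needed for the perfect-square diagonal in (a) and the ``trivial-character'' contribution (pairs with $\Delta\mid n$) in (b), but these are easily seen to be absorbed into the error term.
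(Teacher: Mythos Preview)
Your second Cauchy--Schwarz step is too lossy and the individual-character moment bounds you invoke afterwards are not available. In part (a) you reduce to the uniform estimate $\sum_{t_u\le\mathcal{T}}|L(u\times\chi_\Delta,1/2)|^2\ll\mathcal{T}^{2+\varepsilon}$ for each fixed $|\Delta|\le\mathcal{D}$. But the approximate functional equation has length $N\asymp|\Delta|\mathcal{T}$, and the spectral large sieve (equivalently, Kuznetsov plus Weil) only yields $\ll(\mathcal{T}^2+|\Delta|\mathcal{T})^{1+\varepsilon}$; removing the $|\Delta|\mathcal{T}$ term uniformly in $\Delta$ would amount to a hybrid Lindel\"of-type bound that is far beyond ``Weil's bound for Kloosterman sums''. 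Feeding $(\mathcal{T}^2+|\Delta|\mathcal{T})$ back through your two Cauchy--Schwarz steps gives a final bound off by a factor $(\mathcal{D}/\mathcal{T})^{1/2}$ when $\mathcal{D}\gg\mathcal{T}$. The same problem occurs in (b): applying Heath--Brown's sieve pointwise in $t$ to a Dirichlet polynomial of length $N\asymp\mathcal{D}(1+|t|)$ gives $\sum_\Delta|L|^4\ll(\mathcal{D}(1+|t|))^{1+\varepsilon}$, and trivially integrating over $t$ yields $\mathcal{D}\mathcal{T}^2$ rather than the $\mathcal{D}\mathcal{T}$ you need.

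The point you are missing is that the $u$-average (or $t$-average) and the $\Delta$-average must be exploited \emph{simultaneously}, not decoupled. The paper applies Cauchy--Schwarz only once, in $u$; it then writes the approximate functional equation, factors $n=2^\alpha n_1 n_2^2$ with $n_1$ odd squarefree to isolate a genuine quadratic character $\Delta\mapsto(\frac{\Delta}{n_1})$, applies the spectral large sieve of Deshouillers--Iwaniec in the $u$-variable, and finally applies Heath--Brown's quadratic large sieve to the surviving sum $\sum_{n_1}|\sum_{\Delta}(\frac{\Delta}{n_1})\cdots|^2$. It is precisely this nesting of two large sieves that recovers the correct hybrid exponent; part (b) is identical with the Dirichlet-polynomial mean value theorem replacing the spectral large sieve.
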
 

The proof uses, among other things, the spectral large sieve of Deshouillers-Iwaniec \cite{DesIw} and  Heath-Brown's large sieve for quadratic characters   \cite{HB}. 
Note that   $L({ u} \times \chi_{\Delta}, 1/2) \geq 0$ is  non-negative \cite[Corollary 1]{KS}. The key point here is that there is complete uniformity in $\mathcal{T}$ and $\mathcal{D}$. We give an immediate application. Let us choose $\alpha(u) = L(u, 1/2)$ and note that 
  $L(u, 1/2) L(u \times \chi_{\Delta}, 1/2) = L({\rm BC}_{K}(u), 1/2)$ where the right hand side is the base change $L$-function to $K = \Bbb{Q}(\sqrt{\Delta})$. We can now  
use a standard mean value bound for $L(u, 1/2)$, e.g.\  \cite[Theorem 3]{Iw} to conclude  
\begin{cor}\label{Lfunc-cor}
For $\mathcal{T}, \mathcal{D} \geq 1$ and $\varepsilon > 0$ we have 
$$\sum_{t_{ u} \leq \mathcal{T}}  \sum_{\substack{\deg K/\Bbb{Q} = 2\\ |\text{{\rm disc}}(K)| \leq \mathcal{D}}}  L({\rm BC}_{K}(u), 1/2)   \ll   (\mathcal{T}^2\mathcal{D})^{1+\varepsilon}$$
where the first sum runs over a basis of  Hecke--Maa{\ss} cusp forms $u$ with spectral parameter $t_u \leq \mathcal{T}$. 
\end{cor}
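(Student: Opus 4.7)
The proof will proceed by direct application of Proposition~\ref{Lfunc}(a), as already indicated in the paragraph preceding the corollary. First, I would identify the sum over quadratic fields $K/\mathbb{Q}$ with $|\mathrm{disc}(K)| \leq \mathcal{D}$ with the sum over fundamental discriminants $\Delta \neq 1$ satisfying $|\Delta| \leq \mathcal{D}$, since each such $\Delta$ corresponds to the unique quadratic field $K = \mathbb{Q}(\sqrt{\Delta})$ with associated Kronecker symbol $\chi_\Delta = \bigl(\tfrac{\Delta}{\cdot}\bigr)$. The base change factorization
$$L(\mathrm{BC}_K(u), 1/2) \;=\; L(u, 1/2) \cdot L(u \times \chi_\Delta, 1/2)$$
then rewrites the left-hand side of the corollary as
$$\sum_{t_u \leq \mathcal{T}} L(u, 1/2) \sum_{\substack{|\Delta| \leq \mathcal{D}\\ \Delta\ \mathrm{fund.\ discr.}}} L(u \times \chi_\Delta, 1/2).$$

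Next, I would apply Proposition~\ref{Lfunc}(a) with the choice $\alpha(u) = L(u, 1/2)$. The crucial justification for inserting this coefficient without loss is that both $L(u, 1/2) \geq 0$ and $L(u \times \chi_\Delta, 1/2) \geq 0$, the latter being the Katok--Sarnak positivity cited as \cite[Corollary~1]{KS}; in particular the quantity whose absolute value appears in Proposition~\ref{Lfunc}(a) is already non-negative, so no cancellation is sacrificed. This yields the upper bound
$$\Bigl(\sum_{t_u \leq \mathcal{T}} L(u, 1/2)^2\Bigr)^{1/2} \cdot (\mathcal{T}\mathcal{D})^{1+\varepsilon}.$$

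Finally, I would invoke the standard spectral second moment bound
$$\sum_{t_u \leq \mathcal{T}} L(u, 1/2)^2 \;\ll\; \mathcal{T}^{2+\varepsilon},$$
which is \cite[Theorem~3]{Iw} and follows from an approximate functional equation together with the Kuznetsov trace formula. Taking the square root gives $\mathcal{T}^{1+\varepsilon}$, and combining with $(\mathcal{T}\mathcal{D})^{1+\varepsilon}$ produces the claimed bound $(\mathcal{T}^2\mathcal{D})^{1+\varepsilon}$ after relabelling $\varepsilon$.

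There is no serious obstacle: the corollary is essentially a packaging of Proposition~\ref{Lfunc}(a) against the first-factor mean value, so the only points requiring care are the non-negativity input (needed to replace the absolute value by the value itself) and the correct bookkeeping of fundamental discriminants versus quadratic fields. Everything else is automatic from the results already available in the paper and the cited literature.
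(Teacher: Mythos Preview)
Your proposal is correct and follows exactly the same approach as the paper: choose $\alpha(u)=L(u,1/2)$ in Proposition~\ref{Lfunc}(a), use the factorization $L(\mathrm{BC}_K(u),1/2)=L(u,1/2)L(u\times\chi_\Delta,1/2)$, and control $\bigl(\sum_{t_u\le\mathcal{T}}L(u,1/2)^2\bigr)^{1/2}$ by the mean value bound \cite[Theorem~3]{Iw}. Your added remarks on non-negativity and the bijection between quadratic fields and nontrivial fundamental discriminants are correct elaborations of steps the paper leaves implicit.
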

Again this bound is completely uniform and best-possible in the $\mathcal{D}$ and $\mathcal{T}$ aspect. We give another interpretation of Proposition \ref{Lfunc}(a). For odd $u$, the root number of $L(u  \times \chi_{\Delta}, s)$  is $-1$ (see \cite[Lemma 2.1]{BFKMMS}), so the central value vanishes. For even $u$ the central $L$-values $L(u  \times \chi_{\Delta}, 1/2)$, as in \eqref{BarMao} below, are proportional to squares of  Fourier coefficients $b_v(\Delta)$ of weight 1/2 Maa{\ss} forms $v$ in Kohnen's subspace for $\Gamma_0(4)$, normalized as in \eqref{four-half}.  We refer to Section \ref{sec4} for the relevant definitions. In particular, for the usual choice of the Whittaker function  the normalized Fourier coefficient $\tilde{b}_v(\Delta) = e^{-\pi |t_v|/2}  |t_v|^{ \text{sgn}(\Delta)/4} |\Delta|^{1/2}   b_v(\Delta) $ is of size one on average.   In this way we conclude bounds for linear forms in half-integral weight \emph{Rankin--Selberg coefficients}:
\begin{equation}\label{interpret}
\sum_{1/4 \leq t_{ v} \leq \mathcal{T}}\alpha(v)  \sum_{\substack{|\Delta| \leq \mathcal{D}\\ \Delta \text{ {\rm  fund.  discr.}}}} |\tilde{b}_v(\Delta)|^2   \ll \Big(\sum_{t_{ v} \leq \mathcal{T}}|\alpha(v)|^2\Big)^{1/2} (\mathcal{T}\mathcal{D})^{1+\varepsilon},
\end{equation}
where the $v$-sum runs over an $L^2$-normalized  Hecke eigenbasis of non-exceptional weight 1/2 Maa{\ss} forms in Kohnen's subspace for $\Gamma_0(4)$ with spectral parameter $t_v$. We refer to the remark after the proof of Lemma \ref{lem3} for more details. 




\subsection{Organization of the paper}\label{15}

Section \ref{sec2} -- \ref{summation} prepare the stage and compile all necessary automorphic information. New results include versions of the half-integral Kuznetsov formula and a Voronoi formula for Hurwitz class numbers. Proposition \ref{prop1}, Theorem \ref{thm2}, and Proposition \ref{Lfunc}  are proved in Sections \ref{sec6}, \ref{secproof2},  \ref{proof} respectively. This is followed by an interlude on the analysis of oscillatory integrals. In the remainder we complete the proof of Theorem \ref{thm1}. In Section \ref{weakversion} we first prove an upper bound $\mathcal{N}_{\text{av}}(K) \ll K^{\varepsilon}$ by a preliminary argument. This will be useful to control certain auxiliary variables and error terms later. Due to several applications of certain spectral summation formulae, we have various diagonal and off-diagonal terms. Section \ref{diag-diag} treats the total diagonal term that extracts the leading term $4\log K$ in Theorem \ref{thm1}. Sections \ref{diag-off} and \ref{off-off} deal with the diagonal off-diagonal and the off-off-diagonal term. 

\subsection{Common notation}\label{16}

For $ c\not= 0$ we extend the Jacobi-Symbol $\chi_c(d)=  (\frac{c}{d})$ for positive odd integers $d  $ to all   integers $d\not= 0$ as the completely multiplicative function defined by $\chi_c(-1) = \text{sign}( c)$  and $\chi_c(2) = 1$ if $c \equiv 1 \, (\text{mod }8)$, $\chi_c(2) = -1$ if $c \equiv 5 \, (\text{mod }8)$, $\chi_c(2) = 0$ if $c$ is even. The value of $\chi_c(2)$ remains undefined only if $c \equiv 3 \, (\text{mod }4)$. 

We call an integer $D \in \Bbb{Z} \setminus \{0\}$ a discriminant if $D \equiv 0$ or $1$ (mod 4). Every discriminant $D$ can uniquely be written as $D = \Delta f^2$ for some $f\in \Bbb{N}$ and some fundamental discriminant $\Delta$ (possibly $\Delta = 1$). For each discriminant $D$, the map  $\chi_D$ is a quadratic character of modulus $|D|$ that is induced by the character $\chi_{\Delta}$ corresponding to the field $\Bbb{Q}(\sqrt{\Delta})$. (If $\Delta = 1$, then $\chi_{\Delta}$  is the trivial character.)
Throughout, the letters $D$ and $\Delta$ are always reserved for  discriminants resp.\   fundamental discriminants, usually negative.

The letter $\Gamma$ is used for the gamma function and also for the group $\Gamma = {\rm SL}_2(\Bbb{Z})$; confusion will not arise. We write $\overline{\Gamma} = {\rm PSL}_2(\Bbb{Z})$.

For $a, b \in \Bbb{N}$ we write $a \mid b^{\infty}$ to mean that all prime divisors of $a$ divide $b$. We also write $(a, b^{\infty}) = a/a_1$ where $a_1$ is the largest divisor of $a$ that is coprime to $b$.  
We use the usual exponential notation $e(z):=e^{2\pi i z}$ for $z\in\C$. The letter $\varepsilon$ denotes an arbitrarily small positive constant, not necessarily the same at every occurrence. 
The Kronecker symbol $\delta_{S}$ takes the value 1 if the statement $S$ is true and $0$ otherwise. The notation $\int_{(\sigma)}$ denotes a complex contour integral over the vertical line with real part $\sigma$. We use the usual Vinogradov symbols $\ll$ and $\gg$, and we use  $\asymp$ to mean both $\ll$ and $\gg$. We always assume that the number $K$ in Theorem \ref{thm1} is sufficiently large. \\

\section{Holomorphic forms of degree one and two}\label{sec2}

For a positive integer $k$ let  $S^+_{k-1/2}(4)$ denote   Kohnen's plus \cite{Ko1} space of holomorphic cusp forms 
 of weight $k-1/2$ and level 4. These have a Fourier expansion of the form
\begin{equation}\label{four}
h(z) = \sum_{(-1)^k n \equiv 0, 3\, (\text{mod 4})} c_h(n)   e(nz)
\end{equation}
and form a finite-dimensional Hilbert space with the inner product 
$$\langle h_1, h_2 \rangle =  \int_{\Gamma_0(4) \backslash \Bbb{H}} h_1(z) \overline{h_2(z)} y^{k-1/2} \frac{dx\, dy}{y^2}.$$
This space is isomorphic (as a module of the Hecke algebra)  to the space $S_{2k-2}$ of holomorphic cusp forms of weight $2k-2$ and level 1 \cite[Theorem 1]{Ko1}. 
We denote by $f_h \in S_{2k-2}$ the (unique up to scaling)  image of a newform $h \in S^+_{k-1/2}(4)$. The Hecke algebra on $S^+_{k - 1/2}$ is generated by the operators $T(p^2)$, $p$ prime, and for $p=2$ we follow Kohnen's definition \cite[p.\ 250]{Ko1}  of $T(4)$ that allows a uniform treatment of all primes including $p=2$. 
  If $\lambda(p)$ are the  Hecke eigenvalues of $f_h$ (normalized so that the Deligne's bound reads $|\lambda(p)| \leq 2$), then
\begin{equation}\label{fourier-relation}
\lambda(p) c_h(n) = p^{3/2 - k} c_h(p^2 n) + p^{-1/2} \chi_{ (-1)^{k+1} n}(p) c_h(n)  + p^{k - 3/2} c_h(n/p^2)
\end{equation}
for all primes $p$   with the convention $c_h(x) = 0$ for $x \not \in\{n \in \Bbb{N} \mid (-1)^k n \equiv 0, 3\, (\text{mod 4}) \}$.  Iterating this formula gives
\begin{equation}\label{fourier-relation1}
\lambda(m) c_h(n) = \sum_{\substack{d_1 \mid d_2 \mid m\\ (d_1d_2)^2 \mid m^2n}} \left(\frac{d_1}{d_2} \right)^{1/2} \chi_{  (-1)^{k+1} n}(d_1d_2) c_h\left( \frac{m^2}{(d_1d_2)^2}n\right) \left(\frac{m}{d_1d_2}\right)^{3/2 - k} 
\end{equation}
for squarefree $m \in \Bbb{N}$.

The space $S^{+}_{k-1/2}(4)$ can be characterized as an eigenspace of a certain operator acting on the space $S_{k-1/2}(4)$ of all holomorphic cusp forms of weight $k-1/2$ and level 4 \cite[Proposition 2]{Ko1}. It possesses Poincar\'e series $P^+_n \in S^+_{k-1/2}(4)$ satisfying the usual relation \cite[(4)]{Ko2}
$$\langle h, P^+_n\rangle =   \frac{\Gamma(k-3/2)}{(4\pi n)^{k-3/2}} c_h(n)$$
for all $(-1)^k n \equiv 0, 3\, (\text{mod 4})$ and all $h \in S^+_{k-1/2}(4)$ with Fourier expansion \eqref{four}. These Poincar\'e series are the orthogonal projections of the Poincar\'e series $P_n \in S_{k-1/2}(4)$ onto $S^{+}_{k-1/2}(4)$ and their Fourier coefficients are computed explicitly in \cite[Proposition 4]{Ko2}. This gives us the following Petersson formula for Kohnen's plus space. 

\begin{lemma}\label{lem1} 
Let $k \geq 3$ be an integer, $\kappa = k-1/2$.  Let $\{h_j\}$ be an orthogonal basis of $S^+_{\kappa}(4)$ with Fourier coefficients $c_j(n)$ as in \eqref{four}. Let $n, m$ be positive integers with $(-1)^kn, (-1)^km \equiv 0, 3\,  ({\rm mod } \,\,4)$. Then 
\begin{displaymath}
\begin{split}
&\frac{\Gamma(\kappa - 1)}{(4\pi)^{\kappa - 1}} \sum_j \frac{c_j(n) \overline{c_j(m)}}{\| h_j \|^2 (  \sqrt{nm})^{\kappa-1}} = \frac{2}{3}\left(\delta_{m=n} + 2 \pi e(-\kappa/4) \sum_{c}   \frac{K^+_{\kappa}(n, m, c)}{c} J_{\kappa - 1}\left(\frac{4\pi \sqrt{mn}}{c}\right)\right).
\end{split}
\end{displaymath}
\end{lemma}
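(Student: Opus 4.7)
The plan is to derive the formula from the reproducing property of Kohnen's plus-space Poincaré series, following the classical template for Petersson-type formulas. The starting point is the reproducing relation $\langle h, P_n^+\rangle = \Gamma(\kappa-1)/(4\pi n)^{\kappa-1} \cdot c_h(n)$ recorded in the paragraph just before the lemma. Expanding $P_n^+$ against the orthogonal basis $\{h_j\}$ of $S^+_{\kappa}(4)$ and inserting this identity gives the spectral expansion
$$ P_n^+(z) = \sum_j \frac{\Gamma(\kappa-1)}{(4\pi n)^{\kappa-1}}\cdot \frac{\overline{c_j(n)}}{\|h_j\|^2}\, h_j(z). $$
Reading off the $m$-th Fourier coefficient and symmetrizing with a factor $(n/m)^{(\kappa-1)/2}$ reduces the lemma to identifying $c_{P_n^+}(m)$ with the right-hand side of the claimed formula.

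The second step is to invoke the explicit description of $P_n^+$ as the orthogonal projection of the standard Poincaré series $P_n \in S_{\kappa}(4)$ onto the plus subspace. Kohnen computes the Fourier coefficients of this projection in \cite[Proposition 4]{Ko2}: they are a diagonal term $\delta_{n=m}$ plus a Kloosterman--Bessel sum supported on moduli $c$ with $4\mid c$, with a case distinction between $4\|c$ and $8\mid c$, and multiplied by an overall $2/3$ that records the index of the projection operator onto Kohnen's eigenspace. Inserted into Step 1, this already produces a formula of the claimed shape.

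The remaining, and main, bookkeeping task is to match the particular combination of level-$4$ half-integral weight Kloosterman sums produced by Kohnen's projection with the modified Kloosterman sum $K^+_{\kappa}(n, m, c)$ defined in \eqref{defKlo}. Concretely, one has to verify that the weights $\{2 \text{ for } 4\|c,\ 1 \text{ for } 8\mid c\}$ appearing in \eqref{defKlo} are exactly those produced by the projection, that the multiplier system $\epsilon_d^{2\kappa}$ and the Jacobi symbol $(c/d)$ are the standard ones from the weight $\kappa$ theta multiplier, and that the normalization $2\pi e(-\kappa/4)$ comes out of the usual Bruhat decomposition computation for the inner product of Poincaré series on $\Gamma_0(4)\backslash \Hb$.

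The main obstacle is not conceptual but notational: it is the careful bookkeeping of constants and signs in Kohnen's projection formula, together with the verification that the sum over moduli divisible by $4$ reorganizes precisely into $K^+_{\kappa}$. Particular care is required at the prime $p=2$, where the normalization of $T(4)$ from \cite[p.\ 250]{Ko1} is already engineered to produce the $\{4\|c,\, 8\mid c\}$ split; tracking this through the projection is the only place where one must depart from the straightforward integral-weight template.
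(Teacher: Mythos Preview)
Your proposal is correct and matches the paper's approach exactly: the paper does not give a separate proof but simply notes that the formula follows from the reproducing property $\langle h, P_n^+\rangle = \Gamma(\kappa-1)(4\pi n)^{-(\kappa-1)}c_h(n)$ together with Kohnen's explicit computation of the Fourier coefficients of $P_n^+$ in \cite[Proposition~4]{Ko2}. You have correctly identified both ingredients and the only nontrivial step, namely the bookkeeping that packages the $4\|c$ versus $8\mid c$ cases of Kohnen's projection formula into the modified Kloosterman sum $K^+_\kappa$ with the weights in \eqref{defKlo}.
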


For a positive   integer $k$ we denote by $S^{(2)}_k$ the space of Siegel cusp forms of degree 2 of weight $k$ for the symplectic group ${\rm Sp}_4(\Bbb{Z})$ with Fourier expansion
\begin{equation}\label{four1}
F(Z) = \sum_{T \in \mathcal{P}(\Bbb{Z})} a(T) e(\text{tr}(TZ))
\end{equation}
for $Z = X + iY \in \Bbb{H}^{(2)}$ on the Siegel upper half plane,  
where $\mathcal{P}(\Bbb{Z})$ is the set of symmetric, positive definite 2-by-2 matrices   with integral diagonal elements and half-integral off-diagonal elements. This is a finite-dimensional Hilbert space with respect to the inner product 
 \begin{equation*}
 \langle F, G \rangle  = \int_{{\rm Sp}_4(\Bbb{Z})\backslash \Bbb{H}^{(2)}} F(Z) \overline{G(Z)} (\det Y)^k \frac{dX \, dY}{(\det Y)^3}.
 \end{equation*}
 The norm $\| F \|^2_2$ can be expressed in terms of the adjoint $L$-function at $s=1$, a connection that has only very recently been proved by Chen and Ichino \cite{ChIc}.  There is a special family of Siegel cusp forms that are derived from elliptic cusp forms (Saito--Kurokawa lifts or Maa{\ss} Spezialschar). Let $k$ be  an even positive integer. Let $h \in S^+_{k-1/2}(4)$ be  a Hecke eigenform of weight $k-1/2$  with Fourier expansion as in \eqref{four}, and let 
 $f_h \in S_{2k-2}$ denote the corresponding Shimura lift. 
The Saito--Kurokawa lift associates to $h$ (or $f_h$) a Siegel cusp form $F_h$ of weight $k$ for ${\rm Sp}_4(\Bbb{Z})$ with Fourier expansion \eqref{four1}, where
 \begin{equation}\label{four2}
 a(T) =    \sum_{d \mid (n, r, m)}d^{k-1} c \left(\frac{4 \det T}{d^2}\right), \quad T = \left(\begin{matrix} n & r/2\\ r/2 & m\end{matrix}\right) \in \mathcal{P}(\Bbb{Z}),
 \end{equation}
see e.g.\ \cite[\S 6]{EZ}. 
If   $L (f_h, s)$ is the standard $L$-function  of $f_h$ (normalized so that the functional equation sends $s$ to $1-s$), then the norms of $F_h$ and  $h$ 
are related by
 (\cite[p.\ 551]{KoSk}, \cite[Lemma 4.2 \& 5.2 with $M=1$]{Br})
 \begin{equation}\label{normF}
 \| F_h \|^2 =  \| h \|^2 \frac{\Gamma(k) L(f_h, 3/2)}{4\pi^k}. 
 \end{equation}
\emph{Remarks:} 1) Note that  the formula three lines after (4) in \cite{KoSk} is off by a factor of 2. \\
2) This inner product relation was generalized to Ikeda lifts in \cite{KK}. \\
3) For future reference we note that for $\Re s > 1$ we have
\begin{equation}\label{1overL}
\frac{1}{L(f_h, s)} = \prod_p \left(1 - \frac{\lambda(p)}{p^s} + \frac{1}{p^{2s}}\right) = \sum_{(n, m) = 1} \frac{\lambda(n) \mu(n)\mu^2(m)}{n^sm^{2s}}
\end{equation}
if $\lambda(n)$ denotes the $n$-th Hecke eigenvalue of $f_h$. 

\section{Non-holomorphic automorphic forms}\label{sec4}

We recall the spectral decomposition of $L^2(\Gamma\backslash \Bbb{H})$ with $\Gamma = {\rm SL}_2(\Bbb{Z})$, consisting of   the constant function $u_0 = \sqrt{3/\pi}$, a countable orthonormal basis  $\{u_j, j = 1, 2, \ldots \}$ of Hecke--Maa{\ss} cusp forms 
and Eisenstein series $E(., 1/2 + it)$, $t \in \Bbb{R}$.  As in the introduction we call the collection of these functions $\Lambda$ and the subset of even members $\Lambda_{\text{ev}}$, and we write $\int_{\Lambda}$ resp.\ $\int_{\Lambda_{\text{ev}}}$ for the corresponding spectral averages.  We also introduce the notation  
$\int^{\ast}_{\Lambda_{\text{ev}}} d{\tt u}$ for a spectral average without the residual spectrum which in this case consists only of the constant function.  We use the general notational convention that an element in $\Lambda$ or $\Lambda_{\text{ev}}$ is denoted by ${\tt u}$ while a cusp form is usually denoted by $u$.

For $t \in \Bbb{R}$ let $U_t^{\text{ev}}$ denote the space of even weight zero Maa{\ss} cusp forms  for $\Gamma$ with Laplacian eigenvalue $1/4 + t^2$. It is equipped with the inner product 
\begin{equation}\label{inner-maass}
\langle u_1, u_2 \rangle = \int_{\Gamma \backslash \Bbb{H}} u_1(z) \overline{u_2(z)} \frac{dx \, dy}{y^2}.
\end{equation}
We write the Fourier expansion as
$$u(z) = \sum_{n \not= 0} a(n) W_{0, it}(4 \pi |n| y) e(nx)$$
with $a(-n) = a(n)$, 
where $W_{0, it}(4\pi y) = 2 y^{1/2} K_{it}(2\pi y)$ is the Whittaker function.  The Hecke operators $T(n)$,  normalized as in \cite[(1.1)]{KS}, act on $U_t^{\text{ev}}$ as a commutative family of normal operators. We call $t = t_u$ the spectral parameter of $u$. The Eisenstein series $$E(z, s) =  \sum_{\gamma \in \overline{\Gamma}_{\infty} \backslash \overline{\Gamma}} (\Im \gamma z)^s = \sum_{\substack{ (c, d)\in \Bbb{Z}^2/\{\pm 1\}\\ {\rm gcd}(c, d) = 1}} \frac{y^s}{|cz+d|^{2s}}$$ for $\overline{\Gamma} = {\rm PSL}_2(\Bbb{Z})$ are eigenfunctions of all $T(n)$ with eigenvalue
\begin{equation}\label{defrho}
\rho_s(n) := \sum_{ab = n} (a/b)^{s - 1/2} = n^{s - 1/2} \sigma_{1 - 2s}(n), \quad \sigma_s(n) = \sum_{d \mid n} d^s,
\end{equation}
and an eigenfunction of the Laplacian with eigenvalue $s(1-s)$. We call $(s - 1/2)/i$ the spectral parameter of $E(., s)$. If ${\tt u}$ is   an Eisenstein series or a Hecke--Maa{\ss} cusp form with Hecke eigenvalues $\lambda_{\tt u}(n)$ we define the corresponding $L$-function $L({\tt u}, s) = \sum_n \lambda_{\tt u}(n) n^{-s}$. In particular 
\begin{equation}\label{eisen-L}
   L(E(., 1/2 + it), s) = \zeta(s + it) \zeta(s - it).
   \end{equation} 

If $u \in U_t^{\text{ev}}$ is a cuspidal  Hecke eigenform with eigenvalues $\lambda(n)$, then $n^{1/2}a(n) = a(1) \lambda(|n|)$,  and  by Rankin--Selberg theory (and \cite[6.576.4]{GR} with $a = b = 4\pi$, $\nu = \mu = it$)
\begin{equation}\label{norm}
\begin{split}
\| u \|^2 & = \frac{\pi}{3} \underset{s=1}{\text{res} }\langle |u|^2, E(., s) \rangle = \frac{2\pi}{3} \underset{s=1}{\text{res} } \sum_{n > 0} \frac{|a(n)|^2}{|n|^{s-1}} \int_0^{\infty} W_{0, it}(4 \pi y)^2 y^{s-2} dy\\
&=  \frac{2\pi |a(1)|^2}{3} \underset{s=1}{\text{res} } \sum_{n > 0} \frac{|\lambda(n)|^2}{|n|^{s}}  \frac{\pi^{1/2} \Gamma(s/2) \Gamma(s/2 - it)\Gamma(s/2 + it)}{(2\pi)^s \Gamma((1 + s)/2)}  
= \frac{2  |a(1)|^2 L(\text{sym}^2 u, 1)}{\cosh(\pi t)}. 
\end{split}
\end{equation}
We recall the Kuznetsov formula \cite{Ku} and combine directly the ``same sign'' and the ``opposite sign'' formula to obtain a version for the even part of the spectrum. The conversion between Hecke eigenvalues and Fourier coefficients in the cuspidal case follows from \eqref{norm}. 
\begin{lemma}\label{kuz-even}
Let $n, m \in \Bbb{N}$. Let $h$ be an even holomorphic function in $|\Im t| \leq 2/3$ with $h(t) \ll (1 + |t|)^{-3}$.  For non-constant ${\tt u} \in \Lambda$ let $\mathcal{L}({\tt u}) = L(\text{{\rm sym}}^2 {\tt u}, 1)$ if ${\tt u}$ is cuspidal and\footnote{Note that the measure $d{\tt u}$ is $dt/4\pi$ in the Eisenstein case which explains the factor $1/2$ in the definition of $\mathcal{L}(E(., 1/2 + it))$.}  $\mathcal{L}({\tt u})  = \frac{1}{2}|\zeta(1  + 2it)|^2$ if ${\tt u} = E(., 1/2 + it)$ is Eisenstein\footnote{with the obvious interpretation in \eqref{kuz-even-form} for $t=0$}. Then
\begin{equation}\label{kuz-even-form}
\begin{split}
\int_{\Lambda_{\text{{\rm ev}}}}^{\ast} \frac{\lambda_{\tt u}(n) \lambda_{\tt u}(m) }{\mathcal{L}({\tt u})} h(t_{\tt u})d{\tt u}= &\delta_{n= m} \int_{-\infty}^{\infty} h(t) t \tanh(\pi t) \frac{dt}{4\pi^2} \\
&+   \sum_c \frac{S(m, n, c)}{c}h^*\Big( \frac{\sqrt{nm}}{c}\Big)+  \sum_c \frac{S(m, -n, c)}{c} h^{\ast\ast}\Big( \frac{\sqrt{nm}}{c}\Big)
\end{split}
\end{equation} 
where
\begin{equation}\label{hast}
 h^{\ast}(x) =   2i \int_{-\infty}^{\infty} \frac{J_{2it}(4 \pi x)}{\sinh(\pi t)} h(t)  t \tanh(\pi t) \frac{dt}{4\pi}, \quad h^{\ast\ast}(x) = \frac{4}{\pi}\int_{-\infty}^{\infty}  K_{2it}(4 \pi x)\sinh(\pi t) h(t)  t \frac{dt}{4\pi}.
\end{equation}
\end{lemma}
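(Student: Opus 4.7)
The plan is to derive \eqref{kuz-even-form} by summing the classical same-sign and opposite-sign Kuznetsov formulas for $\Gamma=\mathrm{SL}_2(\mathbb{Z})$ and using Maa\ss{} form parity to isolate the even spectrum. For $n,m\in\mathbb{N}$, the same-sign formula expresses a weighted spectral sum of $\rho_j(n)\overline{\rho_j(m)}/\cosh(\pi t_j)\cdot h(t_j)$ together with an Eisenstein contribution as the sum of a diagonal $\delta_{n=m}$ term and a Kloosterman--Bessel side built from $S(m,n,c)$ and a $J$-Bessel transform of $h$. The opposite-sign formula has the identical spectral shape but with $\rho_j(-m)$ in place of $\rho_j(m)$, no diagonal term, Kloosterman sums $S(m,-n,c)$, and a $K$-Bessel transform of $h$.

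Parity is the decisive ingredient. If $u_j$ has parity $\epsilon_j=\pm 1$, then $\rho_j(-m)=\epsilon_j\rho_j(m)$, so adding the two formulas doubles every even cuspidal term and annihilates every odd cuspidal term. The level-one Eisenstein series $E(\cdot,1/2+it)$ is automatically even, hence its contribution is also doubled. Dividing the resulting identity by $2$ produces a formula whose spectral side runs exactly over $\Lambda_{\text{ev}}^{\ast}$ as desired.

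It remains to translate Fourier coefficients into Hecke eigenvalues. Using $|n|^{1/2}a(n)=a(1)\lambda_u(|n|)$ together with the Rankin--Selberg identity \eqref{norm}, one obtains $|a(1)|^2/\cosh(\pi t_u)=1/(2L(\mathrm{sym}^2 u,1))$ for an $L^2$-normalized cuspidal $u$; together with the factor $2$ from parity doubling this yields the weight $1/\mathcal{L}(u)$ in the cuspidal part of \eqref{kuz-even-form}. The analogous Eisenstein computation, using the classical Fourier expansion of $E(\cdot,1/2+it)$ and the spectral measure $dt/(4\pi)$, explains the factor $\tfrac{1}{2}$ built into $\mathcal{L}(E(\cdot,1/2+it))=\tfrac{1}{2}|\zeta(1+2it)|^2$.

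The only genuine obstacle is careful bookkeeping: matching the Fourier-coefficient normalizations of \cite{Ku} with those used here, tracking the $4\pi$-factors that get absorbed into the explicit Bessel arguments of the transforms $h^{\ast},h^{\ast\ast}$, and confirming that the diagonal picks up precisely the measure $dt/(4\pi^2)$ after the doubling-and-halving step (the diagonal appears only in the same-sign formula, so after adding and dividing by $2$ its weight is halved). No new spectral input is required beyond \eqref{norm} and the two classical Kuznetsov formulas.
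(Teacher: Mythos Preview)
Your proposal is correct and follows exactly the route indicated in the paper: combine the same-sign and opposite-sign Kuznetsov formulas so that parity kills the odd spectrum, then convert Fourier coefficients to Hecke eigenvalues via \eqref{norm}. The paper gives no more detail than this, so your write-up is in fact more explicit than the original.
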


We turn to half-integral weight forms. Let $V_t^+(4)$ denote the (``Kohnen'') space of weight $1/2$ Maa{\ss} cusp forms for $\Gamma_0(4)$ with eigenvalue $1/4 + t^2$ with respect to the weight $1/2$ Laplacian and  Fourier expansion
\begin{equation}\label{four-half}
v(z) = \sum_{ \substack{n\not= 0 \\ n \equiv 0, 1 \, (\text{mod } 4)}} b(n) W_{\frac{1}{4} \text{sgn}(n), it}(4 \pi |n|y) e(nx).
\end{equation}
The congruence condition on the indices can be encoded in an eigenvalue equation: the functions $v \in V_t^+(4)$ are invariant under the operator $L$ defined in \cite[(0.7), (0.8)]{KS}, cf.\ also \cite[(A.1)]{Bi1}.  The space $V_t^{+}(4)$  is a finite-dimensional Hilbert space with respect to the inner product 
\begin{equation}\label{inner}
\langle v_1, v_2 \rangle =   \int_{\Gamma_0(4)\backslash \Bbb{H}} v_1(z) \overline{v_2(z)}   \frac{dx\, dy}{y^2}.
\end{equation}
The Hecke operators $T(p^2)$, $p$ prime,  act on $V^+_t(4)$ as a commutative family of normal operators that commute with the weight $1/2$ Laplacian (again we use Kohnen's modification for $T(4)$ in order to treat all primes uniformly). Explicitly, if $T(p^2) v = \lambda(p) v$, then (see \cite[(1.3)]{KS}) the Fourier coefficients of $v$ satisfy
\begin{equation}\label{hecke-half}
\lambda(p) b(n) = p b(np^2) + p^{-1/2} \chi_n(p) b(n) + p^{-1} b(n/p^2)
\end{equation}
for all primes $p$ and all $n \in \Bbb{Z} \setminus \{0\}$ with $n \equiv 0, 1$ (mod 4) with the convention $b(x) = 0$ for $x \not\in \Bbb{Z}$. 
If $v \in V_t^+(4)$  is an eigenfunction of all Hecke operators $T(p^2)$ with eigenvalues $\lambda(p)$, 
the relation \eqref{hecke-half} can be captured in the identity
$$   \sum_{f=1}^{\infty} \frac{b(\Delta f^2 )}{f^{s-1}} = b(\Delta)  \prod_p \frac{1 -  \chi_{\Delta}(p)p^{-s-1/2}}{1 -  \lambda(p) p^{-s} + p^{-2s}}$$
for a fundamental discriminant $\Delta$. Extending $\lambda(p)$ to all $n$ by the usual Hecke relations, we see that the denominator is just $\sum_{\nu} \lambda(p^{\nu})p^{-\nu s}$, so that  
\begin{equation}\label{non-fund}
   f b(\Delta f^2) =  b(\Delta) \sum_{d \mid f} \mu(d) \chi_{\Delta}(d) \lambda(f/d) d^{-1/2}
\end{equation}
for a fundamental discriminant $\Delta$ and $f \in \Bbb{N}$. \\



\section{Period formulae}

Let $v \in V_t^+(4)$. 
  Katok and Sarnak proved in  \cite[Proposition 4.1]{KS} that there is a linear map (a theta lift) $\mathscr{S}$ sending $v$ to a non-zero element in $U_{2t}^{\text{ev}}$ if $b(1)\not= 0$ and to $0$ otherwise.  A calculation  \cite[p.\ 221-223]{KS} shows that if $v$ is an eigenform of $T(p^2)$, then $\mathscr{S} v$ is an eigenform of $T(p)$   with the same eigenvalue, and this computation works verbatim for $p=2$, too.    Conversely, given an  eigenform  $u\in U_{2t}^{\text{ev}}$ with Hecke eigenvalues $\lambda(p)$, by \cite[Theorem 1.2]{BM}\footnote{Hecke operators are normalized differently in \cite{BM}, but this plays no role.} there is a unique (up to scaling)    $v \in V_t^+(4)$ having eigenvalues $\lambda(p)$ for $T(p^2)$, $p > 2$, (and then automatically also for $T(4)$, since $T(4)$ commutes with the other operators) which may or may not satisfy $b(1) \not= 0$.    In particular, for a given eigenform    $u\in U_{2t}^{\text{ev}}$  there is at most one eigenform $v \in V_t^+$ (up to scaling) with $\mathscr{S}v = u \in U_{2t}^{\text{ev}}$. If it exists, we normalize it to be $L^2$-normalized and denote its Fourier coefficients $b(n)$ as in\footnote{This determines $b(n)$ only up to a constant of absolute value 1, but we will only encounter products of the type $b(n_1)\overline{b(n_2)}$, so that this constant is irrelevant.}  \eqref{four-half}.  If no such $v$ exists, we define $b(n)$ to be 0.


If ${\tt u}$ is a Hecke--Maa{\ss} cusp form or an Eisenstein series, the absolute square of the periods $P(D; {\tt u})$ can be expressed in terms of central $L$-functions. We introduce the relevant notation. For a discriminant $D = \Delta f^2$ with a fundamental discriminant $\Delta$ let
\begin{equation}\label{basicL}
L(D, s) :=  L(\chi_{\Delta}, s)  
\sum_{d \mid f} \mu(d) \chi_{\Delta}(d) \sigma_{1-2s}(f/d) d^{-s}=: \sum_{n=1}^{\infty} \frac{\varepsilon_D(n)}{n^s}.
\end{equation}
With $\rho_s$ as in \eqref{defrho} we can re-write this as
\begin{equation}\label{basicL1}
L(D, s) =  L(\chi_{\Delta}, s) f^{1/2 - s} 
\sum_{d \mid f} \mu(d) \chi_{\Delta}(d) \rho_{s}(f/d) d^{-1/2}.
\end{equation}
Since $\rho_s = \rho_{1-s}$, we see that $L(D, s)$ satisfies the same type of functional equation as $L(\Delta, s)$ namely
\begin{equation}\label{Lfuncteq}
   \Lambda(D, s) := L(D, s) |D|^{s/2} \Gamma\Big(\frac{s+\mathfrak{a}}{2}\Big) \pi^{-s/2} = \Lambda(D, 1-s)
\end{equation}
with $\mathfrak{a} = 1$ if $D < 0$ and $\mathfrak{a} = 0$ if $D > 0$. 
For a Hecke--Maa{\ss} cusp form or an Eisenstein series  ${\tt u}  $ define
\begin{equation}\label{alt}
L({\tt u}, D, s) = \sum_{n=1}^{\infty} \frac{\varepsilon_D(n) \lambda_{\tt u}(n)}{n^s}.
\end{equation} 
The key point is that
\begin{equation}\label{key}
L({\tt u}, \Delta f^2, 1/2)  = L({\tt u}, \Delta, 1/2) \Bigl(\sum_{d \mid f} \mu(d) \chi_{\Delta}(d) \lambda_{\tt u}(f/d) d^{-1/2}\Bigr)^2
\end{equation}
as one can check by a formal computation with Euler products using the Hecke relation for the eigenvalues $\lambda_{\tt u}$  (which are identical for Maa{\ss} forms and Eisenstein series), see \cite[p.\ 188-189]{KZ}. For later purposes we record the simple bound
\begin{equation}\label{key-simple}
\Bigl(\sum_{d \mid f} \mu(d) \chi_{\Delta}(d) \lambda_{\tt u}(f/d) d^{-1/2}\Bigr)^2 \ll f^{1/3}
\end{equation}
uniformly in $\Delta$ and ${\tt u}$, which follows from the Kim-Sarnak bound with $ 2 \cdot 7/64 < 1/3$. 

From \eqref{basicL1}, the fact that $|L(\chi_{\Delta}, 1/2 + it)|^2 = L(E(., 1/2 + it), \Delta, 1/2)$ and \eqref{key} we   have 
\begin{equation}\label{eisen-L2}
\begin{split}
|L(\Delta f^2, 1/2+it)|^2 &= |L(\chi_{\Delta}, 1/2+it)|^2 \Bigl|\sum_{d \mid f} \mu(d) \chi_{\Delta}(d) \lambda_{E(., 1/2 + it)}(f/d) d^{-1/2}\Bigr|^2\\
& = L(E(., 1/2 + it), \Delta f^2, 1/2). 
\end{split}
   \end{equation}

The next key lemma expresses the periods $P(D; u)$ defined in \eqref{defP} for cusp forms $u$ as half-integral weight Fourier coefficients, and then their squares as $L$-functions. The first formula \eqref{mixed} is essentially a formula of Katok-Sarnak \cite[(0.16) \& (0.19)]{KS}, the passage from squares of metaplectic Fourier coefficients to $L$-functions in \eqref{BarMao} is a Kohnen-Zagier type formula of Baruch-Mao \cite[Theorem 1.4]{BM}. The combination \eqref{katok-Sarnak} of these two is a special case of a formula of Zhang \cite[Theorem 1.3.2]{Zh1} or \cite[Theorem 7.1]{Zh2}, derived independently by a different method.

\begin{lemma}\label{lem3} If ${ u} \in U_{2t}^{\text{{\rm ev}}}$ is an even Hecke--Maa{\ss} cusp form   and $D_1, D_2 <0$ are  two discriminants, then
\begin{equation}\label{mixed}
\frac{P(D_1;  {  u})\overline{P(D_2;  {  u})}  \|{  u}\|^{-2} }{|D_1D_2|^{1/4}}  = \frac{3}{\pi} L({  u}, 1/2)  \Gamma(1/4 + it)\Gamma(1/4 - it) |D_1D_2|^{1/2}b(D_1) \overline{b(D_2)}.
\end{equation}
  For a discriminant $D$ of either sign we have
\begin{equation}\label{BarMao}
|b(D)|^2 = \frac{1}{24\pi} \frac{L({  u},  D, 1/2)}{L(\text{{\rm sym}}^2 {  u}, 1)} \frac{\cosh(2 \pi t)\Gamma( \frac{1}{2} - \frac{1}{4} \text{{\rm sgn}}(D)+ it)\Gamma( \frac{1}{2} - \frac{1}{4} \text{{\rm sgn}}(D)- it)}{  |D|}.
\end{equation}
 For $D < 0$ we have 
\begin{equation}\label{katok-Sarnak}
\frac{|P(D;  {  u})|^2 \|{  u}\|^{-2} }{|D|^{1/2}}  = \frac{L({  u}, 1/2) L({  u},  D, 1/2)}{4L(\text{{\rm sym}}^2 {  u}, 1)}.
\end{equation}
\end{lemma}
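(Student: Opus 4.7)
The plan is to assemble the three formulas from two cited period identities plus one elementary gamma-function simplification. For \eqref{mixed}, I would invoke the Katok--Sarnak theta correspondence directly. In \cite[(0.16),(0.19)]{KS} they show that for an even Maass cusp form $u \in U_{2t}^{\text{ev}}$ with $L^2$-normalized Shimura preimage $v \in V_t^+(4)$ and for a negative discriminant $D$, the Heegner period $P(D;u)$ is proportional to the Fourier coefficient $b(D)$, with the proportionality constant involving $\|u\|$, $L(u,1/2)^{1/2}$, $|D|^{1/4}$, and the archimedean factor $\Gamma(\tfrac14 + it)^{1/2}\Gamma(\tfrac14 - it)^{1/2}$. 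Applying this to $D_1$ and $D_2$ separately and forming the product $P(D_1;u)\overline{P(D_2;u)}$ produces exactly the right-hand side of \eqref{mixed}.

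The second formula \eqref{BarMao} is essentially a direct quotation of Baruch--Mao \cite[Theorem 1.4]{BM}, which expresses $|b(D)|^2$ in the Kohnen plus space in terms of the quadratic twist $L$-value $L(u,D,1/2)/L(\text{sym}^2 u,1)$ with explicit archimedean factors $\cosh(2\pi t)\Gamma(\tfrac12 - \tfrac14 \text{sgn}(D) \pm it)$. Care is needed only to match the Whittaker normalization \eqref{four-half} (with weight $\tfrac14 \text{sgn}(n)$) and the inner product \eqref{inner} with the conventions in \cite{BM}; this matching determines the prefactor $1/(24\pi)$.

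For \eqref{katok-Sarnak}, I would specialize \eqref{mixed} to $D_1 = D_2 = D < 0$ to obtain
\begin{equation*}
\frac{|P(D;u)|^2\|u\|^{-2}}{|D|^{1/2}} \;=\; \frac{3}{\pi}\, L(u,1/2)\, \Gamma(\tfrac14+it)\Gamma(\tfrac14-it)\, |D|\,|b(D)|^2,
\end{equation*}
and then substitute \eqref{BarMao} with $\text{sgn}(D) = -1$. This introduces additional factors $\cosh(2\pi t)\Gamma(\tfrac34+it)\Gamma(\tfrac34-it)/|D|$. The four gamma factors collapse via the reflection formula $\Gamma(s)\Gamma(1-s) = \pi/\sin(\pi s)$: one pairs $\Gamma(\tfrac14+it)\Gamma(\tfrac34-it) \cdot \Gamma(\tfrac14-it)\Gamma(\tfrac34+it) = \pi^2/[\sin(\tfrac{\pi}{4}+i\pi t)\sin(\tfrac{\pi}{4}-i\pi t)] = 2\pi^2/\cosh(2\pi t)$, so the $\cosh(2\pi t)$ cancels. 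The remaining numerical constants collapse to $(3/\pi)(1/(24\pi))(2\pi^2) = 1/4$, yielding the stated formula.

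The main obstacle, as the authors themselves emphasize in the discussion following Theorem \ref{thm1}, is not the formal derivation but the careful bookkeeping of normalization conventions across Katok--Sarnak, Baruch--Mao, Kohnen's plus space, and the Shimura lift. Each factor of $2$, $\pi$, $i$, and every choice of Whittaker parameter (in particular the $\tfrac14 \text{sgn}(n)$ in \eqref{four-half}) must be verified, because stray constants propagate into the leading coefficient of Theorem \ref{thm1}. Once the two input identities are calibrated to the conventions of Section \ref{sec4}, the remaining manipulation in \eqref{katok-Sarnak} is a pure gamma-function identity.
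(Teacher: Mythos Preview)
Your approach is essentially the same as the paper's: derive \eqref{mixed} from Katok--Sarnak \cite[(0.16),(0.19)]{KS}, take \eqref{BarMao} from Baruch--Mao, and combine them via the gamma identity (your computation $(3/\pi)(1/(24\pi))(2\pi^2)=1/4$ is exactly what the paper means by ``well-known properties of the gamma function''). Two small points the paper handles more explicitly: first, it disposes of the degenerate case where no $v\in V_t^+(4)$ with $\mathscr{S}v=u$ exists, so that $b(D)=0$ by convention and both sides of \eqref{mixed} and \eqref{katok-Sarnak} vanish; second, since \cite[Theorem 1.4]{BM} is stated for fundamental discriminants, the paper passes to arbitrary $D=\Delta f^2$ via the Hecke relation \eqref{non-fund} and the factorization \eqref{key}, which you should mention. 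The paper also stresses that, because of documented discrepancies among \cite{KS}, \cite{Bi2}, \cite{BM}, \cite{DIT}, it pins the constants to the numerically verified version \cite[Theorem 4, (5.17)]{DIT} rather than to \cite{KS} or \cite{BM} directly---a point you acknowledge in principle but do not resolve.
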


\emph{Remark:} The exact shape of these formulas is an unexpectedly subtle matter, and the attentive reader might well be confused by the various and slightly contradictory versions in the literature. There are at least four sources of possible conflict:
\begin{itemize}
\item the Whittaker functions can be normalized in different ways;
\item the inner products can be normalized in different ways;
\item the translation from adelic language to classical can cause problems;
\item there can be ambiguities related to the groups ${\rm GL}(2)$ vs.\ ${\rm SL}(2)$ vs.\ ${\rm PSL}(2)$.
\end{itemize}
The Katok-Sarnak formula exists in the literature with proofs given in at least in three different versions: \cite[(0.16)]{KS}, \cite[Theorem A1]{Bi2} and \cite[Theorem 4]{DIT}. The original version of Katok-Sarnak was carefully revised by Bir\'o, but the latter seems to be still off by a factor 2 compared to the version in Duke-Imamo\u{g}lu-Toth, which was checked numerically. 
The Baruch-Mao formula \cite[Theorem 1.4]{BM} is quoted in \cite[(5.17)]{DIT}  with an additional factor 2. Zhang's result  \cite[Theorem 7.1]{Zh2} (and also the remark after \cite[Theorem 1.3.2]{Zh1}, the theorem itself being correct) is missing the stabilizer $\epsilon(z)$ in the period $P(D; u)$. This formula is slightly incorrectly reproduced in   \cite{LMY} and several follow-up papers,  based on a different normalization of the Whittaker function. Finally, neither combination of one of the three  Katok-Sarnak formulae with the Baruch-Mao formula in \cite[Theorem 1.4]{BM}  coincides with Zhang's formula. 

We therefore feel  that these beautiful and important results should be stated with correct constants and normalizations. For the proof of Theorem \ref{thm1}  and its connection to the mass equidistribution conjecture this is absolutely crucial. As \cite[Theorem 4]{DIT} was checked numerically by the authors, we follow their version of the Katok-Sarnak formula. This gives   \eqref{mixed}. We verified and confirmed the constant in Zhang's formula independently by proving an averaged version in Appendix \ref{appb}. This gives \eqref{katok-Sarnak}. By backwards engineering, we established the numerical constant in the Baruch-Mao formula, which gives \eqref{BarMao} and coincides with \cite[(5.17)]{DIT}. 

Note that \eqref{katok-Sarnak} is essentially universal: the right hand side of \eqref{katok-Sarnak} is independent of any normalization, the left hand side depends only on the normalization of the inner product \eqref{inner-maass} which is standard.


\begin{proof}
We start with the formula \cite[(0.16)]{KS} for a general discriminant $D < 0$, but use the numerical constants as in 
 \cite[Theorem 4]{DIT} (proved only for fundamental discriminants there). This formula  expresses  $P(D; {  u})$ for an arbitrary discriminant $D < 0$ as a sum over Fourier coefficients of all $v$ with $\mathscr{S}v = {  u}$. By the above remarks, there is at most one such $v$. If there is none, then both sides of \eqref{mixed}  and \eqref{katok-Sarnak} vanish by \cite[(0.16), (0.19)]{KS} and our convention that $b(n) = 0$ in this case, and there is nothing to prove. 
Also note that the left hand side of \eqref{mixed} and both sides of \eqref{katok-Sarnak}  are independent of the normalization of ${  u}$, so without loss of generality we may assume that ${  u}$ is Hecke-normalized as in \cite{KS}. 
We obtain
$$\frac{P(D_1;  {  u})\overline{P(D_2;  {  u})}  \|{  u}\|^{-2} }{|D_1D_2|^{1/4}}    
=   6 |D_1D_2|^{1/2} b(D_1)\overline{b (D_2)}  |b(1)|^2 \| {  u}\|^2.$$
Next we insert \cite[(0.19)]{KS} (again keeping in mind the different normalization of \eqref{inner} and observing that this is coincides with the numerically checked version of \cite[Theorem 4]{DIT}) getting \eqref{mixed}. 

If $D$ is a fundamental discriminant, then \eqref{BarMao}   follows from   \cite[(5.17)]{DIT}  
together with \eqref{norm} with $a(1) = 1$ and $2t$ in place of $t$. By \eqref{non-fund} and \eqref{key} this remains true for arbitrary discriminants. 
The formula \eqref{katok-Sarnak} is a direct consequence of \eqref{mixed}, \eqref{BarMao} and well-known properties of the gamma function, and was proved independently by Zhang \cite[Theorem 1.3.2]{Zh1}. 
\end{proof}

\emph{Remark:} The argument at the beginning of this proof shows that the $u$-sum in Proposition \ref{Lfunc}(a), up to terms of size 0,  can be replaced with the $v$-sum in \eqref{interpret}, using \eqref{BarMao}. This completes the proof of \eqref{interpret}. 
 \\

A similar result holds for Eisenstein series.    If $aX^2 + bXY + cY^2$ is an integral quadratic form of discriminant $D = b^2 -4ac < 0$ with Heegner point $z = (\sqrt{|D|}i - b)/(2a)$, then
$$E(z, s) = \frac{1}{  \zeta(2s)} \sum_{(u, v) \in (\Bbb{Z}^2 \setminus (0, 0))/\{\pm 1\}} \frac{(\sqrt{|D|}/2)^s}{(au^2 - buv + cv^2)^s}.$$
Hence $$P(D; E(., s)) = \frac{1}{ \zeta(2s)} \Big(\frac{\sqrt{|D|}}{2}\Big)^s \zeta(D, s)$$
where $\zeta(D, s)$ is defined\footnote{Note that Zagier defines $\Gamma = {\rm PSL}_2(\Bbb{Z})$, so his definition of equivalence coincides with ours. The quotient by $\{\pm 1\}$ in the $u, v$-sum is not spelled out explicitly in \cite[(6)]{Z}, but implicitly used in the proof of \cite[Proposition 3]{Z} on p.\ 131. }  in \cite[(6)]{Z}. By \cite[Proposition 3 iii]{Z} (or \cite[Theorem 3]{DIT}) we obtain the following lemma in analogy to Lemma \ref{lem3}.

\begin{lemma}\label{lem-eisen} If $D < 0$ is a discriminant, then
\begin{equation}\label{eisen1}
P(D; E(., s)) = \frac{1}{ \zeta(2s)} \Big(\frac{\sqrt{|D|}}{2}\Big)^s \zeta(s) L(D, s),
\end{equation}
and hence
\begin{equation}\label{eisen2}
\frac{|P(D; E(., 1/2 + it))|^2}{|D|^{1/2}} = \frac{L(E(., 1/2 + it), 1/2) L(E(., 1/2 + it), D, 1/2)}{ 2|\zeta(1 + 2 it)|^2} .
\end{equation}  
\end{lemma}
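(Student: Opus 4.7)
The first identity is already essentially derived in the paragraph preceding the statement: expanding the Eisenstein series and grouping ${\rm SL}_2(\Bbb{Z})$-representatives of integral quadratic forms of discriminant $D$ exactly produces Zagier's Dirichlet series $\zeta(D,s)$ of \cite[(6)]{Z}, with the stabilizer weight $1/\epsilon(z)$ automatically absorbed because Zagier also works modulo $\overline{\Gamma} = {\rm PSL}_2(\Bbb{Z})$. Thus
$$P(D;E(\cdot,s)) = \frac{1}{\zeta(2s)}\Big(\frac{\sqrt{|D|}}{2}\Big)^s \zeta(D,s),$$
and the task reduces to invoking \cite[Proposition 3(iii)]{Z}, which gives the factorization $\zeta(D,s)=\zeta(s)L(D,s)$ for $L(D,s)$ as in \eqref{basicL}. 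The content of that factorization is a genus-theoretic bijection between primitive forms of fundamental discriminant $\Delta$ and ideal classes of $\Bbb{Q}(\sqrt{\Delta})$, yielding $\zeta(\Delta,s) = \zeta(s)L(\chi_\Delta,s)$; the passage from $\Delta$ to $D = \Delta f^2$ then introduces the conductor correction $\sum_{d\mid f}\mu(d)\chi_\Delta(d)\sigma_{1-2s}(f/d)d^{-s}$ by a short inclusion--exclusion over the conductor $f$.

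For the second identity, I specialize the first to $s = 1/2 + it$, take absolute squares, and divide by $|D|^{1/2}$. The factor $(\sqrt{|D|}/2)^{1/2+it}$ has absolute square $|D|^{1/2}/2$, cancelling $|D|^{1/2}$ up to the constant $1/2$, and leaving
$$\frac{|P(D;E(\cdot,1/2+it))|^2}{|D|^{1/2}} = \frac{|\zeta(1/2+it)|^2\,|L(D,1/2+it)|^2}{2|\zeta(1+2it)|^2}.$$
Finally I identify the numerator with the two Rankin--Selberg-type $L$-values on the right of \eqref{eisen2}: by \eqref{eisen-L} the identity $L(E(\cdot,1/2+it),s)=\zeta(s+it)\zeta(s-it)$ at $s=1/2$ gives $|\zeta(1/2+it)|^2 = L(E(\cdot,1/2+it),1/2)$, and by \eqref{eisen-L2} one has $|L(D,1/2+it)|^2 = L(E(\cdot,1/2+it),D,1/2)$. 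This produces \eqref{eisen2} with no further manipulation.

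I do not anticipate a real obstacle: the argument is essentially bookkeeping once Zagier's factorization is invoked. The only point requiring care is the consistent quotient by $\{\pm 1\}$ in the Eisenstein expansion and in Zagier's definition of $\zeta(D,s)$, which the author has already flagged in the footnote to this subsection, and which matches the convention $\overline{\Gamma} = {\rm PSL}_2(\Bbb{Z})$ used throughout. Everything else --- in particular the compatibility of the conductor-level correction in \eqref{basicL} with the identity \eqref{key} used to derive \eqref{eisen-L2} --- has already been set up earlier in the paper.
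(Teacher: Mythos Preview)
Your proposal is correct and follows essentially the same route as the paper: the derivation preceding the lemma already gives $P(D;E(\cdot,s)) = \zeta(2s)^{-1}(\sqrt{|D|}/2)^s \zeta(D,s)$, and then \cite[Proposition 3(iii)]{Z} supplies $\zeta(D,s)=\zeta(s)L(D,s)$, while \eqref{eisen2} is deduced from \eqref{eisen1} via \eqref{eisen-L} and \eqref{eisen-L2} exactly as you outline. Your additional remarks on the genus-theoretic origin of Zagier's factorization and the conductor correction are helpful elaborations but not needed beyond the citation.
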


\emph{Remarks:} 1) The second formula follows from the first by \eqref{eisen-L} and  \eqref{eisen-L2}.

2) Here the verification of the numerical constants is much easier than in the cuspidal case. Taking residues at $s=1$ in \eqref{eisen1} for a fundamental discriminant $\Delta < 0$ returns the class number formula for $\Bbb{Q}(\sqrt{\Delta})$, which confirms the numerical constants.


3) For future reference we recall the standard   bounds
\begin{equation}\label{lower}
\zeta(1 + it) \gg |t|^{-\varepsilon}, \quad  |t_{  u}|^{\varepsilon} \gg L(\text{sym}^2 {  u}, 1) \gg |t_{  u}|^{-\varepsilon}
\end{equation}
for $\varepsilon > 0$. This is in particular  relevant to obtain upper bounds in \eqref{eisen2} and \eqref{katok-Sarnak}. \\


We close this section by stating a standard approximate functional equation \cite[Theorem 5.3]{IK} for the $L$-functions occurring in the previous period formulae. For $u \in U_t^{\text{ev}}$ and a fundamental discriminant $\Delta$ (possibly $\Delta = 1$) we have 
\begin{equation}\label{approx-basic}
L(u, \Delta, 1/2) = L(u \times \chi_{\Delta}, 1/2) = 2\sum_n \frac{\lambda(n)\chi_{\Delta}(n)}{n^{1/2}} W_{t}\Big(\frac{n}{|\Delta|}\Big)
\end{equation}
where
\begin{equation}\label{v-t}
W_{t}(n) = \frac{1}{2\pi i} \int_{(2)} \frac{\Gamma(\frac{1}{2}(\frac{1}{2}+ \mathfrak{a} + s + it))\Gamma(\frac{1}{2}(\frac{1}{2} + \mathfrak{a}+ s - it))}{\Gamma(\frac{1}{2}(\frac{1}{2} + \mathfrak{a}  + it))\Gamma(\frac{1}{2}(\frac{1}{2} + \mathfrak{a}  - it)) \pi^{s} }e^{s^2} n^{-s}
 \frac{ds}{s}
 \end{equation}
with $\mathfrak{a} = 1$ if $\Delta < 0$ and $\mathfrak{a} = 0$ if $\Delta > 0$. Note that $W_t$ depends on $\Delta$ only in terms of its sign. If we want to emphasize this we write $W_t^+$ and $W_t^-$ with $\pm = \text{sgn}(\Delta)$. 

A similar expression holds for $E(., 1/2 + it)$ in place of $u$ except that in the case $\Delta = 1$ we have $L(E(., 1/2 + it), 1, s) = \zeta(s+it)\zeta(s- it)$ and there is an additional polar term\footnote{Note that there is a sign error in \cite[Theorem 5.3]{IK}: the residue $R$ should be subtracted.}. We have
\begin{equation}\label{approx-basic1}
\begin{split}
L(E(., 1/2 + it), \Delta, 1/2)&  = 2\sum_n \frac{\rho_{1/2 + it}(n) \chi_{\Delta}(n)}{n^{1/2}} W_{t}\Big(\frac{n}{|\Delta|}\Big)\\
& -\delta_{\Delta = 1} \sum_{\pm} \frac{\zeta(1 \pm 2it)\Gamma(\frac{1}{2} \pm it)\pi^{\mp it} e^{(1/2 \pm  it)^2}}{(\frac{1}{2} \pm i t)\Gamma(\frac{1}{4} + \frac{it}{2})\Gamma(\frac{1}{4}  - \frac{it}{2})}
\end{split}
\end{equation}
with $\rho_{1/2 + it}$ as in \eqref{defrho}. 

\section{Half-integral weight summation formulae}\label{summation}

In this section we compile the Voronoi summation and the Kuznetsov formula for half-integral weight forms. 

\subsection{Voronoi summation}
As before let 
\begin{equation}\label{51}
v(z) = \sum_{  n\not= 0 } b(n) W_{\frac{k}{2} \text{{\rm sgn}}(n), it}(4 \pi |n|y) e(nx)
\end{equation}
 be a Maa{\ss} form of weight $k \in \{1/2, 3/2\}$ and spectral parameter $t$ for $\Gamma_0(4)$ with respect to the usual theta multiplier. 
We start with the   Voronoi summation formula \cite[Theorem 3]{By}.
\begin{lemma}\label{Vor}  Let $c\in \Bbb{N}$, $4 \mid c$, $(a, c) = 1$. Let $\phi$ be a smooth function with compact support in $(-\infty, 0) \cup (0, \infty)$ and for $y > 0$ define
 \begin{equation}\label{defPhi}
 \Phi(\pm y) = \int_0^{\infty} \mathcal{J}^{\pm, +}(ty) \phi(t) + \mathcal{J}^{\pm, -}(ty) \phi(-t) dt,
 \end{equation}
 where
 \begin{displaymath}
 \begin{split}
&   \mathcal{J}^{\pm, \pm}(x) = \frac{\cos(\pi k/2 \mp i \pi r)}{\sin(2\pi i r)} J_{-2 ir}(2\sqrt{x}) -  \frac{\cos(\pi k/2\pm   i \pi r)}{\sin(2\pi i r)} J_{2 ir}(2\sqrt{x}) = - \frac{F(2 \sqrt{x}, 2r, \mp k)}{\sin(2\pi i r)},\\
  &  \mathcal{J}^{\pm, \mp}(x)  = \frac{2 K_{2ir}(2\sqrt{x})}{\Gamma(1/2 \pm k/2 + ir)\Gamma(1/2 \pm k/2 - ir)} \end{split}
 \end{displaymath}
with $F$ as in \eqref{defF}.   Then
 $$\sum_{n \not = 0} b(n)\sqrt{|n|} e\Big(\frac{an}{c}\Big) \phi(n) = \Big(\frac{-c}{a}\Big) \epsilon^{2k}_a e\Big(\frac{k}{4}\Big) \sum_{n \not= 0} b(n) \sqrt{|n|}  \frac{2\pi}{c} e\Big(-\frac{\bar{a}n}{c}\Big) \Phi\Big(\frac{(2\pi)^2 n}{c^2}\Big)$$
 with $\epsilon_a$ as in \eqref{epsd}. 
\end{lemma}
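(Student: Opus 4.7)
The plan is to derive the identity via a Mellin transform / functional equation argument in the classical style of Voronoi summation, adapted to the theta multiplier system on $\Gamma_0(4)$. First I would attach to $v$ the two twisted Dirichlet series
$$L^{\pm}(s, a/c) := \sum_{\pm n > 0} b(n) |n|^{-s}\, e(an/c),$$
which converge absolutely for $\Re s$ large. Splitting $\phi$ by the sign of its argument and applying Mellin inversion expresses the left-hand side of the claim as a sum of two contour integrals
$$\sum_{\pm} \frac{1}{2\pi i} \int_{(\sigma)} L^{\pm}(s, a/c)\, \tilde\phi_{\pm}(s)\, ds$$
for $\sigma$ large, where $\tilde\phi_{\pm}$ is the Mellin transform of $y^{1/2} \phi(\pm y)$. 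Since $\phi$ is compactly supported away from the origin, $\tilde\phi_{\pm}$ is entire and of rapid vertical decay.

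Next I would establish the functional equation for $L^{\pm}(s, a/c)$. Pick $\gamma \in \Gamma_0(4)$ with upper-left entry $a$ and lower-left entry $c$, which is possible since $4 \mid c$ and $(a,c) = 1$. The automorphy relation $v|_k \gamma = \nu(\gamma) v$, together with the explicit value of the theta multiplier $\nu(\gamma) = \left(\tfrac{-c}{a}\right) \epsilon_a^{2k} e(k/4)$, integrated against a vertical test function sitting above $-\bar a/c$, yields a functional equation exchanging $s \leftrightarrow 1-s$ and the cusps $a/c \leftrightarrow -\bar a/c$. The archimedean gamma factors that appear are precisely the Mellin transforms of the Whittaker functions $W_{k\,\mathrm{sgn}(n)/2,\, ir}(4\pi |n| y)$ from \eqref{51}; because the sign of $n$ controls the first index of $W$, each of the four sign pairings $(\pm, \pm)$ of source and dual indices gives rise to its own combination of $\Gamma$-factors.

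In the final step I would shift the contour to $\Re s < 0$, crossing no poles thanks to the compact support of $\phi$. Substituting the dual series $L^{\pm}(1-s, -\bar a/c)$ and carrying out the inverse Mellin transforms of the gamma-factor ratios produces the Bessel kernels of the statement. In the opposite-sign case the standard Mellin--Barnes representation for $K_{2ir}$ gives $\mathcal{J}^{\pm,\mp}$, while in the same-sign case the inverse transform produces a linear combination of $J_{\pm 2ir}(2\sqrt{x})$ whose coefficients, via the reflection formula for $\Gamma$, reorganize into the phases $\cos(\pi k/2 \pm i\pi r)$ and $1/\sin(2\pi i r)$, yielding $\mathcal{J}^{\pm,\pm} = -F(2\sqrt{x}, 2r, \mp k)/\sin(2\pi i r)$ with $F$ as in \eqref{defF}. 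Pulling out the multiplier $\left(\tfrac{-c}{a}\right) \epsilon_a^{2k} e(k/4)$ and the dilation $(2\pi)^2/c^2$ in the dual variable, both inherited from the functional equation, completes the identification.

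The main technical obstacle is the careful bookkeeping of the four sign pairings $(\pm, \pm)$ together with the theta multiplier and the trigonometric identities that transform the gamma-function reflection formula into the cosine phases and $\sin(2\pi i r)^{-1}$ appearing in $\mathcal{J}^{\pm, \pm}$. This is precisely the calculation executed in \cite[Theorem 3]{By}, whose normalization we adopt; the role of the lemma is to state the result in the exact form needed in the sequel, in particular with the kernel $F$ from \eqref{defF} and the Kloosterman-type multiplier $\epsilon_a^{2k} \left(\tfrac{-c}{a}\right) e(k/4)$.
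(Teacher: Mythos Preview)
Your proposal is correct and follows exactly the approach the paper itself indicates: the paper does not give a self-contained proof but simply states that ``the proof of the Voronoi formula (Lemma~\ref{Vor}) follows from a certain vector-valued functional equation satisfied by the $L$-functions with coefficients $b(\pm n) e(\pm an/c)$'' and cites \cite[Theorem~3]{By}. Your outline of the Mellin inversion / functional equation / contour shift argument is precisely this, with the same terminal citation to Bykovskii.
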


The proof of the Voronoi formula (Lemma \ref{Vor}) follows from a certain vector-valued functional equation satisfied by the $L$-functions with coefficients $b(\pm n) e(\pm an/c)$. The same functional equation holds if $v$ is not cuspidal (this is clear from general principles and worked out explicitly in \cite{DG} along the same lines), but in this case the $L$-functions are not entire; they have various poles. We use this observation for two non-cuspidal modular forms. The first is a half-integral weight Eisenstein series 
\begin{equation}
\begin{split}
E^{\ast}\Big(z, \frac{1}{2} + it\Big) =&\frac{ 2^{1 + 2it} \pi^{-it} \Gamma(1/2 + 2it) \zeta(1+4it)  }{\Gamma(3/4 + it)\Gamma(1/4 + it)} y^{1/2 + it} + \frac{2^{1 - 2it} \pi^{3it} \Gamma(1/2 - 2it) \zeta(1-4it)}{\Gamma(3/4 + it)\Gamma(1/4 + it)} y^{1/2 - it} \\
&+ \sum_{D } \frac{L(D,1/2 + 2it) |D|^{it}  }{|D|^{1/2} }\frac{W_{\frac{1}{4} \text{sgn}(D), it}(4 \pi |D|y)}{\Gamma(\frac{1}{2} + \frac{1}{4} \text{sgn}(D) + it)}e(Dx)\\
\end{split}
\end{equation}
which transforms under $\Gamma_0(4)$ as a weight $1/2$ automorphic form with theta multiplier; see \cite[p.\ 964]{DIT}.  As usual, $D$ runs over all discriminants. The other is Zagier's weight 3/2 Eisenstein series\footnote{We have multiplied Zagier's definition by a factor $(4\pi y)^{3/4}$ in order to make $|\mathcal{H}(z)|$ invariant under $\Gamma_0(4)$.} \cite[Theorem 2]{HZ}
\begin{equation}\label{zag}
\begin{split}
\mathcal{H}(z) = &\sum_{D < 0} \frac{H(D)}{|D|^{3/4}} e(|D|x) W_{3/4, 1/4}(4\pi |D|y)  - \frac{(4\pi y)^{3/4}}{12} \\
& + \frac{1}{4\sqrt{\pi}}
\sum_{n = \square} \frac{e(-nx)}{n^{1/4}} W_{-3/4, 1/4}(4\pi n y)+ \frac{y^{1/4}}{\sqrt{8} \pi^{1/4}} \end{split}
\end{equation}
which transforms under $\Gamma_0(4)$ as a weight $3/2$ automorphic form with theta multiplier. As before $H(D)$ is the Hurwitz class number. For $\epsilon \in \{\pm 1\}$, $(a, c) = 1$, $4\mid c$, the Dirichlet series 
$$\sum_{\epsilon D > 0} \frac{L(D, 1/2 + 2it)e(aD/c) |D|^{it}}{|D|^{1/2 + w}}$$
has poles at $w = 1/2 \pm it$ with certain residues $R_{\epsilon}(\pm t, a/c)$, say, and consequently we obtain the following analogue of Lemma \ref{Vor}.

\begin{lemma}\label{Vor-eis}  Let $c\in \Bbb{N}$, $4 \mid c$, $(a, c) = 1$, $t \in \Bbb{R} \setminus \{0\}$.  Let $\phi$ be a smooth function with compact support in $(-\infty, 0) \cup (0, \infty)$ and for $y > 0$ define $\Phi$ as in \eqref{defPhi}.  Then
 \begin{displaymath}
 \begin{split}
 \sum_{D \not = 0}\frac{L(D, 1/2 + 2it)|D|^{it} }{\Gamma(\frac{1}{2} + it + \frac{1}{4} \text{{\rm sgn}}(D))} e\Big(\frac{aD}{c}\Big) \phi(D) = &\Big(\frac{-c}{a}\Big) \epsilon_a e\Big(\frac{1}{8}\Big)\Big[\sum_{\epsilon \in \{\pm 1\}} \sum_{\pm} \frac{R_{\epsilon}(\pm t, a/c)}{\Gamma(\frac{1}{2} + it + \frac{1}{4}\epsilon)}\int_0^{\infty} \phi(\epsilon x) x^{ \pm it} dx \\
& + \sum_{D \not= 0}\frac{L(D, 1/2 + 2it)|D|^{it} }{\Gamma(\frac{1}{2} + it + \frac{1}{4} \text{{\rm sgn}}(D))} \frac{2\pi}{c} e\Big(-\frac{\bar{a}D}{c}\Big) \Phi\Big(\frac{(2\pi)^2 D}{c^2}\Big) \Big]
 \end{split}
 \end{displaymath}
\end{lemma}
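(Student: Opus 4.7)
\medskip

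\noindent\textbf{Proof plan.} The strategy is to run the same contour-shift argument that underlies the cuspidal Voronoi formula (Lemma \ref{Vor}), applied now to the Eisenstein series $E^{\ast}(z, 1/2 + it)$, which transforms under $\Gamma_0(4)$ as a weight $1/2$ form with theta multiplier and whose $D$-th Fourier coefficient is proportional to $L(D, 1/2 + 2it)|D|^{it}/\Gamma(\tfrac{1}{2} + it + \tfrac{1}{4}\,\text{sgn}(D))$. The single new feature relative to the cuspidal case is that the associated $L$-series is no longer entire; keeping track of its polar contribution is what produces the $R_{\epsilon}(\pm t, a/c)$ terms in the statement.

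\medskip

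\noindent\emph{Step 1: Functional equation.} For $(a,c)=1$, $4 \mid c$, consider the twisted Dirichlet series
\[
\mathcal{L}^{\pm}(w;a/c,t) := \sum_{\pm D > 0} \frac{L(D, 1/2 + 2it)|D|^{it}}{\Gamma(\tfrac{1}{2} + it + \tfrac{1}{4}\text{sgn}(D))}\, e\!\left(\tfrac{aD}{c}\right) |D|^{-w},
\]
which converges absolutely for $\Re w$ large. By \cite{DG} (the argument is driven purely by the modular transformation behaviour of $E^{\ast}(\cdot, 1/2 + it)$ under the generators of $\Gamma_0(4)$ and therefore yields exactly the same vector-valued functional equation as in \cite{By}), $\mathcal{L}^{\pm}(w;a/c,t)$ satisfies a functional equation $w \leftrightarrow 1-w$ relating it to $\mathcal{L}^{\pm}(1-w; -\bar a/c, t)$, with the same Bessel-type integral kernel as in the proof of Lemma \ref{Vor} specialized to $k = 1/2$ and $r = t$. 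The multiplier factor on the dual side is $(-c/a)\epsilon_a^{2k} e(k/4)$ with $k = 1/2$, which equals $(-c/a)\epsilon_a e(1/8)$.

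\medskip

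\noindent\emph{Step 2: Mellin inversion and contour shift.} Write $\phi(\pm x) = \frac{1}{2\pi i}\int_{(\sigma)} \tilde\phi_{\pm}(w) x^{-w}\,dw$ for $\sigma$ large. Substituting into the left-hand side gives
\[
\frac{1}{2\pi i}\int_{(\sigma)} \mathcal{L}^{+}(w;a/c,t)\,\tilde\phi_{+}(w)\,dw \;+\; \frac{1}{2\pi i}\int_{(\sigma)} \mathcal{L}^{-}(w;a/c,t)\,\tilde\phi_{-}(w)\,dw.
\]
Shift each contour to $\Re w = -\sigma$. The integrand is meromorphic and, inheriting from the $y^{1/2 \pm it}$ terms of $E^{\ast}(\cdot,1/2+it)$, has simple poles only at $w = 1/2 \pm it$. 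The residues assemble, after using $\tilde\phi_{\epsilon}(1/2 \pm it) = \int_0^{\infty} \phi(\epsilon x) x^{-1/2 \mp it}\,dx$ (and absorbing the $|D|^{it}$ factor and the gamma denominator into the definition of $R_{\epsilon}(\pm t, a/c)$ precisely from the explicit leading coefficients $2^{1 \pm 2it}\pi^{\mp it}\Gamma(1/2 \pm 2it)\zeta(1 \pm 4it)/\Gamma(3/4+it)\Gamma(1/4+it)$ read off from $E^{\ast}$), into the first line of the claimed formula. On the shifted line $\Re w = -\sigma$, apply the functional equation of Step 1, then undo the Mellin inversion: the Bessel kernels combine into exactly the transform $\Phi$ of \eqref{defPhi}, producing the Voronoi-dual sum with coefficients $L(D,1/2+2it)|D|^{it}/\Gamma(\tfrac{1}{2}+it+\tfrac{1}{4}\text{sgn}(D))$ twisted by $e(-\bar a D/c)$.

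\medskip

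\noindent\emph{Main obstacle.} The routine mechanics of the shift are identical to \cite{By}; the delicate part is the bookkeeping of the residual contribution, namely verifying that the constants coming out of the $y^{1/2 \pm it}$-coefficients of $E^{\ast}$ combine cleanly with the gamma denominator $\Gamma(\tfrac12 + it + \tfrac14 \epsilon)^{-1}$ in $\mathcal{L}^{\pm}$ to give a single coefficient $R_{\epsilon}(\pm t, a/c)$ depending only on $(\epsilon, \pm t, a/c)$ as stated. A secondary sanity check is that the archimedean weights $k=1/2$ specialization of the multiplier $\epsilon_a^{2k} e(k/4)$ from Lemma \ref{Vor} reproduces the asserted $\epsilon_a e(1/8)$; this holds since $\epsilon_a^{2\cdot 1/2} = \epsilon_a$ and $e((1/2)/4) = e(1/8)$.
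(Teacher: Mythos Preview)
Your proposal is correct and follows essentially the same approach as the paper: both derive the lemma by running the contour-shift/functional-equation argument behind Lemma \ref{Vor} for the non-cuspidal form $E^{\ast}(\cdot,1/2+it)$, invoking \cite{DG} for the functional equation and picking up the polar contributions that the cuspidal case lacks. One minor bookkeeping slip: in your normalization $\mathcal{L}^{\pm}(w;a/c,t)=\sum b(D)\sqrt{|D|}\,e(aD/c)|D|^{-w}$ the poles sit at $w=1\pm it$ (not $1/2\pm it$), so the Mellin value that appears is $\tilde\phi_{\epsilon}(1\pm it)=\int_0^\infty \phi(\epsilon x)x^{\pm it}\,dx$, matching the integral in the statement.
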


For $t=0$ one combines the $\pm$-terms and takes the limit as $t \rightarrow 0$. In our application, the values $R_{\epsilon}(t, a/c)$ are irrelevant (as long as they are polynomial in $c$ and $t$) since we apply the formula with a function $\phi$ that oscillates much more strongly that $x^{\pm it}$ so that the integral is negligible. 

We obtain a similar summation formula for Hurwitz class numbers. Although we do not need it for the present result, we compute in Appendix \ref{appa} the residues explicitly and get the following handsome formula. 
\begin{lemma}\label{class-num1} Let $c\in \Bbb{N}$, $4 \mid c$, $(a, c) = 1$. Let $\phi$ be a smooth function with compact support in $(0, \infty) $. Then 
\begin{displaymath}
\begin{split}
\sum_{D < 0}\frac{H(D)}{|D|^{1/4}} e\Big(\frac{a|D|}{c}\Big) \phi(|D|) =  \Big(\frac{-c}{a}\Big) \bar{\epsilon}_a e\Big(\frac{3}{8}\Big)&\Bigg[ \sum_{D <0} \frac{H(D)}{|D|^{1/4}}  \frac{2\pi}{c} e\Big(-\frac{\bar{a}|D|}{c}\Big) \int_0^{\infty} \mathcal{J}^+ \Big(t \frac{(2\pi)^2 |D|}{c^2}\Big) \phi(t) dt\\
& 
+ \frac{1}{4\sqrt{\pi}} \sum_{ n = \square}  n^{1/4}  \frac{2\pi}{c} e\Big(\frac{\bar{a}n}{c}\Big) \int_0^{\infty} \mathcal{J}^{-}\Big(t \frac{(2\pi)^2 n}{c^2}\Big) \phi(t) dt \\
&
+ \int_0^{\infty} \phi(x) \Big( \frac{1}{\sqrt{8}c^{1/2}} x^{-1/4}- \frac{\sqrt{2}\pi}{3 c^{3/2}} x^{1/4}   \Big) dx \Bigg]
\end{split}
\end{displaymath}
where 
\begin{equation}\label{Jclean}
\mathcal{J}^+(x) =   \frac{\sin(2\sqrt{x}) }{\sqrt{\pi} x^{1/4}}, \quad \mathcal{J}^-(x) = \frac{2e^{-2\sqrt{x}}}{x^{1/4}}.
\end{equation}
\end{lemma}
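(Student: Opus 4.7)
I would obtain this identity by adapting the cuspidal Voronoi formula of Lemma \ref{Vor} to Zagier's weight-$3/2$ Eisenstein series $\mathcal{H}(z)$ from \eqref{zag}, which is the natural non-cuspidal ``modular form'' whose Fourier coefficients carry the Hurwitz class numbers. Concretely, $\mathcal{H}$ transforms as a weight-$3/2$ automorphic form on $\Gamma_0(4)$ with the theta multiplier; its oscillatory Fourier coefficients are $H(D)/|D|^{3/4}$ for $D<0$ together with $(4\sqrt{\pi}n^{1/4})^{-1}$ for negative indices $-n$ with $n$ a positive square, and it has two residual non-cuspidal pieces $-(4\pi y)^{3/4}/12$ and $y^{1/4}/(\sqrt{8}\pi^{1/4})$ that play exactly the role that the $y^{1/2\pm it}$ pieces played for $E^*$ in Lemma \ref{Vor-eis}.

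First I would write the target sum via Mellin inversion as
$$\sum_{D<0}\frac{H(D)}{|D|^{1/4}}e\Bigl(\frac{a|D|}{c}\Bigr)\phi(|D|) = \frac{1}{2\pi i}\int_{(\sigma)} L_{a/c}(s)\,\widetilde{\phi}(s)\,ds, \qquad L_{a/c}(s) := \sum_{D<0}\frac{H(D)\,e(a|D|/c)}{|D|^{s+1/4}},$$
with $\sigma$ chosen large enough for absolute convergence. The Dirichlet series $L_{a/c}(s)$ admits a meromorphic continuation and a functional equation that one obtains by running the proof of Lemma \ref{Vor} in reverse: integrate $\mathcal{H}(z+a/c)$ against $y^{s-1}$ and then apply the $\Gamma_0(4)$-transformation rule for $\mathcal{H}$ along a matrix $\smat{a}{\ast}{c}{\ast}\in\Gamma_0(4)$, which exists because $4\mid c$. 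The slash action with the theta multiplier (weight $3/2$) produces the Voronoi prefactor $(\frac{-c}{a})\bar{\epsilon}_a e(3/8)$, exactly matching $\epsilon_a^{2k}e(k/4)$ of Lemma \ref{Vor} at $k=3/2$, and swaps $a/c$ with $-\bar{a}/c$ in the additive twist of the dual Dirichlet series.

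Shifting the Mellin contour past the two simple poles of $L_{a/c}(s)$ coming from the non-cuspidal terms in \eqref{zag} (at $s=1/4$ from the $y^{1/4}/(\sqrt{8}\pi^{1/4})$ piece and at $s=3/4$ from the $-(4\pi y)^{3/4}/12$ piece) collects two residues which, paired against $\widetilde{\phi}(s)$, become the explicit terms $\frac{1}{\sqrt{8}c^{1/2}}\int\phi(x)x^{-1/4}dx$ and $-\frac{\sqrt{2}\pi}{3c^{3/2}}\int\phi(x)x^{1/4}dx$ on the right. The remaining contour integral, after inserting the dual Dirichlet series at $-\bar{a}/c$, unfolds into two pieces corresponding to the two families of oscillatory Fourier coefficients of $\mathcal{H}$: the $H(D)$-coefficients feed into the ``same-sign'' Bessel kernel $\mathcal{J}^{+,+}$ of Lemma \ref{Vor}, which at $k=3/2$, $r=1/4$ collapses via $J_{1/2}(z)=\sqrt{2/\pi z}\,\sin z$ to $\mathcal{J}^+(x)=\sin(2\sqrt{x})/(\sqrt{\pi}x^{1/4})$, whereas the square-supported coefficients feed into the ``opposite-sign'' kernel $\mathcal{J}^{+,-}$, which via $K_{1/2}(z)=\sqrt{\pi/2z}\,e^{-z}$ simplifies to $\mathcal{J}^-(x)=2e^{-2\sqrt{x}}/x^{1/4}$, matching \eqref{Jclean}.

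The main obstacle is pinning down the three absolute constants $1/\sqrt{8}$, $\sqrt{2}\pi/3$ and the relative weight $1/(4\sqrt{\pi})$ with total precision: each is a product of (i) the $\Gamma$-factor ratio in the functional equation evaluated at $s=1/4$ or $s=3/4$, (ii) the explicit normalizations $-1/12$ and $1/(\sqrt{8}\pi^{1/4})$ of the two residual pieces of $\mathcal{H}$, and (iii) the normalization of $W_{\pm 3/4, 1/4}$. This bookkeeping is exactly what Appendix \ref{appa} carries out in detail; the overall structural shape of the identity is otherwise an immediate transcription of Lemma \ref{Vor-eis} with $\mathcal{H}$ in place of $E^{*}$, and one expects no hidden difficulty beyond bookkeeping of constants.
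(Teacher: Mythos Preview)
Your proposal is correct and matches the paper's approach: the paper likewise derives Lemma~\ref{class-num1} as the non-cuspidal analogue of Lemma~\ref{Vor} applied to Zagier's weight-$3/2$ Eisenstein series $\mathcal{H}$, with the residual terms arising from the two constant pieces of $\mathcal{H}$ and the explicit residue bookkeeping deferred to Appendix~\ref{appa} (Lemma~\ref{class-num}). One small slip: in the paper's normalization the poles of $L_{a/c}(s)$ sit at $s=5/4$ and $s=3/4$ (not $s=1/4$ and $s=3/4$), with the $y^{3/4}$ piece producing the pole at $5/4$ and the $y^{1/4}$ piece the pole at $3/4$; this does not affect the structure of your argument.
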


\emph{Remark:} Observing that for negative $D \equiv 0, 1$ (mod 4) we have 
$$e(|D|/4) + e(3|D|/4) = 2 \delta_{D \equiv 0 \, (\text{mod } 4)}, \quad -e(|D|/4) + e(3|D|/4) = 2i \delta_{D \equiv 1 \, (\text{mod } 4)},$$
it is a straightforward exercise to conclude
$$\sum_{\substack{-X < D < 0\\ D\equiv \delta \, (\text{mod } 4)}} H(D) = \frac{\pi}{36} X^{3/2} - \frac{1}{8} X + O(X^{3/4})$$
for $\delta = 0, 1$. Further congruence conditions on $D$ can be imposed, and the error term can be improved by a more careful treatment of the dual term in the Voronoi formula.  See \cite{Vi} for the corresponding result for the ordinary class number $h(d)$. The following table provides some numerical results (here we combined the cases $\delta = 0$ and $\delta = 1$). \\

{\footnotesize
\begin{tabular}{l|l|l|l|l|l|l}
$X$ & 1000 & 2000 & 4000 & 6000 & 8000 & 10000\\ \hline   
$\displaystyle\sum^{\substack{\quad\\ \quad}}_{D \leq X} H(D)$ & 5280.5 &15131.3& 43189.5& 79685.7 & 122967 & 172106 \\ \hline
$\displaystyle \frac{\pi}{18} \overset{\substack{\quad\\ \quad}}{X^{3/2}} - \frac{1}{4}X\quad$ & 5269.22 & 15110.7 & 43153.7 & 79615.6 & 122885.6 & $\underset{\substack{\quad\\\quad\\\quad}}{172032.9}$  \\ \hline
\end{tabular}}

\vspace{0.4cm}

\subsection{The Kuznetsov formula}
The Kuznetsov formula was generalized  by Proskurin \cite{Pr} to arbitrary weights and by Andersen-Duke \cite{AD} to Kohnen's subspace. Interestingly, only the direction from Kloosterman sums to spectral sums appears to be in the literature, but no complete version in the other direction. Bir\'o \cite[p.\ 151]{Bi1} has a version only valid for test functions on the spectral side whose spectral mean value is 0, Ahlgren-Andersen \cite[Section 3]{AA} use an approximate version. 

We take this opportunity to state and prove the relevant Kuznetsov formula both for the full space of half-integral weight forms and for Kohnen's subspace. For $\kappa \in \{1/2, 3/2\}$  the relevant Kloosterman sums are
\begin{equation}\label{klooster2}
K_{\kappa}(n, m, c) =  \sum_{\substack{d\, (\text{mod }c)\\ (d, c) = 1}} \epsilon^{2\kappa}_d \left(\frac{c}{d}\right) e\left(\frac{nd + m\bar{d}}{c}\right)  
\end{equation}
for $4 \mid c$. The Eisenstein series belong to the two essential cusps $\mathfrak{a} = \infty, 0$. We normalize and denote their Fourier coefficients by $\phi_{\mathfrak{a}m}(1/2 + it) = \phi^{(\kappa)}_{\mathfrak{a}m}(1/2 + it)$ as in \cite[(12) - (14)]{Pr}. We denote by $\left.\sum\right.^{(\kappa)}$ a sum over an orthonormal basis of the space of cusp forms of weight $\kappa$ and label the members by  $v_j$, $j = 1, 2, \ldots$ with Fourier coefficients  $b_j(n)$ as in \eqref{51} and  spectral parameters by $t_j$. 
\begin{prop}\label{kuz-full}
Let $\kappa \in \{1/2, 3/2\}$, $m, n > 0$. Let $h$ be an even function, holomorphic in $|\Im t| < 2/3$ with $h(t) \ll (1 + |t|)^{-4}$. Then
\begin{displaymath}
 \begin{split}
&   \left.\sum_j \right.^{(\kappa)}\frac{\sqrt{mn}\overline{b_j(m)} b_j(n)}{\cosh(\pi t_j)} h(t_j)   + \sum_{\mathfrak{a}} \int_{-\infty}^{\infty} \Big(\frac{n}{m}\Big)^{it} \frac{ \overline{\phi^{(\kappa)}_{\mathfrak{a}m}(1/2 + it)} \phi^{(\kappa)}_{\mathfrak{a}n}(1/2 + it)}{4\cosh(\pi t)  \Gamma(\frac{1+\kappa}{2} + it)\Gamma(\frac{1+\kappa}{2}- it)} h(t) dt \\
  &=  \delta_{n=m}\int_{-\infty}^{\infty} h(t) t \sinh(\pi t)\Gamma \Big( \frac{1-\kappa}{2} + it\Big)\Gamma \Big( \frac{1-\kappa}{2} - it\Big)  \frac{dt}{4\pi^3} \\
  &+ e\Big(\frac{1-\kappa}{4}\Big)\sum_c \frac{K_{\kappa}(m, n, c)}{c} \int_0^{\infty} \frac{F( 4\pi\sqrt{nm}/c, 2t, -\kappa)}{\cosh(\pi t)} \Gamma \Big( \frac{1-\kappa}{2} + it\Big)\Gamma \Big( \frac{1-\kappa}{2} - it\Big) h(t) t\, \frac{dt}{2\pi^2}
   \end{split}
\end{displaymath}
if in addition  $h(\pm i/4) = 0$. Moreover, regardless of the value of $h(\pm i/4)$ we have 
\begin{displaymath}
 \begin{split}
&   \left.\sum_j\right.^{(\kappa)} \frac{\sqrt{mn}\overline{b_j(-m)} b_j(-n)}{\cosh(\pi t_j)} h(t_j)   + \sum_{\mathfrak{a}} \int_{-\infty}^{\infty} \Big(\frac{n}{m}\Big)^{it} \frac{ \overline{\phi^{(\kappa)}_{\mathfrak{a}, -m}(1/2 + it)} \phi^{(\kappa)}_{\mathfrak{a}, -n}(1/2 + it)}{4\cosh(\pi t)\Gamma(\frac{1-\kappa}{2} + it)\Gamma(\frac{1-\kappa}{2}- it)} h(t) dt \\
  &=  \delta_{n=m}\int_{-\infty}^{\infty} h(t) t \sinh(\pi t)\Gamma\Big(\frac{1+\kappa}{2} + it\Big)\Gamma\Big(\frac{1+\kappa}{2}- it\Big) \frac{dt}{4\pi^3} \\&+ e\Big(\frac{1+\kappa}{4}\Big)\sum_c \frac{K_{-\kappa}(m, n, c)}{c} \int_0^{\infty} \frac{F( 4\pi\sqrt{nm}/c, 2t, \kappa)}{\cosh(\pi t)}\Gamma\Big(\frac{1+\kappa}{2} + it\Big)\Gamma\Big(\frac{1+\kappa}{2}- it\Big) h(t) t\, \frac{dt}{2\pi^2}.
   \end{split}
\end{displaymath}
\end{prop}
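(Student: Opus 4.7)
The plan is to compute an inner product of half-integral weight Poincar\'e series on $\Gamma_0(4)$ (with theta multiplier) in two ways and then invert an archimedean integral transform. Since Proskurin \cite{Pr} has established the direction from Kloosterman sums to the spectral side in this generality, the remaining task is to make the inversion explicit for an arbitrary spectral test function $h$. Let $U^{(\kappa)}_m(z, s)$ denote the weight-$\kappa$ Poincar\'e series with seed proportional to $y^{s - \kappa/2} e(\pm m z)$, the sign being chosen according to whether we are proving the ``positive-positive'' or the ``negative-negative'' formula. I would study $\langle U^{(\kappa)}_m(\cdot, s_1), U^{(\kappa)}_n(\cdot, s_2)\rangle$ with $s_1, s_2$ in a region of absolute convergence.

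Geometrically, unfolding against the Fourier expansion of the second Poincar\'e series produces a diagonal term from $\gamma = e$ (given by a beta-type integral) plus a sum over cosets $\smat{\ast}{\ast}{c}{d}$, $4 \mid c$, producing the Kloosterman sums $K_{\pm\kappa}(m, n, c)$ of \eqref{klooster2} weighted by an archimedean Bessel/Whittaker kernel depending on $s_1, s_2$. Spectrally, Parseval applied to the spectral decomposition of $L^2(\Gamma_0(4)\backslash \mathbb{H}, \kappa)$---comprising cuspidal $v_j$, Eisenstein series attached to the cusps $\infty$ and $0$, and in weight $1/2$ the residual theta-type spectrum at spectral parameter $t = i/4$---produces $\sum_j \overline{b_j(\pm m)}b_j(\pm n) \cdot (\cdots)$ together with the Eisenstein analogue, where the factor $(\cdots)$ is the product of Mellin transforms of Whittaker functions, evaluating to explicit ratios of gamma functions in $s_1, s_2, t_j, \kappa$ (via Gauss/Barnes integrals and \cite[6.576.4]{GR}-type identities).

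To reach arbitrary $h$, integrate the resulting identity against a smooth weight $\psi(s_1, s_2)$ along vertical lines and invoke Mellin inversion to reproduce $h(t_j)$ on the spectral side, up to normalising gamma factors. On the Kloosterman side the same integration produces, after a sequence of contour shifts and applications of Barnes' lemma, the kernel $F(4\pi\sqrt{mn}/c, 2t, \mp\kappa)/\cosh(\pi t)$ of \eqref{defF} multiplied by the gamma factors in the statement; the point is that $F$ is precisely the combination of $J_{\pm 2it}$ produced by the $J$-Bessel transform of $y^{s-\kappa/2}$ via \cite[6.621.1]{GR} combined with the reflection identity for $\Gamma$. Equivalently, the task is to verify a half-integral-weight analogue of Sears--Titchmarsh inversion that inverts Proskurin's integral transform on the spectral side; this is a Bessel-type orthogonality identity that can be proved by contour shift and the residue theorem starting from the known completeness of the spectral basis.

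The main obstacles are twofold. The first is the bookkeeping of constants and phases: the $8$-th roots of unity $e((1\mp\kappa)/4)$, the factors $\cosh(\pi t)$ versus $\sinh(\pi t)$, and the precise placement of $\Gamma\bigl(\tfrac{1\mp\kappa}{2}\pm it\bigr)$ all arise from the interplay of Whittaker asymptotics, theta multipliers, and duplication/reflection for $\Gamma$; cross-checking against Proskurin's direction pins them down. The second, more substantive, obstacle concerns the contour shift past $t = \pm i/4$, which is precisely the spectral parameter of the weight-$1/2$ residual theta spectrum (Jacobi's theta on $\Gamma_0(4)$ and its dual). In the positive-positive version this residual spectrum is supported on square indices $k > 0$ and generates a nonzero pole-type contribution to the spectral side that cannot be matched geometrically unless we impose $h(\pm i/4) = 0$; in the negative-negative version the corresponding Fourier coefficients $b(-k)$ vanish identically on the residual spectrum, which is why that formula holds without any vanishing hypothesis on $h$.
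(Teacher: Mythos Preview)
Your strategy is sound in outline, but it diverges from the paper's route at the crucial inversion step and at the Bessel computation, and this is worth noting.

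The paper does not integrate over two complex variables $s_1,s_2$ with Mellin/Barnes machinery. Instead it quotes Proskurin's pre-Kuznetsov identity \cite[Lemma 3 and Lemma 6]{Pr} specialized to $\sigma=1$, $t=2\tau\in\Bbb{R}$, which already has the spectral side weighted by $\bigl(\cosh\pi(t_j-\tau)\cosh\pi(t_j+\tau)\bigr)^{-1}$. It then integrates in the single real variable $\tau$ against $(h(\tau+i/2)+h(\tau-i/2))\cosh(\pi\tau)\,\Gamma(1-\kappa/2+i\tau)\Gamma(1-\kappa/2-i\tau)$ and invokes Kuznetsov's elementary inversion lemma
\[
\int_{\Bbb{R}}\bigl(h(\tau+i/2)+h(\tau-i/2)\bigr)\frac{\cosh\pi\tau}{\cosh\pi(\tau-t)\cosh\pi(\tau+t)}\,d\tau=\frac{2h(t)}{\cosh\pi t},
\]
proved by a residue calculation. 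This single-variable device replaces your two-variable Mellin inversion entirely and pins down all the constants at once. On the Kloosterman side the paper does not use Barnes' lemma: after inserting the definition of $F$ it observes that $y^{1-\kappa}\frac{d}{dy}(J_{2i\tau}(y)/y^{1-\kappa})$ is a linear combination of $J_{2i\tau\pm1}(y)$ via the recurrences \cite[8.471]{GR}, so the inner $y$-integral collapses by the fundamental theorem of calculus. This is considerably shorter than contour-shifting to produce $F$.

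Your explanation of the condition $h(\pm i/4)=0$ is correct morally but not technically aligned with the proof: in the paper the condition is used to guarantee absolute convergence of the $c$-sum by shifting the $t$-contour to $|\Im t|=1/2-\varepsilon$ (exploiting the power series of $J_{2it}$), rather than to cancel a residual-spectrum pole on the spectral side. Your interpretation matches the \emph{remark} following the proposition, not the proof itself. For the second formula the paper gives two arguments: either redo Proskurin's computation with $\langle\overline{\mathcal{U}_m},\overline{\mathcal{U}_n}\rangle$, which sends $\kappa\mapsto-\kappa$, or apply the isometry $T_\kappa=((\tfrac{\kappa-1}{2})^2+t^2)\,X\Lambda_\kappa$ (conjugation composed with the lowering operator) which swaps positive and negative Fourier coefficients and moves weight $\kappa$ to $2-\kappa$.
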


\emph{Remark:} Note that the space of weight 1/2 Maa{\ss} forms $v$ with spectral parameter $i/4$ are in the kernel of the Maa{\ss} lowering operator. Hence $y^{-1/4}v$ is holomorphic, so that $v$ has no non-vanishing negative Fourier coefficients. This is consistent with the fact that the Eisenstein contribution vanishes in this case because of the gamma factors in the denominator. 

\begin{proof} By \cite[Lemma 3]{Pr} with $\sigma = 1$, $t = 2\tau \in \Bbb{R}$ and the first formula in \cite[Lemma 6]{Pr} we have the ``pre-Kuznetsov'' formula
\begin{displaymath}
\begin{split}
&-\sum_c \frac{K_{\kappa}(m, n, c)}{c^2}  x^{-\kappa}\frac{\pi}{\sinh(2\pi \tau)} \int_0^{x} F(y, 2\tau, 1-\kappa)y^{\kappa - 1} dy+ \frac{\delta_{n=m}e((1+\kappa)/4)}{4\pi(n+m)} \\
&= \frac{\pi^2e((1+\kappa)/4)}{2\Gamma(1 - \frac{\kappa}{2} + i\tau)\Gamma(1 - \frac{\kappa}{2} - i\tau)}\Bigg(\sum_j \frac{\overline{b_j}(m)b_j(n) }{\cosh(\pi(t_j - \tau))\cosh(\pi(t_j + \tau))}\\
& + \frac{1}{4\sqrt{nm}} \sum_{\mathfrak{a}} \int_{-\infty}^{\infty} \Big(\frac{m}{n}\Big)^{-it} \frac{\overline{\phi_{\mathfrak{a}m}(1/2 + it)}\phi_{\mathfrak{a}n}(1/2 + it) }{\Gamma(\frac{1+\kappa}{2} + it) \Gamma(\frac{1+\kappa}{2} - it) \cosh(\pi(t- \tau))\cosh(\pi(t + \tau))} dt\Bigg).
\end{split}
\end{displaymath}
where $$x = 4\pi \sqrt{mn}/c.$$ 
 Note that taking $\sigma = 1$ is admissible in the present situation because we have the same  Weil-type bounds for the Kloosterman sums $K_{\kappa}(n, m, c)$ as for $K^+_{\kappa}(n, m, c)$ in \eqref{weil}. Also note that there is a typo in \cite[Lemma 6]{Pr} in the upper limit of the integral. 
 
 For $h$ as in the lemma and $t \in \Bbb{R}$ we have the following inversion formula
$$\int_{-\infty}^{\infty} \big(h(\tau + i/2)+ h(\tau - i/2)\big) \frac{\cosh(\pi \tau)}{\cosh(\pi(\tau - t)) \cosh(\pi(\tau + t))} d\tau = \frac{2 h(t)}{\cosh(\pi t)}.$$
This is the lemma on p.327 of \cite{Ku}\footnote{We have corrected a sign error. This sign error is cancelled  by another sign error in \cite[(6.6)]{Ku}. The inversion formula was re-produced in \cite[Lemma 16.4]{IK} with the same sign error. There the sign error is cancelled by sign errors in the first and fifth display on p.410.} which is readily proved by residue calculus. We now integrate the pre-Kuznetsov formula against
$$ \big(h(\tau + i/2)+ h(\tau - i/2)\big) \cosh(\pi \tau) \Gamma (1 - \kappa/2 + i\tau) \Gamma (1 - \kappa/2 - i\tau).$$
Our assumptions on $h$ ensure absolute convergence and the possibility to shift the contour up and down to $\Im t = \pm 1/2$ without crossing poles. In this way the
  $\delta$-term becomes
$$\delta_{m=n} \frac{e((1+\kappa)/4)}{4\pi (m+n)}\int_{-\infty}^{\infty} h(\tau) \Gamma(1/2 - \kappa/2 + i\tau)\Gamma(1/2 - \kappa/2 - i\tau) 2\tau\sinh(\pi \tau) d\tau.$$
For the Kloosterman term we insert the definition in \eqref{defF} getting
\begin{displaymath}
\begin{split}
- \sum_c \frac{K_{\kappa}(m, n, c)}{c^2}&\frac{\pi i}{2 x^{\kappa}} \int_{-\infty}^{\infty}\int_0^x h(\tau) \frac{\Gamma((1-\kappa)/2 + i\tau)\Gamma((1-\kappa)/2 - i\tau)}{\cosh(\pi \tau)} \\
&\sum_{\epsilon_1, \epsilon_2 \in \{\pm 1\}} J_{\epsilon_1 + \epsilon_2 2 i \tau}(y) \cos(\pi(\kappa/2 + \epsilon_2 i \tau))  ((1 - \kappa )/2+ \epsilon_1\epsilon_2  i \tau)
y^{\kappa - 1} dy \, d\tau
\end{split}
\end{displaymath}
We note that
\begin{displaymath}
\begin{split}
 y^{1-\kappa}\frac{d}{dy} \Big(\frac{J_{2i\tau}(y)}{y^{1-\kappa}}\Big) &= J'_{2i\tau}(y) - \frac{(\kappa - 1)J_{2i\tau}(y)}{y} \\
&= \frac{J_{2i\tau+1}(y)(i\tau + (1-\kappa)/2) + J_{2i\tau-1}(y)((1 - \kappa)/2 - i\tau)}{-2i\tau}
\end{split}
\end{displaymath}
where the last equality follows from the recurrence relations \cite[8.471.1\&2]{GR}. Substituting this, we can evaluate the $y$-integral by the fundamental theorem of calculus, arriving at
$$-\sum_c \frac{K_{\kappa}(m, n, c)}{c^2} \frac{\pi }{ x } \int_{-\infty}^{\infty} \tau h(\tau) \frac{\Gamma((1-\kappa)/2 + i\tau)\Gamma((1-\kappa)/2 - i\tau)}{\cosh(\pi r)}  F(x, 2\tau, -\kappa)   
  d\tau$$
after some elementary manipulations. We multiply the resulting expression by $\sqrt{mn}$ 
to obtain the first formula. Note that the $c$-sum is absolutely convergent by the power series expansion of the Bessel function contained in $F(x, 2t, -\kappa)$ and the fact that $h(\pm i/4) = 0$,  as we can shift the $t$-contour up and down to $|\Im t| = 1/2 - \varepsilon$.

There are two ways to derive the second formula from the first. We can either observe that in Proskurin's notation we can compute
$\langle \overline{\mathcal{U}_m(., s_1)}, \overline{\mathcal{U}_n(., \bar{s}_2)}\rangle$ instead of $\langle  \mathcal{U}_m(., s_1),  \mathcal{U}_n(., \bar{s}_2)\rangle$. This changes the signs of the coefficients in the spectral expansion and has the effect of changing $\kappa$ into $-\kappa$. Note that in this case we do not need the extra condition $h(\pm i/4) = 0$ to shift contours. Alternatively,  we use the following fact (cf.\ \cite[(4.17), (4.18), (4.27), (4.28), (4.64), p.507, p.509]{DFI}): the map  $$T_{\kappa} = ((\textstyle \frac{\kappa - 1}{2})^2 + t^2) X\Lambda_{\kappa}$$ with $X : f(z) \mapsto f(-\bar{z})$ and $\Lambda_{\kappa} = \kappa/2 + y(i\partial_x - \partial_y)$ the weight lowering operator is a bijective isometry between  weight $\kappa$ and weight $2-\kappa$ that exchanges positive and negative Fourier coefficients:
\begin{equation*}
\begin{split}
  T_{\kappa} & \Bigg(\sum_{n \not=0} b(n) e(nx)W_{\text{sgn}(n)\frac{\kappa}{2}, it}(4 \pi|n|y) \Bigg) \\
  & = \sum_{n \not= 0} b(-n) \text{sgn}(n)  \Big( \Big(\frac{\kappa - 1}{2}\Big)^2+t^2\Big)^{-\frac{1}{2}\text{sgn}(n)} e(nx)W_{\text{sgn}(n)\frac{2-\kappa}{2}, it}(4 \pi|n|y). 
  \end{split}
\end{equation*}
This yields again the second formula for the first. 
\end{proof}

In order to get a corresponding formula for the Kohnen space, we apply the $L$ operator
$$\frac{1}{2(1 + i^{2\kappa})} \sum_{w \, \text{mod } 4} \left( \begin{matrix} 1+w & 1/4\\ 4w & 1\end{matrix}\right)$$
to the formula. As in the case of the Petersson formula (Lemma \ref{lem1}), this has the effect that 
\begin{itemize}
\item the cuspidal term is restricted to forms in the Kohnen space;
\item the $\delta$-term is multiplied by $2/3$; the reason for the number 2/3 is that the dimension of the Kohnen space is 1/3 of the full space, but only half of the coefficients appear;
\item the Kloosterman sums \eqref{klooster2} are replaced with the Kloosterman sums \eqref{defKlo} and the Kloosterman term is also multiplied by 2/3.
\end{itemize}
The hardest part is to compute the Eisenstein coefficients. All of this  has been worked out in detail in \cite[Section 5]{AD}. The corresponding formula \cite[Theorem 5.3]{AD} can be inverted in the same way. In the following lemma let $\sum^+$ denote a sum over an orthonormal basis of weight 1/2 Maa{\ss} cusp forms in Kohnen's space. We recall the definition \eqref{basicL} and \eqref{basicL1} of $L(D, s)$ for a discriminant $D$. 

\begin{lemma}\label{lem10}  Let $\kappa = 1/2$ and let $n, m \in \Bbb{Z}$ that are congruent $0$ or $1$ modulo $4$. Let $h$ be an even function, holomorphic in $|\Im t| < 2/3$ with $h(t) \ll (1 + |t|)^{-4}$. Then 
\begin{displaymath}
 \begin{split}
&   \left.\sum_j \right.^{+}\frac{\sqrt{mn}\overline{b_j(m)} b_j(n)}{\cosh(\pi t_j)} h(t_j)   + \frac{1}{12}\int_{-\infty}^{\infty} \Big(\frac{n}{m}\Big)^{it} \frac{L(m, 1/2 - 2it)L(n, 1/2 + it)  }{|\zeta(1 + 4it)|^2\cosh(\pi t)  \Gamma(\frac{1+\kappa}{2} + it)\Gamma(\frac{1+\kappa}{2}- it)} h(t) dt \\
  &=  \frac{2}{3}\delta_{n=m}\int_{-\infty}^{\infty} h(t) t \sinh(\pi t)\Gamma \Big( \frac{1-\kappa}{2} + it\Big)\Gamma \Big( \frac{1-\kappa}{2} - it\Big)  \frac{dt}{4\pi^3} \\
  &+ \frac{2}{3}e\Big(\frac{1-\kappa}{4}\Big)\sum_c \frac{K^+_{\kappa}(m, n, c)}{c} \int_0^{\infty} \frac{F( 4\pi\sqrt{nm}/c, 2t, -\kappa)}{\cosh(\pi t)} \Gamma \Big( \frac{1-\kappa}{2} + it\Big)\Gamma \Big( \frac{1-\kappa}{2} - it\Big) h(t) t\, \frac{dt}{2\pi^2}
   \end{split}
\end{displaymath}
for $n, m > 0$ if in addition $h(\pm i/4) = 0$. Moreover, regardless of the value of $h(\pm i/4)$ we have 
\begin{displaymath}
 \begin{split}
&   \left.\sum_j\right.^{(+)} \frac{\sqrt{|mn|} \, \overline{b_j(m)} b_j(n)}{\cosh(\pi t_j)} h(t_j)   + \frac{1}{12}\int_{-\infty}^{\infty} \Big(\frac{|n|}{|m|}\Big)^{it} \frac{L(m, 1/2 - 2it)L(n, 1/2 + it)  }{|\zeta(1 + 4it)|^2\cosh(\pi t)\Gamma(\frac{1-\kappa}{2} + it)\Gamma(\frac{1-\kappa}{2}- it)} h(t) dt \\
  &= \frac{2}{3} \delta_{n=m}\int_{-\infty}^{\infty} h(t) t \sinh(\pi t)\Gamma\Big(\frac{1+\kappa}{2} + it\Big)\Gamma\Big(\frac{1+\kappa}{2}+ it\Big) \frac{dt}{4\pi^3} \\&+ \frac{2}{3}e\Big(\frac{1+\kappa}{4}\Big)\sum_c \frac{K^+_{-\kappa}(|m|, |n|, c)}{c} \int_0^{\infty} \frac{F( 4\pi\sqrt{|nm|}/c, 2t, \kappa)}{\cosh(\pi t)}\Gamma\Big(\frac{1+\kappa}{2} + it\Big)\Gamma\Big(\frac{1+\kappa}{2}- it\Big) h(t) t\, \frac{dt}{2\pi^2}
   \end{split}
\end{displaymath}
if $n, m < 0$. 
\end{lemma}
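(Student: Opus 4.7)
The plan is to deduce Lemma \ref{lem10} from Proposition \ref{kuz-full} by projecting the full-space Kuznetsov formula for weight $\kappa$ onto Kohnen's plus subspace via the $L$-operator
$$L = \frac{1}{2(1+i^{2\kappa})} \sum_{w\,(\mathrm{mod}\,4)} \begin{pmatrix} 1+w & 1/4\\ 4w & 1\end{pmatrix}.$$
Since $L$ is (up to normalisation) the orthogonal projector of $S_\kappa(4)$ onto $S^+_\kappa(4)$ and annihilates all Fourier coefficients $b(n)$ with $n$ not congruent to $0$ or $1$ modulo $4$ (times $(-1)^{\lceil \kappa\rceil}$), applying it inside the bilinear kernel of the Poincar\'e-series pairing immediately restricts the cuspidal sum $\sum_j^{(\kappa)}$ to a sum $\sum_j^{+}$ over an orthonormal basis of Kohnen's plus space, and it leaves the assumption $n,m \equiv 0,1$ (mod $4$) as a natural compatibility condition.

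On the geometric side I would track how $L$ transforms each piece, following the template already worked out in \cite[Section 5]{AD}. The diagonal $\delta_{m=n}$ term is multiplied by $2/3$: a factor $1/3$ reflects the fact that $\dim S^+_\kappa(4) = \frac{1}{3}\dim S_\kappa(4)$ and a factor $2$ arises because the plus-space supports only half of the residue classes and the Kohnen inner product is therefore normalised accordingly. The Kloosterman term undergoes the same $2/3$ factor together with the standard combinatorial identity, essentially a Kohnen-style averaging over the four cosets modulo $4$, that converts the full sums $K_\kappa(m,n,c)$ (already supported on $4\mid c$) into the modified sums $K^+_\kappa(m,n,c)$ of \eqref{defKlo}. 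Neither the test integral nor the function $F(4\pi\sqrt{nm}/c,2t,\mp\kappa)$ is altered by this procedure.

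The main obstacle is the Eisenstein contribution, which has to be transformed from the generic cuspidal-basis form involving $\phi^{(\kappa)}_{\mathfrak{a}n}(1/2+it)$, $\mathfrak{a}\in\{0,\infty\}$, into a clean expression with an $L$-function numerator. Here I would use the explicit computation of the Fourier coefficients of the Kohnen-plus half-integral weight Eisenstein series $E^*(z,\tfrac12+it)$ recalled in the paragraph before Lemma \ref{Vor-eis}. After the $L$-projection, the two cusp contributions at $0$ and $\infty$ collapse onto the single $E^*$, whose $n$-th Fourier coefficient equals
$$\frac{L(n,\tfrac12+2it)|n|^{it}}{|n|^{1/2}\Gamma(\tfrac12+\tfrac14\mathrm{sgn}(n)+it)},$$
up to the scalar arising from the constant term. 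Pairing this coefficient with its complex conjugate at $m$, inserting the factor $(n/m)^{it}$ coming from the bilinear form on Eisenstein coefficients, and clearing the denominator $|\zeta(1+4it)|^2$ from the normalisation of $E^*$ (together with a further factor $1/4$ dictated by the projection), one obtains exactly the Eisenstein term
$$\frac{1}{12}\int_{-\infty}^{\infty}\Bigl(\frac{n}{m}\Bigr)^{it}\frac{L(m,\tfrac12-2it)L(n,\tfrac12+2it)}{|\zeta(1+4it)|^2\cosh(\pi t)\,\Gamma(\tfrac{1+\kappa}{2}+it)\Gamma(\tfrac{1+\kappa}{2}-it)}\,h(t)\,dt,$$
the factor $1/12 = (2/3)\cdot(1/8)$ combining the Kohnen projection constant with the normalisation of $E^*$ at the two cusps. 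The absence of any residue at $s=1/2\pm it$ in the spectral integral is guaranteed by the hypothesis $h(\pm i/4)=0$ in the positive-coefficient case, exactly as in Proposition \ref{kuz-full}.

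Finally, the formula for $n,m<0$ follows by either of the two routes indicated in the proof of Proposition \ref{kuz-full}: one may either repeat the pre-Kuznetsov derivation with the conjugated Poincar\'e series, which has the effect of replacing $\kappa$ by $-\kappa$, or apply the weight-lowering isometry $T_\kappa$ which interchanges positive and negative Fourier coefficients and the spaces of weight $\kappa$ and $2-\kappa$. In the negative case the Eisenstein denominator is $\Gamma(\tfrac{1-\kappa}{2}\pm it)$, whose poles at $t=\pm i/4$ cancel the potentially troublesome residues of the Eisenstein coefficients, which is why the hypothesis $h(\pm i/4)=0$ is no longer needed. This matches, and in fact explains, the remark following the proposition about weight $1/2$ Maass forms with spectral parameter $i/4$ having no non-vanishing negative Fourier coefficients.
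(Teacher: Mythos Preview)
Your proposal is correct and follows essentially the same approach as the paper. The paper's argument, given in the paragraph immediately preceding Lemma \ref{lem10}, is precisely to apply the $L$-operator to the full-space formula, with the same three effects you describe (cuspidal sum restricted to the plus space, $\delta$-term multiplied by $2/3$, Kloosterman sums $K_\kappa$ replaced by $K^+_\kappa$ with the same $2/3$ factor), and to import the Eisenstein computation from \cite[Section 5]{AD}; the only procedural difference is that the paper phrases the final step as inverting the Kohnen-space formula \cite[Theorem 5.3]{AD} by the same integration technique as in Proposition \ref{kuz-full}, whereas you apply $L$ directly to the already-inverted Proposition \ref{kuz-full}, which is equivalent.
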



 


\section{Harmonic analysis on positive definite matrices}\label{sec6}

Here we prove Proposition \ref{prop1}. We identify the Hilbert spaces 
\begin{equation}\label{product}
   (\Bbb{H}, y^{-2} dx\, dy) \times (\Bbb{R}_{>0}, r^{-1} dr) \cong (\mathcal{P}(\Bbb{R}), (\det Y)^{-3/2} dY)
   \end{equation} via
$$\iota : (x + iy, r) \mapsto \sqrt{r}\left(\begin{matrix} y^{-1} & -xy^{-1}\\-x y^{-1} & y^{-1} (x^2 + y^2)\end{matrix}\right).$$
Note that for  $ Y   =     \iota(x+iy, r)$ with $ r = \det Y$ we have
$$\Big|\det \frac{dY}{d(r, x, y)} \Big|   = \frac{\sqrt{r}}{y^2},$$
so that the measures coincide. 
The group $\overline{\Gamma} = {\rm PSL}_2(\Bbb{Z})$ acts faithfully on $\mathcal{P}(\Bbb{R})$ and $\mathcal{P}(\Bbb{Z})$ by $T \mapsto U^{\top} T U$ for   $U \in  \overline{\Gamma}$. This is compatible with the action of $\overline{\Gamma}$ on $\Bbb{H}$ by M\"obius transforms.   Every   smooth function $f \in L^2(\Gamma\backslash \Bbb{H})$ has a spectral decomposition
$$f(z) = \sum_{j \geq 0} \langle f, u_j \rangle u_j(z) + \int_{-\infty}^{\infty} \langle f, E(., 1/2 + it)\rangle E(z, 1/2 + it) \frac{dt}{4\pi} = \int_{\Lambda} \langle f, {\tt u} \rangle {\tt u}(z) d{\tt u}. $$
 We recall the notion of the spectral parameter $t_{\tt u}$ for ${\tt u} \in \Lambda$; the constant function has spectral parameter $i/2$. 
Combining the spectral decomposition on $\Gamma \backslash \Bbb{H}$ with Mellin inversion, we conclude that, for a smooth  function $\Phi \in L^2(\Gamma \backslash \mathcal{P}(\Bbb{R}))$,   the spectral decomposition
$$\Phi(Y) = \Phi(\iota(x + iy, r)) =  \int_{(0)}  \int_{\Lambda} \langle \widehat{\Phi}(s), {\tt u} \rangle {\tt u}(x + iy) d {\tt u} \,\, r^{-s} \frac{ds}{2\pi i}$$
holds provided $\Phi$ is sufficiently rapidly decaying as $r\rightarrow 0$ and $r \rightarrow \infty$. Here,  $$\widehat{\Phi}(s)(x+iy) = \int_0^{\infty} \Phi(\iota(x + iy, r)) r^{s} \frac{dr}{r}$$
is the Mellin transform with respect to the $r$-variable. This gives the Parseval formula
\begin{equation}\label{pars}
\| \Phi(Y) \|^2  = \int_{\Gamma \backslash \mathcal{P}(\Bbb{R})} |\Phi(Y)|^2 \frac{dY}{(\det Y)^{3/2}} = \int_{-\infty}^{\infty}  \int_{\Lambda} |\langle \widehat{\Phi}(it), {\tt u} \rangle|^2 d{\tt u}\, \frac{dt }{2\pi}.
\end{equation}


For an automorphic  form  ${\tt u} \in \Lambda$   and a Siegel cusp form $F \in S^{(2)}_k$ with Fourier expansion \eqref{four1} we define the twisted Koecher--Maa{\ss} series by
\begin{equation}\label{KM}
L(F \times {\tt u}, s) := \sum_{T\in \mathcal{P}(\Bbb{Z})/\overline{\Gamma}} \frac{a(T)}{\epsilon(T)( \det T)^{s + (k-1)/2} }{\tt u}\left( (\det T)^{-1/2}T\right)
\end{equation}
where $\epsilon(T) = \{U \in \overline{\Gamma} \mid U^{\top} T U = T\}$ is the stabilizer and $\Re s$ is (initially) sufficiently large. This series is often defined in terms of ${\rm GL}_2(\Bbb{Z})$-equivalence instead of $\overline{\Gamma}$-equivalence, but for us the present version is more convenient. This function has no Euler product, but it does have a functional equation. Let
\begin{equation}\label{G-KM}
G(F \times {\tt u}, s) := 4(2\pi)^{-k+1-2s}\prod_{\pm} \Gamma\left(\frac{k-1}{2} +s - \frac{1}{4} \pm \frac{it_{\tt u}}{2}\right).
\end{equation}
Let us assume that $k$ is even. Then for all even ${\tt u}$ (including Eisenstein series and the constant function), the function $L(F \times {\tt u}, s)$ has an analytic continuation to $\Bbb{C}$ that is bounded in vertical strips and satisfies the functional equation \cite[Theorem 3.5]{Im}
\begin{equation*}
L(F \times {\tt u}, s) G(F \times {\tt u}, s) = L(F \times {\tt u}, 1-s) G(F \times {\tt u}, 1-s).
\end{equation*}
The functional equation is a consequence of the following period formula.   For $\Re s$ sufficiently large \cite[p.\ 927-928]{Im} we have
\begin{displaymath}
\begin{split}
\int_0^{\infty} & \langle F(i \cdot (., r)), {\tt u}\rangle r^{\frac{k-1}{2} + s} \frac{dr}{r}   = \int_{\overline{\Gamma} \backslash \Bbb{H}} \int_0^{\infty} F(i \cdot (z, r)) r^{\frac{k-1}{2} + s} \frac{dr}{r} \overline{{\tt u}}(z) \frac{dx \,dy}{y^2}\\
& = \int_{\overline{\Gamma}  \backslash \mathcal{P}(\Bbb{R})} \sum_{T \in \mathcal{P}(\Bbb{Z})} a(T) e^{-2\pi \text{tr}(YT)} (\det Y)^{\frac{k-1}{2} + s}\overline{{\tt u}}\left( (\det Y)^{-1/2}Y\right) \frac{dY}{(\det Y)^{3/2}}.
\end{split}
\end{displaymath}
Now splitting the $T$-sum into equivalence classes modulo $\overline{\Gamma}$ and unfolding the integral, this equals
$$ \sum_{T\in \mathcal{P}(\Bbb{Z})/\overline{\Gamma}} \frac{a(T)}{\epsilon(T) }  \int_{  \mathcal{P}(\Bbb{R})}   e^{-2\pi \text{tr}(YT)} (\det Y)^{\frac{k-1}{2} + s} \overline{{\tt u}}\left( (\det Y)^{-1/2}Y \right) \frac{dY}{(\det Y)^{3/2}}.$$
Note that it is important that the action of $\overline{\Gamma}$ is faithful. The last integral over $\mathcal{P}(\Bbb{R})$ was evaluated by Maa{\ss} \cite[p.\ 85 and p.\ 94]{Ma}:
$$\int_{  \mathcal{P}(\Bbb{R})}   e^{- \text{tr}(YT)} (\det Y)^{  s} \overline{{\tt u}}\left( (\det Y)^{-1/2}Y \right) \frac{dY}{(\det Y)^{3/2}} = \frac{\pi^{1/2}}{(\det T)^{s}} \overline{\tt u}\left((\det T)^{-1/2} T\right)\prod_{\pm}  \Gamma\Big(s - \frac{1}{4} \pm  \frac{it_{\tt u}}{2}\Big) $$
for any ${\tt u} \in \Lambda$. Thus we obtain 
$$\int_0^{\infty}  \langle F(i \cdot (., r)), {\tt u}\rangle r^{\frac{k-1}{2} + s} \frac{dr}{r} = \frac{\sqrt{\pi}}{4}  L(F \times \overline{{\tt u}}, s)   G(F \times \overline{{\tt u}}, s), $$
initially for $\Re s$ sufficiently large, but then by analytic continuation for all $s \in \Bbb{C}$. For odd ${\tt u}$, the left hand side vanishes, since $F(i \cdot (., r)), {\tt u}\rangle = 0$ for every $r$. 
The Parseval formula \eqref{pars} now implies Proposition \ref{prop1} in the introduction. \\
  

In the special case where $F$ is a Saito--Kurokawa lift, the Koecher--Maa{\ss} series simplifies. 

\begin{lemma}\label{lem7} If $k$ is even and $F = F_h \in S_k^{(2)}$ is a Saito--Kurokawa lift with Fourier expansion as in \eqref{four1} and \eqref{four2} and ${\tt u} \in \Lambda$, then
$$L(F_h \times {\tt u}, s)   =   4^{s + \frac{k-1}{2}} \zeta(2s) \sum_{\substack{D< 0\\ D \equiv 0, 1 \, (\text{{\rm mod  }}4)}} \frac{c_h(|D|)P(D; {\tt u}) }{|D|^{s + \frac{k-1}{2}}}$$
for $\Re s$ sufficiently large and $P(D; {\tt u})$ as in \eqref{defP}. 
\end{lemma}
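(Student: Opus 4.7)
The strategy is a direct unfolding of the Saito--Kurokawa Fourier coefficient formula \eqref{four2} inside the Koecher--Maa{\ss} series \eqref{KM}, followed by a change of variable that trades the innermost divisor sum for a factor of $\zeta(2s)$ and that re-indexes the remaining matrices by their discriminants.

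The first step is to identify the ``spherical'' matrix $(\det T)^{-1/2} T$ with a Heegner point. Writing $T = \left(\begin{smallmatrix} n & r/2 \\ r/2 & m \end{smallmatrix}\right) \in \mathcal P(\Bbb Z)$ and comparing with the explicit form of $\iota(z,1)$ given in Section \ref{sec6}, one reads off
$$
(\det T)^{-1/2} T = \iota(z_T, 1), \qquad z_T = \frac{-r + i\sqrt{|D|}}{2n}, \qquad D := -4\det T,
$$
so $z_T$ is precisely the Heegner point attached to the integral quadratic form $nX^2 + rXY + mY^2$ of discriminant $D$. Since $D \equiv r^2 \pmod 4$, the integer $D$ runs over all negative discriminants as $T$ runs over $\mathcal P(\Bbb Z)$. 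The action $T \mapsto U^{\top} T U$ of $\overline{\Gamma}$ on $\mathcal P(\Bbb Z)$ corresponds to the standard $\overline{\Gamma}$-action on integral binary quadratic forms and hence, via the M\"obius action, to $z_T \mapsto U z_T$; in particular the stabilizer $\epsilon(T)$ agrees with $\epsilon(z_T)$.

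Next, I substitute \eqref{four2} into \eqref{KM} and swap the order of summation, pulling the divisor $d \mid (n,r,m)$ to the outside:
$$
L(F_h \times {\tt u}, s) = \sum_{d \geq 1} d^{k-1} \sum_{\substack{T \in \mathcal P(\Bbb Z)/\overline{\Gamma} \\ d \mid (n,r,m)}} \frac{c_h(4\det T/d^2)}{\epsilon(T)(\det T)^{s + (k-1)/2}}\, {\tt u}(z_T).
$$
The condition $d \mid (n,r,m)$ means exactly that $T = dT'$ with $T' \in \mathcal P(\Bbb Z)$. Under this substitution one has $\det T = d^2 \det T'$, $\epsilon(T) = \epsilon(T')$ (both stabilizer conditions are independent of the scalar $d$), and, crucially, $z_T = z_{T'}$ because scaling $T$ by $d$ scales both $r$ and $n$ (and $\sqrt{|D|}$) by the same factor $d$. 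Collecting powers of $d$ yields
$$
L(F_h \times {\tt u}, s) = \Big(\sum_d d^{-2s}\Big) \sum_{T' \in \mathcal P(\Bbb Z)/\overline{\Gamma}} \frac{c_h(4\det T')}{\epsilon(T')(\det T')^{s + (k-1)/2}}\, {\tt u}(z_{T'}) = \zeta(2s)\, \Sigma(s),
$$
say, for $\Re s$ large enough that all manipulations are absolutely convergent.

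Finally, I re-index the sum $\Sigma(s)$ by the discriminant $D = -4\det T'$. Writing $\det T' = |D|/4$, the factor $(\det T')^{-(s+(k-1)/2)}$ produces $4^{s+(k-1)/2}|D|^{-(s+(k-1)/2)}$, while the inner sum over $T'$ with fixed $\det T' = |D|/4$ is, by the bijection between $\overline{\Gamma}$-classes of forms of discriminant $D$ and $H_D$, exactly the period $P(D;{\tt u})$ of \eqref{defP}. This gives
$$
L(F_h \times {\tt u}, s) = 4^{s + (k-1)/2}\,\zeta(2s) \sum_{\substack{D < 0 \\ D \equiv 0,1 \,(\mathrm{mod}\,4)}} \frac{c_h(|D|)\,P(D;{\tt u})}{|D|^{s+(k-1)/2}},
$$
as claimed. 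The only real subtlety is the scaling invariance $z_{dT'} = z_{T'}$, $\epsilon(dT') = \epsilon(T')$ that allows the divisor sum to decouple cleanly from the Heegner data; everything else is bookkeeping.
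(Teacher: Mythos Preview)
Your proof is correct and follows essentially the same approach as the paper's: both unfold the Saito--Kurokawa coefficient formula inside the Koecher--Maa{\ss} series and use the scaling $T = dT'$ (equivalently, the content stratification) together with the invariances $z_{dT'}=z_{T'}$, $\epsilon(dT')=\epsilon(T')$ to decouple the divisor sum as $\zeta(2s)$. The paper organizes the computation by first grouping over $D$ and then over the content $e(T)=t$ before substituting $t=t'd$, whereas you swap the order of summation first and apply the scaling bijection directly; your route is slightly more streamlined but the underlying idea is identical.
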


\emph{Remark:} One would expect that $c_h(|D|)$ is roughly of size $|D|^{\frac{k}{2} - \frac{3}{4}}$ and that $P(D; {\tt u})$  is roughly of size $|D|^{\frac{1}{4}}$ (for non-constant ${\tt u}$) so that typically $c_h(|D|)P(D; {\tt u})|D|^{-\frac{k-1}{2}}$ is roughly of constant size (with respect to $D$). Using   trivial bounds for $P(D; {\tt u})$ and $c_h(|D|)$, the quantity $c_h(|D|)P(D; {\tt u})|D|^{-\frac{k-1}{2}}$ is certainly $\ll |D|^{3/4+\varepsilon}$.

\begin{proof}
We copy the argument from \cite[p.\ 22]{Bo}.   Let $\Delta$ be a negative fundamental discriminant, $D = \Delta f^2$ a negative discriminant   and $T \in  \mathcal{P}(\Bbb{Z})/\overline{\Gamma}$. For such $T  = \left(\begin{smallmatrix}  n & r/2\\ r/2 & m\end{smallmatrix}\right)$ we write $e(T) = (n, r, m)$. 
It follows  from \eqref{four2} that
\begin{displaymath}
\begin{split}
L(F_h \times {\tt u}, s) &= 4^{s + \frac{k-1}{2}} \sum_{D = \Delta f^2}  \sum_{t \mid f} \frac{1}{|D|^{s + \frac{k-1}{2}}} \sum_{\substack{  \text{det}(T) = - D/4\\ e(T) = t}} \frac{a(T)}{\epsilon(T)} {\tt u}((\det T)^{-1/2} T)\\
&= 4^{s + \frac{k-1}{2}} \sum_{D = \Delta f^2}  \sum_{d \mid t \mid f} \frac{d^{k-1} c_h(|D|/d^2)}{|D|^{s + \frac{k-1}{2}}} \sum_{\substack{  \text{det}(T) = - D/4\\ e(T) = t}} \frac{ {\tt u}((\det T)^{-1/2} T)}{\epsilon(T)} .
\end{split}
\end{displaymath}
Writing $t = t'd$ with $t' \mid f/d$,  we can evaluate the $t'$-sum getting
\begin{displaymath}
\begin{split}
L(F_h \times {\tt u}, s)  & = 4^{s + \frac{k-1}{2}} \sum_{D = \Delta f^2}  \sum_{d   \mid f} \frac{d^{k-1} c_h(|D|/d^2)}{|D|^{s + \frac{k-1}{2}}}   P(D/d^2, {\tt u}) =  4^{s + \frac{k-1}{2}} \zeta(2s) \sum_{D} \frac{c_h(|D|)P(D; {\tt u}) }{|D|^{s + \frac{k-1}{2}}}
\end{split}
\end{displaymath}
as desired.
\end{proof}

We combine Proposition \ref{prop1} and Lemma \ref{lem7} with \eqref{normF} and use the notation of these formulas to derive the following basic spectral formula for the restricted norm 
$\mathcal{N}(F_h)$ 
of a Saito--Kurokawa lift $F_h \in S_k^{(2)}$. 

\begin{prop}\label{prop2} Let $k$ be even and let $F = F_h \in S_k^{(2)}$ be a Saito--Kurokawa lift  with Fourier expansion as in \eqref{four1} and \eqref{four2}. Let $f_h \in S_{2k-2}$ denote the corresponding Shimura lift of $h$. 
 Then
\begin{displaymath}
 \mathcal{N}(F_h) = \frac{\pi^2}{90 } \cdot \frac{18\sqrt{\pi}}{2 L(f_h, 3/2)} \int_{-\infty}^{\infty}  \int_{\Lambda_{\text{\rm ev}}} |\mathcal{G}(k, t_{ {\tt u}}, 1/2 + it)\mathcal{L}(h, \overline{{\tt u}}, 1/2 + it)|^2 d{\tt u} \, dt,
\end{displaymath}
where
\begin{equation}\label{defG}
\begin{split}
\mathcal{G}(k, t_{\tt u}, s) &=  \pi^{  - 2s} \frac{2^k\Gamma(\frac{k-1}{2}  + s  - \frac{1}{4} + \frac{i t_{\tt u}}{2})\Gamma(\frac{k-1}{2}  + s - \frac{1}{4}  - \frac{i t_{\tt u}}{2})}{(\Gamma(k) \Gamma(k-3/2))^{1/2}}
\end{split}
\end{equation}
and $\mathcal{L}(h, {\tt u}, s) $ is the analytic continuation of 
\begin{equation}\label{curlyL}
\mathcal{L}(h, {\tt u}, s) = \left(\frac{\Gamma(k - 3/2)}{(4\pi)^{k-3/2}}\right)^{1/2} \zeta(2s) \sum_{\substack{D< 0\\ D \equiv 0, 1 \, (\text{{\rm mod  }}4)}} \frac{c_h(|D|)P(D; {\tt u}) }{ \| h \| \cdot |D|^{s + \frac{k-1}{2}}}
\end{equation}
for $\Re s$ sufficiently large. 
\end{prop}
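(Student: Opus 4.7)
The proposition is a direct, if intricate, bookkeeping calculation that combines three ingredients already in place: the period formula of Proposition~\ref{prop1}, the Saito--Kurokawa expansion of the Koecher--Maa{\ss} series from Lemma~\ref{lem7}, and the Chen--Ichino type norm identity \eqref{normF}. My plan is to absorb every remaining numerical constant into the normalized factors $\mathcal{G}$ and $\mathcal{L}$, so that the integrand in Proposition~\ref{prop1} becomes $|\mathcal{G}\mathcal{L}|^2$ up to an explicit scalar, and the Petersson norm $\|F_h\|^2$ cancels against the other $h$-dependent factors to leave only $L(f_h,3/2)^{-1}$ behind.

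I would begin by comparing the two $L$-series side by side. The sum $\sum_D c_h(|D|)P(D;\overline{\tt u})/|D|^{s+(k-1)/2}$ appears identically in Lemma~\ref{lem7} and in the defining series \eqref{curlyL} of $\mathcal{L}$, so the only difference between $L(F_h\times\overline{\tt u},s)$ and $\mathcal{L}(h,\overline{\tt u},s)$ is a purely scalar prefactor, yielding
\[
L(F_h\times\overline{\tt u},s) \;=\; 4^{s+(k-1)/2}\,\|h\|\,\Bigl(\tfrac{(4\pi)^{k-3/2}}{\Gamma(k-3/2)}\Bigr)^{1/2}\,\mathcal{L}(h,\overline{\tt u},s).
\]
Similarly, because the archimedean factor $G(F_h\times\overline{\tt u},s)$ in \eqref{G-KM} and $\mathcal{G}(k,t_{\tt u},s)$ in \eqref{defG} share exactly the same pair of gamma functions in the spectral parameter $t_{\tt u}$, their ratio is again a pure scalar:
\[
G(F_h\times\overline{\tt u},s)\;=\;\frac{4(2\pi)^{-k+1-2s}\,\pi^{2s}\,(\Gamma(k)\Gamma(k-3/2))^{1/2}}{2^k}\,\mathcal{G}(k,t_{\tt u},s).
\]
At the critical line $s=1/2+it$ both prefactors have moduli independent of $t$ (the $s$-dependent piece $2^{-2s}\pi^{-2s}\cdot\pi^{2s}=2^{-2s}$ is unimodular), so $|GL|^2$ becomes $c(k)\,\|h\|^2\,|\mathcal{G}\mathcal{L}|^2$ for an explicit $k$-dependent constant $c(k)$ built from $\Gamma(k)$ and powers of $2$ and $\pi$.

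To finish, I would insert this into Proposition~\ref{prop1} and invoke the norm identity $\|F_h\|^2 = \|h\|^2\Gamma(k)L(f_h,3/2)/(4\pi^k)$. The factor $\|h\|^2$ cancels immediately, the factor $\Gamma(k)$ cancels against the corresponding factor in $c(k)$, and the remaining powers of $2$ and $\pi$ collapse to a single universal constant multiplying $\pi^2/90$; the only surviving $h$-dependence is the desired $L(f_h,3/2)^{-1}$.

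The sole real obstacle is the numerical bookkeeping. Powers of $2$, $4$ and $\pi$ enter through four independent normalizations -- the $4^{s+(k-1)/2}$ in Lemma~\ref{lem7}, the $(2\pi)^{-k+1-2s}$ in the gamma factor \eqref{G-KM}, the $(4\pi)^{k-3/2}/\Gamma(k-3/2)$ from the Kohnen--Poincar\'e inner product built into $\mathcal{L}$, and the $4\pi^k$ in the Chen--Ichino norm formula -- and small errors compound quickly. As the authors stress (for instance in the extensive remarks after Lemma~\ref{lem3} concerning the various forms of the Katok--Sarnak and Baruch--Mao formulae in the literature), constants of this shape are notoriously error-prone, and it is precisely this careful tracking of absolute constants on which the precise leading term $4\log K$ in Theorem~\ref{thm1} ultimately depends.
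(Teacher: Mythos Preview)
Your proposal is correct and follows exactly the paper's approach: the paper's entire ``proof'' is the sentence immediately preceding Proposition~\ref{prop2}, namely ``We combine Proposition~\ref{prop1} and Lemma~\ref{lem7} with \eqref{normF},'' and you have spelled out precisely this combination, correctly identifying the scalar relations $L(F_h\times\overline{\tt u},s)=4^{s+(k-1)/2}\|h\|\bigl((4\pi)^{k-3/2}/\Gamma(k-3/2)\bigr)^{1/2}\mathcal{L}(h,\overline{\tt u},s)$ and $G=4(2\pi)^{-k+1-2s}\pi^{2s}2^{-k}(\Gamma(k)\Gamma(k-3/2))^{1/2}\mathcal{G}$, and correctly flagging the constant bookkeeping as the only genuine work.
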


The renormalized functions still satisfy the functional equation
$$\mathcal{L}(h,\overline{ {\tt u}} , s) \mathcal{G}(k,   t_{{\tt u}}, s) = \mathcal{L}(h ,\overline{{\tt u}}, 1-s) \mathcal{G}(k,   t_{ {\tt u}}, 1-s).$$
The inclusion of the gamma factor in \eqref{curlyL} is motivated by the formula in Lemma \ref{lem1}. \\


From the Dirichlet series expansion and the functional equation we obtain an approximate functional equation, cf.\    \cite[Theorem 5.3]{IK}. 

\begin{lemma}\label{approx1} Let $F = F_h\in S_k^{(2)}$ be a Saito--Kurokawa lift and ${\tt u}\in \Lambda$ with spectral parameter $t_{\tt u}$. 
For $t, x \in \Bbb{R}$ let 
\begin{equation}\label{defV3}
  V_t(x; k,   t_{\tt u}) := \frac{1}{2\pi i}\int_{(3)} e^{v^2}    \mathcal{G}(k, t_{\tt u}, v + 1/2 + it) \mathcal{G}(k, t_{\tt u}, v + 1/2 - it)  x^{-v}  \frac{dv}{v}.
  \end{equation}
Then
\begin{displaymath}
\begin{split}
& |\mathcal{G}(k, t_{\tt u}, 1/2 + it)\mathcal{L}(h,  {\tt u}, 1/2 + it)|^2 \\
&=2  \frac{\Gamma(k - 3/2)}{(4\pi)^{k-3/2}}  \sum_{f_1, f_2}  \sum_{D_1, D_2 < 0} \frac{c_h(|D_1|)c_h(|D_2|)P(D_1; {\tt u}) \overline{P(D_2; {\tt u}) }}{ \| h \| \cdot f_1^{1 +2it}f_2^{1-2it}|D_1|^{ k/2 + it}|D_2|^{ k/2 + it}}V_{ t}(|D_1D_2| (f_1f_2)^2; k,  t_{\tt u}). 
  \end{split}
\end{displaymath}      
\end{lemma}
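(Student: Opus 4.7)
Set $\Lambda_{\tt u}(s) := \mathcal{G}(k, t_{\tt u}, s)\,\mathcal{L}(h, {\tt u}, s)$. The functional equation recorded immediately after Proposition~\ref{prop2}, combined with the symmetry $\mathcal{G}(k, t_{\tt u}, \cdot) = \mathcal{G}(k, -t_{\tt u}, \cdot)$ and with $t_{\overline{\tt u}} = -t_{\tt u}$, yields $\Lambda_{\tt u}(s) = \Lambda_{\tt u}(1-s)$. The reality of $c_h(|D|)$ together with $\overline{P(D;{\tt u})} = P(D;\overline{\tt u})$ gives $\overline{\Lambda_{\tt u}(1/2+it)} = \Lambda_{\overline{\tt u}}(1/2-it)$, whence
\[
\bigl|\Lambda_{\tt u}(1/2+it)\bigr|^2 = \Lambda_{\tt u}(1/2+it)\,\Lambda_{\overline{\tt u}}(1/2-it).
\]

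I would then run the standard Mellin--Barnes argument. Introduce
\[
I := \frac{1}{2\pi i}\int_{(3)} e^{v^2}\,\Lambda_{\tt u}(1/2+it+v)\,\Lambda_{\overline{\tt u}}(1/2-it+v)\,\frac{dv}{v},
\]
absolutely convergent thanks to the Gaussian weight and the Dirichlet series expansion of $\mathcal{L}(h,\cdot,s)$ on $\Re s = 7/2$. Shift the contour from $(3)$ to $(-3)$, capturing the simple pole at $v = 0$ with residue $|\Lambda_{\tt u}(1/2+it)|^2$. On the shifted contour apply the functional equation to each $\Lambda$-factor and substitute $v \mapsto -v$; the orientation reversal produces an overall sign $-1$, and the new integrand on $(3)$ equals $\Lambda_{\overline{\tt u}}(1/2+it+v)\,\Lambda_{\tt u}(1/2-it+v)$. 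After expanding both integrands as Dirichlet series one sees that the transformed integrand becomes the original one under the simultaneous relabelings $D_1 \leftrightarrow D_2$, $f_1 \leftrightarrow f_2$, and that these swaps send each term to its complex conjugate. Consequently both integrals are equal and real-valued, giving $2I = |\Lambda_{\tt u}(1/2+it)|^2$.

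The identification with the right-hand side of the lemma is then routine. Substitute \eqref{curlyL} into both $\mathcal{L}$-factors in $I$ and expand
\[
\zeta(1+2it+2v)\,\zeta(1-2it+2v) = \sum_{f_1, f_2} \frac{1}{f_1^{1+2it+2v}\,f_2^{1-2it+2v}}.
\]
Absolute convergence on $\Re v = 3$ allows swapping summation and integration; what remains in $v$ is precisely $V_t(|D_1D_2|(f_1 f_2)^2;\,k,\,t_{\tt u})$ from \eqref{defV3}, containing only the Gaussian, the product of Gamma factors, and $x^{-v}/v$. Collecting the prefactor $\Gamma(k-3/2)/(4\pi)^{k-3/2}$ (one square-root contribution from each $\mathcal{L}$-factor) and the overall factor of $2$ yields the stated identity.

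The only delicate point, which I expect to be the main technical obstacle, is to verify that no additional poles are crossed during the contour shift. Since $\mathcal{G}$ is entire, the potential obstruction comes from the factor $\zeta(2s)$ in each $\mathcal{L}$ (with a simple pole at $s = 1/2$, hence at $v = \mp it$, both lying inside the strip $-3 < \Re v < 3$), but the companion Heegner-period sum cancels this residue: for cuspidal ${\tt u}$ the Koecher--Maa{\ss} series $L(F_h \times {\tt u}, \cdot)$ has no pole at $s = 1/2$, so the sum in \eqref{curlyL} vanishes there, and for Eisenstein components the same cancellation is visible from Lemma~\ref{lem-eisen}. Once this regularity is established uniformly in ${\tt u} \in \Lambda_{\text{ev}}$, the remainder of the argument is a mechanical unfolding.
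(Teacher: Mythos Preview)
Your proposal is correct and follows precisely the standard approximate functional equation argument of \cite[Theorem 5.3]{IK} that the paper cites without further detail. The point you flag as delicate---that the factor $\zeta(2s)$ in \eqref{curlyL} might create unwanted poles inside the strip---is resolved directly by the fact (recorded just before \eqref{G-KM}, via \cite[Theorem 3.5]{Im}) that $L(F_h\times{\tt u},s)$, and hence $\mathcal{L}(h,{\tt u},s)$, is entire for every even ${\tt u}$; no case analysis between cuspidal and Eisenstein ${\tt u}$ is needed.
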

Combining Proposition \ref{prop2}, Lemma \ref{approx1} and \eqref{1overL} we obtain the following basic formula:
\begin{equation}\label{Fh}
\begin{split}
 \mathcal{N}(F_h) &= \frac{\pi^2}{90} \cdot \frac{18\sqrt{\pi}}{2  }\cdot 2\sum_{(n, m) = 1} \frac{\lambda(n) \mu(n)\mu^2(m)}{n^{3/2}m^{3}}  \int_{-\infty}^{\infty}  \int_{\Lambda_{\text{\rm ev}}} \frac{\Gamma(k - 3/2)}{(4\pi)^{k-3/2} \| h \|^2}  \\
 &\sum_{f_1, f_2, D_1, D_2}  \frac{c(|D_1|) c(|D_2|) P(D_1; {\tt u})\overline{P(D_2; {\tt u})}}{f_1f_2 |D_1D_2|^{k/2}} \Big(\frac{|D_2|f_2^2}{|D_1|f_1^2}\Big)^{it} V_{t}(|D_1D_2|(f_1f_2)^2; k, t_{\tt u})   d{\tt u} \, dt.
\end{split}
\end{equation}

 \section{A relative trace formula}\label{secproof2}
 
 In this section we prove a slightly more general formula than that stated in Theorem \ref{thm2}. Let $D_1 = \Delta_1f_1^2, D_2 = \Delta_2f_2^2 < 0$ be two arbitrary negative discriminants and $h$ as specified in Theorem \ref{thm2}. Combining \eqref{mixed} (with $t/2$ in place of $t$) and \eqref{eisen1} we have
 \begin{displaymath}
 \begin{split}
  \frac{1}{|D_1D_2|^{1/4}} &\int_{\Lambda_{\text{ev}}} P(D_1; {\tt u}) \overline{P(D_2; {\tt u})} h(t_{\tt u}) d{\tt u} = \frac{3}{\pi} \frac{H(D_1) H(D_2)}{|D_1D_2|^{1/4}} h(i/2) \\
   &+  \sum_{u \text{ cuspidal, even}}  \frac{3}{\pi} L(u, 1/2) \Gamma(1/4 + it_u/2)\Gamma(1/4 - it_u/2) |D_1D_2|^{1/2} b(D_1)\overline{b(D_2)} h(t_u)\\
   &+ \int_{-\infty}^{\infty}\left( \frac{|D_1|}{|D_2|}\right)^{it/2}  |\zeta(1/2 + it)|^2 \frac{L(D_1, 1/2 + it)L(D_2, 1/2 - it)}{2|\zeta(1 + 2it)|^2} h(t) \frac{dt}{4\pi}.
 \end{split}
 \end{displaymath}
By  the argument of the proof of Lemma \ref{lem3} we can re-write the sum over even cusp forms $u$ as a sum over weight 1/2 cusp forms $v$ in Kohnen's space. Thus the last two terms  of the preceding display become
\begin{displaymath}
\begin{split}
& \left.\sum_{j}\right.^{(+)} \frac{3}{\pi} L(u_j, 1/2) \Gamma(1/4 + it_j)\Gamma(1/4 - it_j) |D_1D_2|^{1/2} b_j(D_1)\overline{b_j(D_2)} h(2t_j)\\
&+ \int_{-\infty}^{\infty}\left( \frac{|D_1|}{|D_2|}\right)^{it}  |\zeta(1/2 + 2it)|^2 \frac{L(D_1, 1/2 + 2it)L(D_2, 1/2 - 2it)}{2|\zeta(1 + 4it)|^2} h(2t) \frac{dt}{2\pi}.
\end{split}
\end{displaymath}
Here, as in Lemma \ref{lem10}, $\sum^{(+)}$ indicates a sum over an orthonormal basis of weight 1/2 cusp forms $v_j$ in Kohnen's space with spectral parameter $t_j$ and Fourier coefficients $b_j(D)$, and $u_j$ is the corresponding Shimura lift with spectral parameter $2t_j = t_u$.  If $\lambda_j(n)$ are the Hecke eigenvalues of $u_j$, then by the   approximate functional equations \eqref{approx-basic}  and \eqref{approx-basic1}  with $\Delta = 1$ 
we can write 
$$L(u_j, 1/2) = 2\sum_n \frac{\lambda_j(n)}{n^{1/2}} W_{2t_j}(n)$$
and $$
 |\zeta(1/2 + 2it)|^2 = 2\sum_n \frac{\rho_{1/2 + 2it}(n)}{n^{1/2}} W_{2t}(n) -\sum_{\pm} \frac{\zeta(1 \pm 4it)\Gamma(\frac{1}{2} \pm 2it)\pi^{\mp 2it} e^{(1/2 \pm 2 it)^2}}{(\frac{1}{2} \pm 2i t)\Gamma(\frac{1}{4} + it)\Gamma(\frac{1}{4}  - it)}  $$
with $W_t$ as in \eqref{v-t} with $\mathfrak{a} = 0$. Note that $W_{2t}(x)$ is even and holomorphic in $|\Im t | <2/3$, satisfying the uniform bound $W_{2t}(x)\ll   (1+|t|^2)/x^2 $  in this region (by trivial bounds).  Moreover $W_{2t}(x)$ vanishes at $t = \pm i/4$. 
We first deal with  residue term in the formula for $|\zeta(1/2 + 2it)|^2$ and substitute this back into the Eisenstein term. This gives
$$- \sum_{\pm} \int_{-\infty}^{\infty} \left( \frac{|D_1|}{|D_2|}\right)^{it}   \frac{ \Gamma(\frac{1}{2} \pm 2it)\pi^{\mp 2it} e^{(1/2 \pm 2 it)^2}}{(\frac{1}{2} \pm 2i t)\Gamma(\frac{1}{4} + it)\Gamma(\frac{1}{4}  - it)}  \frac{L(D_1, 1/2 + 2it)L(D_2, 1/2 - 2it)}{2\zeta(1 \mp 4it)} h(2t) \frac{dt}{2\pi}.$$
We can slightly simplify this by applying in the plus-term  the functional equation for $L(D_1, 1/2 + 2it)$ and changing $t$ to $-t$, and in the minus-term   the functional equation \eqref{Lfuncteq} for $L(D_2, 1/2 - 2 i t)$. In this way we see that the two terms are equal and after some simplification we obtain
$$2  \int_{-\infty}^{\infty} |D_1D_2|^{ it}   \frac{ 2^{-\frac{3}{2} - 2 i t}\Gamma(-\frac{1}{4} + it)   e^{(1/2- 2 it)^2}}{ \sqrt{\pi} \Gamma(\frac{1}{4} + i t)}  \frac{L(D_1, 1/2 + 2it)L(D_2, 1/2 + 2it)}{2\zeta(1+ 4it)} h(2t) \frac{dt}{2\pi}.$$
Our next goal is to use the half-integral weight Hecke relations to combine $\lambda_j(n) b_j(D_1)$. For notational simplicity let us define $\tilde{b}_j(D_1) = \sqrt{|D_1|}b_j(D_1)$. From \eqref{non-fund} we obtain
\begin{displaymath}
\begin{split}
\lambda(n) \tilde{b}_j(D_1)& = \tilde{b}_j(D_1) = \tilde{b}(\Delta_1) \sum_{d \mid f_1} \frac{\mu(d) \chi_{\Delta_1}(d)}{d^{1/2}} \lambda\Big(\frac{f_1}{d}\Big) \lambda(n) 
  = \tilde{b}_j(\Delta_1) \sum_{\substack{d_1rs = f_1\\ r \mid n}} \frac{\mu(d_1) \chi_{\Delta_1}(d_1)}{d_1^{1/2}}   \lambda\Big(\frac{sn}{r}\Big) \\
 & =    \sum_{\substack{d_1rs = f_1\\ r \mid n}} \frac{\mu(d_1) \chi_{\Delta_1}(d_1)}{d_1^{1/2}}    \sum_{m \mid sn/r} \frac{\chi_{\Delta_1}(m)}{\sqrt{m}} \tilde{b}_j\Big(\Delta_1 \Big(\frac{sn}{rm}\Big)^2\Big).
  \end{split}
\end{displaymath} 
In the last step we used \eqref{non-fund} again together with M\"obius inversion.  This yields
 \begin{displaymath}
\begin{split}
 &\sum_n \frac{\lambda_j(n)}{n^{1/2}} W_{2t}(n)\tilde{b}_j(D_1) = \sum_{ d_1rs = f_1 }\frac{\mu(d_1) \chi_{\Delta_1}(d_1)}{d_1^{1/2}}  \sum_n \frac{W_{2t}(rn)}{\sqrt{rn}} \sum_{m \mid sn} \frac{\chi_{\Delta_1}(m)}{\sqrt{m}} \tilde{b}_j\Big(\Delta_1 \Big(\frac{sn}{m}\Big)^2\Big) \\
&=  \sum_{ d_1rs = f_1 }\frac{\mu(d_1) \chi_{\Delta_1}(d_1)}{d_1^{1/2}}  \sum_{n, m} \frac{W_{2t}(rn m/(m, s))}{\sqrt{rn m/(m, s)}}  \frac{\chi_{\Delta_1}(m)}{\sqrt{m}} \tilde{b}_j\Big(\Delta_1 \Big(\frac{sn}{(m, s)}\Big)^2\Big) \\
&=  \sum_{ d_1rs = f_1 }\frac{\mu(d_1) \chi_{\Delta_1}(d_1)}{d_1^{1/2}} \sum_{\tau u =  s} \sum_{n}\sum_{(m, u) = 1} \frac{W_{2t}(rn m)}{\sqrt{rn m }}  \frac{\chi_{\Delta_1}(m\tau)}{\sqrt{m\tau}} \tilde{b}_j(\Delta_1 (un)^2)\\
&=  \sum_{ d_1rs = f_1 }\frac{\mu(d_1) \chi_{\Delta_1}(d_1)}{d_1^{1/2}} \sum_{\tau u =  s} \sum_{n, m} \sum_{v\mid u} \mu(v)  \frac{W_{2t}(rn vm)}{\sqrt{rn vm }}  \frac{\chi_{\Delta_1}(vm\tau)}{\sqrt{vm\tau}} \tilde{b}_j(\Delta_1 (un)^2)\\
&=  \sum_{ d_1r\tau vw = f_1 }\sum_{n, m}  \frac{\mu(d_1)\mu(v)  \chi_{\Delta_1}(d_1vm\tau)}{\sqrt{d_1rn\tau} vm}      W_{2t}(rn vm) \tilde{b}_j(\Delta_1 (vwn)^2). 
 \end{split}
\end{displaymath}  
Comparing \eqref{non-fund} and \eqref{basicL1}, we see that the same Hecke relations hold for Eisenstein series, and we therefore have
\begin{displaymath}
\begin{split}
&\sum_n \frac{\rho_{1/2 + 2it}(n)}{n^{1/2}} W_{2t}(n) |D_1|^{it} L(D_1, 1/2 + 2 it) \\
&=  \sum_{ d_1r\tau vw = f_1 }\sum_{n, m}  \frac{\mu(d_1)\mu(v)  \chi_{\Delta_1}(d_1vm\tau)}{\sqrt{d_1rn\tau} vm}      W_{2t}(rn vm) (\Delta_1(vwn)^2)^{it} L(\Delta_1 (vwn)^2, 1/2 + 2it).
\end{split}
\end{displaymath}
We are now in a position to apply  the second Kohnen-Kuznetsov formula in Lemma \ref{lem10} 
with
$$\frac{6}{\pi} \Gamma(1/4 + it)\Gamma(1/4 - it) \cosh(\pi t) W_{2t}(rnvm) h(2t)$$
in place of $h(t)$. This function satisfies the hypotheses of that formula (and decays rapidly enough in $n$ and $m$), recall that $W_{2t}(n)$ vanishes at $t = \pm i/4$.  The diagonal exists only if $\Delta_1 = \Delta_2$ and $vwn = f_2$. 
For a function $H$ we introduce the integral transform 
\begin{equation}\label{h0}
H^{\dagger}(x) =   \int_{-\infty}^{\infty}  \frac{F(x, t, 1/2)}{\cosh(\pi t)}  H(t)   t \frac{dt}{\pi}
\end{equation}
with $F$ as in  \eqref{defF}.

\begin{theorem}\label{thm5} Let  Let $D_1 = \Delta_1 f_1^2, D_2 = \Delta_2f^2_2$ be negative  discriminants and let $h$ be an even function, holomorphic in $|\Im t | < 2/3$ with $h(t) \ll (1+|t|)^{-10}$. Define $W_t$ as in \eqref{v-t},  $L(D, s)$ as in \eqref{basicL} and $K^+_{3/2}(a, b, c)$ as in \eqref{defKlo}. Then
\begin{displaymath}
\begin{split}
&\frac{1}{|D_1D_2|^{1/4}}\int_{\Lambda_{\text{ev}}} P(D_1; {\tt u}) \overline{P(D_2; {\tt u})} h(t_{\tt u}) d{\tt u} = \frac{3}{\pi} \frac{H(D_1)H(D_2)}{|D_1D_2|^{1/4}} h(i/2)\\
& \quad +   \int_{-\infty}^{\infty} \Big|\frac{D_1D_2}{4}\Big|^{ it/2}   \frac{  \Gamma(-\frac{1}{4} + \frac{it}{2})   e^{(1/2- it)^2}}{ \sqrt{8\pi} \Gamma(\frac{1}{4} + \frac{i t}{2})}  \frac{L(D_1, 1/2 + it)L(D_2, 1/2 + it)}{ \zeta(1+ 2it)} h(t) \frac{dt}{4\pi}\\
& \quad + \delta_{\Delta_1= \Delta_2} \sum_{\substack{d_1r \tau v w = f_1\\ vwn = f_2}} \sum_m \frac{\mu(d_1)\mu(v)  \chi_{\Delta_1}(d_1vm\tau)}{\sqrt{d_1rn\tau} vm}   
 \int_{-\infty}^{\infty}  W_{t}( rnvm) h(t) t \tanh(\pi t)  \frac{dt}{4\pi^2} \\
&\quad + e(3/8)\sum_{ d_1r\tau vw = f_1 }\sum_{n,  c, m}  \frac{\mu(d_1)\mu(v)  \chi_{\Delta_1}(d_1vm\tau)}{\sqrt{d_1rn\tau} vm}     \frac{  K_{3/2}^+(|\Delta_1|(vwn)^2, |D_2|, c)}{ c} H_{ rnvm}^{\dagger}\Big(\frac{4\pi vwn\sqrt{|\Delta_1D_2|}}{c}\Big)\\
\end{split}
\end{displaymath}
where $H_{b}(t) = h(t)W_{t}(b)$ and $H^{\dagger}$ is given by \eqref{h0}. 
\end{theorem}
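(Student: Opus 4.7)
The plan is to execute the strategy sketched after Theorem~\ref{thm2}: replace each Heegner period by half-integral weight Fourier coefficients, then replace $L(u,1/2)$ by another pair of such coefficients via an approximate functional equation combined with the Hecke relations~\eqref{non-fund}, and finally invert the resulting quadrilinear form in metaplectic coefficients by the Kohnen--Kuznetsov formula of Lemma~\ref{lem10}. First I split $\int_{\Lambda_{\text{ev}}}$ into contributions from the constant function, the cuspidal spectrum, and the Eisenstein series. The constant $u_0=\sqrt{3/\pi}$ yields $P(D;u_0)=\sqrt{3/\pi}\,H(D)$, giving the first term. For the Eisenstein part I insert~\eqref{eisen1} to obtain an explicit product $P(D_1;E(\cdot,1/2+it))\overline{P(D_2;E(\cdot,1/2+it))}$ involving $L(D_1,1/2+it)L(D_2,1/2-it)$. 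For the cuspidal part I apply the Katok--Sarnak formula~\eqref{mixed} and pass to weight $1/2$ Kohnen cusp forms $v_j$ via the bijective theta correspondence $\mathscr{S}$ (with spectral parameters $t_j=t_u/2$), rescaling $t\mapsto t/2$ in the Eisenstein integral as well.

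Next I insert the approximate functional equations~\eqref{approx-basic} for $L(u_j,1/2)$ and~\eqref{approx-basic1} with $\Delta=1$ for $|\zeta(1/2+2it)|^2$. The residue term from the latter, after applying~\eqref{Lfuncteq} to functional-equate $L(D_1,\cdot)$ in its $+$-half and $L(D_2,\cdot)$ in its $-$-half and rescaling $t\mapsto t/2$, collapses exactly to the second term of Theorem~\ref{thm5}. Setting $\tilde b_j(D)=\sqrt{|D|}b_j(D)$, I then use the half-integral Hecke identity~\eqref{non-fund} together with a double M\"obius inversion to absorb $\lambda_j(n)$ into $\tilde b_j(D_1)$, arriving at the parametrization
\[
\sum_n\frac{\lambda_j(n)}{\sqrt{n}}\,W_{2t}(n)\,\tilde b_j(D_1)=\sum_{d_1r\tau vw=f_1}\sum_{n,m}\frac{\mu(d_1)\mu(v)\chi_{\Delta_1}(d_1vm\tau)}{\sqrt{d_1rn\tau}\,vm}\,W_{2t}(rnvm)\,\tilde b_j(\Delta_1(vwn)^2)
\]
displayed in the text. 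Because the divisor-sum $\rho_{1/2+2it}$ of~\eqref{defrho} obeys the same Hecke recursion, the identical manipulation applied to the Eisenstein continuous part produces $L(\Delta_1(vwn)^2,1/2+2it)$ in place of $\tilde b_j(\Delta_1(vwn)^2)$.

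The resulting expression now has the symmetric shape to which the second (negative-index) formula of Lemma~\ref{lem10} applies, with $m=-|\Delta_1(vwn)^2|$, $n=-|D_2|$ and even test function
\[
H(t)=\frac{6}{\pi}\,\Gamma\!\left(\tfrac14+it\right)\Gamma\!\left(\tfrac14-it\right)\cosh(\pi t)\,W_{2t}(rnvm)\,h(2t),
\]
which satisfies the hypotheses of the lemma by the rapid decay of $W_{2t}$ and the cancellation of the gamma poles at $t=\pm i/4$ by the vanishing of $W_{2t}(x)$ there. The $\delta$-term of Lemma~\ref{lem10} forces $\Delta_1=\Delta_2$ and $vwn=f_2$ and, after the gamma and $\cosh$ factors built into $H$ cancel against the $\sinh(\pi t)\Gamma(\tfrac12+it)\Gamma(\tfrac12-it)$ via the reflection formula, produces the third term. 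The Kloosterman side uses $K^+_{-1/2}=K^+_{3/2}$ by the $2$-periodicity of~\eqref{defKlo} in $\kappa$, together with the phase $e((1+\kappa)/4)=e(3/8)$; rescaling $t\mapsto t/2$ converts the Bessel transform of Lemma~\ref{lem10} into the transform $H^{\dagger}_{rnvm}$ of~\eqref{h0}, giving the fourth term.

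The main obstacle is bookkeeping rather than conceptual: one must verify that every numerical constant lines up (the $3/\pi$ from~\eqref{mixed}, the $2/3$ from the projection onto Kohnen's space in Lemma~\ref{lem10}, the various powers of $2$ and $\pi$ arising from the rescaling $t\mapsto t/2$ and from~\eqref{eisen1}, and the gamma/$\cosh$ balance), and that absolute convergence is preserved throughout so that Kuznetsov inversion may be applied termwise in the full $(d_1,r,\tau,v,w,n,m)$ expansion. These are delicate in view of the discrepancies in the literature highlighted in the \emph{Remark} following Lemma~\ref{lem3}, but are purely computational once the structural reduction above is in place.
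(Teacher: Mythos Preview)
Your proposal is correct and follows essentially the same route as the paper's own proof in Section~\ref{secproof2}: the same spectral splitting via~\eqref{mixed} and~\eqref{eisen1}, the same passage to weight~$1/2$ Kohnen cusp forms, the same use of the approximate functional equations~\eqref{approx-basic}--\eqref{approx-basic1} with the residual term handled by~\eqref{Lfuncteq}, the same Hecke-combinatorial identity absorbing $\lambda_j(n)\tilde b_j(D_1)$ into the $(d_1,r,\tau,v,w,n,m)$ parametrization, and finally the same application of the negative-index case of Lemma~\ref{lem10} with exactly the test function $H(t)=\tfrac{6}{\pi}\Gamma(\tfrac14+it)\Gamma(\tfrac14-it)\cosh(\pi t)W_{2t}(rnvm)h(2t)$ you wrote down. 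Your identification of the bookkeeping (the $2$-periodicity $K^+_{-1/2}=K^+_{3/2}$, the phase $e(3/8)$, and the vanishing of $W_{2t}$ at $t=\pm i/4$) matches the paper's treatment.
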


\emph{Remarks:}\\
 1) Specializing $f_1 = f_2 = 1$ we obtain Theorem \ref{thm2}. \\
2) Recall again that $W_t(x) = 0$ for $t = \pm i/2$, so that contour shifts in the $t$-integral ensure that the $c, n$-sum is absolutely convergent.\\
3) The first term on the right hand side corresponds to the constant function and is obviously indispensable. In all practical applications, the $t$-integral in the second term is of bounded length due to the factor $\exp((1/2 - it)^2)$, so that by subconvexity bounds for $L(D, 1/2 + it)$ this term is dominated by the class number term. The last term can be analyzed as the off-diagonal term in the Kuznetsov formula except that it contains an extra $n$-sum of length $\approx |t|$ from the approximate functional equation, cf.\ \eqref{bound-wt} below.  Contrary to its appearance, the diagonal term is symmetric in $f_1, f_2$ (as it should be) and can be written as
$$\delta_{\Delta_1 = \Delta_2} \int_{-\infty}^{\infty} \int_{(2)} \frac{\Gamma(\frac{1}{2}(\frac{1}{2} + s +2 it))\Gamma(\frac{1}{2}(\frac{1}{2} + s - 2it))}{\Gamma(\frac{1}{4}+it)\Gamma(\frac{1}{4}-it) \pi^{s} }\frac{e^{s^2} }{s} L(\chi_{\Delta_1}, s)P(f_1, f_2, s)  h(t) t \tanh(\pi t)   \frac{ds}{2\pi i} \frac{dt}{4\pi^2} $$
where
$$P(f_1, f_2, s) = \prod_p  \frac{p^{-\alpha_p(s + 1/2)} - p^{(\beta_p - 2)(s + 1/2)} - \chi_{\Delta_1}(p)p^{2s-1/2}(p^{-\alpha_p(s+1/2)} - p^{-\beta_p(s+1/2)} ) }{1 - p^{2s + 1}}$$
with $\alpha_p = \max(v_p(f_1), v_p(f_2))$ and $\beta_p = \min(v_p(f_1), v_p(f_2))$ for  the usual $p$-adic valuation $v_p$.

\section{Mean values of \texorpdfstring{$L$}{L}-functions}\label{proof}

This section is devoted to the proof of Proposition \ref{Lfunc}. To begin with, we recall Heath-Brown's large sieve in two variations \cite[Corollaries 3 \& 4]{HB}\footnote{In the original version of \cite[Corollary 4]{HB}, the $n$-sum is restricted to odd numbers $n$, but in the case of fundamental discriminants $\Delta$, the symbol $(\frac{\Delta}{n})$ is also defined for even $n$, and the proof works in the same way.}

\begin{lemma}\label{HBlemma} {\rm a)} Let $N, Q \geq 1$, let $S(Q)$ denote the set of real primitive characters of conductor up to $Q$ and let $(a_n)$ be a sequence of complex numbers with $|a_n| \leq 1$. Then
$$\sum_{\chi \in S(Q)} \Big|\sum_{n \leq N} a_n\chi(n)\Big|^2 \ll N(Q+N) (QN)^{\varepsilon}$$
for every $\varepsilon > 0$. 

{\rm b)}  Let $D, N \geq 1$, $(a_n), (b_{\Delta})$ be two sequences of complex numbers with $|a_m|, |b_{\Delta}| \leq 1$, where $b_{\Delta}$ is supported on the set of fundamental discriminants $\Delta$. Then 
$$\sum_{|\Delta| \leq D} \sum_{n \leq N}a_n b_{\Delta}  \chi_{\Delta}(n)  \ll (DN)^{1+\varepsilon} (D^{-1/2} + N^{-1/2}).$$
 \end{lemma}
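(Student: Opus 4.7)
The plan is to establish (a) directly by Heath-Brown's quadratic reciprocity / Poisson method, and then deduce (b) from (a) by transposition and Cauchy-Schwarz. Only (a) requires genuine input; (b) is a soft corollary.

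For (a), I would open the square and parametrise $S(Q)$ by fundamental discriminants $|\Delta|\leq Q$:
\begin{equation*}
S := \sum_{\chi\in S(Q)}\Big|\sum_{n\leq N}a_n\chi(n)\Big|^2 = \sum_{m,n\leq N} a_m\overline{a_n} \sum_{\substack{|\Delta|\leq Q\\ \Delta\text{ fund.}}}\chi_\Delta(mn).
\end{equation*}
After smoothly truncating the $\Delta$-sum by a Schwartz bump $W(|\Delta|/Q)$ (this costs only $(QN)^\varepsilon$) and detecting the fundamental-discriminant condition via $\mu^2$, the main step is quadratic reciprocity $\chi_\Delta(n) = \chi_n(|\Delta|)\cdot\varepsilon(\Delta,n)$, where $\varepsilon$ is a finite character modulo $8$ tracking the signs and the value of $\chi_\Delta(2)$. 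This converts the $\Delta$-sum into one of the form $\sum_\Delta \chi_{mn}(|\Delta|) W(|\Delta|/Q)$ (on each residue class mod $8$). Poisson summation in $\Delta$ modulo $4mn$ re-expresses it as
\begin{equation*}
\frac{Q}{4mn}\sum_{k\in\Z}G(k;4mn)\,\widehat W\!\Big(\frac{kQ}{4mn}\Big),
\end{equation*}
with $G(k;4mn)$ a quadratic Gauss sum. The frequency $k=0$ survives only when $4mn$ is a perfect square, which happens for $\ll N^{1+\varepsilon}$ pairs $(m,n)\in[1,N]^2$ and yields a contribution $\ll QN^{1+\varepsilon}$. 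For $k\neq 0$, the classical evaluation $|G(k;4mn)|\leq (k,4mn)^{1/2}(4mn)^{1/2}$ combined with the rapid decay of $\widehat W$ (which effectively restricts $|k|\ll mn/Q\cdot(QN)^\varepsilon$) bounds these terms by $\ll N^{2+\varepsilon}$. Adding the two regimes gives $S\ll (Q+N)N(QN)^\varepsilon$.

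The main obstacle is the careful bookkeeping around imperfect reciprocity: the extended Kronecker symbol of Section~\ref{16} carries genus-character factors modulo $8$ together with an anomalous definition at $p=2$, and both must be tracked through the Poisson step. The sieving to fundamental $\Delta$ via $\mu^2$ introduces an auxiliary square-divisor sum $\sum_{d^2\mid\Delta}$ that has to be absorbed into $(QN)^\varepsilon$ without inflating the leading-order terms. These technical points are precisely what Heath-Brown works out in his paper.

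For (b), transposition of the operator bounded in (a) (equivalently, duality of the $\ell^2$-norm) promotes (a) to its equivalent dual form
\begin{equation*}
\sum_{n\leq N}\Big|\sum_{\substack{|\Delta|\leq D\\ \Delta\text{ fund.}}}b_\Delta\,\chi_\Delta(n)\Big|^2 \ll (D+N)(DN)^\varepsilon\sum_\Delta|b_\Delta|^2.
\end{equation*}
Cauchy-Schwarz in the $n$-aspect then yields
\begin{equation*}
\Big|\sum_{\Delta,n}a_n b_\Delta\chi_\Delta(n)\Big| \leq \Big(\sum_n|a_n|^2\Big)^{1/2}\Big((D+N)(DN)^\varepsilon\sum_\Delta|b_\Delta|^2\Big)^{1/2},
\end{equation*}
and inserting the trivial bounds $\sum|a_n|^2\leq N$, $\sum|b_\Delta|^2\leq D$ together with $(D+N)^{1/2}\leq D^{1/2}+N^{1/2}$ gives $(DN^{1/2}+ND^{1/2})(DN)^\varepsilon = (DN)^{1+\varepsilon}(D^{-1/2}+N^{-1/2})$, as required.
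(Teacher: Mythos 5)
The weak point is your treatment of the non-zero frequencies in part (a); as written, the argument does not give the stated bound, and this is exactly where the real content of Heath-Brown's theorem lies. After Poisson summation you estimate the dual terms by taking absolute values of the Gauss sums, $|G(k;4mn)|\leq (k,4mn)^{1/2}(4mn)^{1/2}$, with the decay of $\widehat W$ restricting $|k|\ll mn/Q$ (up to $(QN)^\varepsilon$). For a single pair $(m,n)$ this yields about $(mn)^{1/2+\varepsilon}$ — no more than P\'olya--Vinogradov for the original $\Delta$-sum — and summing over $m,n\leq N$ produces $N^{3+\varepsilon}$, not the $N^{2+\varepsilon}$ you claim. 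Thus your sketch only recovers the lemma in the regime $N^2\ll Q$, whereas the whole point of the lemma in this paper (``complete uniformity in $\mathcal{T}$ and $\mathcal{D}$'') is that it holds with $N$ comparable to or larger than $Q$. To do better one cannot discard the arithmetic of the Gauss sum: for odd squarefree $mn$ one has $G(k;mn)=$ (fourth root of unity) $\times\sqrt{mn}\,\left(\frac{k}{mn}\right)$, so the dual sum is again a bilinear form in a real character with the new variable $k$ of length roughly $mn/Q$, and Heath-Brown's proof proceeds by a recursive argument from this identity (repeated Cauchy--Schwarz, the ordinary large sieve, and a treatment of square frequencies at every stage). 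The ``technical points'' you defer to Heath-Brown (reciprocity bookkeeping modulo $8$, the $\mu^2$ sieving) are not the crux; the crux is this extra cancellation in the dual variable, which your one-step Poisson estimate does not capture.

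For comparison, the paper does not prove this lemma at all: it quotes \cite[Corollaries 3 \& 4]{HB} directly, adding only a footnote that Corollary 4 extends from odd $n$ to all $n$ because $\chi_\Delta(2)$ is defined for fundamental discriminants $\Delta$. Your deduction of (b) from (a) is sound in spirit and mirrors how Heath-Brown obtains his Corollary 4 from Corollary 3, but two small repairs are needed: (a) as stated (with $|a_n|\leq 1$ and the factor $N$ already inserted) is not literally self-dual, so you must first upgrade it to the bilinear form $\sum_{\chi\in S(Q)}\bigl|\sum_{n\leq N}a_n\chi(n)\bigr|^2\ll (Q+N)(QN)^\varepsilon\sum_n|a_n|^2$ before transposing; and you should note explicitly that the family $\{\chi_\Delta:|\Delta|\leq D\}$ consists of real primitive characters of conductor $|\Delta|\leq D$, which is what makes the dual form applicable and licenses evaluating $\chi_\Delta$ at even $n$. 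If you intend to cite Heath-Brown for part (a) anyway, the cleanest course is to do what the paper does and cite both corollaries, keeping only the footnoted remark about even $n$.
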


We start with part (a) of the proposition. As mentioned in the introduction, $L(u \times \chi, 1/2) = 0$ if $u$ is odd for root number reasons. For even $u$ we use the approximate functional equation \eqref{approx-basic} and write
\begin{equation}\label{approx}
L({  u} \times \chi_{\Delta}, 1/2 )  =  \frac{1}{2\pi i}\int_{(2)}  \sum_{n} \frac{\lambda_{  u}(n) \chi_{\Delta}(n)}{n^{1/2 + s}} |\Delta|^s G(s, t_{  u} ) ds
\end{equation}
with
$$G(s, t_{  u} )  =   \frac{2  e^{s^2}\Gamma((1/2 + \mathfrak{a} + s + it_{  u})/2)\Gamma((1/2 + \mathfrak{a} + s - it_{  u})/2)}{\Gamma((1/2 + \mathfrak{a}+ it_{  u})/2)\Gamma((1/2 + \mathfrak{a}- it_{  u})/2)\pi^s s} $$
where $\mathfrak{a} = 1$ if $\Delta < 0$ and $\mathfrak{a} = 0$ if $\Delta > 0$. We can treat positive and negative discriminants separately, so that we may assume that $G(s, t_{  u} ) $ is independent of $\Delta$.  
Eventually we would like to apply the Cauchy-Schwarz inequality,  the spectral large sieve inequality and Heath-Brown's large sieve for quadratic characters. The latter requires that the $n$-sum is restricted to odd squarefree integers. Therefore we uniquely factorise  $n = 2^{\alpha} n_1n_2^2$ with $n_1, n_2$ odd, $n_1$ squarefree and use the Hecke relations to write
\begin{displaymath}
\begin{split}
&\sum_{n  }\frac{\lambda_{  u}(n) \chi_{\Delta}(n)}{n^{1/2 + s}} = \sum_{\alpha}\frac{\lambda_{  u}(2^{\alpha}) \chi_{\Delta}(2^{\alpha})}{2^{\alpha(1/2 + s)}} \sum_{2 \nmid n_1 } \frac{\mu^2(n_1)\lambda_{  u}(n_1) \chi_{\Delta}(n_1)}{n_1^{1/2 + s}} \sum_{(n_2, 2\Delta) = 1 } \frac{ \lambda_{  u}(n_2^2) }{n_2^{1 + 2s}} \sum_{2 \nmid d}   \frac{\mu(d) \chi_{\Delta}(d)}{d^{3/2 + 3s}}.\\
\end{split}
\end{displaymath}
We use M\"obius inversion to detect the condition $(n_2, \Delta) = 1$ and we observe that the $\alpha$-sum depends only on $\Delta$ modulo 8. Therefore
\begin{displaymath}
\begin{split}
 & \sum_{     |\Delta| \leq \mathcal{D}}   \sum_{n  } \frac{\lambda_{  u}(n) \chi_{\Delta}(n)}{n^{1/2 + s}} |\Delta|^s \\
  &=  \sum_{2 \nmid d  f}  \frac{\mu(d)\mu(f) \chi_f(d)}{d^{3/2 +3s}f^{1+s}} \sum_{\delta \in \{0, 1, 4, 5\}}  P(s; {  u}, \delta, f)     \sum_{  \substack{   |\Delta'| \leq\mathcal{D}/f\\ \Delta' \equiv \bar{f}\delta\, (\text{mod } 8)}}  \sum_{ 2 \nmid n_1 } \frac{\mu^2(n_1) \chi_f(n_1)\lambda_{  u}(n_1) (\frac{\Delta'}{n_1d})}{n_1^{1/2 + s}}  |\Delta'|^s \end{split}
\end{displaymath}
where $\Delta' f$ is restricted to   negative fundamental discriminants and  
$$P(s; {  u}, \delta, f)  :=  \sum_{\alpha}\frac{\lambda_{  u}(2^{\alpha}) \chi(\delta, 2^{\alpha})}{2^{\alpha(1/2 + s)}} \sum_{2 \nmid n_2 } \frac{ \lambda_{  u}(f^2n_2^2) }{n_2^{1 + 2s}} \ll f^{2\theta+\varepsilon}$$
uniformly in $\Re s \geq \varepsilon$ for $\theta = 7/64$ by the Kim-Sarnak bound. In the above formula we define  $\chi(\delta, 2^{\alpha}) := \chi_{\Delta}(2^{\alpha})$ for any $\Delta \equiv \delta$ (mod 8).)

We substitute this back into \eqref{approx}.  Shifting the contour to the far right, we can truncate the $n_1$-sum at $n_1  \leq  ( \mathcal{D}\mathcal{T})^{1+\varepsilon} $ for $|t_u| \leq \mathcal{T}$ at the cost of a negligible error.  Having done this, we shift the contour back to $\Re s = \varepsilon $, truncate the integral  at $|\Im s| \leq  ( \mathcal{D}\mathcal{T})^{\varepsilon}$, again with a negligible error,  so that 
\begin{displaymath}
\begin{split}
\sum_{t_{ u} \leq \mathcal{T}}& \sum_{\substack{|\Delta| \leq \mathcal{D}\\ \Delta \text{ {\rm  fund.  discr.}}}} \alpha(u)  L({u} \times \chi_{\Delta}, 1/2)  \ll     ( \mathcal{D}\mathcal{T})^{O(\varepsilon)}  \sup_{ \substack{N \leq  ( \mathcal{D}\mathcal{T})^{1+\varepsilon}\\ \Re s = \varepsilon}} \sum_{2 \nmid df} \frac{\mu^2(d)}{d^{3/2+\varepsilon }f^{1-2\theta}} \\
    & \times \sum_{t_{  u} \leq \mathcal{T}} \Bigg| \alpha(u)   \sum_{\substack{ N \leq n_1 \leq 2N\\ 2 \nmid n_1} }  \frac{\mu^2(n_1) \chi_f(n_1)\lambda_{  u}(n_1)}{n_1^{1/2 + s}}   \sum_{  \substack{  |\Delta'| \leq \mathcal{D}/f\\ \Delta' \equiv \bar{f}\delta\, (\text{mod } 8)}}\Big(\frac{\Delta'}{n_1d}\Big) |\Delta'|^s    \Bigg| .
    \end{split}
    \end{displaymath}
    A priori, the right hand side is restricted to even $u$, but by positivity we can extend it to all $u$. 
Next we apply the Cauchy-Schwarz inequality.   
In the second factor we artificially insert $1/L(\text{sym}^2{  u}, 1)$ at the cost of a factor of $\mathcal{T}^{\varepsilon}$ (by \eqref{lower}) to convert the Hecke eigenvalues into Fourier coefficients and apply 
  the spectral large sieve inequality \cite[Theorem 2]{DesIw}. This leaves us with bounding 
    \begin{equation}\label{largesieve}
 \begin{split}
&     ( \mathcal{D}\mathcal{T})^{O(\varepsilon)}  \Big(\sum_{t_u \leq \mathcal{T}} |\alpha(u)|^2\Big)^{1/2} \sum_{2 \nmid df} \frac{\mu^2(d)}{d^{3/2+\varepsilon }f^{1-2\theta}}  \Bigg( \frac{\mathcal{T}^2 + N}{N}  \sum_{\substack{ N \leq n_1 \leq  2N \\ 2 \nmid n_1}}  \mu^2(n_1)  \Big| \sum_{  \substack{   |\Delta'| \leq \mathcal{D}/f\\ \Delta' \equiv \bar{f}\delta\, (\text{mod } 8)}} \Big(\frac{\Delta'}{n_1d}\Big)|\Delta'|^s  \Big|^2\Bigg)^{1/2}
    \end{split}
  \end{equation}
  for $N \leq  ( \mathcal{D}\mathcal{T})^{1+\varepsilon}$. 
 For a given odd, squarefree   $d \in\Bbb{N}$, the $n_1$-sum equals
 \begin{displaymath}
 \begin{split}
& \sum_{ r_1r_2 = d }  \sum_{\substack{N/r_1 \leq   n_1 \leq 2N/r_1\\ ( n_1, 2 r_2) = 1}}  \mu^2(r_1n_1)  \Big| \sum_{  \substack{   |\Delta'| \leq \mathcal{D}/f\\ \Delta' \equiv \bar{f}\delta\, (\text{mod } 8)\\ (\Delta', r_1) = 1}} \Big(\frac{\Delta'}{n_1r_2}\Big) |\Delta'|^s  \Big|^2\\
& \leq  \sum_{ r_1r_2 = d }  \sum_{\substack{ n \leq 2Nr_2\\ 2 \nmid n}}  \tau(n) \mu^2(n)  \Big| \sum_{  \substack{    |\Delta'| \leq \mathcal{D}/f\\ \Delta' \equiv \bar{f}\delta\, (\text{mod } 8)\\ (\Delta', r_1) = 1}} \Big(\frac{\Delta'}{n}\Big) |\Delta'|^s  \Big|^2.
 \end{split} 
 \end{displaymath}     
For odd squarefree $n \not= 1$, the map $\Delta' \mapsto (\frac{\Delta'}{n})$ is a primitive quadratic character of conductor $n$, so that by Heath-Brown's large sieve for quadratic characters (Lemma \ref {HBlemma}a) this expression is bounded by 
$$\ll (\mathcal{D}\mathcal{T})^{\varepsilon}\Big(Nd + \frac{\mathcal{D}}{f}\Big) \frac{\mathcal{D}}{f}.$$
Putting everything together, we complete the proof of Proposition \ref{Lfunc}(a).\\ 

The proof of part (b) is almost identical except that the spectral large sieve is replaced with the standard bound \cite[Theorem 9.1]{IK} for Dirichlet polynomials. Here we use the approximate functional equation
$$L( \chi_{\Delta}, 1/2 + i t )^2  =  \frac{1}{2\pi i}\int_{(2)} \Big( \sum_{n} \frac{\tau(n) \chi_{\Delta}(n)}{n^{1/2 + it + s}} |\Delta|^s \tilde{G}(s, t  ) + \epsilon(t)\sum_{n} \frac{\tau(n) \chi_{\Delta}(n)}{n^{1/2 - it + s}} |\Delta|^s \tilde{G}(s, -t  ) \Big) ds$$
where $|\epsilon(t)| = 1$ and 
\begin{equation*}
\tilde{G}(s, t) =  \frac{ e^{s^2}\Gamma(1/2 +\mathfrak{a} + s + it/2)^2}{\Gamma(1/2+\mathfrak{a} + it/2)^2 \pi^s s}  \frac{1}{(\frac{1}{4} + t^2)^2} \prod_{\epsilon_1, \epsilon_2 \in \{\pm 1\}} \hspace{-0.3cm}\Big(\epsilon_1 s - \Big(\frac{1}{2} +\epsilon_2 it\Big)\Big).
\end{equation*}
We included the polynomial in order to counteract the pole at $s=1/2 - it$ of $L(\chi_{\Delta}, s + 1/2 + it)^2$ for $\Delta = 1$, so that no residual term arises in the approximate functional equation. 
The function $\tilde{G}$ has similar analytic properties as $G$ above, and the divisor function $\tau$ satisfies the same Hecke relations as $\lambda_u$. The proof is now almost literally the same, except that the   factor $(\mathcal{T}^2 + \mathcal{N})/\mathcal{N}$ in \eqref{largesieve} is $(\mathcal{T} + \mathcal{N})/\mathcal{N}$. Thus the proof of Proposition \ref{Lfunc} is concluded.

\section{Interlude: special functions and oscillatory integrals}
In this  rather technical section we compile various sums and integrals over Bessel functions and other oscillatory integrals that we need as a preparation for the proof of Theorem \ref{thm1}. To start with, the following lemma is a half-integral weight version of  \cite[Lemma 5.8]{Iw1}, but with a somewhat different proof. 
\begin{lemma}\label{lem2}
 Let $x > 0$ $  A \geq 0$, $K > 1$.  Let $w$ be a  smooth function with   support in $[1, 2]$ satisfying $w^{(j)}(x) \ll_{\varepsilon} K^{j\varepsilon}$ for $j \in \Bbb{N}_0$. Then there exist smooth  functions $w_0, w_+, w_-$ such that for every $j  \in \Bbb{N}_0$ we have 
$$w_0(x) \ll_A K^{-A},$$ 
\begin{equation}\label{boundsw}
\frac{d^j}{dx^j}w_{\pm}(x) \ll_{j, A }  \Big(1 + \frac{K^2}{x}\Big)^{-A} \frac{1}{x^j}
\end{equation}
and
\begin{equation}\label{bessel-sum}
\sum_{k\text{ {\rm  even}}} i^k w\left(\frac{k}{K}\right) J_{k-3/2}(x) = \sum_{\pm} e^{\pm ix} w_{\pm}(x)  + w_0(x).
\end{equation}
The implied constants in \eqref{boundsw} depend on the $B$-th Sobolev norm of $w$ for a suitable $B= B(A, j)$. The functions  $w_{\pm}$ are explicitly given in \eqref{W2pm} and \eqref{W2m}. 
\end{lemma}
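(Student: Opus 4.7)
The plan is to start from Schl\"afli's integral representation
\begin{equation*}
J_\nu(x) = \frac{1}{\pi}\int_0^\pi \cos(x\sin\theta - \nu\theta)\,d\theta - \frac{\sin(\nu\pi)}{\pi}\int_0^\infty e^{-x\sinh t - \nu t}\,dt,
\end{equation*}
specialised to $\nu = k - 3/2$. For even $k$, $\sin((k - 3/2)\pi) = (-1)^k = 1$, so both pieces contribute, and after summing against $i^k w(k/K)$ and interchanging sum and integral I write the sum on the left of \eqref{bessel-sum} as $I_1(x) + I_2(x)$. The non-oscillatory piece $I_2(x)$ will be absorbed into $w_0(x)$: writing $k = 2m$, so $i^k = (-1)^m$, and applying Poisson summation to $\sum_m (-1)^m w(2m/K) e^{-2mt}$, $A$-fold integration by parts in the resulting Fourier integrals $\int w(u) e^{-u(Kt + i\pi K(n + 1/2))}\,du$ yields the uniform bound $\ll_A K^{1-A}$ for $t \geq 0$. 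Combined with the trivial termwise bound $O(K e^{-Kt})$ and a split of the outer $t$-integral at $t \asymp A(\log K)/K$, this gives $I_2(x) \ll_A K^{-A}$.

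The oscillatory piece is handled by splitting $\cos = (e^{i(\cdot)}+e^{-i(\cdot)})/2$, so that $I_1(x) = I_1^+(x) + I_1^-(x)$ with
\begin{equation*}
I_1^{\pm}(x) = \frac{1}{2\pi}\int_0^\pi e^{\pm i(x\sin\theta + 3\theta/2)}\sum_{k\,\text{even}} i^k w(k/K) e^{\mp i k\theta}\,d\theta.
\end{equation*}
Applying Poisson summation to the inner sum and using that the Fourier transform $\tilde w(\eta) := \int_1^2 w(u) e^{-i\eta u}\,du$ is Schwartz, only one term in the Poisson expansion survives non-negligibly, pinning $\theta$ to a $1/K$-neighbourhood of $\pi/2$ with error $O_A(K^{-A})$. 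After the change of variable $\xi = K(\theta - \pi/2)$, using $\sin(\pi/2 + \xi/K) = \cos(\xi/K)$ to factor out $e^{\pm ix}$ (and, for $I_1^-$, an additional substitution $\xi \to -\xi$), one obtains
\begin{equation*}
w_\pm(x) = \frac{e^{\pm 3i\pi/4}}{4\pi}\int_{\Bbb{R}} e^{\pm ix(\cos(\xi/K) - 1) + 3i\xi/(2K)}\,\tilde w(\xi)\,d\xi + O_A(K^{-A}),
\end{equation*}
which are the formulas \eqref{W2pm}, \eqref{W2m}.

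To establish the decay bounds \eqref{boundsw}, I substitute the definition of $\tilde w$ and interchange the order of integration to write $w_\pm(x) = (4\pi)^{-1}e^{\pm 3i\pi/4}\int_1^2 w(u) g_\pm(u,x)\,du$, where after $\eta = \xi/K$
\begin{equation*}
g_\pm(u, x) = K\int_{\Bbb{R}} e^{\pm ix(\cos\eta - 1) - i(Ku - 3/2)\eta}\,d\eta.
\end{equation*}
The phase has a real stationary point precisely when $|Ku - 3/2|/x \leq 1$, i.e., essentially when $x \gtrsim K$. For $x \ll K$, $A$-fold integration by parts in $\eta$ yields $g_\pm(u,x) \ll_A K^{1-A}$, which already dominates the target bound. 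For $x \gtrsim K$, stationary phase at $\sin\eta^*_\pm = \mp(Ku - 3/2)/x$ gives $g_\pm(u,x) \asymp K/\sqrt{x}\cdot e^{i\psi^*_\pm(u,x)}$ with critical value $\psi^*_\pm(u,x) = \pm(Ku - 3/2)^2/(2x) + O(K^4/x^3)$, so $\partial_u \psi^*_\pm \asymp K^2/x$; $A$-fold integration by parts in $u$ against the smooth $w(u)$ then extracts the factor $(K^2/x)^{-A}$. For the derivative estimate, each $d/dx$ applied under the integral brings down a factor $\pm i(\cos(\xi/K) - 1) \asymp \xi^2/K^2$, and the stationary-phase localisation of $\xi$ to a window of width $\asymp K/\sqrt{x}$ means this contributes $\lesssim 1/x$ per derivative. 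The main technical difficulty is controlling the Airy-type transition region $x \asymp K$, where the stationary point of $\psi_\pm$ approaches $\pm \pi/2$ and the standard stationary-phase expansion degenerates, together with the uniform-in-$j$ tracking of higher Sobolev norms of $w$ through the Schwartz tails of $\tilde w$.
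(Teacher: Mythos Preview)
Your architecture—Schl\"afli's representation, Poisson summation in $k$, absorption of the non-oscillatory piece into $w_0$, and the explicit formulas for $w_\pm$—matches the paper's proof essentially step for step, and your formulas for $w_\pm$ agree with \eqref{W2pm}, \eqref{W2m} up to the redundant smooth cutoff $v(\theta K^{-1/10})$ that the paper inserts.

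The divergence is in the proof of the bounds \eqref{boundsw}. Your reduction to the single integral $g_\pm(u,x)$ via interchange of the $u$- and $\xi$-integrals is problematic: as written, the $\eta$-integral defining $g_\pm$ is over all of $\Bbb{R}$ and does not converge (the phase $\pm x(\cos\eta-1)-(Ku-3/2)\eta$ has infinitely many stationary points once $x \geq Ku - 3/2$). If you restore the correct range $|\eta|\leq\pi/2$ (from $\theta\in[0,\pi]$), then boundary contributions at $\eta=\pm\pi/2$ spoil integration by parts precisely in the transition $x\approx Ku-3/2\asymp K$—this is the Airy difficulty you flag but do not resolve. More to the point, your stationary-phase claim that a critical point appears for $x\gtrsim K$ ignores the localisation already built in: the Schwartz decay of $\tilde w(\xi)$ forces $|\xi|\ll K^{\varepsilon}$, i.e.\ $|\eta|\ll K^{\varepsilon-1}$, and on this effective support the stationary point $\eta^*\asymp K/x$ lies inside only when $x\gtrsim K^{2-\varepsilon}$. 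For $x\ll K^2$ there is no stationary point in the relevant range, and the Airy transition at $x\asymp K$ is a phantom created by forgetting this localisation after interchanging.

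The paper avoids the whole issue by never collapsing to $g_\pm$. It keeps the double integral $\int w(y)\int e^{i\phi(\theta;x,y)}\,d\theta\,dy$ and splits the $\theta$-range at $|\theta|\asymp K^2/x$. For $|\theta|\leq \tfrac{1}{100}K^2/x$ the phase derivative $\partial_\theta\phi=-3\pi/K+2\pi y-(2\pi x/K)\sin(2\pi\theta/K)$ is bounded below by a constant because $y\in[1,2]$ (the paper explicitly notes this is why the support $[1,2]$ rather than $[0,1]$ matters), and a quantitative non-stationary-phase lemma \cite[Lemma 8.1]{BKY} delivers the required decay. For $|\theta|\gg K^2/x$ one integrates by parts in $y$ instead, using $\partial_y\phi=2\pi\theta$. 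Your integration-by-parts-in-$u$ idea is morally the dual of this second step, but you are missing the first step; it is that non-stationary-phase argument in $\theta$ that handles the full range $x\ll K^2$ uniformly and makes the Airy concern disappear.
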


\begin{proof}
We denote the left hand side of  \eqref{bessel-sum} by $W(x)$.  For $k \in \Bbb{N}$, by \cite[8.411.13]{GR} we have
\begin{equation*}
J_{k - 3/2}(x) = \int_{-1/2}^{1/2} e(k \theta) e(-3\theta/2) e^{-ix \sin(2\pi \theta)} d\theta - \frac{(-1)^k}{\pi} \int_0^{\infty} e^{-(k - 3/2)\theta - x \sinh\theta} d\theta.
\end{equation*}
We put
$$W_1(x) =- \sum_{k\text{ {\rm  even}}} i^k w\left(\frac{k}{K}\right)\frac{1}{\pi} \int_0^{\infty} e^{-(k - 3/2)\theta - x \sinh\theta} d\theta,$$
$$W_1^{\pm}(x) = \frac{1}{2}e^{\mp i x} \sum_{k \in \Bbb{Z}} (\pm i)^k w\left(\frac{k}{K}\right) \int_{-1/2}^{1/2} e(k \theta) e(-3\theta/2) e^{-ix \sin(2\pi \theta)} d\theta. $$
Applying Poisson summation modulo 4, we have
$$ \sum_{k\text{ even}} i^k w\left(\frac{k}{K}\right) e^{-k\theta} = \frac{1}{4} \sum_{\substack{\kappa \, (\text{mod }4)\\ \kappa \equiv 0, 2 \, (\text{mod } 4)}} i^{\kappa} \sum_{h \in \Bbb{Z}} e\left(\frac{h\kappa}{4}\right) \int_{-\infty}^{\infty} w\left(\frac{y}{K}\right) e^{-y\theta} e\left(\frac{y h}{4}\right) dy.$$
 The $h = 0$ term vanishes (as do all even $h$), and by partial integration the other terms are bounded by $O_n( K |h|^{-n}( K^{-(1-\varepsilon)n} + \theta^n) e^{-K\theta})$ 
for any $n \in \Bbb{N}$, so that   $$W_1(x)   \ll_n  K \int_0^{\infty} (K^{-(1-\varepsilon)n} + \theta^n) e^{-K\theta} e^{-x \sinh \theta} d\theta \ll_n K^{-(1-\varepsilon)n}.$$
Again by Poisson summation we have
\begin{displaymath}
\begin{split}
W^+_1(x) & =\frac{1}{2}e^{- i x} \int_{-1/2}^{1/2} e(-3\theta/2) e^{-ix \sin(2\pi \theta)}     \sum_{h \in \Bbb{Z}} \int_{-\infty}^{\infty} e\left(\frac{y}{4}\right) w\left(\frac{y}{K}\right) e(y\theta) e(-hy) dy \, d\theta\\
& = \frac{e(3/8)}{2}e^{- i x} \int_{-1/4}^{3/4} e(-3\theta/2) e^{ix \cos(2\pi \theta)}     \sum_{h \in \Bbb{Z}} \int_{-\infty}^{\infty}  w\left(\frac{y}{K}\right) e(y\theta) e(-hy) dy \, d\theta.
\end{split}
\end{displaymath}
Since $-1/4 < \theta  \leq 3/4$, we see by partial integration in the $y$-integral that the contribution of $h \not= 0$ is $O_A(K^{-A})$. Let $v$ be a smooth function with compact support in $[-2, 2]$, identically equal to $1$ on $[-1, 1]$. Then for $h = 0$ we can smoothly truncate the $\theta$-integral by inserting the function $v(\theta K^{9/10})$, the error being again $O_A(K^{-A})$  by partial integration. We obtain $W_1^+(x)  = W_2^+(x) + W_2(x)$, where  $W_2(x) \ll_A K^{-A}$ and  after changing variables 
\begin{equation}\label{W2pm}
\begin{split}
W_2^+(x)&  = \frac{e(3/8)}{2}e^{- i x} \int_{-\infty}^{\infty} v(\theta K^{-1/10}) e\Big(\frac{-3\theta}{2K}\Big) e^{ix \cos(2\pi \theta/K)}    \int_{-\infty}^{\infty}  w(y) e(y\theta)   dy \, d\theta.
\end{split}
\end{equation}
Then for $j \in \Bbb{N}_0$ we have
\begin{equation}\label{dW}
\begin{split}
\frac{d^j}{dx^j}W_2^+(x) = \frac{e(3/8)}{2}   \int_{-\infty}^{\infty}   w(y) \int_{-\infty}^{\infty}  v(\theta K^{-1/10})  e^{i \phi( \theta; x, y)} (i(\cos(2\pi \theta/K) - 1))^j d\theta \, dy
\end{split}
\end{equation}
with $\phi(\theta; x, y) = -3\pi \theta/K+ 2\pi \theta y + x ( \cos(2\pi \theta/K)-1)$  satisfying  $$\frac{d}{d\theta} \phi(\theta; x, y) = -\frac{3\pi}{K} + 2\pi y + 2\pi \frac{x}{K} \sin\Big(2\pi \frac{\theta}{K}\Big) \quad \text{and} \quad \frac{d^j}{d\theta^j} \phi(\theta; x, y) \ll \frac{x}{K^j}, \,\,j \geq 2.$$  In the following we frequently use the Taylor expansions $\sin(t) = t + O(t^3)$ and $\cos(t) = 1 + t^2/2 + O(t^4)$. 

We first extract smoothly the range $|\theta| \leq \frac{1}{100} K^2/x$. Here we observe that    the derivative $\frac{d}{d\theta} \phi(\theta; x, y)$  cannot be too small (it is important that $w$ is supported on $[1, 2]$, not on $[0, 1]$), and we  
  apply \cite[Lemma 8.1]{BKY} with
$$\beta - \alpha \ll \frac{K^2}{x}, \quad X = \Big(\frac{K}{x}\Big)^{2j}, \quad U = \min(K^{1/10}, K^2/x), \quad R = 1 , \quad Y = x, \quad Q = K.$$
In this way we obtain a contribution of  
$$\ll_n  (\beta - \alpha)X[(QR/\sqrt{Y})^{-n} + (RU)^{-n}]  \ll \frac{1}{x^j} \Big(\frac{K^2}{x}\Big)^{1 + j} \Big( \Big(\frac{K}{\sqrt{x}}\Big)^{-n} + \Big(\frac{K^2}{x}\Big)^{-n} + K^{- n/10}\Big)$$
to \eqref{dW} for every $n \geq 0$. 
This is easily seen to be
     $$\ll_{j, A} \frac{1}{x^j} \Big(1 + \frac{K^2}{x}\Big)^{-A}$$
       for every $A > 0$. For the portion $|\theta| \gg K^2/x$ we integrate by parts in the $y$ integral and apply trivial estimates to obtain a bound
       $$\ll_{j, A} \int_{|\theta| \gg K^2/x} (1 + \theta)^{-A} \Big(\frac{\theta}{K}\Big)^{2j} d\theta $$
which is easily seen to be
$$\ll_{j, A} \min\Big(\Big(\frac{K^2}{x}\Big)^{-A} \frac{1}{x^j}, \frac{1}{K^{2j}}\Big) \ll \frac{1}{x^j} \Big(1 + \frac{K^2}{x}\Big)^{-A}.$$
The same analysis works for $W_1^-(x) = W_2^-(x) + \tilde{W}_2(x)$ where
\begin{equation}\label{W2m}
\begin{split}
W_2^-(x)&  = \frac{e(-3/8)}{2}e^{+ i x} \int_{-\infty}^{\infty} v(\theta K^{-1/10}) e\Big(\frac{-3\theta}{2K}\Big) e^{-ix \cos(2\pi \theta/K)}    \int_{-\infty}^{\infty}  w(y) e(y\theta)   dy \, d\theta.
\end{split}
\end{equation}
 We put $w_{\pm} = W_2^{\pm}$ and $w_0 = W_1 + W_2 + \tilde{W}_2$, and the lemma follows on noting that $\frac{1}{2}(i^k + (-i)^k) = i^k \delta_{2\mid k}$. 
\end{proof}


\emph{Remarks:} 1) It is clear from the proof that if $w$ depends  on other parameters in a real- or complex-analytic way with control on derivatives, then $w_{\pm} = W_2^{\pm}$, defined in \eqref{W2pm}, depends on these parameters in the same way. We will use this  observation in Section \ref{thm1} and \eqref{off-off}. 

2) The bound \eqref{boundsw} remains true for $A \geq -1/2$. In the case,  the claim follows for  $x \geq K^2$ from the asymptotic formula \cite[8.451.1  \&  7 \&  8]{GR}.  We state this for completeness, but we do not need it here.  \\

We need a similar formula for the transforms occurring in \eqref{hast} and \eqref{h0}. 
\begin{lemma}\label{bessel-kuz}
Let $A, T \geq 2$ and let ${\tt h}$ be a smooth function with support in $[T, 2T]$ satisfying ${\tt h}^{(j)}(t) \ll T^{-j}$ for $j \in \Bbb{N}_0$.

{\rm a)}  We have
\begin{displaymath}
\begin{split}
&  {\tt h}^*(x) = \frac{T^2}{\sqrt{x}} \Big(1 + \frac{T^2}{x}\Big)^{-A} \sum_{\pm} e(\pm x) H^{\pm}_A(x) + K^{\pm}_A(x)\\
 \end{split}
 \end{displaymath}
where $K^{\pm}_A(x)   \ll_A (T+x)^{-A}$ and $x^j \partial_x^j H^{\pm}_A(x) \ll_{A, j} 1$. An analogous asymptotic formula holds for ${\tt h}^{\dagger}(x)$. 

{\rm b)} We have 
$$h^{**}(x) = T \Big(\frac{x}{T} + \frac{T}{x}\Big)^{-A} H_A(x) + \tilde{K}_A(x)$$
where $\tilde{K}_A(x) \ll_A (T+ x)^{-A}$ and $x^j \partial_x^j H_A(x) \ll_{A, j} 1$.
\end{lemma}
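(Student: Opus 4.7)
Proof plan.

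Both parts follow the same strategy: on the support $[T,2T]$ of ${\tt h}$, identify the asymptotic regime of the relevant Bessel kernel, substitute the leading asymptotic into the definition of the transform, and analyze the resulting $t$-integral via stationary phase (for the main term) and repeated integration by parts (for the error).

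For part (a), the main term lives in the regime $x \gtrsim T^2$ where $4\pi x \gg (2t)^2$. The classical asymptotic $J_{2it}(y) = \sqrt{2/(\pi y)}\cos(y - i\pi t - \pi/4) + O((ty)^{-3/2})$, after dividing by $\sinh(\pi t)$ and multiplying by $\tanh(\pi t)$, contributes a kernel of the shape $(\pi\sqrt{2x})^{-1}e^{\pm i(4\pi x - \pi/4)}$ for $t$ large; integration against $t\,{\tt h}(t)$ then yields the leading term of size $T^2 x^{-1/2}$ with the claimed oscillatory factor $e(\pm x)$, the precise phase absorbed into the smooth amplitude $H_A^{\pm}$. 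For $x \ll T^2$ we are in the small/intermediate-$y$ regime where $J_{2it}(y) \sim (y/2)^{2it}/\Gamma(1+2it)$; repeated integration by parts in $t$ against the Stirling phase of $\Gamma(1+2it)^{-1}$, whose refined derivative in $t$ has algebraic size $\asymp \sqrt{T^2/x}/T$, yields the envelope $(1+T^2/x)^{-A}$, while tails and higher-order asymptotic corrections produce the remainder $K_A^{\pm}(x) \ll (T+x)^{-A}$. The same scheme applies to ${\tt h}^{\dagger}$ because, by \eqref{defF}, $F(x,t,1/2)/\cosh(\pi t)$ is, up to explicit trigonometric factors, the same $J_{\pm it}$-combination that appears in ${\tt h}^{*}$.

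For part (b), the key asymptotic is $\sinh(\pi t)K_{2it}(4\pi x) \sim -i\sqrt{\pi/(4t)}\sin\Phi(t)$ with $\Phi(t) = 2t\log(\pi x/t) + 2t - \pi/4$, valid for $x \ll t^2$; it follows from Stirling applied to the identity $K_{2it}(y) = -\pi i\,\mathrm{Im}[(y/2)^{2it}/\Gamma(1+2it)]/\sinh(2\pi t)$. Substituting into ${\tt h}^{**}(x)$ produces an oscillatory integral in $t$ whose stationary point $t_0 = \pi x$ lies in $[T,2T]$ precisely when $x \asymp T$; stationary phase there, with $\Phi''(t_0) = -2/(\pi x)$, gives the main contribution $\asymp T\,H_A(x)$. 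For $x \gg T$ the regime shifts to exponential decay $K_{2it}(4\pi x) \sim \sqrt{\pi/(8\pi x)}e^{-4\pi x}$, producing the $(T/x)^{-A}$ tail; for $x \ll T$ the nonvanishing phase derivative $\Phi'(t) = 2\log(\pi x/t)$, combined with integration by parts refined by higher-order Stirling terms as in part (a), yields the $(x/T)^{-A}$ savings, while $\widetilde K_A(x) \ll (T+x)^{-A}$ absorbs subleading contributions. The main technical obstacle throughout is the bookkeeping of exponentially large factors in $T$ arising from $\sinh(\pi t)$ and from the Bessel kernels themselves, which cancel only after algebraic simplification and preclude any naive triangle-inequality bound.
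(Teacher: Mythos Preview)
Your overall architecture---replace the Bessel kernel by its asymptotic form, then integrate by parts or apply stationary phase in $t$---is the same as the paper's. But there is a genuine gap in part (a), and a milder version of the same issue in part (b).

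In part (a) you split into the two regimes $x \gtrsim T^2$ and $x \ll T^2$. For the first you use the large-argument expansion of $J_{2it}(4\pi x)$, which is correct. For the second you write ``$J_{2it}(y) \sim (y/2)^{2it}/\Gamma(1+2it)$'' and then integrate by parts against the Stirling phase. This approximation is the \emph{leading term of the power series} and is only valid when $y$ is small in absolute terms (roughly $y^2 \ll |t|$), not merely when $y \ll t^2$. In the wide intermediate range $1 \ll x \ll T^2$ neither of your two approximations holds, so the claimed envelope $(1+T^2/x)^{-A}$ is not justified. The sentence about the ``refined derivative in $t$'' having size $\asymp \sqrt{T^2/x}/T$ describes the derivative of the correct Debye phase, but that phase does not come from the power series; it comes from the uniform expansion. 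The paper closes this gap by invoking the uniform asymptotic \cite[7.13.2(17)]{EMOT}
\[
\frac{\pi i}{\cosh(\pi t)} J_{2it}(x) = \sum_{\pm} \frac{e^{\pm ix \mp i\omega(x,t)}}{x^{1/2}} f^{\pm}_M(x,t) + O_M\big((|t|+x)^{-M}\big),\qquad \omega(x,t)=|t|\,\mathrm{arcsinh}\tfrac{|t|}{x}-\sqrt{t^2+x^2}+x,
\]
which is valid for all $x,t$ and already has the exponential factors cancelled. A single application of the integration-by-parts lemma \cite[Lemma 8.1]{BKY} with $R \asymp \mathrm{arcsinh}(T/x) \gg T/x$ then gives the full envelope at once, without any case distinction.

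The same remark applies to part (b). Your formula ``$K_{2it}(y) = -\pi i\,\mathrm{Im}[(y/2)^{2it}/\Gamma(1+2it)]/\sinh(2\pi t)$'' is again only the power-series leading term, not an identity, and is not valid throughout the range $x \ll T$ you use it on. Your stationary-phase analysis at $t_0=\pi x$ for $x\asymp T$ and your exponential-decay argument for $x\gg T$ are both fine and match the paper; but for $x\ll T$ the paper instead uses the uniform expansion \cite[7.13.2(19)]{EMOT} with phase $\tilde\omega(x,t)=|t|\,\mathrm{arccosh}(|t|/x)-\sqrt{t^2-x^2}$, and for $x\asymp T$ it uses the integral representation $\cosh(\pi t)K_{2it}(x)=\tfrac{\pi}{2}\int_{\Bbb R}\cos(x\sinh\pi u)e(tu)\,du$ rather than your Stirling-based phase. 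Both choices avoid the delicate cancellation of exponentially large factors that you flag at the end as ``the main technical obstacle'' but do not actually carry out.
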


\begin{proof}  a) For the first part we recall the uniform asymptotic formula \cite[7.13.2(17)]{EMOT}.
$$\frac{\pi i}{\cosh(\pi t)}    J_{2 it}(x)  = \sum_{\pm} \frac{ e^{\pm ix \mp i \omega(x, t)}}{x^{1/2} } f^{\pm}_M(x, t) + O_M((|t| + x)^{-M} )$$
where 
$\omega(x, t) =   |t| \cdot \text{arcsinh} (|t|/x) - \sqrt{t^2 + x^2} + x$
and
\begin{equation}\label{flat}
x^i |t|^j\frac{\partial^i}{\partial x^i}  \frac{\partial^j}{\partial t^j}  f^{\pm}_{M}(x, t) \ll_{i, j, M}  1\end{equation}
for every $M \geq 0$.  The error term in \cite{EMOT} is $O(x^{M })$, but for small $x$ the error term $O(|t|^{-M} )$ follows from the power series expansion \cite[8.440]{GR}. 
Partial integration in the form of \cite[Lemma 8.1]{BKY} with $U = T$, $Y = Q = T+x$, $R= \text{arcsinh}(T/x) \gg T/x$ shows that 
$$x^j \frac{\partial^j}{\partial x^j}  \int_{\Bbb{R}} e^{  \mp i \omega(x, t)}  f^{\pm}_{ M}(x, t) h(t) t  \frac{dt}{4\pi^2} \ll_{j, A, M} T^2 \Big( 1 + \frac{T^2}{x}\Big)^{-A}$$
and the claim follows. 

b) For the proof of the second part we distinguish 3 ranges. For $x > 10 T$ the claim follows easily from  the rapid decay of the Bessel $K$-function and its derivatives. For $x < T/10$ we use the uniform asymptotic expansion \cite[7.13.2(19)]{EMOT} (along with the power series expansion \cite[8.485, 8.445]{GR} for very small $x$)
$$ \cosh(\pi t) {K}_{2it}(x) =  \sum_{\pm}  e^{\pm i   \tilde{\omega}(x, t)}  \tilde{f}^{\pm}_M(x, t)  
+ O(|t|^{-M}), \quad \tilde{\omega}(x, t) =  |t| \cdot \text{arccosh} \frac{|t|}{x} - \sqrt{t^2 - x^2}$$
 where $\tilde{f}^{\pm}_M$ satisfies the analogous bounds in \eqref{flat}. Again integration by parts (\cite[Lemma 8.1]{BKY} with $U =  Y = Q = T$, $R= \text{arccosh}(T/x) \geq 1$) confirms the claim in this range. Finally, for $x \asymp T$ we use the integral representation \cite[8.432.4]{GR}
 $$\cosh(\pi t) K_{2it}(x) = \frac{\pi}{2} \int_{-\infty}^{\infty} \cos(x \sinh \pi u) e(tu) du.$$
 This integral is not absolutely convergent, but partial integration shows that the tail is very small, and we can in fact truncate the integral at $|u| \leq \varepsilon \log T$ at the cost of an admissible error $O(T^{-A})$. Thus we are left with bounding 
\begin{displaymath}
\begin{split}
&  \frac{d^j}{dx^j} \int_{-\infty}^{\infty} \int_{-\varepsilon \log T}^{\varepsilon \log T}  \cos(x \sinh \pi u) e(tu) h(t) t \, du \, dt \\
& \ll  \int_{T \leq |t| \leq 2T} \int_{-\varepsilon \log T}^{\varepsilon \log T}
|\sinh(\pi u)|^j  (1+| u|T)^{-B} t \, du\, dt \ll T.\\
\end{split}
\end{displaymath}
 if $B$ is chosen sufficiently large with respect to $j$. 
\end{proof}

For large arguments, the Bessel function $J_{ir}(y)$ behaves like an exponential.  More precisely, by \cite[8.451.1 \& 7 \& 8]{GR} we have an asymptotic expansion which we will need later:
\begin{equation}\label{bessel-approx}
\begin{split}
\sum_{\pm} (\mp) &\frac{J_{ir}(2\pi x) \cos(\pi/4 \pm \pi i r/2)}{\sin(\pi i r)} = \sum_{\pm}  \frac{e(\pm x)}{2\pi\sqrt{x}}\sum_{k=0}^{n-1} \frac{i^k(\pm 1)^k}{(4\pi x)^{k}} \frac{\Gamma(ir + k + 1/2)}{k! \Gamma(ir - k + 1/2)}  + O\Big(\Big(\frac{|r|^2}{x}\Big)^{-n}\Big)
\end{split}
\end{equation}
for $r \in \Bbb{R}$,   $x \geq 1$ and fixed $n \in \Bbb{N}$. This is useful as soon as $x \geq r^2$.\\

 The following lemma is essentially an application of Stirling's formula.  

\begin{lemma} Let $k \geq 1$, $s = \sigma + it \in \Bbb{C}$ with $k + \sigma \geq 1/2$, $M \in \Bbb{N}$.  Then
$$\frac{\Gamma(k+s)}{\Gamma(k) }= k^s G_{M, \sigma}(k, t) + O_{\sigma, M}((k + |t|)^{-M})$$
where
\begin{equation}\label{flat1}
k^{\frac{i}{2} + j}    \frac{d^{i}}{dt^{i}} \frac{d^{j} }{dk^{j}} G_{M, \sigma}(k, t) \ll_{M, \sigma, i, j} \Big( 1 + \frac{t^2}{k}\Big)^{-M}
\end{equation}
for $i, j \in \Bbb{N}_0$. Moreover, 
\begin{equation}\label{asymp}
\frac{\Gamma(k+\sigma + it)}{\Gamma(k) }= k^s \exp\Big(-\frac{t^2}{2k}\Big)\Big(1 + O_{\sigma}\Big(\frac{|t|}{k}  + \frac{t^4}{k^3}\Big)\Big).
\end{equation}
\end{lemma}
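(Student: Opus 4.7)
The plan is to derive both claims from the Stirling asymptotic expansion
$$\log\Gamma(z) = \bigl(z-\tfrac{1}{2}\bigr)\log z - z + \tfrac{1}{2}\log(2\pi) + \sum_{n=1}^{N-1}\frac{B_{2n}}{2n(2n-1)z^{2n-1}} + O_N\bigl(|z|^{1-2N}\bigr),$$
valid on any sector $|\arg z|\leq\pi-\delta$, applied at $z=k$ and $z=k+s$. Subtracting, using $\log(k+s) = \log k + \log(1+s/k)$ on the principal branch (valid for $|s|<k$), and cancelling yields
$$\log\frac{\Gamma(k+s)}{k^s\,\Gamma(k)} = \bigl(k+s-\tfrac{1}{2}\bigr)\log\!\bigl(1+\tfrac{s}{k}\bigr) - s + S_N(k,s) + O_N\bigl(k^{1-2N}\bigr),$$
where $S_N$ is a finite explicit Bernoulli tail satisfying $S_N\ll_N(1+|s|)/k^2$ on $|s|\leq k/2$.

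First I would dispose of the range $|t|\geq k/2$: direct application of Stirling (or the bound $|\Gamma(\sigma+it)|\ll(1+|t|)^{\sigma-1/2}e^{-\pi|t|/2}$ combined with $\Gamma(k+s) = \Gamma(s)\prod_{j=0}^{k-1}(s+j)$) gives $|\Gamma(k+s)/(k^s\Gamma(k))|\ll_M(k+|t|)^{-M}$ for every $M$, so one may set $G_{M,\sigma}(k,t):=0$ in this region and the contribution is absorbed into the error term. For $|t|\leq k/2$ the series $\log(1+s/k) = \sum_{n\geq 1}(-1)^{n+1}(s/k)^n/n$ converges absolutely; rearranging gives
$$\bigl(k+s-\tfrac{1}{2}\bigr)\log\!\bigl(1+\tfrac{s}{k}\bigr) - s = \frac{s^2-s}{2k} + \sum_{n\geq 2}\frac{c_n(s)}{k^n}, \qquad c_n(s) = O\bigl(|s|^{n+1}\bigr),$$
and truncating this expansion (together with $S_N$) at an order $N = N(M,\sigma)$ sufficiently large produces a polynomial $P_M(k,s)$ in $s$ with rational coefficients in $k$ such that $\log[\Gamma(k+s)/(k^s\Gamma(k))] = P_M(k,s) + O_M(k^{-M})$. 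I would then set
$$G_{M,\sigma}(k,t) := \chi(2t/k)\exp\!\bigl(P_M(k,\sigma+it)\bigr)$$
with $\chi\in C_c^\infty(\mathbb{R})$ supported in $[-1,1]$ and equal to $1$ on $[-1/2,1/2]$; the main asymptotic $\Gamma(k+s)/\Gamma(k) = k^s G_{M,\sigma}(k,t) + O((k+|t|)^{-M})$ then follows at once from the two cases combined.

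The derivative bounds \eqref{flat1} come from direct differentiation of this explicit form. Writing $P_M(k,\sigma+it) = -t^2/(2k) + R(k,t)$ with every monomial of $R$ of the shape $t^a/k^b$ and $a+1\leq 2b$, each $\partial_t$ pulls down $\partial_t P_M\ll(|t|+1)/k$ and each $\partial_k$ pulls down $\partial_k P_M\ll(1+t^2)/k^2$; combining with the Hermite-type identity $k^{i/2}\partial_t^i e^{-t^2/(2k)}\ll_i(1+t^2/k)^{i/2}e^{-t^2/(2k)}$ (and its $\partial_k$ analogue) shows that the Gaussian absorbs every accumulated factor of $(1+t^2/k)^{1/2}$, yielding $k^{i/2+j}\partial_t^i\partial_k^j G_{M,\sigma}\ll(1+t^2/k)^{-M}$. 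Derivatives falling on $\chi(2t/k)$ are supported on $|t|\asymp k/2$, where $e^{-t^2/(2k)}\ll_M k^{-M}$ already dominates. The explicit formula \eqref{asymp} is the case $M=2$: the truncation
$$\log\frac{\Gamma(k+s)}{k^s\Gamma(k)} = \frac{s^2-s}{2k} + O_\sigma\bigl((1+|s|)^3/k^2\bigr)$$
and the splitting $(s^2-s)/(2k) = -t^2/(2k) + [(\sigma^2-\sigma) + it(2\sigma-1)]/(2k)$ extract the Gaussian prefactor; exponentiating and tracking the remaining factors delivers the stated correction $1 + O_\sigma(|t|/k + t^4/k^3)$. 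The main obstacle is choosing the truncation order $N = N(M,i,j)$ in the construction of $P_M$ large enough that the Stirling remainder, after exponentiation and differentiation, is uniformly dominated by $e^{-t^2/(2k)}(1+t^2/k)^{-M}k^{-i/2-j}$ across the entire support $|t|\leq k/2$, where $1+t^2/k$ can be as large as $k/4$; once this bookkeeping is set up, the remaining verification amounts to Taylor expansion.
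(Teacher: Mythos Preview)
Your approach is correct and shares the paper's starting point---Stirling's expansion applied to $\log\Gamma(k+s)-\log\Gamma(k)-s\log k$---but the execution differs in a way worth noting. The paper does \emph{not} truncate the Taylor series of $\log(1+s/k)$; it keeps the exact functions
\[
\alpha(k,t)=-t\arctan\tfrac{t}{k}+\tfrac{k-1/2}{2}\log\bigl(1+\tfrac{t^2}{k^2}\bigr),\qquad \beta(k,t)=t\bigl(\log\sqrt{1+\tfrac{t^2}{k^2}}-1\bigr)+(k-\tfrac12)\arctan\tfrac{t}{k},
\]
and obtains the derivative bounds \eqref{flat1} in one stroke via Cauchy's integral formula on circles of radius $k/100$ about $k$ and $\sqrt{k}/10$ about $t$ (chosen to stay away from the branch cuts of $\alpha,\beta$). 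The radii immediately produce the scaling $k^{i/2+j}$, and the crude bounds $\alpha,\beta\ll (1+t^2/k)^2$ on those circles suffice because the factor $e^{\alpha}$ absorbs any power of $1+t^2/k$. No cutoff $\chi$, no Hermite-polynomial bookkeeping, and no separate treatment of $|t|\geq k/2$ are needed.

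Your route---defining $G_{M,\sigma}$ via a truncated polynomial $P_M$---works, but the step ``$\log[\Gamma(k+s)/(k^s\Gamma(k))]=P_M(k,s)+O_M(k^{-M})$'' is not literally true on all of $|t|\leq k/2$ for a fixed truncation order: the Taylor tail $\sum_{n>N}c_n(s)/k^n$ is only $O(|s|\,2^{-N})$ there, which for $|t|\asymp k$ is $\asymp k$. You correctly flag this as ``the main obstacle'', and indeed it is not fatal (both $\exp(P_M)$ and the true ratio are already $\ll e^{-c\,t^2/k}$ in that regime, swallowing the polynomial discrepancy), but patching it requires exactly the kind of case analysis the paper's choice of exact $\alpha,\beta$ avoids. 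Keeping the untruncated $\alpha,\beta$ and invoking Cauchy would streamline your argument considerably.
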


\begin{proof}  This is a standard application of Stirling's formula. First of all, since
$$\frac{\Gamma(k+s)}{\Gamma(k) } k^{-s}= \frac{\Gamma(k+\sigma)}{\Gamma(k) }k^{-\sigma} \frac{\Gamma(k+\sigma + it)}{\Gamma(k + \sigma) } (k+\sigma)^{-it} (1 + \sigma/k)^{it}$$
with 
\begin{equation}\label{sigma}
   \frac{d^{j_1}}{dk^{j_2}} \frac{d^{j_1} }{dt^{j_2}}(1 + \sigma/k)^{it} \ll_{\sigma, j_1, j_2} \frac{1}{k^{j_1+j_2}} \Big(1 + \frac{|t|}{k}\Big)^{j_1}, \quad (1 + \sigma/k)^{it} = 1 + O_{\sigma}(|t|/k), 
   \end{equation}
 it suffices for both statements to treat the two cases $s= \sigma \in \Bbb{R}$ fixed and $s = it \in i\Bbb{R}$. The first case is very simple, so we display the details for the second case.  We have
$$\frac{\Gamma(k+it)}{\Gamma(k) } k^{-it} =  \exp\big(\alpha (k, t) + i\beta (k, t)\big)
\Big(\tilde{G}_{M }(k, t) + O_{M }((k+|t|)^{-M})\Big) $$
where $\tilde{G}_{M }$ satisfies  
\begin{equation}\label{G}
 (k+|t|)^{j_1+j_2}  \frac{d^{j_1}}{dk^{j_1}} \frac{d^{j_2} }{dt^{j_2}} \tilde{G}_{M }(k, t) \ll_{M, j_1,  j_2} 1, \quad \tilde{G}_{M }(k, t) = 1 + O((k+|t|)^{-1})
 \end{equation}
 and
\begin{displaymath}
\begin{split}
\alpha (k, t) & = - t \arctan\frac{t}{k } + \frac{k  - 1/2}{2} \log\Big(1 + \frac{  t^2}{k^2}\Big),\\
 \beta (k, t) &=  t \Big(\log\sqrt{1 + \frac{  t^2}{k^2}} - 1\Big) + \Big(k  - \frac{1}{2}\Big) \arctan\frac{t}{k }.
\end{split}
\end{displaymath}
 It is not hard to see that
$$ \alpha(k, t)  \leq - c \min\Big(\frac{t^2}{k}, |t|\Big)$$
for some absolute constant $c$ (in fact, $c = (\pi - \log 4)/4 = 0.438\ldots$ is the optimal constant). In particular, $\Gamma(k+it)/\Gamma(k)$ is exponentially decreasing as soon as $|t| \geq k^{1/2}$. Moreover, by a Taylor argument we have 
$$\alpha(k, t) = -\frac{t^2}{2k} + O\Big(\frac{t^2}{k^2} + \frac{t^4}{k^3}\Big), \quad \beta(k, t) \ll \frac{|t|}{k} + \frac{|t|^3}{k^2}.$$
This proves \eqref{asymp}. To prove \eqref{flat1}, we need to bound the derivatives of $\alpha$ and $\beta$ which is most quickly done by using Cauchy's integral formula. Note that both $\alpha$ and $\beta$ have a branch cut at the two rays $\pm t/k \in [  i,   i \infty)$. We assume that $k$ is sufficiently large (otherwise there is noting to prove) and we choose a circle $C_1$ about $k$ of radius $k/100$ and  a circle $C_2$ about $t$  of radius $\sqrt{k}/10$. Then $w/z$ is away from the branch cuts for $z \in C_1$, $w \in C_2$, and we have $$\alpha(z, w) \ll  \frac{|w|^2}{|z|} + |w| \ll \frac{|t|^2 +k}{k}, \quad \beta(z, w) \ll    \frac{|w|}{|z|} + \frac{|w|^3}{|z|^2}  \ll  \frac{|t| }{k} + \frac{|t|^3}{k^2} + \frac{1}{k^{1/2}}.$$
for $z \in C_1$, $w\in C_2$. From Cauchy's integral formula we conclude 
\begin{equation}\label{alpha}
\frac{d^{i}}{dt^{i}} \frac{d^{j} }{dk^{j}}\alpha (k, t)\ll_{i, j} \Big(1 + \frac{t^2}{k}\Big)k^{-\frac{i}{2} - j} , \quad 
\frac{d^{i}}{dt^{i}} \frac{d^{j} }{dk^{j}}\beta (k, t)\ll_{i, j} \Big(1 + \frac{t^2}{k}\Big)^2k^{-\frac{i}{2} - j} 
\end{equation}
for $i, j \in \Bbb{N}_0$. Combining \eqref{sigma}, \eqref{G}, \eqref{alpha}  completes the proof of \eqref{flat1}. \end{proof}

We apply this to the function  $\mathcal{G}(k, t_{\tt u}, s)$ defined in  \eqref{defG}.

\begin{cor}\label{cor19} Let $A \geq 0$, $ \sigma  \geq -1/4$ and let $t \in \Bbb{R}$, $t_{\tt u} \in \Bbb{R} \cup [-i/2, i/2]$, $k \in 2\Bbb{N}$.  Then  
\begin{equation}\label{G1}
\mathcal{G}(k, t_{\tt u}, \sigma + 1/2 + it) \ll_{A,   \sigma} k^{-1/4 + 2\sigma} \left(1 + \frac{|t|^2 + |t_{\tt u}|^2}{k} \right)^{-A} . 
\end{equation}
Moreover, for $v \in \Bbb{C}$ 
 we have
$$ \mathcal{G}(k, t_{\tt u}, v + 1/2 + it)\mathcal{G}(k, t_{\tt u}, v + 1/2 - it) = {\tt G}_M(k, t_{\tt u}, t, v) + O_{\Re v, \Re w, M}(k^{-M})$$
with 
\begin{equation}\label{G2}
\begin{split}
k^{j_1 + \frac{j_2}{2} + \frac{j_3}{2}}&  \frac{d^{j_1}}{dk^{j_1}} \frac{d^{j_2} }{dt^{j_2}}\frac{d^{j_3} }{d\tau^{j_3}} 
{\tt G}_M(k,\tau, t, v, w)   \ll_{ \textbf{j}, \Re v, \Re w, M} k^{-1/2 +2 \Re v + 2\Re w} (1 + |\Im v|)^{j_1}
 \end{split}
\end{equation}
 for $\textbf{j} \in \Bbb{N}_0^3$. Finally, for $t, \tau \ll k^{2/3}$ we have 
 \begin{equation}\label{taylor}
  \mathcal{G}(k, \tau,   1/2 + it)     \mathcal{G}(k, \tau,   1/2 - it)  = \frac{16}{\pi k^{1/2}} \exp\Big(-\frac{2(t + \tau/2)^2+ 2(t - \tau/2)^2}{k}\Big) \Big(1 + O(k^{-1/3})\Big).
  \end{equation}
\end{cor}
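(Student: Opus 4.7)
The plan is to reduce all three claims to Stirling's approximation in the form provided by the preceding lemma, after first rewriting $\mathcal{G}$ so that the denominator is compatible with the numerator. The key algebraic step is to use the Legendre duplication formula
\[
\Gamma(2z) = (2\pi)^{-1/2} 2^{2z-1/2} \Gamma(z)\Gamma(z+\tfrac12),
\]
applied both to $\Gamma(k)$ (with $z = k/2$) and to $\Gamma(k-3/2)$ (with $z = k/2 - 3/4$). This converts the denominator of $\mathcal{G}(k,t_{\tt u},s)$ into $2^{k-7/4}\pi^{-1/2}\bigl(\Gamma(k/2)\Gamma(k/2+1/2)\Gamma(k/2-1/4)\Gamma(k/2-3/4)\bigr)^{1/2}$. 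After cancelling the factor $2^k$ and writing $s = \sigma + 1/2 + it$, the arguments of the two $\Gamma$-factors in the numerator become $\frac{k}{2} + \sigma - \frac14 + i(t \pm t_{\tt u}/2)$, which is exactly a shift of $k/2$ by a bounded real part plus imaginary part.

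I would then apply the Stirling approximation from the preceding lemma with base point $k/2$ to each of the four numerator factors (treating real and imaginary shifts separately) and to each of the four factors in the denominator. The ratios $\Gamma(k/2+1/2)/\Gamma(k/2)$, $\Gamma(k/2-1/4)/\Gamma(k/2)$, $\Gamma(k/2-3/4)/\Gamma(k/2)$ are handled by the real-shift case and contribute powers of $k/2$ that collect to $(k/2)^{-1/4}$ in the square root of the denominator; the numerator shifts contribute $(k/2)^{2\sigma - 1/2 + 2it}$ together with two factors of $\Gamma(k/2)$ and the flat functions $G_{M,\sigma-1/4}(k/2, t \pm t_{\tt u}/2)$ from \eqref{flat1}. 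A direct bookkeeping then yields
\[
\mathcal{G}(k, t_{\tt u}, \sigma + 1/2 + it) = C_\sigma\, k^{2\sigma - 1/4}\bigl(k/2\bigr)^{2it}\, G_{M,\sigma-1/4}\bigl(k/2, t + t_{\tt u}/2\bigr) G_{M,\sigma-1/4}\bigl(k/2, t - t_{\tt u}/2\bigr) + O_\sigma(k^{-M})
\]
with $C_\sigma = \pi^{-2\sigma - 1/2 - 2it} 2^{7/4 + 2it}$. The bound \eqref{G1} now follows from \eqref{flat1} because $(1 + (t \pm t_{\tt u}/2)^2/k)^{-A}$ combines to $(1 + (t^2 + t_{\tt u}^2)/k)^{-A}$, up to changing $A$. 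For \eqref{G2}, I set ${\tt G}_M(k,\tau,t,v)$ to be the product of the two explicit Stirling expansions evaluated at $s = v + 1/2 \pm it$ with $t_{\tt u} = \tau$, so that the derivative bounds \eqref{G2} are inherited from \eqref{flat1} via Leibniz and the $(k+|t|)^{-M}$ remainder becomes $O(k^{-M})$.

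For the Taylor expansion \eqref{taylor}, I would not use the flat-function form but rather the sharper asymptotic \eqref{asymp} with base $k/2$ and $\sigma = -1/4$. Applied to each of the four gamma factors (two conjugate pairs, since the four imaginary shifts are $\pm(t+\tau/2)$ and $\pm(t-\tau/2)$), each ratio becomes $(k/2)^{-1/4 + i(\cdot)} \exp\!\bigl(-(\cdot)^2/k\bigr)\bigl(1 + O(k^{-1/3})\bigr)$, the error being $O(|\cdot|/k + (\cdot)^4/k^3) = O(k^{-1/3})$ in the range $t,\tau \ll k^{2/3}$. The four exponentials combine to $\exp\bigl(-(2(t+\tau/2)^2+ 2(t-\tau/2)^2)/k\bigr)$ (here the power $1/(2k)$ in \eqref{asymp} doubles because each conjugate pair contributes twice). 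The oscillating factors $(k/2)^{\pm i(t \pm \tau/2)}$ cancel in the conjugate pairs. Finally, combining all the $(k/2)$-powers from the four numerator factors with the Stirling expansion of $\Gamma(k)\Gamma(k-3/2) = \pi^{-1} 2^{2k-7/2}\Gamma(k/2)^4 (k/2)^{-1/2}(1+O(k^{-1}))$ yields the constant $\pi^{-1} \cdot 2^{7/2} \cdot 2^{1/2} = 16/\pi$ and the claimed power $k^{-1/2}$.

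The main obstacle is not conceptual but bookkeeping: verifying that the explicit constant $16/\pi$ emerges after all cancellations, and checking that the error terms from Stirling (both the polynomial-in-$(k+|t|)^{-1}$ remainder and the $(1 + t^2/k)^{-A}$ decay) propagate correctly through the product of four gamma ratios without losing the uniformity in $\tau, t$. Care is also needed in \eqref{G2} to ensure that differentiating in $v$ produces the expected factors $(1 + |\Im v|)^{j_1}$, which arise from differentiating the $\pi^{-2v}$ and $(k/2)^{2v}$ prefactors; all other $v$-dependence sits inside the bounded flat functions.
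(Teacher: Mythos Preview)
Your approach is correct and is essentially what the paper intends: the corollary is stated without proof as a direct application of the preceding Stirling lemma, and the Legendre duplication formula is the natural device to convert the denominator $(\Gamma(k)\Gamma(k-3/2))^{1/2}$ into a product of four gamma values at arguments $k/2 + O(1)$, after which everything reduces to four applications of \eqref{flat1} (for \eqref{G1}, \eqref{G2}) or of \eqref{asymp} (for \eqref{taylor}). Your bookkeeping for the constant $16/\pi$ is right; note only that your displayed constant $C_\sigma$ is slightly off (the correct prefactor, after separating $(k/2)^{2\sigma-1/4}$ into $k^{2\sigma-1/4}2^{1/4-2\sigma}$, is $\pi^{-1/2-2\sigma-2it}2^{2-2\sigma}$, with no $2^{2it}$), but this does not affect any of the bounds, and the product in \eqref{taylor} comes out correctly regardless. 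The appearance of a stray variable $w$ in \eqref{G2} is a typo in the paper; your reading with a single $v$ is the intended one.
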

Recalling the definition \eqref{defV3} of $V_t(x; k, t_{\tt u})$ we conclude from \eqref{G1} and appropriate contour shifts  the uniform bounds 
\begin{equation}\label{size-restr}
\begin{split}
k^{j_1 + \frac{j_2}{2} + \frac{j_3}{2}}x^{j_4}  \frac{d^{j_1}}{dk^{j_1}} \frac{d^{j_2} }{dt^{j_2}}\frac{d^{j_3} }{d\tau^{j_3}}\frac{d^{j_4}}{dx^{j_4}}    V_t(x; k, \tau) \ll_{A, \textbf{j}}  k^{-1/2} \Big(1 + \frac{x}{k^4}\Big)^{-A}  \Big(1 + \frac{|t|^2 + |\tau|^2}{k} \Big)^{-A} 
\end{split}
   \end{equation}
  for $A > 0$, $\textbf{j} \in \Bbb{N}_0^4$. 

In a similar, but simpler fashion we also apply    this to the weight function $W_t$ defined in \eqref{v-t} and state the bound
\begin{equation}\label{bound-wt}
 (1+|t|)^{j_1} x^{j_2} \frac{d^{j_1}}{dt^{j_1}}\frac{d^{j_2}}{dx^{j_2}}  W_t(x) \ll_{A, j_1, j_2} \Big(1 + \frac{x}{1 + |t|}\Big)^{-A}
\end{equation}
for $A \geq 0$, $j_1, j_2 \in \Bbb{N}_0$. 

\section{A weak version of   Theorem \ref{thm1}}\label{weakversion}

In this section we present a relatively soft  argument that  provides the upper bound $\mathcal{N}_{\text{av}}(K) \ll K^{\varepsilon}$.  This will useful later in order to estimate certain error terms later. By \eqref{Nav} and  \eqref{Fh}  we have  
\begin{displaymath}
\begin{split}
\mathcal{N}_{\text{av}}(K) \ll &\frac{1}{K^2}\sum_{k \in 2\Bbb{N}}  W\Big(\frac{k}{K}\Big) \sum_{h \in B_{k-1/2}^+(4)} \int_{-\infty}^{\infty}  \int_{\Lambda_{\text{\rm ev}}} \frac{\Gamma(k-3/2)}{(4\pi)^k \| h \|^2}    \sum_{f_1, f_2} \frac{1}{f_1f_2}\\
& \sum_{D_1, D_2 < 0} \frac{c_h(|D_1|)c_h(|D_2|)P(D_1; {\tt u}) \overline{P(D_2; {\tt u})} }{   |D_1D_2|^{ k/2  }} \Big(\frac{|D_2|f_2^2}{|D_1|f_1^2}\Big)^{it}V_{  t}(|D_1D_2|(f_1f_2)^2 ; k,  t_{\tt u})    d{\tt u} \, dt.
\end{split}
\end{displaymath}
By \eqref{size-restr} we have  $t, t_{\tt u} \ll K^{1/2 + \varepsilon}$ (up to a negligible error). We insert a smooth partition of unity into the $t_u$-integral and attach a factor $w(|t_{\tt u}|/\mathcal{T}_{\text{spec}})$ 
 where $w$ has support in $[1, 2]$ unless $\mathcal{T}_{\text{spec}} = 1$, 
 in which case $w$ has support in $[0, 2]$. Let    \begin{displaymath}
\begin{split}
\mathcal{N}_{\text{av}}(K, ; &\mathcal{T}_{\text{spec}}):=\frac{1}{K^2}\sum_{k \in 2\Bbb{N}}  W\Big(\frac{k}{K}\Big) \sum_{h \in B_{k-1/2}^+(4)} \int_{-\infty}^{\infty}  \int_{\Lambda_{\text{\rm ev}}} \frac{\Gamma(k-3/2)}{(4\pi)^k \| h \|^2}  w\Big( \frac{|t_{\tt u}|}{\mathcal{T}_{\text{spec}}}\Big)
  \sum_{f_1, f_2} \frac{1}{f_1f_2}\\
& \sum_{D_1, D_2 < 0} \frac{c_h(|D_1|)c_h(|D_2|)P(D_1; {\tt u}) \overline{P(D_2; {\tt u})} }{   |D_1D_2|^{ k/2  }} \Big(\frac{|D_2|f_2^2}{|D_1|f_1^2}\Big)^{it}V_{  t}(|D_1D_2|(f_1f_2)^2 ; k,  t_{\tt u})    d{\tt u} \, dt.
\end{split}
\end{displaymath}
for $\nu  = 0, 1, 2, \ldots$  and $ \mathcal{T}_{\text{spec}} = 2^{\nu}\ll K^{1/2 + \varepsilon}$.
 This section is then devoted to the proof of the bound
\begin{equation}\label{weak}
 \mathcal{N}_{\text{av}}(K;  \mathcal{T}_{\text{spec}}) \ll_{\varepsilon} 1 +  \frac{\mathcal{T}_{\text{spec}}^2}{K^{1-\varepsilon}}
 \end{equation}
for $\varepsilon>0$.  We will now sum over $h$ using Lemma \ref{lem1}. This yields a diagonal term and an off-diagonal term that we treat separately in the following two subsection. Throughout, the letter  $D$, with or without subscripts, shall always denote a \emph{negative} discriminant unless stated otherwise. Let letter $A$ shall denote an arbitrarily large fixed constant, not necessarily the same on every occurrence. 

  
 \subsection{The diagonal term}\label{10.1}
 
The contribution to $\mathcal{N}_{\text{av}}(K;  \mathcal{T}_{\text{spec}})$ of the diagonal term from Lemma \ref{lem1}  is  bounded by    
 \begin{displaymath}
 \begin{split}
   & \ll\frac{1}{K }\int_{-\infty}^{\infty} \int_{ \Lambda_{\text{\rm ev}} }w\Big( \frac{|t_{\tt u}|}{\mathcal{T}_{\text{spec}}}\Big)
   \sum_{f_1,f_2}  \frac{1}{f_1f_2} \sum_{D} \frac{ |P(D ; {\tt u})|^2 }{   |D|^{3/2}} \sup_{k \ll K} |V_t((|D|f_1f_2)^2; k, t_{\tt u}) |d{\tt u} \, dt . 
 \end{split}
 \end{displaymath}

 For the constant function ${\tt u} = \sqrt{3/\pi}$ we have $P(D; \sqrt{3/\pi}) \ll H(D)$. Recalling \eqref{size-restr} and Lemma \ref{hur2},  this gives a total contribution of 
 $$\ll \frac{1}{K }  \int_{-\infty}^{\infty}  \sum_{f_1 ,f_2} \frac{1}{f_1f_2} \sum_D \frac{H(D)^2}{|D|^{3/2}} \Big( 1 + \frac{|D|f_1 f_2}{K^2}\Big)^{-10} \Big(1 + \frac{|t|^2}{K}\Big)^{-10}dt  \ll   \sum_{f_1 ,f_2} \frac{1}{(f_1f_2)^{3/2}} \ll 1$$
  if $\mathcal{T}_{\text{spec}} = 1$ and otherwise the contribution vanishes. 
 
Similarly, by \eqref{eisen2}, \eqref{eisen-L}, \eqref{eisen-L2} and \eqref{lower}, the Eisenstein spectrum contributes
 \begin{displaymath}
 \begin{split}
  & \ll  \max_{1 \leq R \leq K^{2+\varepsilon}}  \frac{1}{RK^{1-\varepsilon}}   \int_{|\tau| \ll \mathcal{T}_{\text{spec}}} |\zeta(1/2 + i\tau)|^2    \sum_{R \leq |\Delta| \leq 2R} |L(\chi_{\Delta}, 1/2 + i\tau)|^2 d\tau. 
  \end{split}
  \end{displaymath}
  We recall our convention that $\Delta$ denotes a  negative fundamental discriminant, $D = \Delta f^2$ an   arbitrary negative  discriminant (where $f$ here has nothing to do with $f_1, f_2$ above). In the above bound we have already executed the sum over $f$. By Proposition \ref{Lfunc}(b) and 
 a standard bound for the fourth moment of the Riemann zeta-function this is $O(K^{-1/2 + \varepsilon})$. 
 

 By \eqref{katok-Sarnak},  \eqref{key}, \eqref{key-simple} and \eqref{lower},  the contribution of the cuspidal spectrum is at most 
   \begin{displaymath}
 \begin{split}
&  \ll  \max_{1 \leq R \leq K^{2+\varepsilon}}   \frac{1}{R K^{1-\varepsilon}} \sum_{t_{ u} \ll \mathcal{T}_{\text{spec}}} L({ u}, 1/2)    \sum_{   R \leq  |\Delta| \leq 2R}    L({ u} \times \chi_{\Delta}, 1/2 )  \ll \mathcal{T}^2_{\text{spec}}K^{\varepsilon-1}   .   \end{split}
  \end{displaymath}
by Corollary \ref{Lfunc-cor}. 
   All of these bounds are consistent with \eqref{weak}.

\subsection{The off-diagonal term: generalities}\label{112}      We now consider the off-diagonal term in Lemma \ref{lem1} from the sum over $h$. 
Here we need to bound
\begin{displaymath}
\begin{split}
   \frac{1}{K^2}\sum_{k \in 2\Bbb{N}} &W\left(\frac{k}{K}\right)  i^k \int_{-\infty}^{\infty}   \int_{\Lambda_{\text{\rm ev}}}  w\Big( \frac{|t_{\tt u}|}{\mathcal{T}_{\text{spec}}}\Big)
   \sum_{f_1, f_2} \sum_{D_1, D_2  } \frac{P(D_1; {\tt u})\overline{P(D_2; {\tt u})} }{  f_1f_2 |D_1 D_2|^{ 3/4 }}   \Big(\frac{|D_2|f_2^2}{|D_1|f_1^2}\Big)^{it}\\
    & V_t(|D_1D_2|(f_1f_2)^2,  k,  t_{\tt u}) \sum_{4 \mid c}    \frac{K^+_{3/2}(|D_1|, |D_2|, c)}{c} J_{k-3/2}\Big(\frac{4\pi \sqrt{|D_1D_2|}}{c}\Big)    d{\tt u} \, dt . 
\end{split}
\end{displaymath}
 We first sum over $k$ using Lemma \ref{lem2}. Up to a negligible error, we obtain
 \begin{displaymath}
\begin{split}
    \frac{1}{K^2}   \int_{-\infty}^{\infty} &  \int_{\Lambda_{\text{\rm ev}}}  w\Big( \frac{|t_{\tt u}|}{\mathcal{T}_{\text{spec}}}\Big)
   \sum_{f_1, f_2} \sum_{D_1, D_2  } \frac{P(D_1; {\tt u})\overline{P(D_2; {\tt u})} }{  f_1f_2 |D_1 D_2|^{ 3/4 }}   \Big(\frac{|D_2|f_2^2}{|D_1|f_1^2}\Big)^{it}\\
    &  \sum_{4 \mid c}    \frac{K^+_{3/2}(|D_1|, |D_2|, c)}{c} e\Big(\pm \frac{2\sqrt{|D_1D_2|}}{c}\Big) \tilde{V}\Big(|D_1D_2|(f_1f_2)^2,   \frac{\sqrt{|D_1D_2|}}{c}, t, t_{\tt u}\Big)   d{\tt u} \, dt 
\end{split}
\end{displaymath}
where
 \begin{equation}\label{tildeV}
\begin{split}
y^{j_1} K^{\frac{1}{2}(j_2+j_3)} &   x^{j_4}  \frac{d^{j_1}}{dy^{j_1}} \frac{d^{j_2} }{dt^{j_2}}\frac{d^{j_3} }{d\tau^{j_3}}\frac{d^{j_4}}{dx^{j_4}}   \tilde{V}(x, y, t, \tau) \\
&\ll_{A,  \textbf{j}}  K^{-1/2} \Big(1 + \frac{x}{K^4}\Big)^{-A}  \Big(1 + \frac{|t|^2 + |\tau|^2}{K} \Big)^{-A} \Big(1 + \frac{K^2}{y}\Big)^{-A}
\end{split}
\end{equation}
for any $A \geq 0$,   $\textbf{j} \in \Bbb{N}_0^4$, cf.\ \eqref{size-restr} and  the   remark after Lemma \ref{lem2}.  In order to apply Voronoi summation, we open the Kloosterman sum and are left with bounding 
  \begin{equation}\label{leftwith}
\begin{split}
    \frac{1}{K^2}\sum_{4 \mid c}& \underset{\substack{d\,(\text{mod }c)\\ (d, c) = 1}}{\max}\Big|  \int_{\Lambda_{\text{\rm ev}}}  w\Big( \frac{|t_{\tt u}|}{\mathcal{T}_{\text{spec}}}\Big)
   \sum_{f_1, f_2} \sum_{D_1, D_2  } \frac{P(D_1; {\tt u})\overline{P(D_2; {\tt u})} }{  f_1f_2 |D_1 D_2|^{ 3/4 }}   \\
    &  e\Big(\pm \frac{2\sqrt{|D_1D_2|}}{c}\Big)  e\Big( \frac{|D_1|d + |D_2|\bar{d}}{c}\Big)V^{\ast}\Big(|D_1|f_1^2, |D_2|f_2^2,   \frac{\sqrt{|D_1D_2|}}{c}, t, t_{\tt u}\Big)   d{\tt u}   \Big|
\end{split}
\end{equation}
where 
\begin{equation}\label{Vastdef}
 V^{\ast}(x_1, x_2, y, \tau) = \int_{-\infty}^{\infty} 
  \Big(\frac{x_2}{x_1}\Big)^{it}  \tilde{V}(x_1x_2,  y, t, \tau)  dt. 
 \end{equation}
Integration by parts shows that
 \begin{equation}\label{astV}
\begin{split}
&y^{j_1} K^{\frac{1}{2}j_2} x_1^{j_3}x_2^{j_4}  \frac{d^{j_1}}{dy^{j_1}} \frac{d^{j_2} }{d\tau^{j_2}}\frac{d^{j_3} }{dx_1^{j_3}}\frac{d^{j_4}}{dx_2^{j_4}}   V^*(x_1, x_2, y, \tau) \\
&\ll_{A, \textbf{j}} \Big(1 + \frac{x_1x_2}{K^4}\Big)^{-A}  \Big(1 + \frac{  |\tau|^2}{K} \Big)^{-A} \Big(1 + \frac{K^2}{y}\Big)^{-A}\Big(1 +  K^{1/2}|\log x_2/x_1|\Big)^{-A}
\end{split}
\end{equation}
for any $A \geq 0$,  $\textbf{j} \in \Bbb{N}_0^4$. The first and third factor on the right hand side of the last display imply $|D_1 D_2| = K^{4+o(1)}$ and $c, f_1, f_2 = K^{o(1)}$ up to a negligible error. On the other hand,  the last factor implies  $D_1f_1^2 =D_2f_2^2(1 + O(1/K^{1/2}))$, so that in effect $D_1, D_2 = K^{2+o(1)}$. 

In the following we treat the cuspidal part, the Eisenstein part and the constant function separately. In principle we could treat them on equal footing and we will do this in Section \ref{off-off}, but for now we keep the prerequisites as simple as possible. 


\subsection{The Eisenstein contribution} We start with the contribution of the Eisenstein spectrum. As this is much smaller in size than the cuspidal spectrum, very simple bounds suffice.  By \eqref{eisen1}, \eqref{lower} and \eqref{astV}   we obtain a contribution of
$$\frac{K^{\varepsilon} }{K^{4} }   \int_{-2\mathcal{T}_{\text{spec}}}^{2\mathcal{T}_{\text{spec}}} \sum_{\substack{D_1f_1^2 = D_2 f_2^2 (1 + O(K^{\varepsilon-1/2}  ))\\   |D_1|, |D_2|  = K^{2 + o(1)}\\ f_1, f_2 \ll K^{\varepsilon}}}   |\zeta(1/2 + it)|^2 |L(D_1, 1/2 + it)L(D_2, 1/2 + it)| dt.  $$
We use the basic inequality $|L(D_1, 1/2 + it)L(D_2, 1/2 + it)| \leq\frac{1}{2}( |L(D_1, 1/2 + it)|^2 + |L(D_2, 1/2 + it)|^2) $. For fixed $f_1, f_2, D_1$ there are $O(  K^{3/2+\varepsilon} )$ values of $D_2$ satisfying the summation condition. Thus we obtain the bound
$$\frac{K^{\varepsilon} }{K^{5/2 } }   \int_{-2\mathcal{T}_{\text{spec}}}^{2\mathcal{T}_{\text{spec}}} \sum_{   |D| = K^{2 +o(1)}}  |\zeta(1/2 + it)|^2 |L(D, 1/2 + it)|^2  dt.  $$
By \eqref{basicL} and Proposition \ref{Lfunc}(b) along with a bound for the fourth moment of the Riemann zeta function we obtain the desired bound
$$\frac{K^{\varepsilon} }{K^{5/2 } }   \int_{-2\mathcal{T}_{\text{spec}}}^{2\mathcal{T}_{\text{spec}}} \sum_{   |\Delta|  \leq K^{2+\varepsilon}}  |\zeta(1/2 + it)|^2 |L(\Delta, 1/2 + it)|^2  dt \ll \frac{\mathcal{T}_{\text{spec}}}{K^{1/2 - \varepsilon}}.$$

\subsection{The cuspidal contribution}\label{104} Next we consider the cuspidal contribution and consider the following portion
\begin{equation}\label{portion}
\frac{\overline{P(D_2; u)}}{|D_2|^{ 3/4   }}\sum_{D_1  } \frac{P(D_1; u)}{|D_1|^{3/4  }}  e\Big( \frac{|D_1|d}{c}\Big) e\Big( \pm \frac{2\sqrt{|D_1D_2|}}{c}\Big)  V^{\ast}\Big(|D_1|f_1^2, |D_2| f_2^2,   \frac{\sqrt{|D_1D_2|}}{c},  t_{\tt u}\Big) 
\end{equation}
 of \eqref{leftwith}, i.e.\ we freeze $c, f_1, f_2$ and $u$ for the moment.  For notational simplicity we consider only the plus case, the minus case may be treated similarly. We insert \eqref{mixed} with $t = t_u/2$  and re-write \eqref{portion} as
  \begin{displaymath} 
  \begin{split}
 &  \frac{3}{\pi}\overline{b(D_2)}\Big|\Gamma\Big(\frac{1}{4} + \frac{it_u}{2}\Big)\Big|^2 L(u, 1/2)  \sum_{D_1  }  b(D_1)e\Big(- \frac{D_1d}{c}\Big)    e\Big(   \frac{2\sqrt{|D_1D_2|}}{c}\Big)V^{\ast}\Big(|D_1|f_1^2, |D_2|f_2^2, \frac{\sqrt{|D_1D_2|}}{c}, t_u\Big)   .
  \end{split}
  \end{displaymath}
To the $D_1$-sum we apply the Voronoi formula (Lemma \ref{Vor}) with weight function
\begin{equation}\label{phi-func}
\phi(x) = \phi_{c, f_1, f_2}(x; t, t_u, D_2)= \frac{1}{ |x|^{1/2   }}   e\Big(   \frac{2\sqrt{|xD_2|}}{c}\Big)V^{\ast}\Big(|x|f_1^2, |D_2|f_2^2, \frac{\sqrt{|xD_2|}}{c}, t_u\Big) 
\end{equation}
for $x < 0$ and $\phi(x) = 0$ for $x > 0$. 
We recall that $c, f_1, f_2 \ll K^{\varepsilon}$ are essentially fixed from the decay conditions of $V^{\ast}$, but we need to be uniform in   $|D_2| = K^{2+o(1)}$.  We define $$\Phi(\pm y) = \Phi_{c, f_1, f_2}(\pm y; t, t_u, D_2)  = \int_0^{\infty} \mathcal{J}^{\pm, -}(x y) \phi(-x) dx$$ as in \eqref{defPhi} with $r = t_u/2$ and obtain that \eqref{portion} is equal to
\begin{equation}\label{D1sum}
\begin{split}
&\frac{3}{\pi} \overline{b(D_2)}\Big|\Gamma\Big(\frac{1}{4} + \frac{it_u}{2}\Big)\Big|^2 L(u, 1/2)  \frac{2\pi}{c} \left( \frac{-c}{-d}\right)\epsilon_{-d} e\left( \frac{1}{8}\right) \sum_{D} b(D) \sqrt{|D|}e\left(\frac{\bar{d}D}{c}\right) \Phi\left( \frac{(2\pi)^2 D}{c^2}\right),
\end{split}
\end{equation}
where only in the above sum do we allow $D$ to be either positive or negative. 
If $D > 0$, then the integral transform $\Phi((2\pi)^2 D/c^2 )$ contains a factor   
\begin{equation}\label{BesselK}
\frac{K_{   i   t_u} ( 4\pi \sqrt{|xD|}/c) }{\Gamma(3/4 + it_u/2)\Gamma(3/4 - it_u/2)}, 
\end{equation}
and we recall that    $|x| = K^{2+o(1)}$, $c \ll K^{\varepsilon}$ up to a negligible error. Thus the argument of the Bessel function is $\gg K^{2-\varepsilon}$, while the index is $\ll K^{1/2+\varepsilon}$. By the rapid decay of the Bessel $K$-function this contribution is easily seen to be negligible (we use \eqref{BarMao} and bound $b(D)$ trivially), 
and we may restrict from now on to $D< 0$.   In this case 
\begin{equation*}
\begin{split}
 \Phi\left( \frac{(2\pi)^2 D}{c^2}\right) = \int_0^{\infty} &  \sum_{\pm} (\mp) \frac{\cos(\pi/4 \pm \pi i t_u/2)}{\sin(\pi i t_u)} J_{\pm i t_u}\Big( \frac{4\pi \sqrt{|Dx|}}{c}\Big)    e\Big(  \frac{2\sqrt{|xD_2|}}{c}\Big)\\
 &V^{\ast}\Big(xf_1^2, |D_2|f_2^2, \frac{\sqrt{|xD_2|}}{c},   t_u\Big)  \frac{dx}{ x^{1/2   }}.
  \end{split}
 \end{equation*}
Using \eqref{bessel-approx}, up to a negligible error we can  write 
\begin{equation*}
\begin{split}
 \Phi\left( \frac{(2\pi)^2 D}{c^2}\right) =  \frac{c^{1/2}}{|D|^{1/4}} \sum_{\pm} \int_0^{\infty} &       e\Big(  \frac{2 \sqrt{x} (\sqrt{|D_2|} \pm \sqrt{|D|})}{c}\Big)\\
 &f^{\pm}\Big(\frac{2 \sqrt{|Dx|}}{c}, t_u\Big) V^{\ast}\Big(xf_1^2, |D_2|f_2^2, \frac{\sqrt{|xD_2|}}{c}, t_u\Big)  \frac{dx}{ x^{3/4  }}
  \end{split}
 \end{equation*}
 with
\begin{equation}\label{besseldecay}
x^j \frac{\partial^j}{\partial x^j}    f^{\pm}(x, r) \ll_{j} 1
\end{equation}
for any $ j \in \Bbb{N}_0$. 
 We substitute this back into  \eqref{D1sum} which equals \eqref{portion}.  We substitute this back into \eqref{leftwith}. In this way we see that the cuspidal contribution to \eqref{leftwith} is at most 
   \begin{equation}\label{isnow}
\begin{split}
    \frac{1}{K^2}&\sum_{4 \mid c} \frac{1}{c^{1/2}}  \sum_{u \text{ even}} w\Big( \frac{|t_{\tt u}|}{\mathcal{T}_{\text{spec}}}\Big) \sum_{f_1, f_2} \frac{1}{f_1f_2}   \\
    & \sum_{D, D_2}  \overline{|b(D_2)|}\Big|\Gamma\Big(\frac{1}{4} + \frac{it_u}{2}\Big)\Big|^2  L(u, 1/2) |b(D)| |D|^{1/4}      | \Psi_{c, f_1, f_2}(  D, D_2, t_u)| \end{split}
\end{equation}
with 
 \begin{displaymath}
 \begin{split}
 \Psi_{c, f_1, f_2}(  D, D_2, t_u) & = 
  \int_0^{\infty}        e\Big(  \frac{2\sqrt{x} (\sqrt{|D_2|} \pm \sqrt{|D|})}{c}\Big)  f^{\pm}\Big(\frac{2 \sqrt{|Dx|}}{c}, t_u\Big)V^{\ast}\Big(xf_1^2, |D_2|f_2^2, \frac{\sqrt{|xD_2|}}{c}, t_u\Big)    \frac{dx}{ x^{3/4 }}    \\
 &=   2   \int_0^{\infty}     e\Big(  \frac{2x (\sqrt{|D_2|} \pm \sqrt{|D|})}{c}\Big)  f^{\pm}\Big(\frac{2 \sqrt{|Dx^2|}}{c}, t_u\Big) V^{\ast}\Big(x^2f_1^2, |D_2|f_2^2, \frac{\sqrt{|x^2D_2|}}{c}, t_u\Big) \frac{dx}{ x^{1/2 }}.   
 \end{split}
 \end{displaymath}
By \eqref{astV} and \eqref{besseldecay}, each integration  by parts with respect to $x$ introduces an additional factor 
\begin{equation}\label{add1}
\frac{c K^{1/2}}{x(\sqrt{|D_2|} \pm \sqrt{|D|})}, 
\end{equation}
and we conclude that
\begin{equation}\label{add2}
\begin{split}
 \Psi_{c, f_1, f_2}( D, D_2,& t_u)  \ll_A   \Big(1 + \frac{  |t_{\tt u}|^2}{K} \Big)^{-A}   \int_0^{\infty} \Big( 1 + \frac{x(\sqrt{|D_2|} \pm \sqrt{|D|})}{c K^{1/2}}\Big)^{-A} \\
 &  \Big(1+K^{1/2}\log \frac{f_2^2 |D_2|}{f_1^2 x^2} \Big)^{-A} 
  \Big(1 + \frac{|D_2|(x f_1f_2)^2}{K^4}\Big)^{-A} \Big(1 + \frac{K^2c}{x |D_2|^{1/2}}\Big)^{-A}  \frac{dx}{ x^{1/2 }}   
\end{split}
\end{equation}
 for every $A \geq 0$. Here   only the  negative part in the $\pm$ sign is relevant, since otherwise the expression is trivially negligible. The limiting factor for the size of the $x$-integral is the first factor in the second line of the previous display, so that we obtain
 \begin{equation*}
\begin{split}
 \Psi_{c, f_1, f_2}( D, D_2, t_u)  \ll_A  & \frac{f_2 ^{1/2}|D_2|^{1/4}  }{f_1^{1/2}K^{1/2}}  \Big(1 + \frac{  |t_{\tt u}|^2}{K} \Big)^{-A} \Big( 1 + \frac{f_2|D_2|^{1/2}(\sqrt{|D_2|}- \sqrt{|D|})}{f_1cK^{1/2}}\Big)^{-A}\\
 & \Big(1 + \frac{|D_2|f_2^2}{K^2}\Big)^{-A}  \Big(1 + \frac{K^2f_1c}{ |D_2|f_2}\Big)^{-A}
\end{split}
\end{equation*}
for every $A \geq 0$. It is not hard to see that this can be simplified as
 \begin{equation*}
\begin{split}
 \Psi_{c, f_1, f_2}( D, D_2, t_u)  \ll_A  & (1 + f_1f_2c)^{-A}     \Big(1 + \frac{  |t_{\tt u}|^2}{K} \Big)^{-A} \Big(1 + \frac{|D_2|}{K^2}\Big)^{-A} \Big( 1 + \frac{ |D_2| - |D|}{K^{1/2}}\Big)^{-A}. \end{split}
\end{equation*}
 With this bound we return to \eqref{isnow}, apply the simple bound $|b(D)b(D_2)| \leq |b(D)|^2 + |b(D_2)|^2$ together with \eqref{BarMao}, getting an upper bound of the shape
 \begin{equation*}
\begin{split}
    \frac{1}{K^2}\sum_{4 \mid c} \frac{1}{c^{1/2}}&  \sum_{u \text{ even}} w\Big( \frac{|t_{\tt u}|}{\mathcal{T}_{\text{spec}}}\Big)    \sum_{f_1, f_2} \sum_{D, D_2}  \frac{  |D|^{1/4} }{  f_1f_2|D_2|}   \frac{L(u, 1/2)L(u, D_2, 1/2)}{L(\text{sym}^2 u, 1)}  | \Psi_{c, f_1, f_2}(  D, D_2, t_u)| \end{split}
\end{equation*}
plus a similar expression that with $L(u, D, 1/2)/|D|$ in place of $L(u, D_2, 1/2)/|D_2|$ which can be treated in the same way. We sum over $D, f_1, f_2, c$    and end up with (after changing the value of $A$)
 \begin{equation*}
\begin{split}
    \frac{1}{K^{3}} &  \sum_{u \text{ even}}w\Big( \frac{|t_{\tt u}|}{\mathcal{T}_{\text{spec}}}\Big)  \Big(1 + \frac{  |t_{\tt u}|^2}{K} \Big)^{-A}     \sum_ {  D_2}     \Big(1 + \frac{|D_2| }{K^2}\Big)^{-A}   \frac{L(u, 1/2)L(u, D_2, 1/2)}{L(\text{sym}^2 u, 1)}   .   
\end{split}
\end{equation*}
We write $D_2 = \Delta f^2$ with a fundamental discriminant $\Delta$ and use \eqref{key}, \eqref{key-simple}.  Summing over $f$, we obtain
\begin{equation*}
\begin{split}
    \frac{1}{K^{3}} &  \sum_{u \text{ even}}w\Big( \frac{|t_{\tt u}|}{\mathcal{T}_{\text{spec}}}\Big)  \Big(1 + \frac{  |t_{\tt u}|^2}{K} \Big)^{-A}     \sum_ { \Delta } \frac{K^{4/3}}{|\Delta|^{2/3} }   \Big(1 + \frac{|\Delta| }{K^2}\Big)^{-A}   \frac{L(u, 1/2)L(u, \Delta, 1/2)}{L(\text{sym}^2 u, 1)} .
\end{split}
\end{equation*}
We estimate the denominator $L(\text{sym}^2 u, 1)$ by \eqref{lower} and apply Corollary \ref{Lfunc-cor}  to finally obtain the upper bound $\mathcal{T}_{\text{spec}}^2 K^{\varepsilon-1}$ in agreement with \eqref{weak}.
       
 \subsection{The constant function}   This is very similar to the preceding subsection,  so we can be brief.  In short, we win a factor $\mathcal{T}_{\text{spec}} \ll K^{1+\varepsilon}$ from the fact that the spectrum is reduced to one element, and we lose a factor $|D|^{1/2} \ll K^{1+\varepsilon}$ since each class number is a factor $|D|^{1/4}$ bigger than the generic period $P(D;u)$. The key point is that we end up with a pure bound without a $K^{\varepsilon}$-power. This $K^{\varepsilon}$-power is unavoidable when we apply Corollary \ref{Lfunc-cor}, but for a sum over class numbers $H(D)$ alone we can apply Lemma  \ref{hur2} below which avoids a $K^{\varepsilon}$-power. 
  The analogue of \eqref{portion} is  
$$\frac{3}{\pi} \frac{\overline{H(D_2)}}{|D_2|^{ 3/4   }}\sum_{D_1  } \frac{H(D_1)}{|D_1|^{3/4  }}  e\Big( \frac{|D_1|d}{c}\Big) e\Big( \pm \frac{2\sqrt{|D_1D_2|}}{c}\Big)  V^{\ast}\Big(|D_1|f_1^2, |D_2|f_2^2, \frac{\sqrt{|D_1D_2|}}{c},  t_{\tt u}\Big). $$
 We apply the Voronoi formula (Lemma \ref{class-num1}) to the $D_1$-sum as before. Due to the oscillatory behaviour of $\phi$ in \eqref{phi-func} the main terms are easily seen to be negligible, and as in the previous argument also one of the osciallatory terms is negligible due to the exponential decay of $\mathcal{J}^-$ in \eqref{Jclean}. The behaviour of $\mathcal{J}^+$ similar, but  much simpler, as no asymptotic formula of a Bessel function is necessary. The analogue of \eqref{isnow} then becomes   \begin{equation*}
\begin{split}
    \frac{1}{K^2}\sum_{4 \mid c} \frac{1}{c^{1/2}}&      \sum_{f_1, f_2} \sum_{D, D_2}  \frac{H(D_2) H(D) |D|^{1/4} }{ |DD_2|^{3/4}  f_1f_2}     | \Psi_{c, f_1, f_2}( D, D_2, t_u)|. \end{split}
\end{equation*}
In the same way as above this leads to
\begin{equation}\label{leadsto}
\begin{split}
    \frac{1}{K^{3}} &   \sum_ { D}   \Big(1 + \frac{D }{K^2}\Big)^{-A}   \frac{H(D)^2}{|D|^{1/2}} \ll 1. 
\end{split}
\end{equation}
The last step is justified by the following  simple lemma.
\begin{lemma}\label{hur2} For $x \geq 1$ we have
 $$\sum_{D \leq x} H(D)^2 \ll x^2.$$
\end{lemma}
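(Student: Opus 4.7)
My plan is to reduce the problem to a second-moment estimate for Dirichlet $L$-values at $s=1$ twisted by quadratic characters. Every negative discriminant is uniquely written as $D = \Delta g^2$ with $\Delta$ a negative fundamental discriminant, and the Hurwitz class number factors cleanly along this decomposition. Indeed, \eqref{basicL1} at $s=1$ (combined with $\rho_1(n) = \sigma_1(n)/n^{1/2}$) together with Dirichlet's class number formula gives
\begin{equation*}
H(D) = \frac{|\Delta|^{1/2}}{\pi} L(\chi_\Delta, 1) \sum_{d \mid g} \mu(d)\chi_\Delta(d)\sigma_1(g/d) \ll h(\Delta) \, g^{1+\varepsilon},
\end{equation*}
where the implied constant absorbs the corrections arising from the exceptional fundamental discriminants $\Delta \in \{-3,-4\}$ (i.e.\ from the order of the unit group).

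Squaring, summing and exchanging the order of summation, I would use this to reduce to
\begin{equation*}
\sum_{|D| \leq x} H(D)^2 \ll \sum_{g \leq \sqrt{x}} g^{2+\varepsilon} \sum_{\substack{|\Delta| \leq x/g^2 \\ \Delta \text{ fund.\ discr.}}} |\Delta| \, L(\chi_\Delta, 1)^2.
\end{equation*}
The inner sum is then handled by the classical quadratic second moment bound $\sum_{|\Delta| \leq Y} L(\chi_\Delta, 1)^2 \ll Y$, which is known unconditionally. Note that this bound is well within reach of the machinery already set up in this paper: it follows, for instance, by inserting an approximate formula for $L(\chi_\Delta,1)^2$ and applying Heath-Brown's quadratic large sieve (Lemma \ref{HBlemma}(a)), exactly in the spirit of the proof of Proposition \ref{Lfunc}(b) but in the much simpler setting $s=1$, where no Dirichlet polynomial length issue arises. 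By partial summation this upgrades to $\sum_{|\Delta| \leq Y} |\Delta| L(\chi_\Delta,1)^2 \ll Y^2$.

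Plugging back in,
\begin{equation*}
\sum_{|D| \leq x} H(D)^2 \ll \sum_{g \leq \sqrt x} g^{2+\varepsilon} \cdot (x/g^2)^2 = x^2 \sum_{g \leq \sqrt{x}} g^{-2+\varepsilon} \ll x^2,
\end{equation*}
which is the claim. The only mildly subtle point is the multiplicative bookkeeping in step one: one has to verify that the factors $\sum_{d \mid g} \mu(d)\chi_\Delta(d)\sigma_1(g/d)$ are truly $\ll g^{1+\varepsilon}$ uniformly in $\Delta$, which is immediate since $|\sigma_1(g/d)| \leq \sigma_1(g)$. Everything else is either classical or an off-the-shelf application of the quadratic large sieve, so I do not foresee a genuine obstacle here.
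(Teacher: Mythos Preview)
Your argument is correct. Both your proof and the paper's reduce to the second-moment bound $\sum_{|\Delta|\leq Y} h(\Delta)^2 \ll Y^2$ for fundamental discriminants (which the paper cites as Barban~\cite{Ba}, and which is equivalent to the $L(\chi_\Delta,1)^2$ moment you invoke). The difference is in the opening move: the paper uses the cruder inequality $H(D)\leq \sum_{n^2\mid D} h(D/n^2)$, squares it, and then has to untangle the resulting double sum over $n_1,n_2$ via an lcm/gcd decomposition together with $h(\Delta n^2)\leq h(\Delta)\,n\,\tau(n)$. You instead use the exact closed formula $H(\Delta g^2)=\pi^{-1}|\Delta|^{1/2}L(\chi_\Delta,1)\sum_{d\mid g}\mu(d)\chi_\Delta(d)\sigma_1(g/d)$ (which, as you note, follows from \eqref{basicL} at $s=1$ combined with the class number formula), bound the $g$-factor by $g^{1+\varepsilon}$, and go straight to the fundamental-discriminant sum. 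Your route avoids the extra combinatorics and is a bit cleaner; the paper's route is slightly more elementary in that it never writes down the exact multiplicative structure of $H(\Delta g^2)$. Either way the endgame is the same.
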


\begin{proof} Let $h(D)$ denote the usual class number. Since
 $$H(D) \leq  \sum_{n^2 \mid D} h(D/n^2)$$
 we have
 $$\sum_{|D| \leq x} H(D)^2 \leq \sum_{n_1, n_2} \sum_{[n_1^2, n_2^2] \mid |D| < x} h\left(\frac{D}{n_1^2}\right)h\left(\frac{D}{n_2^2}\right) \leq \sum_{n_1, n_2, m} \sum_{|D| \leq \frac{x}{n_1^2n_2^2m^2} } h(n_2^2D) h(n_1^2D).$$
Since $h(\Delta n^2) = h(\Delta)  n \prod_{p \mid n} (1 - \chi_{\Delta}(p)/p) \leq h(\delta) n\tau(n)$ for a fundamental discriminant $\Delta$, we get from bounds for moments of the ordinary class number \cite{Ba} that 
 \begin{displaymath}
 \begin{split}
 \sum_{|D| \leq x} H(D)^2 &\leq\sum_{n_1, n_2, m, f}  \sum_{|\Delta| \leq x/(n_1 n_2 m f )^2 } h(\Delta)^2 n_1n_2f^2 \tau(n_1)\tau(n_2)\tau(f)^2\\
 & \ll  x^2\sum_{n_1, n_2, m, f} \frac{n_1n_2f^2 \tau(n_1)\tau(n_2)\tau(f)^2}{(n_1n_2mf)^4} \ll x^2.
 \end{split}
 \end{displaymath}
 \end{proof}

The bound \eqref{leadsto}   is in agreement with, and completes the proof of,  \eqref{weak}.\\

We conclude this section with a brief discussion. The bound \eqref{weak} along with $\mathcal{T}_{\text{spec}} \ll K^{1/2 + \varepsilon}$ (from the decay of $V_t$) implies immediately the upper bound $N_{\text{av}}(K) \ll K^{\varepsilon}$. The $\varepsilon$-power is unavoidable at this point because of the  use of Heath-Brown's large sieve in the proof of Proposition \ref{Lfunc}. Except for the spectral large sieve implicit in  proof of Proposition \ref{Lfunc}, we have not touched the spectral ${\tt u}$-sum, so any further improvement must involve a treatment of this sum. This is precisely the purpose of the relative trace formula for Heegner periods given in Theorem \ref{thm2}. The weaker result \eqref{weak} is nevertheless useful: it allows us to discard small eigenvalues $t_{\tt u} \ll K^{1/2 - \varepsilon}$ and it allows us to estimate efficiently some error terms later.    The following sections are devoted to the proof of Theorem \ref{thm1}. 


\section{Proof of Theorem \ref{thm1}: the preliminary argument}\label{sec12}

By \eqref{Nav} and  \eqref{Fh} we have
\begin{displaymath}
\begin{split}
\mathcal{N}_{\text{av}}(K) =  & \frac{12}{\omega  K^2}    \sum_{k \in 2\Bbb{N}} W\left(\frac{k}{K}\right) \sum_{h \in B_{k-1/2}^+(4)} \frac{\pi^2}{90} \cdot \frac{18\sqrt{\pi}}{2  }\cdot 2\sum_{(n, m) = 1} \frac{\lambda(n) \mu(n)\mu^2(m)}{n^{3/2}m^{3}}  \int_{-\infty}^{\infty}  \int_{\Lambda_{\text{\rm ev}}} \frac{\Gamma(k - 3/2)}{(4\pi)^{k-3/2} \| h \|^2} \\
& \sum_{f_1, f_2, D_1, D_2}  \frac{c(|D_1|) c(|D_2|) P(D_1; {\tt u})\overline{P(D_2; {\tt u})}}{f_1f_2 |D_1D_2|^{k/2}}\Big(\frac{|D_2|f_2^2}{|D_1|f_1^2}\Big)^{it} V_t(|D_1D_2|(f_1f_2)^2; k, t_{\tt u}) 
  d{\tt u} \, dt.
 \end{split}
 \end{displaymath}
 Throughout we agree on the convention that $D$ (with or without indices) denotes a negative discriminant and $\Delta$ denotes a negative fundamental discriminant. 
We make two immediate manipulations. Fix some $0 < \eta < 1/100$. By the concluding remark of the preceding section we can insert the function
\begin{equation}\label{def-omega}
\omega(t_{\tt u}) = 1 - e^{-(t_{\tt u}/K^{1/2- \eta})^{10^6/\eta}}
\end{equation}
(not to be confused with the constant $\omega$ in the previous display) into the ${\tt u}$-integral, and we can truncate the $n, m$-sum at 
$$n,m \ll K^{\eta}. $$
Both transformations induce an admissible error, the former due to that $\omega(t_{\tt u}) - 1 \ll_A K^{-A}$ for every $A > 0$ and $t_{\tt u} \gg K^{\frac{1}{2} - \frac{\eta}{2}}$. Note that $\omega$ is even, holomorphic, within $[0, 1]$ for $t_{\tt u} \in \Bbb{R} \cup \{i/2, -i/2\}$ and satisfies
\begin{equation}\label{prop-omega}
\begin{split}
 \omega(t_{\tt u}) & \ll  K^{-10^6}, \quad |t_{\tt u}| \leq K^{\frac{1}{2} - 2\eta},\\
 |t_{\tt u}|^j \frac{d^j}{dt^j_{\tt u}} \omega(t_{\tt u}) &  \ll_j 1
\end{split}
\end{equation}
for $j \in \Bbb{N}_0$. In particular, up to a negligible error we may ignore the constant function ${\tt u} = \sqrt{3/\pi}$ with $t_{\tt u} = i/2$. As before we denote by $\int_{\Lambda_{\text{ev}}}^{\ast}$ a spectral sum/integral over the non-residual spectrum, i.e.\ everything except the constant function. 

Note that $\lambda(n)$ depends on $h$, as it is a Hecke eigenvalue of $f_h$. Before we can sum over $h$ using Lemma \ref{lem1}, we must first combine $\lambda(n)$ with $c(|D_2|)$. To this end we recall \eqref{fourier-relation1} and recast $\mathcal{N}_{\text{av}}(K)$, up to a small error, coming from the truncation of the $n, m$-sum and the ${\tt u}$-integral, as
 \begin{equation}\label{slightly-simp}
\begin{split}
  &  \frac{12}{  \omega K^2}  \sum_{k \in 2\Bbb{N}} W\left(\frac{k}{K}\right) \sum_{h \in B_{k-1/2}^+(4)} \frac{\pi^2}{90} \cdot \frac{18\sqrt{\pi}}{2  }\cdot 2\sum_{\substack{(n, m) = 1\\ n, m \leq K^{\eta}}} \frac{  \mu(n)\mu^2(m)}{n^{3/2}m^{3}} \\
  & \int_{-\infty}^{\infty}  \int^{\ast}_{\Lambda_{\text{\rm ev}}} \omega(t_{\tt u})  \frac{\Gamma(k - 3/2)}{(4\pi)^{k-3/2} \| h \|^2}
    \sum_{f_1, f_2, D_1, D_2}  \sum_{\substack{d_1 \mid d_2 \mid n\\ (d_1d_2)^2 \mid n^2D_2}} \left(\frac{d_1}{d_2} \right)^{1/2} \chi_{  D_2}\Big(\frac{d_2}{d_1}\Big)\left(\frac{n}{d_1d_2}\right)^{3/2 - k}\\
   &   \frac{c(|D_1|) c(|D_2|n^2/(d_1d_2)^2) P(D_1; {\tt u})\overline{P(D_2; {\tt u})}}{f_1f_2 |D_1D_2|^{k/2}}  \Big(\frac{|D_2|f_2^2}{|D_1|f_1^2}\Big)^{it} V_t(|D_1D_2|(f_1f_2)^2; k, t_{\tt u}) 
  d{\tt u} \, dt .
 \end{split}
 \end{equation}
 We can now sum over $h$ using Lemma \ref{lem1}, and we start with an analysis of the diagonal term which is given by 
  \begin{displaymath}
\begin{split}
 \mathcal{N}^{\text{diag}}&(K) =  \frac{12}{\omega  K^2}   \sum_{k \in 2\Bbb{N}} W\left(\frac{k}{K}\right)   \frac{\pi^2}{90} \cdot \frac{18\sqrt{\pi}}{2  } \cdot 2\cdot \frac{2}{3}\sum_{\substack{(n, m) = 1\\ n, m \leq K^{\eta}}} \frac{  \mu(n)\mu^2(m)}{n^{3/2}m^{3}}  \int_{-\infty}^{\infty}   \int^{\ast}_{\Lambda_{\text{\rm ev}}} \omega(t_{\tt u})  \sum_{f_1, f_2, D}  \\
  &  \sum_{\substack{d_1 \mid d_2 \mid n\\ (d_1d_2)^2 \mid n^2D}} \left(\frac{d_1}{d_2} \right)^{1/2} \chi_{  D}\Big(\frac{d_2}{d_1}\Big)   \frac{  P(Dn^2/(d_1d_2)^2; {\tt u})\overline{P(D; {\tt u})}}{f_1f_2 (|D|n/(d_1d_2))^{3/2}}\Big(\frac{d_1d_2f_2}{nf_1}\Big)^{2it} V_t\Big(\Big(\frac{|D|nf_1f_2}{d_1d_2}\Big)^2; k, t_{\tt u}\Big) d{\tt u} \, dt .
  \end{split}
 \end{displaymath}
 The analysis of this term occupies this and the following two sections, and we will eventually show that $ \mathcal{N}^{\text{diag}}(K)  = 4\log K + O(1)$.  The discussion of the off-diagonal term is postponed to Section \ref{off-off}. 
 
 We write   $d_2 = d_1\delta$, $n = d_1\delta \nu$, so that  $d_1^2 \mid \nu^2 D$. Since $n$ is squarefree, this implies $d_1^2 \mid D$. We   write $d_1 = d$ and $D d^2$ in place of $D$. With this notation we recast $ \mathcal{N}^{\text{diag}}(K)$ as 
 \begin{displaymath}
\begin{split}
  \frac{12}{  \omega K^2} & \sum_{k \in 2\Bbb{N}} W\left(\frac{k}{K}\right)   \frac{\pi^2}{90} \cdot \frac{18\sqrt{\pi}}{2  } \cdot 2 \cdot \frac{2}{3}\sum_{\substack{(d\delta \nu, m) = 1\\ d\delta \nu, m \leq K^{\eta}}} \frac{  \mu(d\delta \nu)\mu^2(m)}{(d\delta\nu)^{3/2}m^{3}}  \int_{-\infty}^{\infty}   \int^{\ast}_{\Lambda_{\text{\rm ev}}} \omega(t_{\tt u})    \sum_{f_1, f_2, D}   \frac{\chi_D(\delta)}{\delta^{1/2}} \\ &\frac{  P(D\nu^2; {\tt u})\overline{P(Dd^2; {\tt u})}}{f_1f_2 (d|D|\nu)^{3/2}}  \Big(\frac{df_2}{\nu f_1}\Big)^{2it} V_t ( ( |D|\nu df_1f_2 )^2; k, t_{\tt u} )  d{\tt u} \, dt.
 \end{split}
 \end{displaymath}
  From the Katok-Sarnak formula in combination with \eqref{non-fund} in the cuspidal case and from \eqref{eisen1} in combination with \eqref{basicL1} in the Eisenstein case we conclude\footnote{This remains true in the excluded case ${\tt u} = \text{const}$ if we define   $\lambda_{\tt u} = \rho_1$.}
 $$ P(\Delta f^2, {\tt u}) =   P(\Delta , {\tt u})\alpha_{\tt u}(f), \quad \alpha_{\tt u}(f) = f^{1/2} \sum_{d \mid f} \mu(d) \chi_{\Delta}(d) \lambda_{\tt u}(f/d) d^{-1/2}$$
for a fundamental discriminant $\Delta$ where $\alpha_{\tt u}(f)$ depends also on $\Delta$, which we suppress from the notation. Using this notation along with \eqref{katok-Sarnak} in the cuspidal case and \eqref{eisen2} in the Eisenstein case we obtain
  \begin{equation*}
\begin{split}
 \mathcal{N}^{\text{diag}}(K) &= 
   \frac{12}{  \omega K^2}   \sum_{k \in 2\Bbb{N}} W\left(\frac{k}{K}\right)   \frac{\pi^2}{90} \cdot \frac{18\sqrt{\pi}}{2  }\cdot 2 \cdot \frac{2}{3} \cdot \frac{1}{4}\sum_{\substack{(d\delta \nu, m) = 1\\ d\delta \nu, m \leq K^{\eta}}} \frac{  \mu(d\delta \nu)\mu^2(m)}{(d\delta\nu)^{3/2}m^{3}}  \int_{-\infty}^{\infty}  \int^{\ast}_{\Lambda_{\text{\rm ev}}} \omega(t_{\tt u})\\
&    \sum_{f_1, f_2}\sum_{\substack{D = \Delta f^2\\ (\delta, f) = 1}}      \frac{\chi_{\Delta}(\delta) \alpha_{\tt u} (f\nu)\overline{ \alpha_{\tt u}(f d) }  }{\delta^{1/2}f_1f_2 (df^2\nu)^{3/2}|\Delta|}   \frac{L({\tt u}, 1/2) L({\tt u},  \Delta, 1/2)}{\mathcal{L}({\tt u})} \Big(\frac{df_2}{\nu f_1}\Big)^{2it} V_t ( ( |D|\nu df_1f_2 )^2; k, t_{\tt u} )    d{\tt u} \, dt 
 \end{split}
 \end{equation*}
 where $\mathcal{L}({\tt u}) = L(\text{sym}^2 {\tt u}, 1)$ if ${\tt u}$ is cuspidal and $\mathcal{L}({\tt u}) = \frac{1}{2}|\zeta(1 + 2 i t)|^2$ if ${\tt u} = E(., 1/2 + it)$ is Eisenstein (with the obvious interpretation in the case $t = 0$). With later transformations in mind, we also restrict the $f$-sum to $f \leq K^{\eta}$. By trivial estimates along with Corollary \ref{Lfunc-cor} and \eqref{size-restr}, this induces an error of $O(K^{\varepsilon - \eta})$.  By the usual Hecke relations we have
$$ \alpha_{\tt u} (f\nu)\overline{ \alpha_{\tt u}(f d)} = f (d\nu)^{1/2} \sum_{\substack{d_1 \mid fd\\ d_2 \mid f\nu}} \sum_{d_3 \mid (\frac{fd}{d_1}, \frac{f\nu}{d_2}) }\frac{\mu(d_1)\chi_{\Delta}(d_1)     \mu(d_2)\chi_{\Delta}(d_2)  }{\sqrt{d_1d_2}} \lambda_{\tt u}(f^2 d\nu/(d_1d_2d_3^2)) .$$
We summarize the previous discussion as
\begin{equation}\label{weobt}
\begin{split}
\mathcal{N}^{\text{diag}}(K) & =   \frac{12}{ \omega  K^2}    \sum_{k \in 2\Bbb{N}} W\left(\frac{k}{K}\right)   \frac{\pi^2}{90} \cdot \frac{18\sqrt{\pi}}{2  }\cdot 2 \cdot \frac{2}{3} \cdot \frac{1}{4}\sum_{\substack{(d\delta \nu, m) = 1\\ d\delta \nu, m \leq K^{\eta}}} \frac{  \mu(d\delta \nu)\mu^2(m)}{(d\delta\nu)^{3/2}m^{3}}  \\
&\int_{-\infty}^{\infty} 
 \sum_{f_1, f_2, \Delta}\sum_{\substack{  f\leq K^{\eta}\\ (\delta, f) = 1}}      \frac{\chi_{\Delta}(\delta)   }{\delta^{1/2}f_1f_2 df^2\nu|\Delta|  }\Big(\frac{df_2}{\nu f_1}\Big)^{2it}    \\
 &\sum_{\substack{d_1 \mid fd\\ d_2 \mid f\nu}} \sum_{d_3 \mid (\frac{fd}{d_1}, \frac{f\nu}{d_2}) }\frac{\mu(d_1)\chi_{\Delta}(d_1)     \mu(d_2)\chi_{\Delta}(d_2)  }{\sqrt{d_1d_2}}  \mathcal{I}\Big(\Delta, t,  k, \frac{f^2 d \nu}{d_1d_2d_3^2}\Big) dt
 + O(K^{\varepsilon - \eta})
\end{split}
\end{equation}
where 
$$ \mathcal{I}(\Delta, t,k, r)  =  \int^{\ast}_{\Lambda_{\text{\rm ev}}}  \frac{L({\tt u}, 1/2) L({\tt u},  \Delta, 1/2)}{\mathcal{L}({\tt u})} \lambda_{\tt u}(r) h(t_{\tt u}) d{\tt u}$$
with
\begin{equation}\label{htau}
h(\tau) = \omega(\tau) V_t ( ( |\Delta|f^2\nu df_1f_2 )^2; k,\tau ) .
\end{equation}
The expression $\mathcal{I}$ depends also on $f^2 \nu d f_1f_2$, but we suppress this from the notation.
We insert the approximate functional equations \eqref{approx-basic} and \eqref{approx-basic1} getting
\begin{displaymath}
\begin{split}
\mathcal{I}(\Delta, t,k, r)  = & \ 4\int^{\ast}_{\Lambda_{\text{\rm ev}}}\sum_{{\tt n},{\tt m}} \frac{\lambda_{\tt u}({\tt n}) \lambda_{\tt u}({\tt m}) \chi_{\Delta}({\tt m})\lambda_{\tt u}(r)}{({\tt n}{\tt m})^{1/2}}W^+_{t_{\tt u}}({\tt n}) W^-_{t_{\tt u}}\Big(\frac{{\tt m}}{|\Delta|}\Big) \frac{1}{\mathcal{L}({\tt u})}  h(t_{\tt u}) d{\tt u}\\
& - \int_{-\infty}^{\infty}  \sum_{\pm} \frac{\zeta(1 \pm 2i\tau)\Gamma(\frac{1}{2} \pm i\tau)\pi^{\mp i\tau} e^{(1/2 \pm  i\tau)^2}}{(\frac{1}{2} \pm i \tau)\Gamma(\frac{1}{4} + \frac{i\tau}{2})\Gamma(\frac{1}{4}  - \frac{i\tau}{2})}\frac{|L(\chi_{\Delta}, 1/2 + i\tau)|^2}{|\zeta(1 + 2i\tau)|^2} \rho_{1/2 + i\tau}(r) h(\tau) \frac{d\tau}{2\pi}.
 \end{split}
 \end{displaymath}
Since the $\tau$-integral is rapidly converging, it is easy  to see that the polar term contributes at most $O(K^{\varepsilon - 1})$ to \eqref{weobt}, so from now on we focus on the first term of the preceding display. By the Hecke relations we can recast it as
 \begin{equation}\label{diagdiag}
  4 \sum_{d \mid r} \int^{\ast}_{\Lambda_{\text{\rm ev}}}\sum_{{\tt n},{\tt m}} \frac{\lambda_{\tt u}({\tt n}r/d) \lambda_{\tt u}({\tt m}) \chi_{\Delta}({\tt m}) }{(d{\tt n}{\tt m})^{1/2}} W^+_{t_{\tt u}}(d{\tt n}) W^-_{t_{\tt u}}\Big(\frac{{\tt m}}{|\Delta|}\Big) \frac{1}{\mathcal{L}({\tt u})}  h(t_{\tt u}) d{\tt u} .
  \end{equation}
 This is now in shape to apply the Kuznetsov formula for the even spectrum, Lemma \ref{kuz-even}. We treat the three terms on the right hand side of \eqref{kuz-even-form} separately and start with the diagonal term to which the next section is devoted. The two off-diagonal terms are treated in Section \ref{diag-off}.

\section{The diagonal diagonal term}\label{diag-diag}
The diagonal contribution equals
$$ \mathcal{I}^{\text{diag}}(\Delta, t, k, r) = 4 \sum_{d \mid r} \sum_{{\tt n}} \frac{ \chi_{\Delta}({\tt n}r/d) }{{\tt n}r^{1/2}} \int_{-\infty}^{\infty} W^+_{\tau}(d{\tt n}) W^-_{\tau}\Big(\frac{{\tt n}r/d}{|\Delta|}\Big)    \tau \tanh(\pi \tau) h(\tau) \frac{d\tau}{4\pi^2} .$$
Opening up the Mellin transform in the definition \eqref{v-t}, this equals
\begin{displaymath}
\begin{split}
\frac{1}{\sqrt{r}}   \int_{-\infty}^{\infty} & h(\tau)
\int_{(2)} \int_{(2)} \prod_{\pm}\Big( \frac{\Gamma(\frac{1}{2}(\frac{1}{2}  + s_1 \pm i\tau))\Gamma(\frac{1}{2}(\frac{3}{2} + s_2 \pm i\tau))}{\Gamma(\frac{1}{2}(\frac{1}{2} \pm i\tau))\Gamma(\frac{1}{2}(\frac{3}{2} \pm i\tau))  }  \Big) |\Delta|^{s_2}  L( \chi_{\Delta}, 1 + s_1 + s_2) 
 \\
 & \sum_{r_1r_2 = r}  \frac{\chi_{\Delta}(r_2) }{r_1^{s_1} r_2^{s_2}}    \frac{e^{s_2^2 +s_1^2}}{\pi^{s_1+s_2}s_1s_2} \frac{ds_1 \, ds_2}{(2\pi i)^2}
  \tau \tanh(\pi \tau) \frac{d\tau}{\pi^2}.
\end{split}
\end{displaymath}
We shift the $s_1, s_2$-contours  to $\Re s_1 = \Re s_2 = -1/4$ getting
\begin{equation}\label{i-diag}
\begin{split}
\mathcal{I}^{\text{diag}}&(\Delta, t, k, r) = \frac{1}{\sqrt{r}} L( \chi_{\Delta}, 1) \sum_{r_1r_2 = r} \chi_{\Delta}(r_2) \int_{-\infty}^{\infty} h(\tau) \tau \tanh(\pi \tau) \frac{d\tau}{\pi^2} \\
&+ O\Big(\int_{-\infty}^{\infty} |h(\tau)| |\tau|^{3/4} d\tau \int_{-\infty}^{\infty} e^{\xi^2}\Big(\frac{| L(\chi_{\Delta}, 1/2 + i\xi)|}{|\Delta|^{1/4}|} +| L(\chi_{\Delta}, 3/4 + i\xi)|\Big)  d\xi \Big) .
 \end{split}
\end{equation}
We first deal with the error term and substitute it back into \eqref{weobt}. Roughly speaking the factor $|\tau|^{3/4}$ saves a factor $K^{1/8}$ from the trivial bound, while on average over $\Delta$ the $L$-values on the lines $1/2$ and $3/4$ are still bounded. More precisely, recalling \eqref{size-restr} the error term gives a total contribution of at most
$$\frac{1}{K^{3/2}}  \int_{-\infty}^{\infty}   \int_{-\infty}^{\infty}   \int_{-\infty}^{\infty} \sum_{\Delta} \Big(\frac{| L(\chi_{\Delta}, 1/2 + i\xi)|}{|\Delta|^{5/4}} +\frac{| L(\chi_{\Delta}, 3/4 + i\xi)|}{|\Delta|} \Big)\Big( 1 + \frac{|t|^2 + |\tau|^2}{K}\Big)^{-10} e^{\xi^2} d\xi \, |\tau|^{3/4}d\tau \, dt.$$
By a standard mean value bound for the $\Delta$-sum (e.g. \cite[Theorem 2]{HB}), the previous display can be bounded by $\ll K^{-1/8 + \varepsilon}$. We substitute the main term in \eqref{i-diag} into \eqref{weobt} getting
\begin{equation*}
\begin{split}
&\mathcal{N}^{\text{diag}, \text{diag}}(K)  =    \frac{12}{ \omega  K^2}   \sum_{k \in 2\Bbb{N}} W\left(\frac{k}{K}\right)   \frac{\pi^2}{90} \cdot \frac{18\sqrt{\pi}}{2  }\cdot 2 \cdot \frac{2}{3} \cdot \frac{1}{4}\sum_{\substack{(d\delta \nu, m) = 1\\ d\delta \nu, m \leq K^{\eta}}} \frac{  \mu(d\delta \nu)\mu^2(m)}{(d\delta\nu)^{3/2}m^{3}} \\
&\sum_{f_1, f_2, \Delta}\sum_{\substack{ f \leq K^{\eta}\\ (\delta, f) = 1}}    L( \chi_{\Delta}, 1)  \int_{-\infty}^{\infty} 
   \sum_{\substack{d_1 \mid fd\\ d_2 \mid f\nu}} \sum_{d_3 \mid (\frac{fd}{d_1}, \frac{f\nu}{d_2}) }      \sum_{r_1r_2 = \frac{f^2 d \nu}{d_1d_2d_3^2}}       \frac{\chi_{\Delta}(\delta)  \mu(d_1)\chi_{\Delta}(d_1)     \mu(d_2)\chi_{\Delta}(d_2) d_3 \chi_{\Delta}(r_2)  }{\delta^{1/2}f_1f_2 (df^2\nu)^{3/2} |\Delta|(f_1\nu/(f_2d))^{\pm 2it} }\\
   &   \int_{-\infty}^{\infty} h(\tau) \tau \tanh(\pi \tau) \frac{d\tau}{\pi^2}        dt
\end{split}
\end{equation*}
where we recall that $h(\tau)$ is given by \eqref{htau} and depends in particular on $t$, $k$ and $\Delta$ (as well as on $f, f_1, f_2, d, \nu$). A trivial estimate at this point using \eqref{size-restr} shows
\begin{equation}\label{trivial}
\begin{split}
\mathcal{N}^{\text{diag}, \text{diag}}(K)\ll &
\frac{1}{K^2}\sum_{k \in 2\Bbb{N}} W\left(\frac{k}{K}\right)  \sum_{f_1, f_2,\Delta} \frac{L(\chi_{\Delta}, 1)}{|\Delta| f_1f_2}\\
&\int_{-\infty}^{\infty} \int_{-\infty}^{\infty}\frac{1}{k^{1/2}}  \Big(1 + \frac{|\Delta| (f_1f_2)^2}{k^4}\Big)^{-A}  \Big(1 + \frac{|t|^2 + |\tau|^2}{k} \Big)^{-A}  |\tau| d\tau \, dt \ll \log K,
\end{split}
\end{equation}
(by standard mean value results for $L(\chi_{\Delta}, 1)$), but eventually we want an asymptotic formula, not an upper bound. 

Our next goal is to show that the $t$-integral forces $f_1\nu = f_2d$, up to a negligible error. To make this precise, we first observe that by the same computation as in \eqref{trivial} the portion $|t| \leq K^{2/5}$ contributes at most $O(K^{-1/10 + \varepsilon})$ to $ \mathcal{N}^{\text{diag}, \text{diag}}(K) $. We can therefore insert a smooth weight function that vanishes on $|t| \leq \frac{1}{2} K^{2/5}$ and is one on $|t| \geq K^{2/5}$. Integrating by parts sufficiently often using \eqref{size-restr}, we can then restrict to $$f_1\nu = f_2d (1 + O(K^{\varepsilon - 2/5}))$$
up to a negligible error. It is then easy to see that the terms $f_1 d \not = f_2 \nu$ contribute $O(K^{\varepsilon - 2/5})$ to $ \mathcal{N}^{\text{diag}, \text{diag}}(K) $. Having excluded these, we re-insert the portion $|t| \leq K^{2/5}$ to the $t$-integral, again  at the cost of an error $O(K^{\varepsilon-1/10})$. Finally we complete the $d, \delta, \nu, m, f$-sum at the cost of an error $O(K^{\varepsilon-\eta})$. Since $(\nu, d) = 1$, the equation $f_1d = f_2 \nu$ implies $f_1 = dg$, $f_2 = \nu g$ for some $g\in \Bbb{N}$. Substituting all this, we recast $\mathcal{N}^{\text{diag}, \text{diag}}(K)  $ as 
\begin{equation*}
\begin{split}
 &    \frac{12}{ \omega  K^2}    \sum_{k \in 2\Bbb{N}} W\left(\frac{k}{K}\right)   \frac{\pi^2}{90} \cdot \frac{18\sqrt{\pi}}{2  }\cdot 2 \cdot \frac{2}{3} \cdot \frac{1}{4}\sum_{ (d\delta \nu, m) = 1 } \frac{  \mu(d\delta \nu)\mu^2(m)}{(d\delta\nu)^{3/2}m^{3}} \sum_{\substack{g, f, \Delta \\ (\delta, f) = 1}}    L( \chi_{\Delta}, 1)  \\
&\int_{-\infty}^{\infty} 
   \sum_{\substack{d_1 \mid fd\\ d_2 \mid f\nu}} \sum_{d_3 \mid (\frac{fd}{d_1}, \frac{f\nu}{d_2}) }      \sum_{r_1r_2 = \frac{f^2 d \nu}{d_1d_2d_3^2}}       \frac{\chi_{\Delta}(\delta)  \mu(d_1)\chi_{\Delta}(d_1)     \mu(d_2)\chi_{\Delta}(d_2) d_3 \chi_{\Delta}(r_2)  }{\delta^{1/2}g^2d\nu (df^2\nu)^{3/2} |\Delta|}  \int_{-\infty}^{\infty} h(\tau) \tau \tanh(\pi \tau) \frac{d\tau}{\pi^2}        dt  
   \end{split}
\end{equation*}
up to an error of $O(K^{\varepsilon-\eta})$, where in the definition \eqref{htau} of $h$ we replace $f_1 = dg$, $f_2 = \nu g$. We substitute the definition in \eqref{htau} and open the Mellin transform \eqref{defV3}. So that the  previous display becomes
\begin{equation}\label{previousdisplay}
\begin{split}
&     \frac{12}{ \omega  K^2}   \sum_{k \in 2\Bbb{N}} W\left(\frac{k}{K}\right)   \frac{\pi^2}{90} \cdot \frac{18\sqrt{\pi}}{2  }\cdot 2 \cdot \frac{2}{3} \cdot \frac{1}{4} \int_{(3)} \frac{e^{v^2 }}{v} \mathscr{L}(2v) \\
&\int_{-\infty}^{\infty}  \int_{-\infty}^{\infty}  \mathcal{G}(k, \tau, v + 1/2 + it)   \mathcal{G}(k, \tau, v + 1/2 - it) \omega(\tau)   \tau \tanh(\pi \tau) \frac{d\tau}{\pi^2}        dt \frac{dv }{2\pi i} 
 \end{split}
\end{equation}
where
\begin{displaymath}
\begin{split}
\mathscr{L}(v) = \sum_{ (d\delta \nu, m) = 1 } \sum_{\substack{g, f, \Delta\\ (f, \delta) = 1}}  \sum_{\substack{d_1 \mid fd\\ d_2 \mid f\nu}} \sum_{d_3 \mid (\frac{fd}{d_1}, \frac{f\nu}{d_2}) }      \sum_{r_1r_2 = \frac{f^2 d \nu}{d_1d_2d_3^2}}    \frac{  \mu(d\delta \nu)\mu^2(m) \mu(d_1)\mu(d_2) \chi_{\Delta}(\delta d_1 d_2r_2) d_3 }{(d\nu)^{4 + 2v}( \delta g) ^{2+2v}f^{3+2v} m^3  |\Delta|^{1+v}}    L( \chi_{\Delta}, 1) .      
\end{split}
\end{displaymath}
Recall our general assumption that $\Delta$ runs over negative fundamental discriminants. The main term will come from the residue at $v = 0$ and we need to analytically continue $\mathscr{L}$ to $\Re v < 0$ and compute the residue at $v = 0$. The critical portion is the $\Delta$-sum which we analyze in the following lemma.
\begin{lemma}
\label{lemsip}
Let $\alpha \in \Bbb{N}$ and uniquely write $\alpha = \alpha_1^2 \alpha_2$ with $\mu^2(\alpha_2) = 1$. Then
$$\sum_{-X \leq  \Delta < 0} \chi_{\Delta}(\alpha) L( \chi_{\Delta}, 1) =  \frac{\zeta(2) X}{2\alpha_2} \prod_{p \mid \alpha}  \Big(1 + \frac{1}{p} - \frac{1}{p^{3}}\Big)^{-1} \prod_{p }\Big( 1 - \frac{1}{p^{2}} - \frac{1}{p^{3}} + \frac{1}{p^{4}}\Big)  + O\big(X^{13/18}\alpha^{1/4} (X\alpha)^{\varepsilon}\big).$$
In particular, the Dirichlet series
$$\mathscr{K}(v; \alpha) = \sum_{\Delta < 0} \frac{\chi_{\Delta}(\alpha) L( \chi_{\Delta}, 1)}{|\Delta|^{1+v}}$$
has analytic continuation to $\Re v > -5/18$ except for a simple pole at $v=0$ with residue
$$ \frac{\zeta(2)}{2\alpha_2} \prod_{p \mid \alpha}  \Big(1 + \frac{1}{p} - \frac{1}{p^{3}}\Big)^{-1} \prod_{p }\Big( 1 - \frac{1}{p^{2}} - \frac{1}{p^{3}} + \frac{1}{p^{4}}\Big) $$
and is bounded by $O_{\varepsilon}(|v|  \alpha^{1/4+\varepsilon})$ in the region $\Re v \geq -5/18 + \varepsilon$, $|v| \geq \varepsilon$.
\end{lemma}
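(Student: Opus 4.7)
The plan is to expand $L(\chi_\Delta, 1) = \sum_n \chi_\Delta(n)/n$, apply a truncation at a cutoff $N$ to be optimized, and interchange the order of summation. After swapping, one must analyse
$$\sum_{n \leq N} \frac{1}{n} \sum_{-X \leq \Delta < 0,\ \Delta \text{ fund.}} \chi_\Delta(\alpha n),$$
with the tail $n > N$ controlled through the Polya--Vinogradov bound $\sum_{n > N} \chi_\Delta(n)/n \ll |\Delta|^{1/2+\varepsilon}/N$. The key structural observation is the unique factorization $\alpha n = a^2 b$ with $b$ squarefree: then $\chi_\Delta(\alpha n)$ equals $\chi_\Delta(b)$ on discriminants coprime to $a$ and vanishes otherwise, so $\Delta \mapsto \chi_\Delta(\alpha n)$ is the principal character exactly when $\alpha n$ is a perfect square (that is, $b = 1$), and a genuinely oscillating quadratic character otherwise.

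The main term arises from the $b = 1$ contribution. Writing $\alpha = \alpha_1^2 \alpha_2$ with $\alpha_2$ squarefree, the condition $b = 1$ forces $n = \alpha_2 m^2$, and the main-term sum becomes
$$\frac{1}{\alpha_2} \sum_m \frac{1}{m^2} \#\{-X \leq \Delta < 0 : (\Delta, \alpha_1 \alpha_2 m) = 1,\ \Delta \text{ fund.}\} = \frac{X}{\alpha_2} \sum_m \frac{\kappa(\alpha_1 \alpha_2 m)}{m^2} + O(X^{1/2+\varepsilon}),$$
where $\kappa$ is the multiplicative density function that counts negative fundamental discriminants coprime to a given integer. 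Splitting primes into those dividing $\alpha$ and those not, a direct Euler-product computation reproduces the explicit constant
$$\frac{\zeta(2)}{2\alpha_2} \prod_{p \mid \alpha} \Big(1 + \frac{1}{p} - \frac{1}{p^3}\Big)^{-1} \prod_p \Big(1 - \frac{1}{p^2} - \frac{1}{p^3} + \frac{1}{p^4}\Big)$$
claimed in the lemma.

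For the off-diagonal $b > 1$ contribution one bounds
$$\sum_{-X \leq \Delta < 0,\ (\Delta, a) = 1,\ \Delta \text{ fund.}} \chi_\Delta(b)$$
for squarefree $b > 1$. Removing the fundamentality condition on $\Delta$ via $\mu^2(d) = \sum_{e^2 \mid d} \mu(e)$ reduces this to incomplete sums of quadratic characters to a modulus dividing $4b$, bounded by Polya--Vinogradov. Summing against $1/n$ for $n \leq N$, balancing with the tail, and optimizing $N$ yields the stated error $O(X^{13/18} \alpha^{1/4} (X\alpha)^\varepsilon)$. Partial summation then promotes the pointwise asymptotic to the Dirichlet series: the leading term $c(\alpha) X$ produces a simple pole at $v = 0$ with residue $c(\alpha)$, the error produces a holomorphic function on $\Re v > -5/18$, and uniformity in $\alpha$ gives the bound $\mathscr{K}(v; \alpha) \ll |v| \alpha^{1/4+\varepsilon}$ away from the pole. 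The main obstacle is matching the precise exponent $13/18$: a naive Polya--Vinogradov balance gives a weaker saving, so likely one needs either Heath-Brown's quadratic large sieve (Lemma \ref{HBlemma}) in place of Polya--Vinogradov, or a Burgess-style bound, in order to land exactly on $5/18$.
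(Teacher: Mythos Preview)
Your approach is structurally the same as the paper's: truncate $L(\chi_\Delta,1)$, swap the order of summation, split according to whether $\alpha n$ is a perfect square, extract the main term from the square part, and bound the non-square part by P\'olya--Vinogradov applied to the $\Delta$-sum. The paper differs only in the tail step: instead of a sharp cutoff with the P\'olya--Vinogradov bound $\sum_{n>N}\chi_\Delta(n)/n \ll |\Delta|^{1/2+\varepsilon}/N$, it uses a smooth cutoff $w(n/Y)$, Mellin-inverts, and shifts to $\Re s = -1/2$ invoking the Conrey--Iwaniec subconvexity bound $L(\chi_\Delta,1/2)\ll|\Delta|^{1/6+\varepsilon}$, giving tail error $X^{7/6+\varepsilon}Y^{-1/2}$.

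Your final worry is misplaced, and in the opposite direction from what you suspect. With your sharp P\'olya--Vinogradov tail the error is $X^{3/2+\varepsilon}/N$, and balancing against the non-square contribution $X^{1/2}(\alpha N)^{1/4+\varepsilon}$ (obtained exactly as in the paper, via $\mu^2(m)=\sum_{d^2\mid m}\mu(d)$ and P\'olya--Vinogradov on a character of conductor $\ll \alpha N$) gives $N=X^{4/5}\alpha^{-1/5}$ and total error $X^{7/10}\alpha^{1/5}(X\alpha)^\varepsilon$. Since $7/10 < 13/18$ and $1/5 < 1/4$, your elementary route actually \emph{beats} the stated error term; no large sieve or Burgess is needed. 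The paper's use of subconvexity is therefore not essential here.

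The one place where you are genuinely too brief is the main-term constant. The density $\kappa(r)$ of negative fundamental discriminants coprime to $r$ is not cleanly multiplicative in the naive sense because of the congruence conditions at the prime $2$; the paper handles this by splitting explicitly into the cases $\Delta$ odd, $4\,\|\,\Delta$, and $8\,\|\,\Delta$, computing each piece via Perron's formula, and checking that the three contributions recombine to give the stated Euler product (with the same answer whether $\alpha$ is odd or even). Your sketch ``a direct Euler-product computation reproduces the explicit constant'' hides this case analysis, which is where the factor $\frac{1}{2}$ and the precise shape $\prod_{p\mid\alpha}(1+p^{-1}-p^{-3})^{-1}\prod_p(1-p^{-2}-p^{-3}+p^{-4})$ actually emerge.
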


\emph{Remark:} The computation of the leading constant in Lemma \ref{lemsip} seems to be a new result even in the case $\alpha = 1$ and features an interesting Euler product. See \cite{Ju} for similar Euler products for averages at the point 1/2. \\

 \begin{proof} Let $w$ be a fixed smooth function that is equal to $1$ on $[0, 1]$ and vanishes on $[2, \infty)$. Let $Y \geq 1$. We have
 $$\sum_n\frac{\chi_{\Delta}(n)}{n} w\left( \frac{n}{Y}\right) = \int_{(2)} L(\chi_{\Delta}, 1+s) \widehat{w}(s)Y^s \frac{ds}{2\pi i} =    L( \chi_{\Delta}, 1) + O(Y^{-1/2} |\Delta|^{1/6 + \varepsilon})$$
 where $\widehat{w}$ in the present case denotes the Mellin transform and the left hand side comes from a contour shift to $\Re s = -1/2$ and the Conrey-Iwaniec \cite{CoIw} subconvexity bound for real characters. We obtain
 \begin{equation}\label{char-sum}
 \sum_{-X \leq  \Delta < 0} \chi_{\Delta}(\alpha) L( \chi_{\Delta}, 1)= \sum_n \frac{1}{n}w\left( \frac{n}{Y}\right) \sum_{-X \leq \Delta < 0} \chi_{\Delta}(\alpha n) + O(X^{7/6+\varepsilon} Y^{-1/2}).
 \end{equation}
We decompose the main term as $S_{\square} + S_{\not= \square}$ depending on  whether $\alpha n$ is a square or not. 
We first consider the portion $S_{\square}$ where $n$ is restricted to  $n = \alpha_2k^2$ with $k \in \Bbb{N}$. This gives
 \begin{equation}\label{er1}
 \begin{split}
S_{\square} & =  \sum_k \frac{1}{\alpha_2 k^2} w\left( \frac{\alpha_2k^2}{Y}\right) \sum_{\substack{-X \leq \Delta < 0\\ (\Delta, \alpha k) = 1}} 1 = \sum_k \frac{1}{\alpha_2 k^2}  \sum_{\substack{-X \leq \Delta < 0\\ (\Delta, \alpha k) = 1}} 1 + O\Big(\frac{X}{Y}\Big).
\end{split}
 \end{equation}
We decompose the main term as $ S_{\square}^{\text{odd}} + S_{\square}^{\text{even, 4}} + S_{\square}^{\text{even, 8}}$ depending on whether $\Delta$ is odd, exactly divisible 4 or exactly divisible by 8. We have
$$S_{\square}^{\text{odd}}  = \frac{1}{2\alpha_2} \sum_k \frac{1}{k^2} \sum_{\substack{m \leq X\\ (m, \alpha k) = 1}} \mu^2(m)(\chi_0(m) - \chi_{-4}(m))$$
where $\chi_0$ is the trivial character modulo 4 and $\chi_{-4}$ the non-trivial character modulo 4. For $\chi \in \{\chi_0, \chi_{-4}\}$ we consider the Dirichlet series
\begin{displaymath}
 \begin{split}
&\frac{1}{2\alpha_2}\sum_k \frac{1}{k^2} \sum_{ (m, \alpha k) = 1} \frac{\mu^2(m)\chi(m)}{m^s}  = \frac{\zeta(2)}{2\alpha_2} \prod_{p \nmid \alpha}  \Big(1 + \frac{\chi(p)}{p^s} - \frac{\chi(p)}{p^{s+2}}\Big) \\
&= \frac{\zeta(2)}{2\alpha_2} \prod_{p \mid \alpha}  \Big(1 + \frac{\chi(p)}{p^s} - \frac{\chi(p)}{p^{s+2}}\Big)^{-1}  L(\chi, s) \prod_p\Big( 1 - \frac{\chi(p)^2}{p^{2s}} - \frac{\chi(p)}{p^{s+2}} + \frac{\chi(p)^2}{p^{2s+2}}\Big).
\end{split}
 \end{displaymath}
A standard application of Perron's formula (e.g.\ \cite[Corollary II.2.4]{Te}) shows now that
\begin{equation}\label{er2}
S_{\square}^{\text{odd}} = \frac{\zeta(2) X}{4\alpha_2} \prod_{\substack{p \mid \alpha\\ p \nmid 2}}  \Big(1 + \frac{1}{p} - \frac{1}{p^{3}}\Big)^{-1}  \prod_{p \nmid 2}\Big( 1 - \frac{1}{p^{2}} - \frac{1}{p^{3}} + \frac{1}{p^{4}}\Big) + O\Big( \frac{X^{1/2}}{\alpha_2}(\alpha_2 X)^{\varepsilon}\Big).
\end{equation}
We have $S_{\square}^{\text{even, 4}} \not= 0$ and $S_{\square}^{\text{even, 8}} \not= 0$ only if $\alpha$ is odd, which we assume from now on. Then 
$$S_{\square}^{\text{even, 4}}  = \frac{1}{\alpha_2} \sum_{(k, 2) = 1} \frac{1}{k^2} \sum_{\substack{m \leq X/4\\ (m, \alpha k) = 1\\ m \equiv 1  (\text{mod } 4)}} \mu^2(m) = \frac{1}{2\alpha_2} \sum_{(k, 2) = 1} \frac{1}{k^2} \sum_{\substack{m \leq X/4\\ (m, \alpha k) = 1 }} \mu^2(m)(\chi_0(m) + \chi_{-4}(m)) $$
and by the same computation we obtain
$$S_{\square}^{\text{even, 4}} = \frac{\zeta(2) X}{16\alpha_2} \Big(1 - \frac{1}{4}\Big) \prod_{p \mid \alpha}  \Big(1 + \frac{1}{p} - \frac{1}{p^{3}}\Big)^{-1}  \prod_{p \nmid 2}\Big( 1 - \frac{1}{p^{2}} - \frac{1}{p^{3}} + \frac{1}{p^{4}}\Big) + O\Big( \frac{X^{1/2}}{\alpha_2}(\alpha_2 X)^{\varepsilon}\Big).$$
Finally,
$$S_{\square}^{\text{even, 8}}  = \frac{1}{\alpha_2} \sum_{(k, 2) = 1} \frac{1}{k^2} \sum_{\substack{m \leq X/8\\ (m, \alpha k) = 1\\ m \equiv 1  (\text{mod } 2)}} \mu^2(m) = \frac{1}{\alpha_2} \sum_{(k, 2) = 1} \frac{1}{k^2} \sum_{\substack{m \leq X/8\\ (m, \alpha k) = 1 }} \mu^2(m)\chi_0(m)$$
where now $\chi_0$ is the unique character modulo 2, and we get the same main term for $S_{\square}^{\text{even, 8}} $. Putting everything together, we obtain for $\alpha_2$ odd that 
\begin{displaymath}
\begin{split}
S_{\square}& =  \frac{\zeta(2) X}{\alpha_2} \prod_{p \mid \alpha}  \Big(1 + \frac{1}{p} - \frac{1}{p^{3}}\Big)^{-1} \prod_{p }\Big( 1 - \frac{1}{p^{2}} - \frac{1}{p^{3}} + \frac{1}{p^{4}}\Big) \cdot \frac{16}{11} \Big( \frac{1}{4} + \frac{1}{16} \cdot\frac{3}{4} \cdot 2\Big) + O\Big(\frac{X}{Y}+ \frac{X^{1/2}}{\alpha_2}(\alpha_2 X)^{\varepsilon}\Big)\\
& = \frac{\zeta(2) X}{2\alpha_2} \prod_{p \mid \alpha}  \Big(1 + \frac{1}{p} - \frac{1}{p^{3}}\Big)^{-1} \prod_{p }\Big( 1 - \frac{1}{p^{2}} - \frac{1}{p^{3}} + \frac{1}{p^{4}}\Big)  + O\Big(\frac{X}{Y}+ \frac{X^{1/2}}{\alpha_2}(\alpha_2 X)^{\varepsilon}\Big)
\end{split}
\end{displaymath}
and for $\alpha_2$ even that
\begin{displaymath}
\begin{split}
S_{\square}& =  \frac{\zeta(2) X}{\alpha_2} \prod_{p \mid \alpha}  \Big(1 + \frac{1}{p} - \frac{1}{p^{3}}\Big)^{-1} \prod_{p }\Big( 1 - \frac{1}{p^{2}} - \frac{1}{p^{3}} + \frac{1}{p^{4}}\Big) \cdot \frac{16}{11}\cdot  \frac{1}{4}\Big(1  + \frac{1}{2} -\frac{1}{8}  \Big) + O\Big(\frac{X}{Y}+ \frac{X^{1/2}}{\alpha_2}(\alpha_2 X)^{\varepsilon}\Big)\\
& = \frac{\zeta(2) X}{2\alpha_2} \prod_{p \mid \alpha}  \Big(1 + \frac{1}{p} - \frac{1}{p^{3}}\Big)^{-1} \prod_{p }\Big( 1 - \frac{1}{p^{2}} - \frac{1}{p^{3}} + \frac{1}{p^{4}}\Big)  + O\Big(\frac{X}{Y}+ \frac{X^{1/2}}{\alpha_2}(\alpha_2 X)^{\varepsilon}\Big).
\end{split}
\end{displaymath}
Note how beautifully the constants fit together.  We return to \eqref{char-sum} and study  $S_{\not=\square}$ where the $n$-sum is restricted to $\alpha n \not= \square$. We write $n = 2^{\nu} n'$ and $\alpha = 2^a \alpha'$ with $n', \alpha'$ odd. We need to bound
$$ \sum_{\substack{2^{\nu} n' \leq 2Y\\ 2^{\nu+a}n'\alpha' \not= \square}} \frac{1}{2^{\nu} n'  }  \Big| \sum_{- X\leq \Delta < 0} \chi_{\Delta}(2^{a + \nu}) \chi_{\Delta}(\alpha'n')\Big|.$$
The number $\alpha'n'$ is not a square, unless it is 1, in which case $a+\nu$ is necessarily odd, so that the $\Delta$-sum and hence the entire expression is $O(1)$. The number $\chi_{\Delta}(2^{a + \nu})$ depends only on $\Delta$ modulo 8, and we can split the sum into residue classes modulo 8 to make it independent of that factor.  If $\alpha' n'$ is not a square and odd, then $\Delta \mapsto (\frac{\Delta}{\alpha' n'})$ is defined for every negative $\Delta$ and in fact a possibly imprimitive, but certainly non-trivial character modulo $\alpha'n'$. Splitting even into residue classes modulo 16, we can detect the condition that $\Delta$ is a fundamental discriminant by requiring $\Delta$ or $\Delta/4$ be squarefree. Thus it suffices to bound
$$\sum_{m \leq X'} \mu^2(m) \psi(m)$$
 for a character $\psi$ of conductor $\ll \alpha Y$ and $X' \leq X$.  Writing $\mu^2 (m) = \sum_{d^2 \mid m} \mu(d)$ and using the P\'olya-Vinogradov inequality,  we bound the previous display  by
 \begin{equation}\label{er3}
 \ll \sum_{d \leq X} \min\Big(\frac{X}{d^2}, (\alpha Y)^{1/2+\varepsilon}\Big) \ll (X^{1/2}( \alpha Y)^{1/4})^{1+\varepsilon}.
 \end{equation}
 Collecting error terms in \eqref{char-sum}, \eqref{er1}, \eqref{er2}, \eqref{er3}, the total error becomes
 $$\ll \Big(\frac{X^{7/6}}{Y^{1/2}} + \frac{X}{Y} + \frac{X^{1/2}}{\alpha_2} + X^{1/2}(\alpha Y)^{1/4}\Big)(Xy\alpha)^{\varepsilon}.$$
 We choose $Y = X^{8/9}\alpha^{-1/3} + 1$ to recover a total error of $(X^{13/18} \alpha^{1/6} + X^{1/9} \alpha^{1/3} + X^{1/2} \alpha^{1/4})(X\alpha)^{1+\varepsilon}$. This completes the proof of the asymptotic formula. The claim on the Dirichlet series $\mathscr{K}(v; \alpha)$ follows by partial summation. 
 \end{proof}
 
 With the notation of the previous lemma we can write
$$\mathscr{L}(v) = \sum_{ (d\delta \nu, m) = 1 } \sum_{\substack{g, f\\ (f, \delta) = 1}}  \sum_{\substack{d_1 \mid fd\\ d_2 \mid f\nu}} \sum_{d_3 \mid (\frac{fd}{d_1}, \frac{f\nu}{d_2}) }      \sum_{r_1r_2 = \frac{f^2 d \nu}{d_1d_2d_3^2}}    \frac{  \mu(d\delta \nu)\mu^2(m) \mu(d_1)\mu(d_2)  d_3 }{(d\nu)^{4 + 2v}( \delta g) ^{2+2v}f^{3+4v} m^3  }    \mathscr{K}(v,        \delta d_1 d_2r_2
),
$$
and we conclude that $\mathscr{L}$ has analytic continuation to $\Re v > -13/18$ except for a simple pole at $v = 0$ and polynomial (in fact linear) bounds on vertical lines. If $\text{rad}(n)$ denotes the squarefree kernel of $n$, then 
\begin{equation}\label{euler}
\begin{split}
&\underset{v=0}{\text{res}} \mathscr{L}(v) = \frac{\zeta(2)^2}{2}\prod_{p }\Big( 1 - \frac{1}{p^{2}} - \frac{1}{p^{3}} + \frac{1}{p^{4}}\Big) \\
&\sum_{ (d\delta \nu, m) = 1 } \sum_{ (f, \delta) = 1}  \sum_{\substack{d_1 \mid fd\\ d_2 \mid f\nu}} \sum_{d_3 \mid (\frac{fd}{d_1}, \frac{f\nu}{d_2}) }      \sum_{r_1r_2 = \frac{f^2 d \nu}{d_1d_2d_3^2}}    \frac{  \mu(d\delta \nu)\mu^2(m) \mu(d_1)\mu(d_2)  d_3 }{(d\nu)^{4  }\delta^2 f^{3 } m^3   \text{rad}(\delta d_1 d_2r_2) } \prod_{p \mid   \delta d_1 d_2r_2} \Big(1 + \frac{1}{p} - \frac{1}{p^3}\Big)^{-1}
\end{split}
\end{equation}
 where we have implicitly computed the $g$-sum as $\zeta(2)$. Now a massive computation with Euler products, best performed with a computer algebra system, shows gigantic cancellation, and we obtain the beautiful formula
 $$\underset{v=0}{\text{res}} \mathscr{L}(v) = \frac{\zeta(2)^2}{2\zeta(4)}.$$
 With this we return to \eqref{previousdisplay} and shift the $v$-contour to $\Re v = -1/10$.    By \eqref{G1} and trivial bounds the remaining integral expression is bounded by
 $$\ll \frac{1}{K^2} \sum_{K \leq k \leq 2K} K^{-\frac{1}{2} - \frac{4}{10}} \int_{-\infty}^{\infty} \int_{-\infty}^{\infty} \Big(1 + \frac{|t|^2 + |\tau|^2}{K}\Big) |\tau| d\tau \, dt \ll K^{-4/10}.$$
It remains to deal with the double pole at $v = 0$ whose residue is given by
 \begin{equation*}
\begin{split}
\mathcal{R} = &    \frac{12}{  \omega K^2}   \sum_{k \in 2\Bbb{N}} W\left(\frac{k}{K}\right)   \frac{\pi^2}{90} \cdot \frac{18\sqrt{\pi}}{2  }\cdot 2 \cdot \frac{2}{3} \cdot \frac{1}{4} \\
&\int_{-\infty}^{\infty}  \int_{-\infty}^{\infty}  \underset{v=0}{\text{res}}\Big( \frac{e^{v^2 }}{v} \mathscr{L}(2v)  \mathcal{G}(k, \tau, v+ 1/2 + it)  \mathcal{G}(k, \tau, v  + 1/2 - it)\Big) \omega(\tau)   \tau \tanh(\pi \tau) \frac{d\tau}{\pi^2}        dt  . 
 \end{split}
\end{equation*}
We can remove the factor $\omega(\tau)\tanh(\pi \tau)$ at the cost of an error $O(K^{-\eta/2})$, cf.\ the definition \eqref{def-omega}. 
Using the definition \eqref{defG}, at $v=0$ we have the following Taylor expansion 
\begin{displaymath}
\begin{split}
   & \mathcal{G}(k, \tau, v+ 1/2 + it)     \mathcal{G}(k, \tau, v  + 1/2 - it)\\
    &= \mathcal{G}(k, \tau,   1/2 + it)     \mathcal{G}(k, \tau,   1/2 - it)  \Big\{1 + v\Big(\sum_{\pm, \pm}  \frac{\Gamma'}{\Gamma}\Big( \frac{k-1/2}{2} \pm  it \pm \frac{i\tau}{2}\Big) - 4\log \pi\Big) + O(v^2)\Big\}.
\end{split}
\end{displaymath}
We have $\frac{\Gamma'}{\Gamma}(z) = \log z +O(|z|^{-1})$ (for $\Re z \geq 1$, say) and so
$$\sum_{\pm}  \frac{\Gamma'}{\Gamma}\Big( \frac{k-1/2}{2} - it \pm \frac{i\tau}{2}\Big) - 4\log \pi = 4 \log k + O(1)$$
for $t, \tau \ll k^{2/3}$, say. Otherwise, the $t$ and $\tau$ integrals are negligible outside this region. In this range we can insert \eqref{taylor} 
to conclude that
\begin{equation*}
\begin{split}
\mathcal{R} = &   \frac{12}{\omega  K^2}   \sum_{k \in 2\Bbb{N}} W\left(\frac{k}{K}\right)   \frac{\pi^2}{90} \cdot \frac{18\sqrt{\pi}}{2  } \cdot 2 \cdot \frac{2}{3} \cdot \frac{1}{4} \cdot  \frac{1}{2} \cdot  4\log k \int_{-\infty}^{\infty}  \int_{-\infty}^{\infty} \frac{\zeta(2)^2}{2\zeta(4)}\frac{16}{\pi k^{1/2}} e^{-(4t^2 + \tau^2)/k}   | \tau|  \frac{d\tau}{\pi^2}        dt   + O(1).
 \end{split}
\end{equation*}
We evaluate the two integrals, sum over $k$ (by Poisson, for instance) and recall the definition of $\omega$ just before \eqref{Nav} getting
\begin{equation*}
\begin{split}
\mathcal{R} &=      \frac{12}{\omega  K^2}  \sum_{k \in 2\Bbb{N}} W\left(\frac{k}{K}\right)   \frac{\pi^2}{90} \cdot \frac{18\sqrt{\pi}}{2  } \cdot 2\cdot \frac{2}{3} \cdot \frac{1}{4} \cdot \frac{1}{2} \cdot  4\log k  \frac{\zeta(2)^2}{2\zeta(4)}\frac{16}{\pi k^{1/2}}      k \cdot \frac{\sqrt{\pi k}}{2} \frac{1}{\pi^2}  + O(1)\\
& =   \frac{12}{2} \cdot   \frac{\pi^2}{90} \cdot \frac{18\sqrt{\pi}}{2  } \cdot 2 \cdot  \frac{2}{3} \cdot \frac{1}{4} \cdot  \frac{1}{2} \cdot 4 \log K \cdot  \frac{\zeta(2)^2}{2\zeta(4)}\frac{16}{\pi}       \cdot \frac{\sqrt{\pi }}{2} \frac{1}{\pi^2}  + O(1) = 4 \log K + O(1). 
 \end{split}
\end{equation*}
We have now detected the main term and we conclude this section by stating that 
$$\mathcal{N}^{\text{diag}, \text{diag}}(K) = \mathcal{R} + O(1) = 4\log K + O(1).$$
  
\section{The diagonal off-diagonal term}\label{diag-off}

\subsection{Preparing the stage} We return to \eqref{diagdiag} and consider the  off-diagonal terms on the right hand side of \eqref{kuz-even-form}. We treat the first off-diagonal term in detail in the following three subsections. In Section \ref{144} show the minor modifications  to treat the second off-diagonal term. The first off-diagonal term is given by 
\begin{displaymath}
\begin{split}
\mathcal{I}^{\text{off-diag,1}}(\Delta, t,k,  r):= 8i \sum_{d \mid r} &\sum_{{\tt n},{\tt m}} \frac{ \chi_{\Delta}({\tt m}) }{(d{\tt n}{\tt m})^{1/2}}  \sum_c \frac{S({\tt m}, {\tt n}r/d, c)}{c} \\
&\int_{-\infty}^{\infty}   \frac{J_{2i\tau}(4\pi \sqrt{({\tt m}{\tt n}r/d)}/c)}{\sinh(\pi \tau)} W^+_{\tau}(d{\tt n}) W^-_{\tau}\Big(\frac{{\tt m}}{|\Delta|}\Big)h(\tau) \tau \tanh(\pi \tau) \frac{d\tau}{4\pi}
\end{split}
\end{displaymath}
where $h$ was defined in \eqref{htau} and depends in particular on $t$,  $k$ and $\Delta$. 
This needs to be inserted into \eqref{weobt} with $r = f^2d\nu/(d_1 d_2 d_3^2) \leq K^{4\eta}$. In \eqref{weobt} we apply a smooth partition   of unity to the $\Delta$-sum and consider a typical portion
$$\mathcal{J}^{(1)}(X, t,k,  r) := \sum_{\Delta}w\Big( \frac{|\Delta|}{X}\Big) \mathcal{I}^{\text{off-diag,1}}(\Delta, t,k,  r)$$
for a smooth function $w$ with compact support in $[1, 2]$. Our aim in this section is to prove the bound
\begin{equation}\label{J-bound}
  \mathcal{J}^{(1)}(X, t,k,  r) \ll X K^{1/2 - \eta}
\end{equation}
uniformly in $k \asymp K$, $t \leq K^{1/2 + \varepsilon}$, 
\begin{equation}\label{X}
   X \leq K^{2+\varepsilon}, \quad r \leq  K^{4\eta}
   \end{equation}
    and $f, f_1, f_2, d, \nu \in \Bbb{N}$ which are implicit in the definition \eqref{htau}. Taking \eqref{J-bound} for granted, we estimate \eqref{weobt} trivially to obtain a contribution of $O(K^{\varepsilon - \eta})$, which is admissible. So it remains to show \eqref{J-bound}, and to this end we start with some initial discussion. 

The $c$-sum in $\mathcal{I}^{\text{off-diag,1}}(\Delta, t,k,  r)$ is absolutely convergent, as can be seen by using the Weil bound for the Kloosterman sum and  shifting the $t$-contour to  $\Re i\tau = 1/3$, say, without crossing any poles. By the power series expansion for the Bessel function \cite[8.440]{GR} we have  $$J_{2i\tau}(x) \ll_{\Im \tau}  x^{-2\Im \tau}e^{\pi|\tau|} (1+|\tau|)^{-1/2}, \quad x \leq 1$$  and we  can therefore truncate the $c$-sum at $c \leq K^{10^6}$, say, at the cost of a very small error. Having done this, we can sacrifice holomorphicity of the integrand in the $\tau$-integral, and we   insert a smooth partition of unity  into the $\tau$-integral restricting to $\tau \asymp T$, say, with 
\begin{equation}\label{T}
K^{1/2 - 2\eta} \leq T \leq K^{1/2+\varepsilon},
\end{equation} otherwise $h(\tau)$ is negligible by \eqref{htau}, \eqref{size-restr} and \eqref{prop-omega}. 
We insert smooth partitions of unity into the ${\tt n}, {\tt m}, c$-sums and thereby restrict to ${\tt n} \asymp N$, ${\tt m} \asymp M$ $c \asymp C$, say, where 
\begin{equation}\label{NM}
N \leq T^{1+\varepsilon} , M \leq X T^{1 + \varepsilon}
\end{equation}
by \eqref{bound-wt} and initially $C \leq K^{10^6}$. Next we want to evaluate asymptotically the $\tau$-integral. To this end we use Lemma \ref{bessel-kuz} with the weight function ${\tt h} = {\tt h}_{{\tt n}, {\tt m}, \Delta}$ given by 
$$W^+_{\tau}(d{\tt n}) W^-_{\tau}\Big(\frac{{\tt m}}{|\Delta|}\Big)h(\tau)  w\Big( \frac{|\tau|}{T}\Big) = W^+_{\tau}(d{\tt n}) W^-_{\tau}\Big(\frac{{\tt m}}{|\Delta|}\Big)\omega(\tau) V_{\pm t}((|\Delta|f^2f_1f_2d\nu)^2; k, \tau) $$
where $w$ is the weight function occurring in the smooth partition of unity of the $\tau$-integral. By \eqref{bound-wt}, \eqref{prop-omega} and \eqref{size-restr}, the function ${\tt h}$ is ``flat'' in all variables, i.e.\
  $$\frac{d^{j_1}}{d{\tt n}^{j_1}}\frac{d^{j_2}}{d{\tt m}^{j_2}}\frac{d^{j_3}}{d\Delta^{j_3}} \frac{d^j}{dt^j}{\tt h}_{{\tt n}, {\tt m}, \Delta}(t) \ll_{\textbf{j}} K^{-1/2}  T^{-j}N^{-j_1} M^{-j_2} X^{-j_3}$$ for $k \asymp K$, $|\Delta| \asymp X$, ${\tt n} \asymp N$, ${\tt m} \asymp M$ and $\textbf{j} \in \Bbb{N}_0^4$, uniformly in all other variables.  Lemma \ref{bessel-kuz}a implies that we can restrict, up to a negligible error, to
\begin{equation}\label{C}
C \leq \frac{\sqrt{NMr_2}}{T^2} K^{\varepsilon},
\end{equation}
where $r_2 = r/d$, and up to a negligible error we are left with bounding
\begin{equation}\label{E1}
\begin{split}
\mathscr{J}^{(1)}_r(X, N, M, C, T) = &\frac{K^{\varepsilon}T^2}{(NM)^{3/4}(KC)^{1/2}}\sum_{r_1r_2=r} \frac{1}{(rr_1)^{1/4}} \\
&\sum_{\Delta, {\tt n} , {\tt m}, c}  \chi_{\Delta}({\tt m}) S({\tt m}, {\tt n} r_2, c) e\Big( \pm 2 \frac{\sqrt{{\tt n} {\tt m} r_2}}{  c}\Big) W\Big( \frac{|\Delta|}{X}, \frac{{\tt n}}{N}, \frac{{\tt m}}{M}, \frac{c}{C}\Big)
\end{split}
\end{equation}
 for a smooth   function $W$  with compact support in $[1, 2]^4$ and  bounded Sobolev norms with variables $X, N, M, C, T$ satisfying \eqref{X}, \eqref{NM}, \eqref{C}, \eqref{T}, respectively. 

 The basic strategy is now to apply Poisson summation first in the ${\tt n}$-sum and then in the ${\tt m}$-sum in Section \ref{132}. This shortens the variables in generic ranges, so that a trivial bound turns out to be of size $XK^{1/2 + 2\eta+\varepsilon}$. This is very close to our target \eqref{J-bound}. In Section \ref{133} we will  extract a character sum from this expression where the P\'olya-Vinogradov inequality produces the final saving, at least in generic ranges of the variables. In order to also treat non-generic ranges where some of the variables are relatively short, at each step we also apply trivial bounds along with Heath-Brown's large sieve. In particular, by Lemma \ref{HBlemma}b) we can bound
 \eqref{E1} by
 \begin{equation}\label{E2}
 \begin{split}
\mathscr{J}^{(1)}_r(X, N, M, C, T) &\ll \frac{K^{\varepsilon}T^2}{(NM)^{3/4}(KC)^{1/2}} X NMC^{3/2} \Big( \frac{1}{X^{1/2}} + \frac{1}{M^{1/2}}\Big) \\
&= \frac{K^{\varepsilon}T^2}{ K^{1/2}} CN^{1/4} \Big(M^{1/4} X^{1/2}  + \frac{X}{M^{1/4}} \Big).
\end{split}
 \end{equation}
 \subsection{Poisson summation}\label{132}
 Now we open the Kloosterman sum in \eqref{E1} and apply Poisson summation in ${\tt n}$ in residue classes modulo $c$. In this way we re-write $\mathscr{J}^{(1)}_r(X, N, M, C, T) $ as
 \begin{equation}\label{E3}
 \begin{split}
 &\frac{K^{\varepsilon}T^2}{(NM)^{3/4}(KC)^{1/2}}\sum_{r_1r_2 = r} \frac{1}{(rr_1)^{1/4}} \sum_{\Delta,  {\tt m}, c}  \chi_{\Delta}({\tt m}) \underset{\gamma\, (\text{mod } c)}{\left.\sum\right.^{\ast}}e\Big(\frac{{\tt m}\gamma}{c}\Big) \sum_{\nu \, (\text{mod } c)}e\Big( \frac{\bar{\gamma}\nu r_2}{ c}\Big)\\
& \quad\quad\quad\quad\frac{1}{c} \sum_{n \in \Bbb{Z}}e\Big(\frac{n \nu}{c}\Big) \int_{0}^{\infty} e\Big( \pm 2 \frac{\sqrt{x {\tt m} r_2}}{  c}\Big) W\Big( \frac{|\Delta|}{X}, \frac{x}{N}, \frac{{\tt m}}{M}, \frac{c}{C}\Big) e\Big( - \frac{x n}{c}\Big) dx. 
\end{split}
\end{equation}
We consider the character sum
\begin{equation}\label{char1}
\frac{1}{c}\underset{\gamma\, (\text{mod } c)}{\left.\sum\right.^{\ast}}e\Big(\frac{{\tt m}\gamma}{c}\Big) \sum_{\nu \, (\text{mod } c)}e\Big( \frac{\bar{\gamma}\nu r_2}{ c}\Big)e\Big(\frac{n \nu}{c}\Big) =  \underset{\substack{\gamma\, (\text{mod } c)\\ \bar{\gamma}r_2 \equiv -n\, (\text{mod } c)}}{\left.\sum\right.^{\ast}}e\Big(\frac{{\tt m}\gamma}{c}\Big). 
\end{equation}
This is non-zero only if $(n, c) = (r_2, c)$. We write $(r_2, c) = \delta$, $r_2 = \delta r_2'$, $c = \delta c'$, $n = \delta n'$ with $(n'r_2', c') = 1$ and recast \eqref{char1} as
$$  \underset{\substack{\gamma\, (\text{mod } c)\\ \gamma \equiv -r_2'\overline{n'}\, (\text{mod } c')}}{\left.\sum\right.^{\ast}}e\Big(\frac{{\tt m}\gamma}{c}\Big).$$
We decompose $\delta = \delta_1 \delta_2$ with $\delta_2$ maximal so that $(\delta_2, c') = 1$. Note that $r_2' \bar{n'}$ is coprime to $c'$, so the condition $(c, \gamma) = 1$ is equivalent to $(\delta_2, \gamma) = 1$. We factor $c = c'\delta_1 \cdot \delta_2$ with $(c'\delta_1, \delta_2) = 1$ and apply the Chinese Remainder Theorem to see that the previous display vanishes unless $\delta_1 \mid {\tt m}$, say ${\tt m} = \delta_1{\tt m}'$, in which case it equals
$$\delta_1 R_{\delta_2}({\tt m}') e\Big( -\frac{{\tt m}'r_2' \overline{n' \delta_2}}{c'}\Big)$$
where $R$ denotes the Ramanujan sum. 

Next we consider the $x$-integral in \eqref{E3}. The phase has a unique stationary point at $x = r_2{\tt m}/n^2$ if $\text{sgn}(n) = \pm$ and no stationary point if $\text{sgn}(n) = \mp$. If $N \leq r_2{\tt m}/n^2 \leq 2N$ and $\text{sgn}(n) = \pm$, we can apply the stationary phase lemma \cite[Proposition 8.2]{BKY} with $X=1$, $V = V_1 = Q = N$, $Y = \sqrt{NMr_2}/C \geq T^2K^{-\varepsilon} \geq K^{1/2 - \varepsilon}$ to see that the integral is given by
$$\Big(\frac{cr_2{\tt m}}{|n|^3}\Big)^{1/2} e\Big( \pm \frac{r_2{\tt m}}{c|n|}\Big)W_1\Big( \frac{|\Delta|}{X}, \frac{r_2{\tt m}}{n^2N}, \frac{{\tt m}}{M}, \frac{c}{C}\Big) $$
for a smooth function $W_1$  with compact support in $[1, 2]^4$ and  bounded Sobolev norms, up to a negligible error from truncating the series in \cite[(8.9)]{BKY}. Otherwise we apply integration by parts in the form of \cite[Lemma 8.1]{BKY} with  $X = 1$, $U = Q = N$, $Y = R = \sqrt{NMr_2}/C$ to conclude that the integral is negligible. 

Noting that with our previous notation
$$e\Big( \pm \frac{r_2{\tt m}}{c|n|}\Big) = e\Big(   \frac{d'_2{\tt m}' }{c'n' \delta_2}\Big)$$
for $\text{sgn}(n) = \pm$, we can now apply the additive reciprocity formula $e(1/ab) = e(\bar{a}/b)e(\bar{b}/a)$ for $(a, b) = 1$ to conclude that $\mathscr{J}^{(1)}_r(X, N, M, C, T) $ equals, up to a negligible error term, 
\begin{equation}\label{E2a}
 \begin{split}
 \frac{K^{\varepsilon}T^2}{(NM)^{3/4}(KC)^{1/2}}\sum_{r_1\delta_1\delta_2 d'_2 = r} &\frac{1}{(rr_1)^{1/4}} \underset{\substack{(c', n'r_2'\delta_2) = 1\\ \delta_1 \mid (c')^{\infty}}}{\sum_{\Delta,  {\tt m}', c'} \sum_{\pm n' \in \Bbb{N}}} \chi_{\Delta}({\tt m}'\delta_1)  \delta_1 R_{\delta_2}({\tt m}') e\Big( \frac{{\tt m}'r_2' \overline{c'}}{n' \delta_2}\Big)  \\
 &\Big(\frac{c'd'_2{\tt m}'}{\delta_2|n'|^3}\Big)^{1/2}W_1\Big( \frac{|\Delta|}{X}, \frac{d'_2{\tt m}'}{\delta_2 (n')^2N}, \frac{{\tt m}'\delta_1}{M}, \frac{c'\delta_1\delta_2}{C}\Big).  
 \end{split}
\end{equation}
Here we recall the notation conventions from Section \ref{15} regarding expressions $\delta \mid c^{\infty}$ etc. 
 For easier readability we remove all the dashes at the variables, and we define 
$$W_2(x, y, z, w) = w^{1/2} y^{-3/2} z^{1/2} W_1(x, z/y^2, z, w).$$
We also open the Ramanujan sum $R_{\delta_2}({\tt m}) = \sum_{\delta_3 \mid (\delta_2, {\tt m})} \delta_3\mu(\delta_2/\delta_3)$,  write $\delta_2 = \delta_3\delta_4$ and  replace $m$ with $m\delta_3$. Finally we drop the $\pm$-sign in the summation condition on $n$ (both cases are identical). With this notation we can re-write \eqref{E2a} as
\begin{equation}\label{E4}
 \begin{split}
 \frac{K^{\varepsilon}T^2}{M K^{1/2}}&\sum_{r_1\delta_1\delta_3\delta_4 r_2 = r} \frac{\delta_1^{3/4} \delta_3 \mu(\delta_4)}{(rr_1r_2 \delta_4)^{1/4} } \\
 &\underset{\substack{(c, nr_2\delta_3\delta_4) = 1\\ \delta_1 \mid c^{\infty}}}{\sum_{\Delta,  {\tt m}, c, n}  } \chi_{\Delta}({\tt m}\delta_1\delta_3)     e\Big( \frac{{\tt m}  r_2 \overline{c}}{n \delta_4}\Big)   W_2\Big( \frac{|\Delta|}{X}, \frac{n\delta_3 \sqrt{N \delta_1\delta_4}}{\sqrt{Mr_2}}, \frac{{\tt m}\delta_1\delta_3}{M}, \frac{c\delta_1\delta_3\delta_4}{C}\Big).  
 \end{split}
\end{equation}
Before we continue to transform this expression, we estimate trivially with the large sieve (Lemma \ref{HBlemma}b) 
\begin{equation}\label{E4a}
 \begin{split}
&\mathscr{J}^{(1)}_r(X, N, M, C, T) \\
& \ll  \frac{K^{\varepsilon}T^2}{M K^{1/2}}\sum_{r_1\delta_1\delta_3\delta_4 r_2 = r} \frac{\delta_1^{3/4}\delta_3}{(rr_1r_2\delta_4)^{1/4} }  X  \frac{\sqrt{Mr_2} }{\delta_3\sqrt{N\delta_1\delta_4}} \frac{M}{\delta_1\delta_3} \frac{C}{\delta_1\delta_3\delta_4} \Big( \Big(\frac{\delta_1}{M}\Big)^{1/2} + X^{-1/2}\Big) \\
&\ll \frac{K^{\varepsilon} T^2 C}{(KN)^{1/2}} (X + (XM)^{1/2}).
 \end{split}
\end{equation}
Our next goal is to apply Poisson summation in ${\tt m}$ restricted to residue classes $\delta_4 n |\Delta|$. For $m \in \Bbb{Z}$, this leads to the character sum
$$\sum_{\mu \, (\text{mod } \delta_4 n |\Delta|)} \chi_{\Delta}(\mu)  e\Big( \frac{\mu r_2 \overline{c}}{n \delta_4}\Big) e\Big(\frac{\mu m}{\delta_4 n|\Delta|}\Big) =  \sum_{\mu \, (\text{mod } \delta_4 n |\Delta|)} \chi_{\Delta}(\mu)  e\Big( \frac{\mu (  r_2 \overline{c}|\Delta| + m)}{ \delta_4n|\Delta|}\Big) . $$ 
We decompose both $n$ and $\delta_4$ according to their common divisor with $\Delta$. We write $n = n_1 n_2$ and $\delta_4 = \delta_5 \delta_6$ where $n_1\delta_5 \mid  \Delta^{\infty}$, $(n_2\delta_6, \Delta) = 1$. We obtain by the Chinese Remainder Theorem that the above character sum equals
\begin{displaymath}
\begin{split}
& \sum_{\mu \, (\text{mod } \delta_6 n_2)}   e\Big( \frac{\mu\overline{\delta_5 n_1|\Delta|} (  r_2 \overline{c}|\Delta| + m)}{n_2 \delta_6 }\Big)\sum_{\mu \, (\text{mod } \delta_5 n_1 |\Delta|)} \chi_{\Delta}(\mu)  e\Big( \frac{\mu \overline{\delta_6n_2}(  r_2 \overline{c}|\Delta| + m)}{ \delta_5n_1|\Delta|}\Big).
 \end{split}
\end{displaymath}
The first sum vanishes unless $n_2\delta_6 \mid   r_2|\Delta| + m c$ in which case it equals $n_2\delta_6$. Since $\Delta$ is a negative fundamental discriminant, the second sum vanishes unless $n_1\delta_5 \mid   r_2|\Delta| + m c$ in which case it equals
$$i\sqrt{|\Delta|} n_1 \delta_5\chi_{\Delta}\Big( n_2\delta_6   \frac{  r_2 \bar{c}|\Delta| + m}{n_1\delta_5}\Big). $$
Having this evaluation available, the Poisson summation formula transforms  \eqref{E4} into
\begin{equation}\label{E5}
 \begin{split}
 \frac{K^{\varepsilon}T^2}{M K^{1/2}}&\sum_{r_1\delta_1\delta_3\delta_5\delta_6 r_2 = r} \frac{\delta_1^{3/4} \delta_3 \mu(\delta_5\delta_6)}{(rr_1r_2 \delta_5\delta_6)^{1/4} } \underset{\substack{(c, n_1n_2r_2\delta_3\delta_5\delta_6) = 1\\ \delta_1 \mid c^{\infty}, n_1\delta_5 \mid  \Delta^{\infty}, (n_2 \delta_6, \Delta) = 1\\ n_1n_2\delta_5\delta_6 \mid   r_2|\Delta| + mc }}{\sum_{\Delta,   c, n_1, n_2} \sum_{  m\in \Bbb{Z}}} \chi_{\Delta}( \delta_1\delta_3)    \frac{\chi_{\Delta}\Big( n_2\delta_6  \frac{  r_2 \bar{c}|\Delta| + m}{n_1\delta_5}\Big)}{ |\Delta|^{1/2}}  \\
 &\int_0^{\infty} W_2\Big( \frac{|\Delta|}{X}, \frac{n_1n_2 \delta_3\sqrt{N \delta_1 \delta_5\delta_6}}{\sqrt{Mr_2}}, \frac{x\delta_1\delta_3}{M}, \frac{c\delta_1\delta_3\delta_5\delta_6}{C}\Big) e\Big(- \frac{mx}{n_1n_2\delta_5\delta_6 |\Delta|}\Big)dx,
 \end{split}
\end{equation}
up to a factor $i$.
The above integral is just the Fourier transform of $W_2$ with respect to the third variable which we write as 
$$ \frac{M}{\delta_1\delta_3}W_3\Big( \frac{|\Delta|}{X}, \frac{n_1n_2 \delta_3\sqrt{N \delta_1\delta_5\delta_6}}{\sqrt{Mr_2}}, \frac{mM }{n_1n_2\delta_5\delta_6 |\Delta|\delta_1\delta_3}, \frac{c\delta_1\delta_3\delta_5\delta_6}{C}\Big) . $$
Defining $W_4(x, y, z, w) = W_3(x, y, z/(xy), w)$ we recast the previous display as
$$ \frac{M}{\delta_1\delta_3}W_4\Big( \frac{|\Delta|}{X}, \frac{n_1n_2\delta_3 \sqrt{N \delta_1\delta_5\delta_6}}{\sqrt{Mr_2}}, \frac{m\sqrt{NM   } }{ (r_2\delta_1 \delta_5\delta_6 )^{1/2}X}, \frac{c\delta_1\delta_3\delta_5\delta_6}{C}\Big) . $$
The function $W_4$ is compactly supported   in the first, second and fourth variable and rapidly decaying in the third.  
We first estimate the $m=0$ contribution to be 
\begin{displaymath}
\begin{split}
 \ll \frac{K^{\varepsilon}T^2}{M K^{1/2}}\sum_{r_1\delta_1\delta_3\delta_5\delta_6 r_2 = r} \frac{\delta_1^{3/4} \delta_3  }{(rr_1r_2 \delta_5\delta_6)^{1/4} }  \frac{M}{\sqrt{X}\delta_1\delta_3} X \frac{C}{\delta_1\delta_3\delta_5\delta_6}  \ll K^{\varepsilon} \frac{X^{1/2}CT^2}{K^{1/2} } \ll K^{\varepsilon} \frac{(XNM)^{1/2}}{K^{1/2} }  \ll XK^{\varepsilon}
\end{split}
\end{displaymath}
by \eqref{C}, \eqref{NM}, \eqref{X}, \eqref{T} and a divisor estimate for $n_1n_2$. This is clearly admissible for \eqref{J-bound}, so from now on we assume $m\not= 0$. 
 By the same argument, we obtain for the $m \not= 0$ contribution  the bound 
\begin{equation}\label{E6}
\begin{split}
 \frac{K^{\varepsilon}T^2}{M K^{1/2}}\sum_{r_1\delta_1\delta_3\delta_5\delta_6 r_2 = r} \frac{\delta_1^{3/4} \delta_3  }{(rr_1r_2 \delta_5\delta_6)^{1/4} }  \frac{M}{\sqrt{X}\delta_1\delta_3} X \frac{C}{\delta_1\delta_3\delta_5\delta_6} \frac{(r_2\delta_1 \delta_5\delta_6 )^{1/2}X}{\sqrt{NM   } } \ll K^{\varepsilon} \frac{X^{3/2}CT^2}{(MNK)^{1/2} }.
\end{split}
\end{equation}
By \eqref{C} and \eqref{X}, this is only a factor $K^{3\eta + \varepsilon}$ away from our target \eqref{J-bound}, so a very small additional saving suffices to win. For easier readability we consider only the case $m > 0$, the other case being entirely analogous. 

\subsection{The endgame}\label{133} Up until now we have not touched the long $\Delta$-sum, which we will now use to obtain some additional saving. Before we do this, we must exclude the case that $C$ is very small. To this end we combine \eqref{E6} and \eqref{E4a} to obtain
$$\mathscr{J}^{(1)}_r(X, N, M, C, T)\ll K^{\varepsilon} \frac{T^2C}{(NK)^{1/2}}\min\Big(\frac{X^{3/2}}{M^{1/2}} + X + (XM)^{1/2}\Big) \ll K^{\varepsilon} \frac{T^2C}{(NK)^{1/2}}X.$$
Similarly we can combine \eqref{E6} and \eqref{E2} to obtain
\begin{displaymath}
\begin{split}
\mathscr{J}^{(1)}_r(X, N, M, C, T)& \ll K^{\varepsilon}  \frac{T^2C}{K^{1/2}}\min\Big( \frac{X^{3/2}}{(MN)^{1/2}}, (MN)^{1/4} X^{1/2} + \Big(\frac{N}{M}\Big)^{1/4} X\Big) \\
&\leq  K^{\varepsilon}  \frac{T^2CX}{K^{1/2}} \Big( \frac{1}{(MN)^{1/8}}+  \Big(\frac{N}{M}\Big)^{1/4} \Big)  \ll K^{\varepsilon}  \frac{T^2CX}{K^{1/2}} \Big( \frac{r^{1/8}}{C^{1/4}T^{1/2}}+  \frac{r^{1/4}N^{1/2}}{C^{1/2}T}\Big) 
\end{split}
\end{displaymath}
using \eqref{C}. Combining the previous two bounds we finally obtain 
$$\mathscr{J}^{(1)}_r(X, N, M, C, T) \ll K^{\varepsilon}  \frac{T^2CX}{K^{1/2}} \cdot \frac{r^{1/8}}{C^{1/4}T^{1/2}} \ll  XK^{\frac{1}{4} + \frac{1}{2}\eta + \varepsilon} C^{\frac{3}{4}}$$
by \eqref{T} and \eqref{X} which meets our target \eqref{J-bound} unless
$$C \geq K^{1/3 - 3\eta}$$
which we assume from now on. We recall that $c$ is automatically coprime to $n_1n_2 r_2\delta_3\delta_5\delta_6$. For fixed $k \in \Bbb{N}$ let $\mathscr{S}(k) = \{k_1 x^2 : k_1 \mid k, x \in \Bbb{N}\}$ be the set of square classes of all divisors of $k$. From   \eqref{E5} we remove all $c\in \mathscr{S}(2r_1\delta_1m)$.  Since $C$ is large, the $O(K^{\varepsilon})$ square classes are only a thin subset of all $c$ and  by the same computation as in \eqref{E6}, they contribute no more than
\begin{equation}\label{square-c}
\begin{split}
& 
\ll K^{\varepsilon} \frac{X^{3/2}C^{1/2}T^2}{(MNK)^{1/2} } \ll r^{1/2}XK^{1/2 + 2\eta + \varepsilon} C^{-1/2} \ll XK^{1/3 + 6\eta}
\end{split}
\end{equation}
to$ \mathscr{J}^{(1)}_r(X, N, M, C, T) $ which for $\eta < 1/40$ is admissible. 

With this we return to \eqref{E5} and explain   the idea how to obtain additional savings. Ignoring (for the purpose of these heuristic remarks) the secondary variables $\delta_1, \delta_3, \delta_5, \delta_6, r_2, n_1$, we have to sum
$$\sum_{n_2 \mid |\Delta| + mc} \chi_{\Delta}(n_2m)$$
Writing $|\Delta| + m c = n_2 s$ for $s \in \Bbb{N}$, and assuming also for simplicity that $n_2, m$ are odd, we obtain a sum over
$$\Big( \frac{-sn_2 + mc}{n_2m}\Big) = \Big( \frac{-sn_2 + mc}{n_2 }\Big) \Big( \frac{-sn_2 + mc}{m }\Big) =  \Big( \frac{  mc}{n_2 }\Big) \Big( \frac{-sn_2 }{m }\Big) = \Big( \frac{  m}{n_2 }\Big)\Big( \frac{n_2 }{m }\Big) \cdot \Big( \frac{  c}{n_2 }\Big) \Big( \frac{-s }{m }\Big)  $$
where $-n_2s +mc$ is essentially restricted to squarefree numbers. By quadratic reciprocity, the first two factors are essentially constant. Since $c$ is not a square, the map $n_2 \mapsto (\frac{c}{n_2})$ is a non-trivial character, and since typically the length of $n_2$ is much longer than the length of $c$,  the $n_2$-sum  has some saving from the P\'olya-Vinogradov inequality (here we also need to deal with the squarefree condition). We now make this precise. For clarity, we repeat \eqref{E5} with the small amendments we have made so far:
\begin{equation}\label{E7}
 \begin{split}
 \frac{K^{\varepsilon}T^2}{ (XK)^{1/2}}&\sum_{r_1\delta_1\delta_3\delta_5\delta_6 r_2 = r} \frac{ \ \mu(\delta_5\delta_6)}{(rr_1r_2 \delta_1\delta_5\delta_6)^{1/4} } \underset{\substack{(c, n_1n_2r_2\delta_3\delta_5\delta_6) = 1\\ \delta_1 \mid c^{\infty}, n_1\delta_5 \mid  \Delta^{\infty}, (n_2 \delta_6, \Delta) = 1\\ n_1n_2\delta_5\delta_6 \mid   r_2|\Delta| + mc }}{\sum_{\Delta, n_1, n_2, m} \sum_{ c\not\in \mathscr{S}(2r_1\delta_1m)} } \chi_{\Delta}\Big( \delta_1\delta_3 n_2\delta_6   \frac{  r_2 \bar{c}|\Delta| + m}{n_1\delta_5}\Big)  \\
&  W_5\Big( \frac{|\Delta|}{X}, \frac{n_1n_2 \delta_3\sqrt{N \delta_1\delta_5\delta_6}}{\sqrt{Mr_2}}, \frac{m\sqrt{NM   } }{ (r_2\delta_1 \delta_5\delta_6 )^{1/2}X}, \frac{c\delta_1\delta_3\delta_5\delta_6}{C}\Big) 
 \end{split}
\end{equation}
where $W_5(x, y, z, w) = x^{-1/2} W_4(x, y, z, w)$. 
 We define $s$ through the equation 
  \begin{equation}\label{s}
  n_1n_2 \delta_5\delta_6 s =   r_2|\Delta| + mc.
  \end{equation}
 Note that, up to a negligible error,  
\begin{equation}\label{unbal}
m c \ll K^{\varepsilon}\frac{CX r_2 ^{1/2}}{\sqrt{NM\delta_1\delta_5\delta_6}\delta_3  } \ll K^{\varepsilon} \frac{r_2 X}{T^2}  
\end{equation}
by  \eqref{C}, so that by \eqref{T} we conclude that $mc $ is substantially smaller than $|  r_2\Delta|$. In particular, $n_1n_2 \delta_5\delta_6 s \asymp r_2 X$, so that
\begin{equation}\label{Ss}
s \asymp \frac{ X\delta_3 \sqrt{N \delta_1r_2}}{\sqrt{M\delta_5\delta_6}  }.
\end{equation}
We first argue that we can truncate the $n_1$-sum in \eqref{E5} at $n_1 \leq K^{4\eta}$.  Indeed, since $(n_1, c) = 1$, but $n_1 \mid \Delta^{\infty}$, the   squarefree kernel $\text{rad}(n_1)$ must divide $m$. Summing trivially over $n_1, n_2, s, c, m$ in \eqref{E5} as in \eqref{E6},  the portion $n_1 \geq Y$ contributes at most
\begin{equation}\label{similar}
\begin{split}
&K^{\varepsilon} \frac{T^2}{(XK)^{1/2}} \sum_{r_1\delta_1\delta_3\delta_5\delta_6 r_2 = r} \frac{  1 }{(rr_1r_2 \delta_1\delta_5\delta_6)^{1/4} }  \\
&  \times \sum_{n_1 \geq Y} \frac{\sqrt{Mr_2}}{\delta_3\sqrt{N\delta_1\delta_5\delta_6}n_1}  \frac{ X\delta_3 \sqrt{N \delta_1r_2}}{\sqrt{M\delta_5\delta_6}  } \frac{C}{\delta_1\delta_3\delta_5\delta_6} \frac{X(r_2\delta_1\delta_5\delta_6)^{1/2}}{ \sqrt{NM  } \text{rad}(n_1)} \ll K^{\varepsilon} \frac{X^{3/2} CT^2}{(MNK)^{1/2} Y^{1-\varepsilon}}
\end{split}
\end{equation}
 by applying Rankin's trick and using that $\sum_n \text{rad}(n)^{-1} n^{-\sigma}$ is absolutely convergent for $\sigma > 0$. If $Y \geq K^{4\eta}$, this is $\ll X K^{1/2-2\eta + \varepsilon}$ by \eqref{C} and \eqref{X}, hence admissible for \eqref{J-bound}. Having truncated the $n_1$-sum, we decompose $\Delta = \Delta_1\Delta_2$ into two fundamental discriminants of suitable signs where  $(\Delta_2, 2n_1\delta_5) = 1$ and $\Delta_1 \mid 8n_1 \delta_5 $, in particular $|\Delta_1| \leq 8K^{8\eta}$ and $(\Delta_1, cn_2\delta_6) = 1$. With this notation and recalling \eqref{s} we can write
 $$\chi_{\Delta}\Big( \delta_1\delta_3  n_2\delta_6   \frac{  r_2 \bar{c}|\Delta| + m}{n_1\delta_5}\Big) = \chi_{\Delta_1}(\delta_1\delta_3cs) \chi_{\Delta_2}(\delta_1\delta_3\delta_5\delta_6 n_1n_2m).$$
 Next we make $n_2m$ coprime to $2r_2\Delta_1$ by factoring $n_2 = n_2'n_2''$ and $m=m'm''$ with $(n_2'm', 2r_2\Delta_1) = 1$, $n_2''m'' \mid (2r_2\Delta_1)^{\infty} \mid (2r_2n_1\delta_5)^{\infty}$ so that the previous display equals
 \begin{equation}\label{char}
 \begin{split}
& \chi_{\Delta_1}(\delta_1\delta_3cs) \chi_{\Delta_2}(\delta_1\delta_3\delta_5\delta_6 n_1 n_2''m'') \chi_{ (n_1n_2\delta_5\delta_6 s - mc)(  r_2|\Delta_1|)} (n_2'm')        \\
 = & \chi_{\Delta_1}(\delta_1\delta_3cs) \chi_{\Delta_2}(\delta_1\delta_3\delta_5\delta_6 n_1 n_2''m'') \Big(\frac{-  r_2|\Delta_1|mc}{n_2'}\Big) \Big(\frac{  r_2|\Delta_1|n_1n_2\delta_5\delta_6 s}{m'}\Big)\\
 = & \chi_{\Delta_1}(\delta_1\delta_3cs) \chi_{\Delta_2}(\delta_1\delta_3\delta_5\delta_6 n_1 n_2''m'') \Big(\frac{  r_2|\Delta_1|n_1n''_2\delta_5\delta_6 s}{m'}\Big) \Big(\frac{-  r_2|\Delta_1|m'' }{n_2'}\Big)\Big(\frac{m'}{n_2'}\Big) \Big(\frac{n_2'}{m'}\Big)   \Big(\frac{c}{n_2'}\Big). 
  \end{split}
 \end{equation}
By a computation similar to \eqref{similar}, this time using that $$\sum_{\substack{a \mid b^{\infty}\\ a \leq X}} 1 \ll (bX)^{\varepsilon}$$ for $X \geq 1$, $b \in \Bbb{N}$ (which follows in the same way by Rankin's trick), we can assume $n_2'', m'' \leq K^{4\eta}$, the remaining portion to \eqref{E5} being $\ll XK^{1/2 - 2\eta + \varepsilon}$.  We are left with short (i.e.\ $\ll K^{4\eta}$) variables
\begin{equation}\label{short}
r_1, \delta_1, \delta_3, \delta_5, \delta_6, r_2, n_1, n_2'', m'', \Delta_1
\end{equation}
and potentially long variables 
$$\Delta_2, c, n_2', m', s$$
subject to $c \not\in \mathscr{S}(2r_1\delta_1m'm'')$ as well as 
\begin{displaymath}
\begin{split}
  & (c, n_1n_2'n_2'' r_2 \delta_3\delta_5\delta_6) = 1, \quad \delta_1 \mid c^{\infty} , \quad n_1\delta_5 \mid (\Delta_1\Delta_2)^{\infty}, \quad (n_2'n_2'' \delta_6, \Delta_1\Delta_2) = 1,\quad \Delta_1 \mid 8n_1\delta_5,\\&(\Delta_2, 2n_1\delta_5) = 1,  \quad (n_2'm', 2r_2\Delta_1) = 1, \quad n_2''m'' \mid(2r_2\Delta_1)^{\infty}, \quad n_1n_2'n_2'' \delta_5\delta_6s =   r_2 \Delta_1\Delta_2 + mc.
\end{split}
\end{displaymath} 
 We can eliminate $\Delta_2$ from the last equation, so that a congruence 
 $$  n_1n_2'n_2'' \delta_5\delta_6s =  mc \, (\text{mod } r_2\Delta_1)$$
 remains. Then the conditions $(\Delta_2, 2n_1\delta_5) = (n_2'n_2''\delta_6, \Delta_2) = 1$ are re-phrased as
 $$(n_1n_2'n_2'' \delta_5\delta_6s - mc , 2r_2n_1\delta_5n_2' n_2'' \delta_6) = r_2$$
 which is equivalent to
 $$(n_1n_2'n_2'' \delta_5\delta_6, mc) = r_2', \quad (2r_2, n_1n_2'n_2'' \delta_5\delta_6s - mc) = r_2'', \quad r_2'r_2'' = r_2.$$
 The condition  $n_1\delta_5 \mid (\Delta_1\Delta_2)^{\infty}$ reads
 $$\text{rad}(n_1\delta_5) \mid \frac{n_1n_2'n_2'' \delta_5\delta_6s - mc}{r_2}.$$
 All of these conditions on $n_2'$ can be detected by congruences modulo ``short'' variables in \eqref{short} (and some powers of 2) as well as $(n_2', m) = 1$. Finally we need to remember that $\Delta_1\Delta_2$ is a fundamental discriminant. To this end we split into residue classes $\Delta_1\Delta_2 \equiv 1, 5, 8, 9, 12, 13$ (mod 16) and insert a factor $\mu^2((n_1n_2'n_2'' \delta_5\delta_6s - mc)/(\alpha r_2))$ with $\alpha \in \{1, 4\}$. We use the convolution formula
 $$\mu^2\Big(\frac{n_1n_2'n_2'' \delta_5\delta_6s - mc}{\alpha r_2}\Big)= \sum_{y^2 \mid \frac{n_1n_2'n_2'' \delta_5\delta_6s - mc}{\alpha r_2}} \mu(y),$$
and insert all of this back into \eqref{E7}. We claim that we can restrict $y \leq K^{A\eta}$ for some constant $A$. Indeed, summing over all short variables, as well as $ c, m'$, we get a congruence for $n_2's$ modulo $y^2$, so that the portion $y > Y$ contributes at most
$$K^{O(\eta)}  \sum_{y \geq Y}\frac{T^2}{(XK)^{1/2}} C \frac{X}{\sqrt{NM}} \frac{X}{y^2} \ll \frac{XK^{1/2+ O(\eta)}}{Y}$$
which is acceptable if $Y = K^{A\eta}$ for $A$ sufficiently large. Note that \eqref{unbal} implies that  $n_1n_2'n_2'' \delta_5\delta_6s - mc $ never vanishes, so there is no ``$1+$'' in the congruence count.  In addition, for fixed $y$ we also remove all $c \in \mathscr{S}(y^2)$ at the cost of an error $XK^{1/3+ O(\eta)}$ as in \eqref{square-c}.

We are finally ready to return to \eqref{char} and split the sum over $n_2'$ into residue classes modulo $\nu$ modulo $H := 32 r_1\delta_1\delta_3\delta_5\delta_6r_2n_1n_2''m''\Delta_1 y^2 = K^{O(\eta)}$. By assumption, $c$ is not in a square class of any divisor of $H$. 
Thus we consider 
$$\sum_{\substack{n'_2 \equiv \nu \, (\text{mod } H)\\ (n'_2, m') = 1}}\Big(\frac{c}{n_2'}\Big)W\Big(\frac{n'_2}{R}\Big)$$
for $\nu$ odd, $c \ll C$  and $R  \ll \sqrt{M/N} K^{O(\eta)}$. We can detect the congruence condition by characters, none of which conspires with $n_2' \mapsto (\frac{c}{n_2'})$ to become the trivial character. By the P\'olya-Vinogradov inequality, we can bound the previous display by  $C^{1/2} K^{O(\eta)}$, and by trivial estimates over $c, m', s$ and the present estimate for the sum over $n_2'$ we obtain the final bound
$$K^{O(\eta)} \frac{T^2}{(XK)^{1/2}} C \frac{X}{\sqrt{NM}}\frac{X\sqrt{N}}{\sqrt{M}} C^{1/2} \ll K^{O(\eta)}  \frac{N^{3/4} X^{3/2}}{K^{1/2}TM^{1/4}} \ll K^{O(\eta)}\frac{X^{3/2}}{K^{1/2}T^{1/4}} \ll XK^{3/8+ O(\eta)}$$
by \eqref{C}, \eqref{NM}, \eqref{X} and \eqref{T}. For sufficiently small $\eta$ this is in agreement with \eqref{J-bound} and completes the analysis of the first  off-diagonal term in \eqref{diagdiag}. 

\subsection{The second off-diagonal term}\label{144}  The analysis of the second off-diagonal term in \eqref{kuz-even-form} is very similar, so we can be brief. Here we need to consider
\begin{displaymath}
\begin{split}
\mathcal{I}^{\text{off-diag,2}}(\Delta, t,k,  r):= \frac{16}{\pi} \sum_{d \mid r} &\sum_{{\tt n},{\tt m}} \frac{ \chi_{\Delta}({\tt m}) }{(d{\tt n}{\tt m})^{1/2}}  \sum_c \frac{S({\tt m}, {\tt n}r/d, c)}{c} \\
&\int_{-\infty}^{\infty}   K_{2i\tau}(4\pi \sqrt{({\tt m}{\tt n}r/d)}/c) \sinh(\pi \tau) W^+_{\tau}(d{\tt n}) W^-_{\tau}\Big(\frac{{\tt m}}{|\Delta|}\Big)h(\tau) \tau  \frac{d\tau}{4\pi}
\end{split}
\end{displaymath}
with $h$ as in \eqref{htau}. 
This needs to be inserted into \eqref{weobt} with $r = f^2d\nu/(d_1 d_2 d_3^2) \leq K^{4\eta}$. Under the same size restrictions \eqref{X} as before we want to show that
$$\mathcal{J}^{(2)}(X, t,k,  r) := \sum_{\Delta}w\Big( \frac{|\Delta|}{X}\Big) \mathcal{I}^{\text{off-diag,2}}(\Delta, t,k,  r) \ll XK^{1/2 - \eta}.$$
As before we first use holomorphicity ot ensure absolute convergence of the $c$-sum and obtain a very coarse truncation. Then we apply smooth partitions of unity restricting to $\tau \asymp T$ satisfying \eqref{T}, $n \asymp N$, $m \asymp M$ satisfying \eqref{NM} and $c\asymp C$. This time we use Lemma \ref{bessel-kuz}b to conclude that
\begin{equation}\label{C1}
C \leq K^{\varepsilon} \frac{\sqrt{NM}}{T},
\end{equation}
and by an analogue of \eqref{E1}  we need to bound the quantity
\begin{equation*}
\begin{split}
\mathscr{J}^{(2)}_r(X, N, M, C, T) = &\frac{K^{\varepsilon}T}{(NMK)^{1/2}C}\sum_{r_1r_2=r} \frac{1}{(rr_1)^{1/4}} \sum_{\Delta, {\tt n} , {\tt m}, c}  \chi_{\Delta}({\tt m}) S({\tt m}, {\tt n} r_2, c) W\Big( \frac{|\Delta|}{X}, \frac{{\tt n}}{N}, \frac{{\tt m}}{M}, \frac{c}{C}\Big)
\end{split}
\end{equation*}
where $W$ satisfies the same properties. The trivial bound using the large sieve (Lemma \ref{HBlemma}) is now
\begin{equation}\label{F1}
  \mathscr{J}^{(2)}_r(X, N, M, C, T) \ll \frac{K^{\varepsilon}T}{(NMK)^{1/2}C} XNMC^{3/2}\Big( \frac{1}{X^{1/2}} + \frac{1}{M^{1/2}}\Big)=    \frac{K^{\varepsilon}T}{ K^{1/2}} (XNC)^{1/2} (  X^{1/2} +  M^{1/2} ).
  \end{equation}
Next we apply Poisson summation in ${\tt n}$ in residue classes modulo $c$ which is simpler than before because there is no exponential $e(\pm 2\sqrt{{\tt n}{\tt m}r_2}/c)$. This transforms $ \mathscr{J}^{(2)}_r(X, N, M, C, T)$ into 
\begin{displaymath}
\begin{split}    \frac{K^{\varepsilon}T}{(NMK)^{1/2}C}\sum_{r_1r_2=r} \frac{1}{(rr_1)^{1/4}} \sum_{\Delta,  {\tt m}, c}  \chi_{\Delta}({\tt m}) N \sum_{n \in \Bbb{Z}}   \underset{\substack{\gamma \, (\text{mod }c)\\ r_2\bar{\gamma} \equiv -n \, (\text{mod } c)}}{\left.\sum\right.^{\ast}} e\Big( \frac{{\tt m} \gamma  }{c}\Big)   W_1\Big( \frac{|\Delta|}{X}, \frac{nN}{C}, \frac{{\tt m}}{M}, \frac{c}{C}\Big)
\end{split}
\end{displaymath}
for a weight function $W_1$ that is compactly supported in the first, third and fourth variable and rapidly decaying in the second. This term contains the same character sum as in \eqref{char1}. By the same manipulation we obtain
\begin{equation}\label{F2}
\begin{split}    \mathscr{J}^{(2)}_r(X, N, M, C, T) = & \frac{K^{\varepsilon}TN^{1/2}}{(MK)^{1/2}C}\sum_{r_1\delta_1\delta_3\delta_4 r_2 = r} \frac{(\delta_1 \delta_3)^{3/4}\mu(\delta_4)}{(rr_1r_2 \delta_4)^{1/4} } \\
&\underset{\substack{(c, nr_2\delta_3\delta_4) = 1\\ \delta_1 \mid c^{\infty}}}{\sum_{\Delta,  {\tt m}, c} \sum_{n\in \Bbb{Z}}} \chi_{\Delta}({\tt m}\delta_1\delta_3)   e\Big( -\frac{{\tt m}r_2 \overline{n\delta_2}}{c}\Big)W_1\Big( \frac{|\Delta|}{X}, \frac{n\delta_1\delta_2 N}{C}, \frac{{\tt m}\delta_1}{M}, \frac{c\delta_1\delta_2}{C}\Big).
\end{split}
\end{equation}
This is arithmetically analogous to \eqref{E4} except that the roles of $\delta_2n$ and $c$ are reversed in the exponential. This makes good sense since in generic ranges we have $c \asymp K^{1/2}$, $ n \asymp K$ in \eqref{E4}, but $c \asymp K$, $n \asymp K^{1/2}$ in \eqref{F2}. The large sieve now gives the bound
\begin{equation}\label{F3}
  \mathscr{J}^{(2)}_r(X, N, M, C, T) \ll \frac{K^{\varepsilon}TN^{1/2}}{(MK)^{1/2}C} X\frac{C}{N}MC\Big(\frac{1}{\sqrt{X}} + \frac{1}{\sqrt{M}}\Big) =\frac{K^{\varepsilon}TX^{1/2} C }{(NK)^{1/2}}  ( \sqrt{X} +\sqrt{M} ). 
\end{equation}
Next we apply Poisson summation in ${\tt m}$ in residue classes modulo $|\Delta| c$. In the character sum    
$$\sum_{\mu\, (\text{mod }c|\Delta|)} \chi_{\Delta}(\mu) e\Big(\frac{\mu(m- r_2\overline{n\delta_2}|\Delta|)}{c|\Delta|}\Big)$$
we decompose $c = c_1 c_2$ where $c_1 \mid \Delta^{\infty}$, $(c_2, \Delta) = 1$. The character sum vanishes unless $c_1c_2 \mid mn\delta_2 - r_2 |\Delta|$ in which case it equals
$$i \sqrt{|\Delta|} c_1 \chi_{\Delta}\Big(c_2 \frac{m- r_2 \overline{n\delta_2}|\Delta|}{c_1}\Big).$$
Estimating trivially at this point yields
\begin{equation}\label{F4}  
  \mathscr{J}^{(2)}_r(X, N, M, C, T)  \ll K^{\varepsilon}\frac{T(MN)^{1/2}C^2 X^2 }{C(XK)^{1/2}MN} = \frac{X^{3/2}CT}{(KNM)^{1/2}} \ll X K^{1/2 +2 \eta + \varepsilon}
\end{equation}
 by \eqref{C1} and \eqref{X}, matching the bound in \eqref{E6}.  With the roles of $c$ and $n$ reversed, we now want to make sure that $n \ll  C(N\delta_1\delta_2)^{-1}$ is large enough.  By the trivial bound \eqref{F4} we can assume that $n \geq C(N\delta_1\delta_2)^{-1} K^{-4\eta}$. Now combining \eqref{F4} and \eqref{F1} we obtain
 $$  \mathscr{J}^{(2)}_r(X, N, M, C, T)  \ll K^{\varepsilon} \frac{(CX)^{1/2} T }{K^{1/2} }\min\Big(\frac{XC^{1/2}}{(NM)^{1/2}} , (NX)^{1/2} + (NM)^{1/2}\Big) \leq K^{\varepsilon} \frac{TXC^{1/2}}{K^{1/2}}(N^{1/2} + C^{1/4}).$$
Combining \eqref{F4} and \eqref{F3} we obtain
$$  \mathscr{J}^{(2)}_r(X, N, M, C, T) \ll K^{\varepsilon} \frac{CTX^{1/2}}{(MNK)^{1/2}}\min(\sqrt{MX} + M, X)\ll K^{\varepsilon} \frac{CTX }{(NK)^{1/2}} . $$
Combining the previous two bounds, we obtain
$$   \mathscr{J}^{(2)}_r(X, N, M, C, T) \ll K^{\varepsilon}\frac{TXC^{3/4}}{K^{1/2}} =  K^{\varepsilon}\frac{TXN^{3/4}}{K^{1/2}} \Big( \frac{C}{N}\Big)^{3/4} \ll XK^{1/4+\varepsilon} \Big(\frac{C}{N}\Big)^{3/4} $$
by \eqref{NM} and \eqref{T}. This is acceptable unless 
$$C/N \geq K^{1/3 - 2\eta} \quad \text{and} \quad C \geq K^{2/3 - 2\eta}$$
which we assume from now on, so that in particular $n \gg  C(N\delta_1\delta_2)K^{-4\eta} \gg N^{1/3 -10\eta}$. By the same argument as in the previous subsection we can now extract certain square classes in the $n$-sum and then save from the P\'olya-Vinogradov inequality. In effect, we replace the factor $C$ from a trivial bound of the $c_2$-sum by a factor $ K^{O(\eta)}\sqrt{N/C}$ of the square root of the conductor of $c_2 \mapsto (\frac{n}{c_2})$. This leads to the final bound
$$   \mathscr{J}^{(2)}_r(X, N, M, C, T) \ll K^{O(\eta)} \frac{X^{3/2} T \sqrt{N/C}}{(KNM)^{1/2}} \leq K^{O(\eta)} \frac{X^{3/2} T  }{K^{1/2}C^{1/2}} \ll XK^{1/6 +O(\eta)}$$
by \eqref{X}, \eqref{T} and our assumption  $C \geq K^{2/3 - 2\eta}$. This is in agreement with \eqref{J-bound} and completes the analysis of the the second diagonal off-diagonal term, hence the analysis of the complete diagonal term.

\section{The off-diagonal term}\label{off-off}

\subsection{Initial steps}
We return to \eqref{slightly-simp} and analyze the off-diagonal term in Lemma \ref{lem1} applied to the $h$-sum. Here we are only interested in upper bounds, so dropping all numerical constants  it suffices to estimate 
\begin{displaymath} 
\begin{split}
    \frac{1}{  K^2} &  \sum_{k \in 2\Bbb{N}} W\left(\frac{k}{K}\right)   \sum_{\substack{(n, m) = 1\\ n, m \leq K^{\eta}}} \frac{  \mu(n)\mu^2(m)}{n^{3/2}m^{3}}  \int_{-\infty}^{\infty}  \int^{\ast}_{\Lambda_{\text{\rm ev}}} \omega(t_{\tt u})    \sum_{f_1, f_2, D_1, D_2}  \sum_{\substack{d_1 \mid d_2 \mid n\\ (d_1d_2)^2 \mid n^2D_2}} \left(\frac{d_1}{d_2} \right)^{1/2} \chi_{  D_2}\Big(\frac{d_2}{d_1}\Big) \\
  &\frac{ P(D_1; {\tt u})\overline{P(D_2; {\tt u})}}{f_1f_2 |D_1D_2|^{3/4}} \Big(\frac{|D_2|f_2^2}{|D_1|f_1^2}\Big)^{it} i^k\sum_c  \frac{K^+(|D_1|, |D_2|n^2/(d_1d_2)^2, c)}{c} J_{k-3/2}\Big( \frac{ 4\pi \sqrt{|D_1D_2|}n}{cd_1d_2}\Big) \\
 &   V_{t}(|D_1D_2|(f_1f_2)^2; k, t_{\tt u})  d{\tt u} \, dt,
 \end{split}
\end{displaymath}
 and our target bound is $K^{-\eta}$. We recall that $V_t(x, k, \tau)$ was defined in \eqref{defV3} and besides the decay properties it is important to note that $V_t(x, k, \tau)$ is holomorphic in, say,  $|\Im \tau| \leq 1$. Since we want to apply the trace formula (Theorem \ref{thm5}) to the spectral sum  later, we must not destroy holomorphicity in the third variable.

 As in Section \ref{sec12} we write   $d_2 = d_1\delta$, $n = d_1\delta \nu$. Then $d_1^2 \mid \nu^2 D_2$ and $d_1^2 \mid D_2$ since $n$ is squarefree. Again we  write $d_1 = d$ and $D_2 d^2$ in place of $D_2$ and bound the preceding display as
 \begin{displaymath} 
\begin{split}
    \frac{1}{  K^2} &   \sum_{ d\delta \nu, m \leq K^{\eta}} \frac{  \mu^2(d\delta \nu m) }{d^3 \delta^2 \nu^{3/2} m^3 }\Big| \sum_{k \in 2\Bbb{N}} W\left(\frac{k}{K}\right)  \int_{-\infty}^{\infty}  \int^{\ast}_{\Lambda_{\text{\rm ev}}} \omega(t_{\tt u})    \sum_{f_1, f_2, D_1, D_2}      \frac{\chi_{D_2}(\delta) P(D_1; {\tt u})\overline{P(D_2d^2; {\tt u})}}{f_1f_2 |D_1D_2|^{3/4}}  \\
 &  \Big(\frac{|D_2|(df_2)^2}{|D_1| f_1^2}\Big)^{it}  i^k\sum_c  \frac{K_{3/2}^+(|D_1|, |D_2|\nu^2 , c)}{c} J_{k-3/2}\Big( \frac{ 4\pi \sqrt{|D_1D_2|}\nu}{c }\Big)  V_{t}(|D_1D_2|( d f_1f_2)^2; k, t_{\tt u})  d{\tt u} \, dt\Big|. 
 \end{split}
\end{displaymath}
 We sum over $k$ using Lemma \ref{lem2} and open the Kloosterman sum. As in Section \ref{112}, up to a negligible error we obtain the upper bound
 \begin{displaymath} 
\begin{split}
    &   \sum_{ d\delta \nu, m \leq K^{\eta}}  \sum_{4 \mid c}  \sum_{f_1, f_2} \frac{  \mu^2(d\delta \nu m) }{d^3 \delta^2 \nu^{3/2} m^3 cf_1f_2 }\underset{\substack{\gamma\, (\text{mod } c)\\ (\gamma, c) = 1}}{\max} | \mathcal{I}^{\text{off}}(K)|
     \end{split}
\end{displaymath}
    where $\mathcal{I}^{\text{off}}(K) = \mathcal{I}_{d, \delta, \nu, m, c, f_1, f_2, \gamma}^{\text{off}}(K)$ is given by
    \begin{displaymath} 
\begin{split}
\mathcal{I}^{\text{off}}(K)
 =   &    \frac{1}{K^2}  \int_{-\infty}^{\infty}  \int^{\ast}_{\Lambda_{\text{\rm ev}}} \omega(t_{\tt u})    \sum_{  D_1, D_2}      \frac{ \chi_{D_2}(\delta)P(D_1; {\tt u})\overline{P(D_2d^2; {\tt u})}}{  |D_1D_2|^{3/4}} \Big(\frac{|D_2|(df_2)^2}{|D_1| f_1^2}\Big)^{it}  \\
 &    e\Big( \frac{|D_1|\gamma + |D_2|\nu^2\bar{\gamma}}{c}\Big) e\Big(\pm \frac{ 2 \sqrt{|D_1D_2|}\nu}{c }\Big)  \tilde{V}\Big(|D_1D_2|(df_1f_2)^2,   \frac{\sqrt{|D_1D_2|}\nu}{c}, t, t_{\tt u}\Big)   d{\tt u} \, dt; 
 \end{split}
\end{displaymath}
here $\tilde{V}$  satisfies  \eqref{tildeV} and is holomorphic in $|\Im t_{\tt u}| \leq 1$.  The bounds contained in \eqref{tildeV}  imply in particular $c, f_1, f_2 \leq K^{\eta}$ up to a negligible error. For notation simplicity we consider only the plus-case, the minus case being entirely similar. 
 
 As in Section \ref{sec12} we start with a Voronoi step, but in the present situation  (since we have already excluded the constant function that requires a very careful treatment) we can afford to lose small powers of $K$ on the way. We introduce the notation
 $$A \preccurlyeq B \quad :\Longleftrightarrow \quad A \ll K^{c\eta} B$$
 for some constant $c$, not necessarily the same on every occasion. We always assume that $\eta$ is sufficiently small. 
 
  To begin with, we integrate over $t$  which by the properties of \eqref{tildeV} induces the condition $|D_1| f_1^2 - |D_2|(df_2)^2 \preccurlyeq K^{1/2}$ up to a negligible error. This now implies $K^2 \preccurlyeq D_1, D_2  \preccurlyeq  K^{2  }$, up to a negligible error.   In $\tilde{V}$ we can separate the variables $D_1, D_2$ from $t_{\tt u}$ by Mellin inversion (keeping holomorphicity in $t_{\tt u}$). Since $\tilde{V}$ is of size $K^{-1/2}$ and we also get a factor $K^{1/2}$ from the $t$-integration,  we are left with bounding
   \begin{displaymath} 
\begin{split}
\tilde{\mathcal{I}}^{\text{off}}(K)
 =   &    \frac{1}{K^2}     \int^{\ast}_{\Lambda_{\text{\rm ev}}} \Omega(t_{\tt u})    \sum_{ D_1, D_2}      \frac{ \chi_{D_2}(\delta)P(D_1; {\tt u})\overline{P(D_2d^2; {\tt u})}}{  |D_1D_2|^{3/4}} V_1\Big(\frac{|D_1|}{K^2}\Big)  V_2\Big(\frac{|D_2|}{K^2}\Big)  \\
 &  V_3\Big(K^{1/2} \log\frac{|D_2|(df_2)^2}{|D_1| f_1^2}\Big)   e\Big( \frac{|D_1|\gamma + |D_2|\nu^2\bar{\gamma}}{c}\Big) e\Big(  \frac{ 2 \sqrt{|D_1D_2|}\nu}{c }\Big)     d{\tt u}  
 \end{split}
\end{displaymath}
where $V_1, V_2$ have support in $[K^{-O(\eta)}, K^{O(\eta)}]$ with Sobolev norms bounded by $ \preccurlyeq 1$, $V_3$ is rapidly decaying, and $\Omega(\tau)$ is holomorphic in $|\Im \tau| \leq 1$, satisfies the conditions \eqref{prop-omega} and is non-negligible only in the range $K^{1/2} \preccurlyeq |\tau| \preccurlyeq K^{1/2}$.  
 
We now consider the $D_1$-sum
\begin{equation}\label{D1-sum}
  \sum_{ D_1}      \frac{ P(D_1; {\tt u})\overline{P(D_2d^2; {\tt u})}}{  |D_1D_2|^{3/4}} V_1\Big(\frac{|D_1|}{K^2}\Big)   V_3\Big(K^{1/2} \log\frac{|D_2|(df_2)^2}{|D_1| f_1^2}\Big)     e\Big( \frac{|D_1|\gamma  }{c}\Big) e\Big(  \frac{ 2 \sqrt{|D_1D_2|}\nu}{c }\Big)  
  \end{equation}
and insert \eqref{mixed} with $t = t_{\tt u}/2$ if ${\tt u}$ is cuspidal and \eqref{eisen1} if ${\tt u} = E(., 1/2 + it_{\tt u})$ is Eisenstein. For clarity we recall that
\begin{displaymath}
\begin{split}
 \frac{ P(D_1; {\tt u}) \overline{P(D_2d^2; {\tt u})}}{|D_1D_2 |^{3/4}} = \frac{d^{1/2}}{|D_1|^{1/2}|D_2|^{3/4}} \begin{cases} b(D_1) \sqrt{|D_1|} A(D_2d^2, {\tt u}), & {\tt u} \text{ cuspidal},\\
 L(D_1, 1/2 + it_{\tt u}) |D_1|^{it_{\tt u}/2} A(D_2, {\tt u}), & {\tt u} = E(., 1/2 + it_{\tt u}),\end{cases}
\end{split}
\end{displaymath}
where
$$A(D, {\tt u}) = \begin{cases} \frac{3}{\pi} L({\tt u}, 1/2) \Gamma(\frac{1}{4} + \frac{it_{\tt u}}{2})\Gamma(\frac{1}{4} - \frac{it_{\tt u}}{2})b(D) \sqrt{|D|}, & {\tt u} \text{ cuspidal},\\
\frac{\zeta(1/2 + it_{\tt u})\zeta(1/2 - it_{\tt u}) L(D, 1/2 - it_{\tt u}) |D|^{-it_{\tt u}/2}}{2|\zeta(1 + 2 it_{\tt u})|^2}, & {\tt u} = E(., 1/2 + it_{\tt u}).\end{cases}$$
Even though the normalization in the cuspidal and the Eisenstein case is quite different, the Voronoi formulae in Lemma \ref{Vor} (with $r = t_{\tt u}/2)$ and \ref{Vor-eis} (with $t = t_{\tt u}/2$) can deal in the same fashion with the sums
\begin{equation}\label{d1}
\sum_{D_1 < 0} \left\{ \begin{array}{l}  b(D_1) \sqrt{|D_1|} \\  L(D_1, 1/2 + it_{\tt u}) |D_1|^{it_{\tt u}/2}\end{array} \right\} e\Big( \frac{|D_1|\gamma}{c}\Big) \phi(D_1)
\end{equation}
with
$$\phi(x) =   \frac{1}{|x|^{1/2}}V_1\Big(\frac{|x|}{K^2}\Big)   V_3\Big(K^{1/2} \log\frac{|D_2|(df_2)^2}{|x| f_1^2}\Big) e\Big(  \frac{ 2 \sqrt{|xD_2|}\nu}{c }\Big)   $$
for $x < 0$ and $\phi(x) = 0$ for $x > 0$. It is easy to see  that with this choice of $\phi$ and for $t  \ll K^{1/2 + \varepsilon}$ the two polar terms in Lemma \ref{Vor-eis} are negligible, due to the strong oscillatory behaviour of the exponential $e(\pm 2\sqrt{|xD_2|}\nu/c)$ for $K^2 \preccurlyeq x, D_2 \preccurlyeq   K^{2 }$, $c \preccurlyeq   1$. By the same argument as in Section \ref{104}, see in particular \eqref{BesselK}, the  terms with $D > 0$ on the right hand side of the Voronoi summation formula are negligible. Hence, up to a negligible error, \eqref{d1} becomes
\begin{equation*}
\begin{split}
\Big( \frac{-c}{\gamma}\Big)&\epsilon_{\gamma} e(1/8) \frac{2\pi}{c}\sum_{D_1 < 0} \left\{ \begin{array}{l}  b(D_1) \sqrt{|D_1|} \\  L(D_1, 1/2 + it_{\tt u}) |D_1|^{it_{\tt u}/2}\end{array} \right\} e\Big(- \frac{|D_1|\bar{\gamma}}{c}\Big)  \int_0^{\infty}V_1\Big(\frac{|x|}{K^2}\Big)   \\
&  \sum_{\pm} (\mp)  \frac{\cos(\pi/4 \pm \pi i t_u/2)}{\sin(\pi i t_u)} J_{\pm i t_u}\Big( \frac{4\pi \sqrt{|D_1x|}}{c}\Big)  V_3\Big(K^{1/2} \log\frac{|D_2|(df_2)^2}{|x| f_1^2}\Big)    e\Big(  \frac{2\sqrt{|xD_2|}\nu}{c}\Big) \frac{dx}{ x^{1/2   }}. 
   \end{split}
\end{equation*}
Using \eqref{bessel-approx} we approximate the Bessel $J$-function for $t_{\tt u} \ll K^{1/2 + \varepsilon}$ by an exponential and replace up to a negligible error the preceding display by
\begin{equation*}
\begin{split}
\Big( \frac{-c}{\gamma}\Big)&\epsilon_{\gamma} e(1/8) \frac{2\pi}{c} \sum_{D_1 < 0} \left\{ \begin{array}{l}  b(D_1) \sqrt{|D_1|} \\  L(D_1, 1/2 + it_{\tt u}) |D_1|^{it_{\tt u}/2}\end{array} \right\} e\Big( -\frac{|D_1|\bar{\gamma}}{c}\Big)   \sum_{\pm} \int_0^{\infty}  \frac{c^{1/2}}{|D_1|^{1/4}} V_1\Big(\frac{|x|}{K^2}\Big)  \\
& f^{\pm}\Big( \frac{2\sqrt{|D_1x|}}{c}, t_{\tt u}\Big) V_3\Big(K^{1/2} \log\frac{|D_2|(df_2)^2}{|x| f_1^2}\Big)    e\Big(  \frac{2\sqrt{|xD_2|}\nu \pm 2 \sqrt{|D_1x|}}{c}\Big) \frac{dx}{ x^{3/4   }}
   \end{split}
\end{equation*}
with 
$$f^{\pm}(x, \tau) =  \frac{ 1}{2\pi }\sum_{k=0}^{n-1} \frac{i^k(\pm 1)^k}{(4\pi x)^{k}} \frac{\Gamma(i\tau+ k + 1/2)}{k! \Gamma(i\tau - k + 1/2)}   
 $$
for $n$ sufficiently large, but fixed.  In particular  $f^{\pm}$ satisfies   \eqref{besseldecay}, and we see from the asymptotic expansion \eqref{bessel-approx} that $f^{\pm}$ is holomorphic in the second variable in, say, $|\Im \tau| < 1$. For notational simplicity we consider only the leading term $k=0$, the lower order terms being completely analogous (but easier).

We restore the periods $P(D; {\tt u})$, so that the leading term of \eqref{D1-sum} has the shape 
\begin{equation*}
\begin{split}
  \sum_{\pm} \Big( \frac{-c}{\gamma}\Big)&\epsilon_{\gamma} e(1/8) \frac{2\pi }{c^{1/2}}\sum_{D_1 < 0}  \frac{P(D_1; {\tt u}) \overline{P(D_2d^2; {\tt u})}}{|D_2|^{3/4}|D_1|^{1/2}}  e\Big(- \frac{|D_1|\bar{\gamma}}{c}\Big) \frac{1}{2\pi}\int_0^{\infty}  V_1\Big(\frac{|x|}{K^2}\Big)    \\
&  V_3\Big(K^{1/2} \log\frac{|D_2|(df_2)^2}{|x| f_1^2}\Big)    e\Big(  \frac{2\sqrt{|xD_2|}\nu \pm 2 \sqrt{|D_1x|}}{c}\Big) \frac{dx}{ x^{3/4   }}
\end{split}
 \end{equation*}
 with lower order terms of similar form. Recall that $K^2 \preccurlyeq D_2 \preccurlyeq   K^{2 }$. 
Integrating by parts, we see as in \eqref{add1} -- \eqref{add2} that only the minus-term in the exponential is relevant (the plus-term is negligible) and the $x$-integral restricts to $\sqrt{|D_2|}\nu - \sqrt{|D_1|} \preccurlyeq K^{-1/2}$. We therefore introduce a new variable $h \in \Bbb{Z}$ by 
$$|D_1| = |D_2|\nu^2 + h$$
with $h \preccurlyeq K^{1/2 }$. Changing variables in the $x$-integral,   we obtain
\begin{equation*}
\begin{split}
&    \Big( \frac{-c}{\gamma}\Big)\epsilon_{\gamma} e(1/8) \frac{1 }{c^{1/2}}\sum_{h \preccurlyeq K^{1/2}}  \frac{P(-|D_2|\nu^2 - h, {\tt u}) \overline{P(D_2d^2; {\tt u})} }{|D_2|^{1/2}(|D_2|\nu^2 + h)^{1/2}}  e\Big( -\frac{(|D_2|\nu^2 + h)\bar{\gamma}}{c}\Big)  \\
  &\int_0^{\infty} V_1\Big(\frac{|xD_2|}{K^2}\Big)    V_3\Big(K^{1/2} \log\frac{ (df_2)^2}{|x| f_1^2}\Big)    e\Big(  \frac{2 \sqrt{x D_2}(\sqrt{|D_2|}\nu-  \sqrt{(|D_2|\nu^2 + h)})}{c}\Big) \frac{dx}{ x^{3/4   }}
\end{split}
 \end{equation*}
where $h$ is restricted to numbers such that $-(|D_2|\nu^2 + h)$ is a negative discriminant. The weight function $V_3$ forces $x \preccurlyeq 1$ and more precisely 
\begin{equation}\label{x}
x - (df_2/f_1)^2  \preccurlyeq K^{-1/2}.
\end{equation}  By a Taylor expansion we can write
$$ e\Big(  \frac{2 \sqrt{x|D_2|}(\sqrt{|D_2|}\nu-  \sqrt{(|D_2|\nu^2 + h)})}{c}\Big) = e\Big( -\frac{\sqrt{x}h}{c\nu}\Big) F(D_2)$$
where  $$F(D) = F_{x,  h, \nu, c}(D) = 1 + \frac{i\pi \sqrt{x} h^2}{2 |D|  \nu^3 c} - \frac{i\pi \sqrt{x}h^3}{4|D|^2 \nu^5 c} +\ldots $$ 
Again we only keep the leading term (the lower order terms being similar, but easier). We substitute all of this back  into $\tilde{\mathcal{I}}^{\text{off}}(K)$ and pull the $x$-integration outside which is subject to \eqref{x}. In this way we see that it suffices to bound  
\begin{equation}\label{I1}
\begin{split}
 \mathcal{I}_1^{\text{off}}(K)= \frac{1}{K^{5/2}}     \int^{\ast}_{\Lambda_{\text{\rm ev}}} \Omega(t_{\tt u})   & \sum_{  D_2}        \sum_{h \preccurlyeq K^{1/2}}  \frac{P(-|D_2|\nu^2 - h, {\tt u}) \overline{P(D_2d^2; {\tt u})}}{|D_2|^{1/2} (|D_2|\nu^2 + h)^{1/2}}\chi_{D_2}(\delta) \\
 &   V_x\Big(\frac{|D_2|}{K^2}\Big) e\Big( -\frac{  h\bar{\gamma}}{c} -\frac{\sqrt{x}h}{c\nu}\Big)d{\tt u}
 \end{split}
 \end{equation}
where $V_x(z) = V_2(z) V_1(xz)$, uniformly in
$$x, \nu, c, \delta, d \preccurlyeq 1, \quad (\gamma, c) = 1.$$
A trivial bound using Cauchy-Schwarz and Proposition \ref{Lfunc} gives $ \mathcal{I}_1^{\text{off}}(K) \preccurlyeq 1$, as in Section \ref{weakversion}. In order to make progress and get additional savings, we must now treat the ${\tt u}$-integral non-trivially. This is where the trace formula has its appearance. 
 
\subsection{Application of the trace formula} We now apply Theorem \ref{thm5} to the spectral expression
\begin{displaymath}
\begin{split}
& \int^{\ast}_{\Lambda_{\text{\rm ev}}} \Omega(t_{\tt u})      \frac{P(-|D_2|\nu^2 - h, {\tt u}) \overline{P(D_2d^2; {\tt u})}}{|D_2|^{1/4} (|D_2|\nu^2 + h)^{1/4}} d{\tt u} \\
 &=  \int_{\Lambda_{\text{\rm ev}}} \Omega(t_{\tt u})      \frac{P(-|D_2|\nu^2 - h, {\tt u}) \overline{P(D_2d^2; {\tt u})}}{|D_2|^{1/4} (|D_2|\nu^2 + h)^{1/4}} d{\tt u} - \frac{3}{\pi} \frac{H(D_2d^2)H(-|D_2|\nu^2 - h)}{|D_2|^{1/4} (|D_2|\nu^2 + h)^{1/4}} \Omega(i/2).
 \end{split}
 \end{displaymath}
 We discuss the four terms on the right hand side of the trace formula. 
 
1)  The class number term gets immediately cancelled. 
 
2)  The $t$-integral in the polar term is rapidly decaying and by a Burgess-type   subconvexity bound $L(D, 1/2 + it) \ll |D|^{3/16+\varepsilon}(|1+|t|)^{10}$, say, its contribution to \eqref{I1} is at most\footnote{Of course, instead of subconvexity, we could also used mean value bounds on average over $D_2$ to get an even stronger saving.}  
 $$
 \preccurlyeq \frac{1}{K^{5/2}} \sum_{  D_2 \preccurlyeq K^2}        \sum_{h \preccurlyeq K^{1/2}}  \frac{1}{|D_2|^{1/16} (|D_2|\nu^2 + h)^{1/16  }} \preccurlyeq K^{-1/4}.$$  

3) For the diagonal term we observe that  $\sum_m m^{-1} \int |\Omega(t)W_{t}(rnvm)t| dt \preccurlyeq K$ uniformly in $n, r, v$ by \eqref{bound-wt}. 
%
If the fundamental discriminants underlying $D_2d^2$ and $-|D_2|\nu^2 - h$ coincide, we have  at most $\preccurlyeq K^{1/4}$ choices for $h$ (in most cases much fewer), so the diagonal term contributes at most 
 $$\preccurlyeq \frac{1}{K^{5/2}}  K  \sum_{  D_2\preccurlyeq K^2}         \frac{K^{1/4}}{|D_2|^{1/2}  } \preccurlyeq   K^{-1/4}$$
 to \eqref{I1}. 
 
4) It remains to deal with the fourth term, and to this end we  write $D_2 = \Delta_2 f_2$ with a fundamental discriminant $\Delta_2$. Then the Kloosterman term can be bounded by 
  \begin{equation*} 
\begin{split}
 \mathcal{I}_1^{\text{off}, \text{off}}(K)= &\frac{1}{K^{5/2}}      \sum_{h \preccurlyeq K^{1/2}}  \Big|   \sum_{  D_2  = \Delta_2 f_2^2}        \frac{ \chi_{D_2}(\delta) }{|D_2|^{1/4} (|D_2|\nu^2 + h)^{1/4}}   V_x\Big(\frac{|D_2|}{K^2}\Big) e\Big( -\frac{h\bar{\gamma}}{c}\Big)   \\
 &  \sum_{ d_1r\tau vw = f_2 }\sum_{n,  c, m}  \frac{\mu(d_1)\mu(v)  \chi_{\Delta_2}(d_1vm\tau)}{\sqrt{d_1rn\tau} vm}     \frac{  K_{3/2}^+(|\Delta_2|(vwn)^2, |D_2|\nu^2 +  h, c)}{ c} \\
&\quad\quad\quad \times \int_{-\infty}^{\infty}  \frac{F(4\pi vwn\sqrt{|\Delta_2|(|D_2|\nu^2 + h)}/c, t, 1/2)}{\cosh(\pi t)}   \Omega(t) W_{t}(  rnvm) t \frac{dt}{\pi}\Big|.
 \end{split}
 \end{equation*}
(Here we exchanged the roles of $D_1$ and $D_2$ in Theorem \ref{thm5}.) The general strategy is now as follows: We evaluate the $t$-integral by Lemma \ref{bessel-kuz} and simplify the expression by using suitable Taylor expansions. It is a lucky coincidence that this step yields rational phases in the exponentials. We are then ready to  apply Poisson summation in the long $\Delta_2$-sum which will eventually give enough savings. We now make these ideas precise. 

We recall that $\Omega$ satisfies the conditions stated in \eqref{prop-omega} and is negligible for $|t| \gg K^{1/2+\varepsilon}$. 
The $n, m$-sum is absolutely convergent by \eqref{bound-wt}, and we can truncate it at $rnvm  \preccurlyeq K^{1/2}$ at the cost of a negligible error. We split the $n, m$-sum into dyadic ranges $N \leq n \leq 2N$, $M \leq m \leq 2M$ where 
\begin{equation}\label{N-M}
   NM \preccurlyeq (rv)^{-1}K^{1/2}.
   \end{equation}
By the remark after Theorem \ref{thm5} and the properties of $\Omega$, the integrand of the $t$-integral is holomorphic in, say, $|\Im t| < 2/3$, so by contour shifts, Weil's bound \eqref{weil} and the power series expansion of the Bessel $J$-function we see that the $c$-sum is absolutely convergent and can be truncated at $c \ll K^{10^6}$, say, at the cost of a negligible error. Having truncated the $c$-sum in this very coarse way, we can sacrifice holomorphicity and include a smooth partition of unity into the $t$-integral, where a typical portion is weighted by $w(|t|/T)$ with a smooth compactly function $w$ localizing $|t| \asymp  T$ with $K^{1/2-2\eta} \ll T \ll K^{1/2+\varepsilon}$.   We apply Lemma \ref{bessel-kuz}a) to evaluate asymptotically the $t$-integral which in particular restricts the size of $c$. Splitting also the $c$-sum into dyadic ranges $c \asymp C$, we can assume, up to a negligible error, 
\begin{equation}\label{CC}
C \preccurlyeq \frac{vwn \sqrt{|\Delta_2|(|D_2|\nu^2 + h)}}{T^2} \preccurlyeq  \frac{KN}{d_1r\tau}.
\end{equation}
Having recorded these conditions, we can write the $t$-integral (as usual, up to a negligible error) as $$\sum_{\pm} \frac{T^2c^{1/2}}{\sqrt{vwn} |\Delta_2|^{1/4}(|D_2|\nu^2 + h)^{1/4}} e\Big(\pm \frac{2 vwn\sqrt{|\Delta_2|(|D_2|\nu^2 + h)}}{c}\Big) H^{\pm}\Big(  \frac{2 vwn\sqrt{|\Delta_2|(|D_2|\nu^2 + h)}}{c}\Big) $$
 for a flat function $H$, i.e.\ $x^j\frac{d^j}{dx^j} H^{\pm}(x) \ll_j 1$ for $j \in \Bbb{N}_0$.  Substituting back, it remains to bound
  \begin{equation}\label{145} 
\begin{split}
 & \frac{1}{K^{3/2}}      \sum_{h \preccurlyeq K^{1/2}}  \sum_{ d_1,r,\tau ,v,w   }  \sum_{\substack{n \asymp N\\ m \asymp M}} \sum_{c \asymp C} \Big|   \sum_{  \Delta_2 }        \frac{ \chi_{D_2}(\delta) }{|D_2|^{1/2} (|D_2|\nu^2 + h)^{1/2}}   V_x\Big(\frac{|D_2|}{K^2}\Big)       \frac{ \chi_{\Delta_2}(d_1vm\tau)}{nvm}   \\
 &  \frac{  K_{3/2}^+(|\Delta_2|(vwn)^2, |D_2|\nu^2 +  h, c)}{ c^{1/2}} e\Big(\pm \frac{2 vwn\sqrt{|\Delta_2|(|D_2|\nu^2 + h)}}{c}\Big)   H^{\pm}\Big(  \frac{2 vwn\sqrt{|\Delta_2|(|D_2|\nu^2 + h)}}{c}\Big)\Big|.
 \end{split}
 \end{equation}
where for given $r, v, d_1, \tau$ the parameters $N, M, C$ are subject to \eqref{N-M} and \eqref{CC} and $D_2 = \Delta_2 (d_1r\tau vw )^2$. Estimating trivially at this point using the Weil bound \eqref{weil}, we obtain the bound
\begin{equation}\label{tri}
\preccurlyeq \frac{1}{K^{3/2}} K^{1/2} C \preccurlyeq K^{1/2}.
\end{equation}
We see that applying the trace formula was a gambit in the sense that the trivial bound is now roughly a factor $K^{1/2}$ off our target. On the other hand, all automorphic information is now gone, and we may hope to get enough saving from the long  character sums. In particular, we can assume that $C \geq K^{1 - a\eta}$ for some sufficiently large constant $a$, otherwise the trivial bound \eqref{tri} suffices. For such $C$, we can use a Taylor expansion
 $$e\Big(\pm \frac{2 vwn\sqrt{|\Delta_2|(|D_2|\nu^2 + h)}}{c}\Big) = e\Big(\pm \frac{2 vwn|\Delta_2d_1r\tau vw\nu}{c} \pm \frac{vwnh}{d_1r\tau vw\nu c}\Big)\Phi(\Delta_2)$$
 with 
 $$\Phi(\Delta)  - 1  \ll \frac{nh^2}{c|D_2|} \preccurlyeq K^{-3/2}$$
in the current range of variables.  The error term contributes $\preccurlyeq K^{-1}$ to \eqref{145}. Similarly, we also have
$$\frac{H^{\pm}(  2 vwn\sqrt{|\Delta_2|(|D_2|\nu^2 + h)}/c) }{|D_2|^{1/2} (|D_2|\nu^2 + h)^{1/2}}  = \frac{H^{\pm}(  2 vwn |\Delta_2|d_1r\tau vw\nu/c)}{|D_2|\nu} + O\Big(\frac{h}{|D_2|^2}\Big)$$
and again the error term contributes $\preccurlyeq K^{-1}$ to \eqref{145}. Defining $\tilde{V}_x(z) = z^{-1} V_x(z)$, we are left with bounding
  \begin{equation*} 
\begin{split}
  \frac{1}{K^{7/2}}   &  \sum_{h \preccurlyeq K^{1/2}}  \sum_{ (d_1r\tau vw, \delta) = 1   }  \sum_{\substack{n \asymp N\\ m \asymp M}} \sum_{c \asymp C} \Big|   \sum_{  \Delta_2 }            \tilde{V}_x\Big(\frac{|\Delta_2|(d_1r\tau vw)^2}{K^2}\Big)     \frac{ \chi_{\Delta_2}(\delta d_1vm\tau)}{nvmc^{1/2}} \\
 &     K_{3/2}^+(|\Delta_2|(vwn)^2, |\Delta_2|(d_1r\tau vw\nu)^2 +  h, c)  e\Big(\pm \frac{2 (vw)^2n |\Delta_2|d_1r\tau \nu }{c}\Big)   H^{\pm}\Big(  \frac{2 (vw)^2n |\Delta_2|d_1r\tau  \nu}{c}\Big)\Big|.
 \end{split}
 \end{equation*}
Note the very fortunate fact that the algebraic phase $ e(\pm  2 vwn\sqrt{|\Delta_2|(|D_2|\nu^2 + h)}/c) $ in \eqref{145} has become a rational phase.  As usual, the $\Delta_2$-sum runs over negative fundamental discriminants, and we split the sum into residue classes $\Delta_2 \equiv 1,  5, 8, 9, 12, 13$ (mod 16)  and insert a factor $\mu^2(\Delta_2/\alpha)$ with $\alpha \in \{1, 4\}$ to detect squarefreeness. For notational simplicity let us treat the case of odd $\Delta$, the case of even $\Delta$ being similar. Using the well-known convolution formula for $\mu^2$, this leaves us with bounding  
  \begin{equation}\label{almostdone} 
\begin{split}
&  \frac{1}{K^{7/2}}     \sum_{h \preccurlyeq K^{1/2}}  \sum_{ (d_1r\tau vw, \delta) = 1   } \sum_{(d_2, \delta d_1vm\tau) = 1}  \sum_{\substack{n \asymp N\\ m \asymp M}} \sum_{c \asymp C} \frac{1}{vnmc^{1/2}}\\
& \Big|   \sum_{  \Delta_2 }   \psi(\Delta_2)         \tilde{V}_x\Big(\frac{|\Delta_2|(d_2d_1r\tau vw)^2}{K^2}\Big)     \Big( \frac{\Delta_2}{\delta d_1vm\tau}\Big)     K_{3/2}^+(|\Delta_2|(d_2vwn)^2, |\Delta_2|(d_2d_1r\tau vw\nu)^2 +  h, c) \\
& e\Big(\pm \frac{2 (vw)^2nd_2^2 |\Delta_2|d_1r\tau  \nu }{c}\Big)   H^{\pm}\Big(  \frac{2 (vwd_2)^2n |\Delta_2|d_1r\tau  \nu}{c}\Big)\Big|.
 \end{split}
 \end{equation}
 for a character $\psi$ modulo 4. Recall that the Kloosterman sum is nonzero only  if $4 \mid c$. Estimating trivially at this point (using \eqref{weil}), we can assume that 
 $$d_2d_1r\tau v w \preccurlyeq C/K \preccurlyeq K^{1/2}$$
 by \eqref{CC} and \eqref{N-M}, 
the remaining portion being $O(K^{-\eta})$ if the $K^{O(\eta)}$ factor in the previous $ \preccurlyeq$ sign is sufficiently large.  We now open the Kloosterman sum  and apply Poisson summation in $\Delta_2$ in residue classes modulo $ c \delta d_1vm\tau$. If ${\tt D}$ denotes the dual variable, this yields the character sum
\begin{equation}\label{charsum}
\begin{split}
&\sum_{\Delta_2 \, (\text{mod } c \delta d_1vm\tau)} \psi(\Delta_2) \Big( \frac{\Delta_2}{\delta d_1vm\tau}\Big)  e\Big(  \pm \frac{2 (vw)^2nd_2^2 |\Delta_2|d_1r\tau  \nu }{c}\Big)  \\
&  \sum_{\substack{\gamma\, (\text{mod }c)\\ (\gamma, c) = 1}} \epsilon^{2\kappa}_{\gamma} \left(\frac{c}{\gamma}\right) e\left(\frac{|\Delta_2|(d_2vwn)^2\gamma + (|\Delta_2|(d_2d_1r\tau vw\nu)^2 +  h)\bar{\gamma}}{c}\right)e\Big( \frac{\Delta_2 {\tt D}}{c \delta d_1vm\tau}\Big). 
\end{split}
\end{equation}
We write $c = c_1c_2$ where $(c_1, 2\delta d_1vm\tau) = 1$ and $c_2 \mid (2\delta d_1vm\tau)^{\infty}$. Then both sums split off a sum modulo $c_1$ given by
\begin{displaymath}
\begin{split}
&\sum_{\Delta_2 \, (\text{mod } c_1)}    e\Big(  \pm \frac{2 (vw)^2nd_2^2 |\Delta_2|d_1r\tau  \nu \bar{c}_2}{c_1}\Big)  \\
&  \sum_{\substack{\gamma\, (\text{mod }c_1)\\ (\gamma, c_1) = 1}}  \left(\frac{\gamma}{c_1}\right) e\left(\frac{(|\Delta_2|(d_2vwn)^2\gamma + (|\Delta_2|(d_2d_1r\tau vw\nu)^2 +  h)\bar{\gamma})\bar{c}_2}{c_1}\right)e\Big( \frac{\Delta_2 {\tt D} \overline{c_2 \delta d_1vm\tau}}{c_1  }\Big),
\end{split}
\end{displaymath}
cf.\ \cite[Lemma 2]{Iw0} for the treatment of the theta-multiplier. Summing over $\Delta_2$ bounds this double sum modulo $c_1$ by
\begin{displaymath}
\begin{split}
& \leq c_1\#\{ \gamma \in (\Bbb{Z}/c_1\Bbb{Z}_1)^{\ast} \mid (d_2vwn)^2\gamma + (d_2d_1r\tau vw\nu)^2  \bar{\gamma} \pm  2 (vw)^2nd_2^2 d_1r\tau  \nu + {\tt D} \overline{  \delta d_1vm\tau} = 0 \}\\
& \ll_{\varepsilon} c_1^{1+\varepsilon} (c_1, (d_2vwn, d_2d_1r\tau vw\nu)^2). 
\end{split}
\end{displaymath}
We estimate the remaining part of the character sum \eqref{charsum} trivially by $c_2^2\delta d_1vm\tau$. By the properties of the (essentially non-oscillating) weight functions $\tilde{V}_x$ and $H^{\pm}$, the dual variables ${\tt D}$ can be truncated at
$${\tt D} \preccurlyeq  \frac{ c \delta d_1vm\tau}{K^2/(d_2d_1r\tau vw)^2},$$
and so the $\Delta_2$-sum in \eqref{almostdone} can be bounded by 
\begin{displaymath}
\begin{split}
& \preccurlyeq \Big(\frac{K^2/(d_2d_1r\tau vw)^2}{ c \delta d_1vm\tau} + 1\Big) c^{1+\varepsilon} (c_1, (d_2vwn, d_2d_1r\tau vw\nu)^2) c_2\delta d_1vm\tau\\
& = \Big( \frac{K^2c^{\varepsilon}}{  (d_2d_1r\tau vw)^2  }     + c^{1+\varepsilon}   \delta d_1vm\tau\Big) \Big(c,  (d_2w)^2( n,   r   \nu)^2 (2\delta d_1vm\tau)^{\infty}\Big) 
\end{split}
\end{displaymath}
using the notation explained in Section \ref{15}. The first term in the first parenthesis accounts for the zero frequency in the Poisson summation formula. We substitute this back into \eqref{almostdone} getting the (generous) upper bound
  \begin{equation}\label{generous} 
  \begin{split}
\preccurlyeq &  \frac{1}{K^{3}}      \sum_{d_2   d_1r\tau v w\preccurlyeq K^{1/2}   }  \sum_{\substack{n \asymp N\\ m \asymp M}} \sum_{c \asymp C} \frac{ (c,  (2d_2   d_1r\tau v wnm)^{\infty})}{vnmc^{1/2}}\Big( \frac{K^2c^{\varepsilon}}{  (d_2d_1r\tau vw)^2  }     + c^{1+\varepsilon}    d_1vm\tau\Big).  
 \end{split}
 \end{equation}
Here we dropped the variable $\delta \preccurlyeq 1$. The rest is book-keeping. By Rankin's trick we have
$$\sum_{c \asymp C} (c, x^{\infty}) \ll C\sum_{\substack{ d \ll C\\ d \mid x^{\infty}}} 1 \ll C\sum_{ d \mid x^{\infty}} \Big( \frac{C}{d} \Big)^{\varepsilon} \ll C(Cx)^{\varepsilon}$$
 for every $\varepsilon > 0$ and $x \in \Bbb{N}$.  Using \eqref{CC} and \eqref{N-M},  the    bound \eqref{generous} becomes
  \begin{equation*} 
  \begin{split}
&\preccurlyeq   \frac{1}{K^{3}}       \sum_{d_2   d_1r\tau v w\preccurlyeq K^{1/2}   }   \frac{C^{1/2}}{v }\Big( \frac{K^2 }{  (d_2d_1r\tau vw)^2  }     + C M  d_1v\tau\Big) \\
&\preccurlyeq  \frac{1}{K^{3}}       \sum_{d_2   d_1r\tau v w\preccurlyeq K^{1/2}   }   \frac{(KN)^{1/2}}{(d_1r\tau)^{1/2}v }\Big( \frac{K^2 }{  (d_2d_1r\tau vw)^2  }     +\frac{ KN M   v }{r}\Big)\\
& \preccurlyeq  \frac{1}{K^{3}}       \sum_{d_2   d_1r\tau v w\preccurlyeq K^{1/2}   }   \frac{K^{3/4}}{(d_1r^2v\tau)^{1/2} }\Big( \frac{K^2 }{ v (d_2d_1r\tau vw)^2  }     +\frac{ K^{3/2}   }{r^2v}\Big) \preccurlyeq K^{-1/4}.
 \end{split}
 \end{equation*}
This is the desired power saving and completes the proof of Theorem \ref{thm1}.

\appendix
\section{ A period formula on average}\label{appb}

In order to verify the constant $1/4$ in the period formula \eqref{katok-Sarnak}, we  take a large parameter $T$, a very small $\varepsilon > 0$ and consider the two averages
$$A_1(T) = \sum_{u} \frac{L(u, 1/2) L(u \times \chi_{\Delta}, 1/2)}{L(\text{sym}^2 u, 1)} h_{T, \varepsilon}(t_u)$$
and
$$A_2(T) =\sum_{u} \frac{|P(\Delta, u)|^2}{\sqrt{|\Delta|}} h_{T, \varepsilon}(t_u)$$
for a (fixed) negative fundamental discriminant $\Delta$ with class number 1 (for simplicity) and
$$ h_{T, \varepsilon}(t) = \frac{t^2 + 1/4}{T^2} \exp\Big(-\Big(\frac{t - T}{T^{1-\varepsilon}}\Big)^2 - \Big(\frac{t + T}{T^{1-\varepsilon}}\Big)^2\Big).$$
The computation is relatively standard, so we can be brief. 
Using the same approximate functional equation as in \eqref{approx-basic} we have
$$L(u, 1/2) = 2 \sum_n \frac{\lambda_u(n)}{n^{1/2}} W^+_{t_u}(n), \quad L(  u \times \chi_{\Delta}, 1/2) = 2 \sum_m \frac{\lambda_u(m) \chi_{\Delta}(m)}{m^{1/2}} W^-_{t_u}\Big(\frac{m}{|\Delta|}\Big)$$
for even $u$ with $W_t$ as in \eqref{v-t}. 
For odd $u$, each summand in $A_1(T)$ vanishes. 
This gives
$$A_1(T ) = 4 \sum_{nm} \frac{\chi_{\Delta}(m)}{\sqrt{nm}} \sum_{u \text{ even}} \frac{\lambda_u(n)  \lambda_u(m)}{L(\text{sym}^2 u, 1)} V_{t_u}(n) W_{t_u}(m) h_{T, \varepsilon}(t_u).$$
To make this spectrally complete, we artificially add the corresponding Eisenstein contribution
$$ 4 \sum_{nm} \frac{\chi_{\Delta}(m)}{\sqrt{nm}} \int_{-\infty}^{\infty}  \frac{\rho_{it}(n)  \rho_{-it}(m)}{|\zeta(1 + 2it)|^2} W^+_{t}(n) W^-_t\Big(\frac{m}{|\Delta|}\Big) h_{T, \varepsilon}(t) \frac{dt}{2\pi} .$$
Using \eqref{approx-basic1}, this can be written in terms of moments of the Riemann zeta function and Dirichlet $L$-functions. By standard mean value bounds the contribution is $O(T^{1+\varepsilon})$ (recall that $\Delta$ is fixed). We apply the Kuznetsov formula for the even spectrum given in Lemma \ref{kuz-even}. In this way we get a main term
\begin{displaymath}
\begin{split}
&4 \sum_{nm} \frac{\chi_{\Delta}(n)}{n} \int_{-\infty}^{\infty}  W^+_{t}(n) W^-_{t}(n) h_{T, \varepsilon}(t) t \tanh(\pi t) \frac{dt}{4\pi^2} \\
&= \int_{-\infty}^{\infty} h_{T, \varepsilon}(t)
\int_{(2)} \int_{(2)} \prod_{\pm}  \frac{\Gamma(1/2 + s_1 \pm it_u)}{\Gamma(1/2 \pm it_u) \pi^{s_1} s_1} \frac{\Gamma(3/2 + s_2 \pm it_u)}{\Gamma(3/2 \pm it_u) \pi^{s_2} s_2}\\
&\quad\quad\quad |\Delta|^{s_2}  L( \chi_{\Delta}, 1 + s_1 + s_2) 
    e^{s_2^2 +s_1^2} \frac{ds_1 \, ds_2}{(2\pi i)^2}
  t \tanh(\pi t) \frac{dt}{\pi^2}.
  \end{split}
  \end{displaymath}
  We shift the $s_1, s_2$-contour to real part $-1/4$, obtaining
  \begin{equation}\label{A1}
  L( \chi_{\Delta}, 1) \int_{-\infty}^{\infty} h_{T, \varepsilon}(t)t \tanh(\pi t) \frac{dt}{\pi^2} + O(T^{7/4 + \varepsilon})
  \end{equation}
with main term $\gg T^{2-\varepsilon}$. It remains to deal with the off-diagonal terms in \eqref{kuz-even-form} and we briefly sketch why both of them are negligible. By \eqref{bound-wt} we can restrict $n, m \ll T^{1+\varepsilon}$ at the cost of a negligible error. The first off-diagonal term contributes a term of the shape
$$ \sum_{nm} \frac{\chi_{\Delta}(m)}{\sqrt{nm}} \sum_c \frac{S(n, m, c)}{c}\int_{-\infty}^{\infty}  J_{2it}\Big(4\pi  \frac{\sqrt{nm}}{c}\Big)    W^+_{t}(n) W^-_{t}(m) h_{T, \varepsilon}(t_u)  t \, \frac{dt}{\cosh(\pi t)}.$$
By Lemma \ref{bessel-kuz}, the $t$-integral is negligible unless $c \leq \sqrt{nm} T^{-2+\varepsilon}$ which does not happen for $n, m \ll T^{1+\varepsilon}$.   The second off-diagonal term contributes a term of the shape
$$ \sum_{nm} \frac{\chi_{\Delta}(m)}{\sqrt{nm}} \sum_c \frac{S(n, m, c)}{c}\int_{-\infty}^{\infty} K_{2it}\Big(4\pi  \frac{\sqrt{nm}}{c}\Big) V_{t}(n) W_{t}(m) h_{T, \varepsilon}(t_u) \sinh(\pi t) t \, dt.$$
Again by  Lemma \ref{bessel-kuz},  up to a small error the $t$-integral is negligible unless $c \leq \sqrt{nm} T^{-1+\varepsilon}$ in which case it is essentially  non-oscillating in $n, m$, so we can restrict to $n, m = T^{1+o(1)}$, $c \ll T^{\varepsilon}$. Poisson summation in the $m$-variable now shows that the entire expression is negligible, since $\chi_{\Delta}$ is a   non-trivial character. 

We continue with the analysis of $A_2(T)$. Since we assume class number 1, we have
$$A_2(T) =  \frac{1}{\sqrt{\Delta} \epsilon_{\Delta}^2} \sum_u |u(z_{\Delta})|^2 h_{T, \varepsilon}(t_u)$$
where $z_{\Delta}$ is the unique Heegner point (modulo $\Gamma$) and $\epsilon_{\Delta} \in \{1, 2, 3\}$ is half the number of roots of unity in $\Bbb{Q}(\sqrt{\Delta})$. We artificially add the constant function and the Eisenstein series at a cost of $O(T)$ and apply the pre-trace formula for the entire spectrum--for odd $u$ we have $u(z_{\Delta}) = 0$ automatically. This gives
$$A_2(T) = \frac{1}{\sqrt{\Delta} \epsilon_{\Delta}^2} \sum_{\gamma \in \overline{\Gamma}} k(\gamma z_{\Delta}, z_{\Delta}) + O(T)$$
where
$$k(z, w) =\frac{1}{4\pi} \int_{-\infty}^{\infty} F(1/2 + it, 1/2 - it, 1, -v) h_{T, \varepsilon}(t) \tanh(\pi t) t\, dt, \quad v = v(z, w) = \frac{|z-w|^2}{4\Im z \Im w}. $$
The stabilizer of $z_{\Delta}$ contributes
\begin{equation}\label{A2}
\frac{1}{\sqrt{\Delta} \epsilon_{\Delta}}\cdot  \frac{1}{4\pi} \int_{-\infty}^{\infty} h_{T, \varepsilon}(t) \tanh(\pi t) t\, dt. 
\end{equation}
It is easy to see that $u(\gamma z, z) \leq \delta$ implies $\| \gamma \| \ll \sqrt{\delta + 1}$, cf.\ e.g. \cite[(A.7) with $n=1$]{IS}. From \cite[(1.64)]{Iw3} we see that $k(z, w)$ is negligible as soon as $u \gg T^{\varepsilon - 2}$, so that the contribution of all matrices not in the stabilizer is negligible. Combining \eqref{A1}, \eqref{A2} and the class number formula in the case $h_{\Delta} = 1$, we obtain
$$A_2(T) \sim \frac{1}{4} A_1(T), \quad T \rightarrow \infty$$
in accordance with \eqref{katok-Sarnak}.

\section{A Dirichlet series with Hurwitz class numbers}\label{appa}

The aim of this section is an analysis of the $L$-function
$$L_+(s, a/c) = \sum_{D < 0} \frac{H(D)e(a|D|/c)}{|D|^{1/4 + s}}$$
for $4 \mid c$, $(a, c) = 1$. As before, $H(D)$ denotes the Hurwitz class number, and the series converges absolutely in $\Re s > 5/4$. 
The results are probably known to specialists, but do not seem to be in the literature and may be of independent interest.  We recall the notation \eqref{epsd}.

\begin{lemma}\label{class-num}
Let $c > 0$, $4 \mid c$, $(a, c) = 1$.   The Dirichlet series $L_+(s, a/c)$ has meromorphic continuation to all $\Bbb{C}$. It has two simple poles  at $s = 5/4$, $s = 3/4$  (and no other poles) with residues
 \begin{displaymath}
 \begin{split}
& \underset{s = 5/4}{\text{{\rm res}}} L_+(s, a/c) =- \frac{\sqrt{2} \pi}{3c^{3/2}} \left( \frac{-c}{a}\right) \bar{\epsilon}_a e(3/8), \quad \underset{s = 3/4}{\text{{\rm res}}} L_+(s, a/c)  =    \frac{1}{\sqrt{8}c^{1/2}} \left( \frac{-c}{a}\right) \bar{\epsilon}_a  e(3/8).
 \end{split}
 \end{displaymath}
\end{lemma}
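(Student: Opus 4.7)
The strategy is to express $L_+(s,a/c)$ as a Mellin transform of Zagier's weight-$3/2$ Eisenstein series $\mathcal{H}$ from \eqref{zag} along the vertical line $\Re z = a/c$, and to extract its meromorphic continuation and residues from the modular transformation law of $\mathcal{H}$ under $\Gamma_0(4)$. Using $W_{3/4,1/4}(u) = u^{3/4}e^{-u/2}$ and the Mellin identity $\int_0^\infty W_{3/4,1/4}(4\pi|D|y)y^{s-1}\,dy = \Gamma(s+3/4)(4\pi|D|)^{-s}$, Fubini yields for $\Re s > 3/4$
$$\int_0^\infty \mathcal{H}_2(a/c,y)\, y^{s-1}\,dy \;=\; \frac{\Gamma(s+3/4)}{(4\pi)^s}\,L_+\!\bigl(s+\tfrac12,\tfrac{a}{c}\bigr),$$
where $\mathcal{H}_2$ is the $D<0$ Fourier contribution to \eqref{zag}. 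Since $(4\pi)^s/\Gamma(s+3/4)$ is entire, the poles of $L_+(\sigma,a/c)$ correspond exactly to the poles of the Mellin transform of $\mathcal{H}_2(a/c,y)$, which in turn are driven by the small-$y$ asymptotics of $\mathcal{H}_2$.

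To control $\mathcal{H}_2(a/c,y)$ as $y\to 0$, I would apply the modular transformation
$$\mathcal{H}(a/c + iy) \;=\; \chi(\gamma)^{-1}\,(icy)^{-3/2}\,\mathcal{H}\bigl(-\bar a/c + i/(c^2 y)\bigr),$$
with $\gamma = \left(\begin{smallmatrix} -\bar a & \beta \\ c & -a\end{smallmatrix}\right) \in \Gamma_0(4)$ (where $\beta = (a\bar a - 1)/c \in\Bbb{Z}$) and weight-$3/2$ theta multiplier $\chi(\gamma) = \epsilon_{-a}^{-3}\bigl(\tfrac{c}{-a}\bigr)$, and then expand the right-hand side via the Fourier expansion \eqref{zag}. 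Near $y=0$ (so $w:=1/(c^2 y) \to \infty$), only the constant Fourier coefficient $\mathcal{H}_0(w) = -(4\pi w)^{3/4}/12 + w^{1/4}/(\sqrt 8\pi^{1/4})$ and the lowest positive-frequency contributions survive up to exponentially small errors; projecting this asymptotic onto positive frequencies (the term-by-term contributions from the square-supported sum in \eqref{zag} combining with a portion of $\mathcal{H}_0$) produces an expansion of the form
$$\mathcal{H}_2(a/c,y) \;=\; A(a,c)\,y^{-3/4} \;+\; B(a,c)\,y^{-1/4} \;+\; O\bigl(y^{1/4}\bigr), \qquad y\to 0,$$
with $A, B$ explicit linear combinations of $-(4\pi)^{3/4}c^{-3}/12$ and $c^{-2}/(\sqrt 8 \pi^{1/4})$, each weighted by $\chi(\gamma)^{-1}i^{-3/2}$. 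This asymptotic makes $\int_0^\infty\mathcal{H}_2(a/c,y)y^{s-1}dy$ meromorphic on $\Bbb{C}$ with only two simple poles, at $s = 3/4$ (residue $A$) and $s = 1/4$ (residue $B$), which translate via the gamma factor to simple poles of $L_+(\sigma, a/c)$ at $\sigma = 5/4$ and $\sigma = 3/4$.

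Finally, using $i^{-3/2} = e(-3/8)$ together with the theta-multiplier identification $\chi(\gamma)^{-1} = \bigl(\tfrac{-c}{a}\bigr)\bar\epsilon_a$ (reconciling $\epsilon_{-a}^{-3}\bigl(\tfrac{c}{-a}\bigr)$ with $\bar\epsilon_a\bigl(\tfrac{-c}{a}\bigr)$ via $\epsilon_a\epsilon_{-a} = i$ for odd $a$ and the conventions for the Jacobi symbol with negative modulus), I would evaluate the residues explicitly: at $\sigma = 5/4$ the residue equals $A(a,c)\,(4\pi)^{3/4}/\Gamma(3/2)$, which simplifies to $-\sqrt 2 \pi(3c^{3/2})^{-1}\bigl(\tfrac{-c}{a}\bigr)\bar\epsilon_a\, e(3/8)$, and at $\sigma = 3/4$ the residue equals $B(a,c)\,(4\pi)^{1/4}/\Gamma(1) = (\sqrt 8\,c^{1/2})^{-1}\bigl(\tfrac{-c}{a}\bigr)\bar\epsilon_a\, e(3/8)$. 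The main obstacles are (i) performing the positive-frequency projection of the dual-cusp Fourier expansion cleanly, which requires either a Gauss-sum manipulation or a residue computation on a shifted contour, and (ii) the meticulous bookkeeping of the theta multiplier $\chi(\gamma)$ with negative entry $d = -a$ and the branch of $(icy)^{-3/2}$ — precisely the kind of delicate constant-chasing that the authors warn about in the remark after Lemma \ref{lem3}, where mismatches in the literature are noted.
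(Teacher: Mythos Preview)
Your approach is the same idea as the paper's---Mellin transform of Zagier's $\mathcal{H}$ along $\Re z = a/c$ combined with the $\Gamma_0(4)$ transformation---but the paper organizes the computation in a way that dissolves your obstacle (i). Rather than isolating the positive-frequency piece $\mathcal{H}_2$ pointwise, the paper Mellin-transforms $\mathcal{H}$ minus its constant Fourier term and obtains
\[
\mathcal{I}(s,a/c) \;=\; L_+(s,a/c)\,G_{3/4}(s) \;+\; L_-(s,a/c)\,G_{-3/4}(s),
\]
where $L_-(s,a/c)=\tfrac{1}{4\sqrt{\pi}}\sum_{m\ge 1} e(-m^2 a/c)\,m^{-(2s-1/2)}$ is the square-supported series and $G_{\pm 3/4}$ are explicit Whittaker--Mellin integrals. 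The Riemann-style splitting $\int_0^\infty = \int_0^{1/c}+\int_{1/c}^\infty$ plus the modular transformation gives a closed formula for $\mathcal{I}$ with poles only at $5/4,3/4,1/4,-1/4$, valid on all of $\Bbb{C}$ (your $O(y^{1/4})$ asymptotic alone only yields a half-plane). One then continues $L_-$ separately by its own theta/Hurwitz functional equation and solves $L_+=(\mathcal{I}-L_- G_{-3/4})/G_{3/4}$. Your ``positive-frequency projection'' is exactly this subtraction of $L_- G_{-3/4}$: at $s=3/4$ there is a genuine contribution from $\text{res}_{s=3/4}L_-$ (a Gauss sum) times $G_{-3/4}(3/4)=2(\sqrt{2}-1)\pi^{1/4}$, so $B$ is not determined by the dual constant term alone as your description suggests. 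The paper also checks explicitly that the apparent pole of $L_+$ at $s=1/4$ cancels between $\mathcal{I}$ and $L_- G_{-3/4}$ (using the functional equation of $L_-$), a point your sketch does not address.

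On your obstacle (ii): note that the paper's $\mathcal{H}$ already carries the factor $(4\pi y)^{3/4}$, so $|\mathcal{H}|$ is $\Gamma_0(4)$-invariant and the transformation law is a pure phase,
\[
\mathcal{H}(a/c+iy)=\mathcal{H}\bigl(-d/c+i/(c^2y)\bigr)\Bigl(\tfrac{-c}{a}\Bigr)\bar\epsilon_a\,e(3/8),
\]
with phase $(-i)^{-3/2}=e(3/8)$, not $(icy)^{-3/2}$ or $i^{-3/2}=e(-3/8)$. This also explains why your candidate expressions for $A,B$ carry the wrong powers of $c$.
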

  
  \begin{proof} We recall the definition of $\mathcal{H}(z)$ in \eqref{zag} and compute 
  \begin{equation}\label{defI}
  \mathcal{I}(s, a/c) := \int_0^{\infty} \Big(\mathcal{H}(a/c + iy) -  (c_1 y^{3/4} +c_2y^{1/4})\Big)y^{s-1/2} \frac{dy}{y} 
  \end{equation}
  with $c_1 = - (4\pi)^{3/4}/12$, $c_2 = 1/(\sqrt{8} \pi^{1/4})$ in two ways. Let
  $$L_-(s, a/c) = \frac{1}{4 \sqrt{\pi}} \sum_{n=1}^{\infty} \frac{e(-an^2/c)}{ n^{2s-1/2}}.$$
This $L$-function is obviously holomorphic in $\Re s > 3/4$. It has a simple pole at $s=3/4$ with residue
\begin{equation}\label{resl-}
\underset{s=3/4}{\text{res}}L_-(s, a/c)  =\frac{1}{4 \sqrt{\pi}}  \frac{1}{2c}  \sum_{n \,(\text{mod } c)} e\Big(-\frac{an^2}{c}\Big) = \frac{1}{4 \sqrt{\pi}} (1+i) \bar{\epsilon}_{-a} \left( \frac{c}{-a}\right) \frac{1}{2\sqrt{c}},
\end{equation}
and it   has a functional equation 
\begin{equation}\label{functheta}
  L_-(s, a/c) = \Big(\frac{c}{2\pi}\Big)^{1-2s} \left( \frac{-c}{-a}\right)  \epsilon_{-a} e(1/8) \frac{\Gamma(  3/4-s)}{\Gamma(s - 1/4)} L_-(1-s, -\bar{a}/c).
\end{equation}
This follows from the corresponding properties of Hurwitz zeta function and standard computations with Gau{\ss} sums (or the transformation behaviour of one-dimensional  theta series). In particular we obtain  the  analytic continuation of $L_-(s, a/c)$ to all of $\Bbb{C}$ with only a simple pole  at $s=3/4$. 
 Moreover, $L_-(s, a/c) = 0$ for $s = 1/4 - n$, $n = 1, 2, 3 \ldots$.

Returning to \eqref{defI},  we have
\begin{equation}\label{I}
\mathcal{I}(s, a/c) = L_+(s, a/c) G_{3/4}(s) + L_-(s, a/c) G_{-3/4}(s)
\end{equation}
where
$$G_{c}(s) = \int_0^{\infty} W_{c, 1/4}(4\pi y) y^{s-1/2} \frac{dy}{y}.$$  
Since $W_{3/4, 1/4}(x) = e^{-x/2} x^{3/4}$, we can explicitly compute
\begin{equation}\label{g+}
G_{3/4}(s) =  2^{s+1/4}(4\pi)^{1/2 - s}  \Gamma(s + 1/4).
\end{equation}
For  the analysis of $G_{-3/4}$, we combine \cite[7.621.3, 9.131.1, 9.111]{GR} to obtain
\begin{displaymath}
\begin{split}
G_{-3/4}(s) &= (4\pi)^{1/2 - s} \frac{\Gamma(s -1/4)\Gamma(s + 1/4) }{\Gamma(s + 5/4)}F(s + 1/4, s-1/4, s + 5/4; 1/2) \\
&=    (4\pi)^{1/2 - s} \Gamma(s -1/4)\int_0^1 t^{s-3/4}(1 - t/2)^{1/4 - s} dt.
\end{split}
\end{displaymath}
In particular, by repeated partial integration in the $t$-integral we see that $G_{-3/4}(s)$ is meromorphic with simple poles at most at $s= (2n+1)/4$, for integers $n \leq 0$. 
From \cite[9.121.24]{GR} we get
\begin{equation}\label{g-}
G_{-3/4}(3/4) = 2 (\sqrt{2} - 1)\pi^{1/4}.
\end{equation}

On the other hand, we may complete the pair $(a, c)$ to a matrix $(\begin{smallmatrix} a & b\\ c & d\end{smallmatrix})\in\Gamma_0(4)$. Then
$$\mathcal{H}(z) = \mathcal{H}\left( \frac{dz - b}{-cz + a}\right) \left( \frac{-c}{a}\right) \bar{\epsilon}_a \left( \frac{-cz + a}{|-cz + a|}\right)^{-3/2},$$
in particular
$$\mathcal{H}\left( \frac{a}{c} + iy\right) = \mathcal{H}\left( - \frac{d}{c} + \frac{i}{c^2 y}\right)  \left( \frac{-c}{a}\right) \bar{\epsilon}_a e(3/8).$$
Splitting the integral in \eqref{defI} into $\int_0^{1/c}$ and $\int_{1/c}^{\infty}$ and applying the functional equation in the former, we obtain
\begin{equation}\label{funcI}
\begin{split}
\mathcal{I}(s, a/c)= & c^{1-2s} \left( \frac{-c}{a}\right) \bar{\epsilon}_a e(3/8)  \int_{1/c}^{\infty}   \Big(\mathcal{H}\left( - \frac{d}{c} +  iy \right)  - (c_1 y^{3/4} + c_2 y^{1/4})\Big)y^{-s+1/2} \frac{dy}{y} \\
&- \frac{c_1 c^{-s-1/4}}{s +1/4} -\frac{c_2 c^{-s+1/4}}{s - 1/4} + c^{1-2s} \left( \frac{-c}{a}\right) \bar{\epsilon}_a e(3/8) \Big( \frac{c_1c^{s-5/4}}{s-5/4} + \frac{c_2c^{s-3/4}}{s-3/4}\Big)\\
  & +   \int_{1/c} ^{\infty} \Big(\mathcal{H}(a/c + iy) -  (c_1 y^{3/4} + c_2 y^{1/4})\Big)y^{s-1/2} \frac{dy}{y} .
\end{split}
\end{equation}
This establishes the analytic continuation of $\mathcal{I}(s, a/c)$ to all of $\Bbb{C}$ except for poles at $s =  5/4, 3/4, 1/4, -1/4$.  From the preceding analysis we conclude the meromorphic continuation of $L^+(s, a/c)$ as a function of finite order with possible poles at most at $5/4, 3/4, 1/4$. Since $L_{-}(s, a/c) G_{-3/4}(s)$ is holomorphic at $s = 5/4$, we obtain the formula for the residue at $s = 5/4$ from \eqref{I}, \eqref{g+} and \eqref{funcI}. 
Using in addition  \eqref{resl-}, \eqref{g-}, we obtain
\begin{displaymath}
\begin{split}
\underset{s = 3/4}{\text{res}} L_+(s, a/c)&  =  \frac{\pi^{1/4}}{\sqrt{2}c^{1/2}} \left( \left( \frac{-c}{a}\right) \bar{\epsilon}_a e(3/8) \frac{1}{\sqrt{8}\pi^{1/4}} - (\sqrt{2} - 1)\pi^{1/4} \frac{1}{4 \sqrt{\pi}} (1+i) \bar{\epsilon}_{-a} \left( \frac{c}{-a}\right)\right) \\
\end{split}
\end{displaymath}
which confirms the residue formula at $s = 3/4$,  
since $\bar{\epsilon}_{-a}(\frac{c}{-a})  = (-i) (\frac{-c}{a}) \bar{\epsilon}_a$. Similarly, using also \eqref{functheta} and the simple formula $F(1/2, 0, 3/2; 1/2) = 1$, we get
\begin{displaymath}
\begin{split}
\underset{s = 1/4}{\text{res}} L_+(s, a/c)&  = \frac{1}{2\pi^{3/4}} \Big( - \frac{1}{\sqrt{8}\pi^{1/4}} - \sqrt{8} \pi^{1/4} L_-(1/4, a/c)\Big) \\
&=  \frac{1}{2\pi^{3/4}} \Big( - \frac{1}{\sqrt{8}\pi^{1/4}} + \sqrt{8} \pi^{1/4}\Big( \frac{c}{2\pi}\Big)^{1/2} \left( \frac{-c}{-a}\right) \epsilon_{-a} e(1/8)\sqrt{\pi}  \frac{1}{4 \sqrt{\pi}} (1+i) \bar{\epsilon}_{d} \left( \frac{c}{d}\right) \frac{1}{2\sqrt{c}} \Big) 
\end{split}
\end{displaymath}
with $d \equiv -\bar{a}$ (mod $c$), which vanishes. \end{proof}

\emph{Remark:} One can show that away from the two poles, the function $L^+(s, a/c)$ satisfies the growth condition $L^+(s, a/c) \ll_{\Re s} ((1 + |s|) c)^{\max(0, 5/4 - \Re s, 1 - 2\Re s) + \varepsilon}.$\\

\section{A volume computation}\label{appc}

In this appendix we justify \eqref{vol} for a Saito--Kurokawa lift $F \in S_k^{(2)}$ associated with a Hecke eigenform $f \in S_{2k-2}$. Following \cite[Section 2]{Bl}, we write its Fourier expansion at $Z = iY$ as
$$F(iY) = \sum_{T\in \mathcal{P}(\Bbb{Z}) } \alpha(T) (\det 2T)^{\frac{k}{2} - \frac{3}{4}} e^{-2\pi \text{tr}(TY)}, $$
normalized such that $\alpha(T)^2 = L(f \times \chi_{-\det 2T}, 1/2)$ if $-\det 2T$ is a fundamental discriminant. In this case, $\| F \| = (2\pi)^{-k}\Gamma(k) k^{-1/4 + o(1)}$ by \cite[(2.8)]{Bl}. We conclude that
$$\mathcal{F}(Y) := \frac{(\det Y)^{k/2}F(iY) }{\| F \|_2}=k^{1/4 + o(1)} \sum_{T\in \mathcal{P}(\Bbb{Z}) } \frac{\alpha(T)}{\det(2T)^{3/4}} \frac{(4\pi)^{k} (\det TY)^{\frac{k}{2}} e^{-2\pi \text{tr}(TY)}}{\Gamma(k)}. $$
The function $$X \mapsto \frac{(4\pi)^k (\det X)^{k/2} e^{-2\pi \text{tr}X}}{\Gamma(k)}$$ is invariant under conjugation, and for a diagonal matrix $X = \text{diag}(x_1, x_2)$ it is negligible unless $x_1, x_2 = k/4\pi + O(\sqrt{k}\log k)$, cf. \cite[Section 4]{Bl}. For large $k$, we conclude that $\mathcal{F}(Y)$ is negligible   unless there exists $T \in \mathcal{P}(\Bbb{Z})$ such that the two eigenvalues of $TY$ are $k/4\pi + O(\sqrt{k}\log k)$. In particular, the essential support of $\mathcal{F}$ can be restricted to matrices $Y$ whose maximal eigenvalue $\lambda_{\max}(Y)$ satisfies $\lambda_{\max}(Y) \ll k$.  Since $|\mathcal{F}|$ is invariant under $Y \mapsto Y^{-1}$, its minimal eigenvalue $\lambda_{\min}(Y)$ must satisfy $\lambda_{\min}(Y) \gg 1/k$. The implied constants will not be relevant. As in \eqref{product}, we write 
$$Y  = Y(r, x, y) = \sqrt{r}\left(\begin{matrix} y^{-1} & -xy^{-1}\\-x y^{-1} & y^{-1} (x^2 + y^2)\end{matrix}\right)$$
with $x + iy \in \Gamma \backslash \Bbb{H}$. Let $\mathcal{Y}$ be the set of such matrices $Y$ with $1/k \ll \lambda_{\min}(Y) \leq \lambda_{\max}(Y) \ll k$. 
Without loss of generality we may assume that $Y$ is Minkowski-reduced, equivalently $x+iy$ is in the standard fundamental domain $|x|\leq 1/2$, $x^2 + y^2 \geq 1$. The two eigenvalues of $Y$ are given by
$$\sqrt{r} \frac{1 + x^2 + y^2 \pm \sqrt{(1+x)^2 + 2(x^2 - 1)y^2 + y^4}}{2y}.$$
Thus we have 
\begin{displaymath}
\begin{split}
\text{vol}(\mathcal{Y})& = \underset{Y(r, x, y) \in \mathcal{Y}}{\int_{-1/2}^{1/2}   \int_{\sqrt{1 - x^2}}^{\infty} \int_{\Bbb{R}} } \frac{dr}{r} \frac{dy \, dx}{y^2}= \int_{-1/2}^{1/2}  \int_{\sqrt{1 - x^2}}^{\infty}  \big(4\log (k/y) + O(1) \big) \frac{dy \, dx}{y^2} \\
&= \text{vol}({\rm SL}_2(\Bbb{Z}) \backslash \Bbb{H}) \cdot 4 \log k + O(1),
\end{split}
\end{displaymath}
as desired. 


\end{document}